\numberwithin{equation}{section}
\theoremstyle{plain}
\newtheorem{theorem}{Theorem}[section]
\newtheorem{lemma}[theorem]{Lemma}
\newtheorem{proposition}[theorem]{Proposition}
\newtheorem{corollary}[theorem]{Corollary}
\newtheorem{definition}[theorem]{Definition}
\theoremstyle{remark}
{
    
    \newtheorem{remark}[theorem]{Remark}

}
   \newtheorem{hypothesis}[theorem]{Hypothesis}
\newcommand{\norm}[2]{\left\lVert#1\right\rVert_{#2}}
\newcommand{\hu}{\hat{u}}
\newcommand{\hb}{\hat{B}}
\newcommand{\Uad}{\mathcal{U}_{\rm ad}}
\newcommand{\whq}{\widehat{\calq}}
\newcommand{\wtq}{\widetilde{\calq}}
\newcommand{\umin}{\check{u}}
\newcommand{\umax}{\hat{u}}
\newcommand{\Uspace}{{L^2(0,T)}^m}
\newcommand\finsquare{\hfill{$\square$}\end{proof}\medskip }
\newcommand{\blue}[1]{\textcolor{black}{#1}}
\def\dd{{\rm d}}
\newcommand{\ddt}{\frac{\rm d}{{\rm d} t} }
\def\weight(#1,#2){c_{#1,#2}}
\def\ra{{\rm ra}}
\def\fh{\hat{f}}
\def\uh{\hat{u}}
\def\yh{\hat{y}}
\def\fb{\bar{f}}
\def\hb{\bar{h}}
\def\pb{\bar{p}}
\def\tb{\bar{t}}
\def\ub{\bar{u}}
\def\vb{\bar{v}}
\def\wb{\bar{w}}
\def\xb{\bar{x}}
\def\yb{\bar{y}}
\def\Bb{\bar{B}}
\def\Mb{\bar{M}}
\def\Nb{\bar{N}}
\def\cala{{\mathcal  A}}
\def\calb{{\mathcal B}}
\def\calf{{\mathcal F}}
\def\call{{\mathcal L}}
\def\calp{{\mathcal P}}
\def\calq{{\mathcal Q}}
\def\calt{{\mathcal T}}
\def\calu{{\mathcal U}}
\def\cR{{\mathcal R}}
\def\D{\mathcal{D}}
\def\eps{\varepsilon}
\def\Om{{\Omega}}
\def\om{{\omega}}
\def\mub{\bar{\mu}}
\def\zetab{{\bar\zeta}}
\def\1B{{\bf  1}}
\def\argmin{\mathop{\rm argmin}}
\def\dist{\mathop{\rm dist}}
\def\dom{\mathop{{\rm dom}}}
\def\intt{\mathop{\rm int}}
\newcommand\Lip{\mathop{\rm Lip}}
\def\range{\mathop{\rm Im}}
\def\supp{\mathop{\rm supp}}
\def\Ker{\mathop{\rm Ker}}
\def\Min{\mathop{\rm Min}}
\def\half{\mbox{$\frac{1}{2}$}}
\def\1B{{\bf  1}}
\newcommand{\NN}{\mathbb{N}}
\newcommand{\RR}{\mathbb{R}}
\def\cN{\mathbb{N}}
\def\cR{\mathbb{R}}
\newcommand\be{\begin{equation}}
\newcommand\ee{\end{equation}}
\newcommand\ba{\begin{array}}
\newcommand\ea{\end{array}}
\newcommand{\bea}{\begin{eqnarray}}
\newcommand{\eea}{\end{eqnarray}}
\newcommand{\bean}{\begin{eqnarray*}}
\newcommand{\eean}{\end{eqnarray*}}
\def\rar{\rightarrow}
\def\ds{\displaystyle}
\def\disp{\displaystyle}
\def\la{\langle}
\def\ra{\rangle}
\title[Optimal control of a semilinear heat equation]
{State-constrained control-affine\\ parabolic problems II: \\Second order sufficient optimality conditions}
\author{M. Soledad Aronna}
\address{Escola de Matem\'atica Aplicada, FGV EMAp, Rio de Janeiro 22250-900, Brazil}
\email{soledad.aronna@fgv.br}
\author{Fr\'ed\'eric Bonnans}
\address{INRIA-Saclay and Centre de 
Math\'ematiques Appliqu\'ees, Ecole Polytechnique, 91128 Palaiseau, France}
\email{Frederic.Bonnans@inria.fr}  
\author{Axel Kr\"oner}
\address{Institut f\"ur Mathematik, Humboldt Universit\"at zu Berlin, 10099 Berlin, Germany;
Inria and CMAP, Ecole Polytechnique, CNRS, Universit\'e Paris Saclay, 91128 Palaiseau, France}
\email{axel.kroener@math.hu-berlin.de}  
\thanks{The first author was supported by FAPERJ (Brazil) through the Jovem Cientista do Nosso Estado Program; 
by CNPq (Brazil) and by the Alexander von Humboldt Foundation (Germany). The
second author thanks the `Laboratoire de Finance pour les Marchés de l’Energie’ for its support. The
second and third authors were supported by a public grant as part of the Investissement d’avenir
project, reference ANR-11-LABX-0056-LMH, LabEx LMH, in a joint call with Gaspard Monge Program for optimization, operations research and their interactions with data sciences.
\\
\indent This article is the continuation of the work \cite{ABK-PartI}, by the same authors, in which first and second order necessary conditions were established.}
\begin{document}

\maketitle


\begin{abstract}
In this paper we consider an optimal control problem governed by a
  semilinear heat equation with bilinear control-state terms and
  subject to control and state constraints. The state constraints are
  of integral type, the integral being with respect to the space
  variable. The control is multidimensional. The cost functional is of
  a tracking type and contains a linear term in the control variables. We derive second order
  sufficient conditions relying on the Goh transform. 
    The appendix provides an example illustrating the applicability of our results.
\end{abstract}

\vspace{5mm}

{\sc\small Keywords}: \keywords{\small
Optimal control of partial differential equations, semilinear parabolic equations, state constraints, second order analysis, Goh transform, control-affine problems} 


\section{Introduction}
This is the second part of two papers on necessary and sufficient optimality conditions for an optimal control problem governed by a semilinear heat equation containing bilinear terms coupling the control variables and the state, and subject to constraints on the control and state. 
While in the first part \cite{ABK-PartI}, first and second order necessary optimality conditions are shown, in this second part we derive second order sufficient optimality conditions.
The control may have several components and enters the dynamics in a bilinear term and in an affine way in the cost. This does not allow to apply classical techniques of calculus of variations to derive second order sufficient optimality conditions. Therefore, we extend techniques that were recently established in the following articles,
and  that involve the Goh transform~\cite{MR0205719} in an essential way. Aronna, Bonnans, Dmitruk and Lotito~\cite{ABDL12} obtained second order necessary and sufficient conditions for bang-singular solutions of control-affine finite dimensional systems with control bounds, results that were extended in Aronna, Bonnans and Goh \cite{MR3555384}  when adding a state constraint of inequality type.
An extension of the analysis in \cite{ABDL12} to the infinite dimensional setting was done by Bonnans \cite{BonnansA13}, for a problem concerning a semilinear heat equation subject to control bounds and without state constraints. For a quite general class of linear differential equations in Banach spaces with bilinear control-state couplings and subject to control bounds,  Aronna, Bonnans and Kr\"oner~\cite{MR3767765PlusErratum} provided second order conditions, that extended later to the complex Banach space setting \cite{ABK19}.

There exists a series of publications on second order conditions for problems governed by control-affine ordinary differential equations, we refer to references in \cite{ABK-PartI}.
\if{, starting with the early papers \cite{MR0205719} by Goh  and \cite{Kel64} by Kelley, later \cite{Dmi77} by Dmitruk, and recently  \cite{ABDL12}, already mentioned above.
In this context, the case dealing with both control and state constraints was treated in e.g. Maurer \cite{MR0464007}, McDanell and Powers \cite{McDaPow71}, Maurer, Kim and Vossen \cite{MaurerKimVossen2005}, Sch\"attler \cite{MR2253361}, and Aronna {\em et al.} \cite{MR3555384} as cited in the previous paragraph. Fore a more detailed description of the contributions in this framework, we refer to \cite{MR3555384}.
}\fi

In the elliptic framework, regarding the case we investigate here, this is, when no quadratic control term is present in the cost (or what some authors call {\em vanishing Tikhonov term}), 
 Casas in \cite{MR2974742} proved second order sufficient conditions for bang-bang optimal controls of a semilinear equation, and for one containing a bilinear coupling of control and state in the recent joint work with D. and G. Wachsmuth \cite{MR3878305}.

  Parabolic optimal control problems with state constraints are discussed in
R\"osch and Tr\"oltzsch~\cite{MR1982739},
who gave second order sufficient conditions for a linear equation with
mixed control-state constraints. In the presence of pure-state
constraints, Raymond and Tr\"oltzsch \cite{MR1739375}, and Krumbiegel
and Rehberg~\cite{MR3032877}
obtained second order sufficient conditions for a semilinear equation,
Casas, de Los Reyes, and Tr\"oltzsch~\cite{CasReyTro08} and de Los
Reyes, Merino, Rehberg and Tr\"oltzsch~\cite{RMRT08} obtained
sufficient second order conditions for  semilinear equations,
both in the elliptic and parabolic cases. The articles mentioned in this paragraph did not consider bilinear terms, and their sufficient conditions do not apply to the control-affine problems that we treat in the current work.

It is also worth mentioning the work  \cite{MR3374636} by Casas,
  Ryll and Tr\"oltzsch that provided second order conditions for a semilinear FitzHugh-Nagumo system subject to control constraints in the case of vanishing Tikhonov term.





The contribution of this paper are second order 
sufficient optimality conditions for an optimal control problem for a
semilinear parabolic equation with cubic nonlinearity, several
controls coupled with the state variable through bilinear
  terms, pointwise control constraints and state constraints
  that are integral in space.
The main challenge arises from the fact that  both the dynamics  and the cost function are affine with respect to the control, hence classical techniques are not applicable to derive second order sufficient conditions. We  rely on the Goh transform \cite{MR0205719} to derive sufficient optimality conditions for bang-singular solutions. 
In particular, the sufficient conditions are stated on a cone of directions larger than the one used for the necessary conditions.

The paper is organized as follows. 
 In Section~\ref{sec:1} the problem is stated and main assumptions are formulated. Section~\ref{sec:3} is devoted to second order necessary conditions and 
Section~\ref{suf-cond-sec} to second order sufficient conditions.

\subsection*{Notation} 
Let $\Om$ be an open subset of $\cR^n,$ $n\leq 3$, 
with $C^\infty$ boundary $\partial\Om$.
Given $p \in [1,\infty]$ and $k\in \NN$, let $W^{k,p}(\Omega)$ be
the Sobolev space of functions in $L^p(\Omega)$ with 
derivatives (here and after, derivatives w.r.t. $x\in\Om$ or w.r.t. time
are taken in the sense of distributions)
 in $L^p(\Omega)$ up to order $k.$ Let $\D(\Omega)$ be the set of $C^\infty$ functions with compact support in $\Om$.
By $W^{k,p}_0(\Omega)$ we denote the closure of 
$\D(\Omega)$ with respect to the $W^{k,p}$-topology.  Given a horizon $T>0$, we write $Q := \Om\times (0,T)$. $\norm{\cdot}{p}$ denotes the norm in $L^p(0,T),$ $L^p(\Omega)$ and $L^p(Q)$, indistinctly. When a function depends on both space and time, but the norm is computed only with respect of one of these variables, we specify both the space and domain. For example, if $y\in L^p(Q)$ and we fix $t\in (0,T),$ we write $\|y(\cdot,t)\|_{L^p(\Omega)}$.
For the $p$-norm in $\RR^m,$ for $m\in \NN,$ we use $|\cdot|_p$. 
We set $H^k(\Omega):= W^{k,2}(\Omega)$ and $H^k_0(\Omega):=
W_0^{k,2}(\Omega).$
By $W^{2,1,p}(Q)$ we mean the Sobolev space of $L^p(Q)$-functions
whose second derivative in space and first derivative in time belong
to $L^p(Q)$.
We write $H^{2,1}(Q)$ for $W^{2,1,2}(Q)$ and, setting 
$\Sigma:= \partial\Om\times (0,T),$ we define the state space as
\be
Y := \{ y\in H^{2,1}(Q); \; y=0  \text{ a.e. on $\Sigma$} \}.
\ee
If $y$ is a function over $Q$, we use $\dot y$ to denote its time
derivative in the sense of distributions.
As usual we denote the spatial gradient and the Laplacian by $\nabla$ and $\Delta$. By $\operatorname{dist}(t,I):=\inf \{\norm{t-\bar t}\;;\; \tb \in I \}$ for $I\subset \RR$, we denote the distance of $t$ to the set $I$.

\section{Statement of the problem and main assumptions}\label{sec:1}

In this section we introduce the optimal control problem and recall results on well-posedness of the state equation and existence of solutions of the optimal control problem from \cite{ABK-PartI}.

\subsection{Setting}
The {\em state equation} is given as
\be
\label{dynamics}
\left\{
\begin{split}
&\dot y(x,t) - \Delta y(x,t) +\gamma y^3(x,t) = 
f (x,t) + y(x,t) \sum_{i=0}^m u_i(t) b_i(x)\quad \text{in } Q,\\
&y=0\,\, \text{ on } \Sigma,\quad 
y(\cdot,0) = y_0\,\, \text{in } \Om,
\end{split}
\right.
\ee
with
\be
\label{HypSpaces1}
y_0\in H^1_0(\Om),\quad f\in L^2(Q),\quad b\in W^{1,\infty}(\Om)^{m+1},
\ee 
$\gamma \geq 0$, $u_0\equiv1$ is a constant, and 
$u:=(u_1,\ldots,u_m) \in L^2(0,T)^m$.
Lemma \ref{state-equ-estimA.l} below shows that for each control $u\in \Uspace,$  there is a unique associated solution
$y\in Y$  of 
\eqref{dynamics}, called the 
{\em associated state}. Let $y[u]$ denote this solution.
We consider control constraints of the form
$u\in \Uad$, where 
\be
\label{HypUad}
\Uad = \{u\in L^2(0,T)^m;\; \umin_i \leq u_i(t) \leq \umax_i,\,\, i=1,\dots,m\},
\ee
for some constants $\umin_i< \umax_i$, for $i=1,\dots,m.$
In addition, we have finitely many linear running state constraints of the 
form
\be
\label{stateconstraint}
g_j(y(\cdot,t)):=\int_\Omega c_j(x)y(x,t){\rm d}x+d_j \leq 0,
\quad \text{for } t\in [0,T],\;\;  j=1,\dots,q,
\ee
where $c_j\in H^2(\Om) \cap H^1_0(\Om)$ 
for $j=1,\dots,q$, and $d\in \cR^q$. 

We call any $(u,y[u])\in L^2(0,T)^m \times Y$ a {\em trajectory}, and if it additionally  satisfies the control and state constraints, we say it is an {\em admissible trajectory.}
The {\em cost function} is
\be
\label{cost}
\begin{split}
J(u,y) : = &\half \int_Q (y(x,t)-y_d(x))^2 {\rm d} x {\rm d} t 
\\
&+ \half \int_\Omega (y(x,T)-y_{dT} (x))^2 {\rm d} x 
+  \sum_{i=1}^m \alpha_i \int_0^T u_i(t) {\rm d} t,
\end{split}
\ee
where 
\be
\label{HypSpaces2}
y_d \in L^2(Q),\quad y_{dT}\in H^1_0(\Om),
\ee
and $\alpha \in \RR^m$.
We consider the optimal control  problem 
\be\label{P}\tag{P}
\Min_{u\in \Uad}  J(u,y[u]); \quad
\text{subject to \eqref{stateconstraint}}.  
\ee

For problem \eqref{P}, assuming it in the sequel to be feasible,
we consider two types of solutions. 

\begin{definition} We say that $(\ub,y[\ub])$ is an  {\em $L^2$-local solution} (resp.,  {\em $L^\infty$-local solution}) if there exists $\eps>0$ such that $(\ub,y[\ub])$ is a minimum among the admissible trajectories $(u,y)$ that satisfy $\|u-\ub\|_2<\eps$  (resp., $\|u-\ub\|_\infty<\eps$).
\end{definition}

The state equation is well-posed and has a solution in $Y$. Furthermore, the mapping $u\mapsto y$, $L^2(0,T)\rar Y$ is of class $C^{\infty}$. 
Since \eqref{P} has a bounded feasible set, it is easily checked that
its set of solutions of \eqref{P} is non-empty. 
For details regarding these assertions see Appendix \ref{sec:well-posed}.

\subsection{First order optimality conditions}\label{sec:first}
It is well-known that the dual of $C([0,T])$ is the set of (finite) Radon measures,
and that the action of a finite Radon measure coincides with the
Stieltjes integral associated with a bounded variation function $\mu\in BV(0,T)$. 
We may assume w.l.g. that $\mu (T)=0$, and we let  $\dd\mu$ denote the Radon measure
associated  to $\mu $. Note that if $\dd \mu$ belongs to
the set $\mathcal{M}_+(0,T)$ of nonnegative finite Radon measures   
then we may take $\mu $ nondecreasing and right-continuous. Set
\be
BV(0,T)_{0,+} := \big\{ \mu   \in BV(0,T) \text{ nondecreasing, right-continuous; } \mu (T) =0 \big\}. 
\ee

Let $(\ub,\yb)$ be an admissible trajectory of problem $(P)$. 
We say that $\mu\in BV(0,T)_{0,+}^q$ is 
{\em complementary to the state constraint} for $\yb$ if
\be
\int_0^T g_j(\yb(\cdot,t)) \dd \mu_j(t) {=\int_0^T \left( \int_\Omega c_j(x)\yb(x,t) \dd x + d_j \right) \dd \mu_j(t)} =0, \;\;j=1,\ldots,q.
\ee 
Let
$
(\beta,\mu)\in \RR_+\times BV(0,T)_{0,+}^q.
$
We say that $p \in L^\infty(0,T;H^1_0(\Om))$
is the {\em costate associated} with 
$(\ub,\yb,\beta,\mu)$, or shortly with $(\beta,\mu),$ 
if $(p,p_0)$ is solution of \eqref{costate-eq}.
As explained in Appendix \ref{costate_equation},
 $p = p^1 - \sum_{j=1}^q c_j \mu_j$
for some $p^1\in Y$. In particular $p(\cdot, 0)$ and $p(\cdot, T)$ are well-defined and it can be checked that $p_0=p(\cdot ,0)$. 

\begin{definition}
We say that the triple
$(\beta,p,\mu) \in \RR_+\times L^\infty(0,T;H^1_0(\Om))
\times BV(0,T)_{0,+}^q$ is a {\em generalized Lagrange multiplier} if it
satisfies the following {\em first-order optimality conditions:}
$\mu$ is complementary to the state constraint,
$p$ is the costate associated with $(\beta,\mu)$,
the non-triviality condition
$
(\beta,\dd \mu) \neq 0
$ 
holds and, for $i=1$ to $m$, 
defining the {\em switching function} by 
\be
\label{def-psi}
\Psi_i^p(t) := \beta\alpha_i + \int_\Om b_i(x) \yb(x,t) p(x,t) \dd x,
\quad \text{for }i=1,\ldots,m,
\ee
one has $\Psi^p \in L^\infty(0,T){^m}$ and 
\be
\label{FirstControl}
\sum_{i=1}^m \int_0^T \Psi^p_i(t) (u_i(t)-\ub_i(t))  \dd t \geq 0,\quad \text{for every } u\in \Uad.
\ee
We let $\Lambda(\ub,\yb)$ denote 
the set of generalized Lagrange multipliers associated with $(\ub,\yb)$.
If $\beta=0$ we say that the corresponding multiplier is {\em singular}. Finally,  we write
 $\Lambda_1(\ub,\yb)$ for the set of pairs $(p,\mu)$ with $(1,p,\mu)\in \Lambda(\ub,\yb)$. When the nominal solution is fixed and there is no place for confusion, we just write $\Lambda$ and $\Lambda_1.$
\end{definition}

\if{
\subsubsection{The reduced abstract problem}\label{sec:red-prob}

Set $F(u):= J(u,y[u]),$ and 
$G: \Uspace \rar C([0,T])^q$, $G(u):= g(y[u])$.
The {\em reduced problem} is 
\be
\label{RP}
\tag{RP}
\Min_{u\in \Uad} F(u); \quad G(u) \in K,
\ee
where $K:=C([0,T])_-^q$ is the closed convex cone of continuous
functions over $[0,T],$ with values in $\cR_-^q.$
Its interior is the set of functions in 
$C([0,T])^q$ with negative values.
We say that  the reduced problem \eqref{RP} is {\em qualified} at $\ub$ if:
\be
\label{qualif-RP}
\left\{ \ba{lll}
\text{there exists $u\in \Uad$ such that $v:=u-\ub$ satisfies}
\\
G(\ub)+DG(\ub) v \in \intt(K).
\ea\right.
\ee

Given a  Banach space 
$X,$ a closed convex subset $S\subseteq X$ and a point $\bar s \in S,$ 
the 
{\em normal cone} to $S$ at $\bar s$
is defined as
\be
N_S(\bar s)  := \{ x^*\in X^* ;\;\la x^*, s-\bar s\ra \leq 0,\,\,\, \text{for all } s\in S\}.
\ee
 We get the following first order conditions for our problem $(P)$:

\begin{lemma}\label{multipliers} 
{\rm (i)}
If $(\ub,y[\ub])$ is an $L^2$-local solution of $(P),$ then the associated set $\Lambda$ of multipliers is nonempty.
\\ {\rm (ii)}
If in addition the qualification condition \eqref{qualif-RP} holds at $\ub$, then there is no
singular multiplier, and
$\Lambda_1$
is nonempty and bounded in $L^{\infty}(0,T;H^1_0(\Om))\times BV(0,T)_{0,+}^{q}$. 
\end{lemma}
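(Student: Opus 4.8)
The plan is to obtain the multipliers from the abstract Fritz--John rule applied to the reduced problem \eqref{RP}, and then to rewrite the resulting stationarity condition in costate form by a duality (integration-by-parts) identity; the statement is also part of the companion paper \cite{ABK-PartI}, and we outline a self-contained argument. First, since $u\mapsto y[u]$ is of class $C^\infty$ from $\Uspace$ to $Y$, $Y$ embeds continuously into $C([0,T];H^1_0(\Om))$, and each $g_j$ is an affine continuous functional on $L^2(\Om)$, the maps $F(u):=J(u,y[u])$ and $G(u):=(g_j(y[u]))_{j=1}^q$ are of class $C^\infty$ from $\Uspace$ into $\RR$ and into $C([0,T])^q$ respectively, while $\Uad$ is closed and convex and $K:=C([0,T])_-^q$ is a closed convex cone with nonempty interior. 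The Fritz--John (generalized Karush--Kuhn--Tucker) theorem, applied at the $L^2$-local solution $\ub$, then produces $\beta\ge0$ and $\mu\in BV(0,T)^q$ with $(\beta,d\mu)\neq0$, $d\mu\in N_K(G(\ub))$, and $\langle\beta\,DF(\ub)+DG(\ub)^*\mu,\,u-\ub\rangle\ge0$ for all $u\in\Uad$. Because $K$ is a cone, $d\mu\in N_K(G(\ub))$ is equivalent to: each $d\mu_j$ is a nonnegative measure and $\langle d\mu,G(\ub)\rangle=0$; combined with $g_j(\yb(\cdot,t))\le0$ this yields $\int_0^T g_j(\yb(\cdot,t))\,d\mu_j=0$ for every $j$, and after the usual normalization we may take $\mu\in BV(0,T)_{0,+}^q$.

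Next I would introduce the costate $p$ associated with $(\beta,\mu)$, i.e. the solution of \eqref{costate-eq}, which by the appendix satisfies $p\in L^\infty(0,T;H^1_0(\Om))$ and $p=p^1-\sum_{j=1}^q c_j\mu_j$ with $p^1\in Y$. For $v:=u-\ub$, let $z=Dy[\ub]v\in Y$ be the associated linearized state. Testing the linearized state equation with $p$, the costate equation with $z$, and integrating by parts in space and time gives
\[
\beta\,DF(\ub)v+\langle d\mu,DG(\ub)v\rangle=\sum_{i=1}^m\int_0^T\Psi_i^p(t)\,v_i(t)\,dt,
\]
with $\Psi^p$ as in \eqref{def-psi}; the bound $\Psi^p\in L^\infty(0,T)^m$ follows from $b_i\in W^{1,\infty}(\Om)$ and the regularity of $\yb$ and $p$. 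Substituting this identity into the stationarity inequality yields \eqref{FirstControl}, hence $(\beta,p,\mu)\in\Lambda(\ub,\yb)$, which proves (i).

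For (ii), pick $u^*\in\Uad$ with $v^*:=u^*-\ub$ as in the qualification condition \eqref{qualif-RP}, so $w^*:=G(\ub)+DG(\ub)v^*\in\intt(K)$, and fix $\delta>0$ with $w^*_j(t)\le-\delta$ for all $t$ and $j$. If $\beta=0$ then $d\mu\neq0$, and using $\langle d\mu,G(\ub)\rangle=0$ the stationarity inequality at $u^*$ gives $0\le\langle d\mu,DG(\ub)v^*\rangle=\langle d\mu,w^*\rangle\le-\delta\sum_j\|d\mu_j\|_{\mathcal M}<0$, a contradiction; hence $\beta>0$ and, normalizing $\beta=1$, $\Lambda_1\neq\emptyset$. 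The same estimate with $\beta=1$ gives $\sum_j\|d\mu_j\|_{\mathcal M}\le\delta^{-1}DF(\ub)v^*$, a bound independent of the multiplier; since $p^1$ depends linearly and continuously on $(1,\mu)$ and $\|\sum_j c_j\mu_j\|_{L^\infty(0,T;H^1_0(\Om))}$ is controlled by $\sum_j\|d\mu_j\|_{\mathcal M}$, we obtain a uniform bound on $\|p\|_{L^\infty(0,T;H^1_0(\Om))}$, which proves (ii).

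The main obstacle is the duality identity of the second paragraph: the costate equation carries a measure on its right-hand side, so $p$ exists only in the transposition sense, and the integration by parts coupling the linearized state $z$ with $p$ must be justified carefully by means of the well-posedness results and the decomposition $p=p^1-\sum_j c_j\mu_j$ of the appendix. Once this is in place, the remaining steps --- the multiplier rule, the complementarity bookkeeping, and the boundedness argument in (ii) --- are routine.
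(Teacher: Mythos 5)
Your proposal is correct and follows essentially the same route as the paper: apply the abstract Fritz--John/generalized Lagrange multiplier rule to the reduced problem $\mathrm{(RP)}$ with $K=C([0,T])_-^q$ (using that $K$ has nonempty interior), identify $N_K(G(\ub))$ with nonnegative measures satisfying complementarity, and translate the stationarity condition into \eqref{FirstControl} via the costate equation and the decomposition $p=p^1-\sum_j c_j\mu_j$. The only difference is in part (ii), where the paper simply invokes the abstract result of Bonnans--Shapiro for non-singularity and weak-$*$ compactness of $\Lambda_1$ under the qualification condition, whereas you reprove that step directly by testing the stationarity inequality against the qualifying direction $v^*$ --- the standard argument underlying the cited proposition, so both versions are equivalent in substance.
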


\begin{proof}
(i)
Let us consider the generalized Lagrangian associated with the reduced problem \eqref{RP}:
\be
\label{LangrianG}
L[\beta,\dd\mu](u):= \beta F(u) + \sum_{j=1}^q \int_0^T G_j(u)(t)\dd\mu_j(t).
\ee
Let $\ub$ be a local solution of \eqref{RP}.
By, e.g., \cite[Proposition 3.18]{MR1756264},
since $K$ has nonempty interior,
there exists a generalized Lagrange
multiplier associated with problem \eqref{RP}, that is, 
$(\beta,\dd\mu) \in\cR_+\times  N_K(G(\ub))$ such that 
\be
(\beta,\dd\mu)\neq 0\quad  \text{and}\quad  -D_uL[\beta,\dd\mu](\ub) \in
N_{\Uad}(\ub).
\ee
Due to the costate equation \eqref{costat-eq}, the latter condition
is equivalent to 
\eqref{FirstControl}.
\\ (ii)
That $\Lambda_1$
is nonempty and weakly-* compact 
follows from \cite[Proposition 3.16]{MR1756264}.
Now let $(p_{\ell}, \dd\mu_{\ell})$
be a bounded sequence in $\Lambda_1$.
We can assume, up to the extraction
of a subsequence, that
$\dd\mu_{\ell}$
weakly-* converges to some measure
$\dd\mub$. Also $p_{\ell}$ converges \mbox{weakly-*}, since the mapping
$(\beta,\dd\mu)\mapsto p$
($p$ being the solution of the costate equation with
data $(\beta,\dd\mu)$)
is linear continuous
and, therefore,   weakly-* continuous from
$\RR\times \mathcal{M}([0,T])$ to $L^\infty(0,T; H^1_0(\Om))$. 
Since $\Lambda_1$ is weakly-*compact, 
the conclusion follows.
\end{proof}

Observe that the qualification condition for \eqref{RP} given in \eqref{qualif-RP} holds if and only if the following qualification condition for the original problem \eqref{P} is satisfied:
\be
\label{qualif-P}
\left\{ \ba{lll}
\text{there exists $\eps>0$ and $u\in \Uad$ such that $v:=u-\ub$ satisfies}
\\
g_j(\yb(\cdot,t)) + g_j'(\yb(\cdot,t))z[v](\cdot, t) <-\eps,
\text{ for all $t\in [0,T]$, and $j=1,\dots,q$.}
\ea\right.
\ee
 In view of Lemma \ref{multipliers}, 
if \eqref{qualif-P} is satisfied, then 
$\Lambda_1$ is nonempty and weakly-* compact.

In the sequel of this section, we consider
$(\ub,\yb,\beta,p,\dd \mu),$ with 
$\yb$ the state associated with the admissible
control $\ub$ and
$(\beta,p,\dd \mu)\in \Lambda.$ 
}\fi

We recall from \cite[Lem. 3.5(i)]{ABK-PartI} the following  statement on first order conditions.
\begin{lemma} 
If $(\ub,y[\ub])$ is an $L^2$-local solution of $(P),$ then the associated set $\Lambda$ of multipliers is nonempty.
\end{lemma}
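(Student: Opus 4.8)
The statement is the standard first-order necessary condition for the constrained problem \eqref{P}, and the natural route is to pass through the reduced abstract problem \eqref{RP} and invoke a Lagrange multiplier theorem from convex/nonsmooth optimization in Banach spaces. First I would recall the smoothness facts already available: the control-to-state map $u\mapsto y[u]$ is $C^\infty$ from $L^2(0,T)^m$ into $Y$ (Appendix \ref{sec:well-posed}), hence $F(u):=J(u,y[u])$ is $C^\infty$ on $L^2(0,T)^m$, and $G(u):=g(y[u])$ is $C^\infty$ from $L^2(0,T)^m$ into $C([0,T])^q$ by the continuous embedding $Y\hookrightarrow C(\overline Q)$ (available since $n\le 3$) together with the boundedness of the $c_j$. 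The feasible set $\Uad$ is closed, convex and bounded in $L^2(0,T)^m$, and $K:=C([0,T])_-^q$ is a closed convex cone with nonempty interior (the functions with strictly negative values).

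The core step is the application of a generalized Karush--Kuhn--Tucker theorem, e.g. \cite[Proposition 3.18]{MR1756264}: if $\ub$ is a local solution of the problem $\min_{u\in\Uad}F(u)$ subject to $G(u)\in K$, then since $K$ has nonempty interior there exist $(\beta,\dd\mu)\in\RR_+\times N_K(G(\ub))$, not both zero, with $-D_uL[\beta,\dd\mu](\ub)\in N_{\Uad}(\ub)$, where $L[\beta,\dd\mu](u)=\beta F(u)+\sum_{j=1}^q\int_0^T G_j(u)(t)\,\dd\mu_j(t)$. The identification $N_K(G(\ub))=\{\dd\mu\in\mathcal M_+(0,T)^q:\ \mu$ complementary to the state constraint$\}$ is the usual description of the normal cone to the cone of nonpositive continuous functions (supported on the contact set), and this yields both the sign condition $\beta\ge0$, the membership $\mu\in BV(0,T)_{0,+}^q$ (after the normalization $\mu(T)=0$), and the complementarity condition. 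Finally I would translate the stationarity condition into the form stated in the definition of $\Lambda$: using the costate equation \eqref{costate-eq} to compute $D_uF$ and $D_uG$ (an integration-by-parts / adjoint-state computation, carried out in Appendix \ref{costate_equation}), the condition $-D_uL[\beta,\dd\mu](\ub)\in N_{\Uad}(\ub)$ becomes exactly the variational inequality \eqref{FirstControl} with the switching function $\Psi^p$ from \eqref{def-psi}; the non-triviality $(\beta,\dd\mu)\neq0$ carries over verbatim. Hence $(\beta,p,\mu)\in\Lambda(\ub,\yb)$ and $\Lambda$ is nonempty.

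The only delicate point is that an $L^2$-local solution of \eqref{P} is indeed a local solution of the abstract problem \eqref{RP} \emph{in the $L^2(0,T)^m$ topology used to apply the multiplier theorem} — which is immediate here since the $\varepsilon$-ball in the definition of $L^2$-local solution is exactly an $L^2$-neighborhood, and $G$ is continuous so admissibility for \eqref{P} matches the constraint $G(u)\in K$. Beyond that, everything is bookkeeping: the Banach-space KKT theorem does the real work, and the remaining effort is the adjoint-state manipulation that rewrites the abstract stationarity condition in the concrete form \eqref{FirstControl}, which is deferred to the appendices. I would therefore keep this proof short, essentially a citation to \cite[Lem.\ 3.5(i)]{ABK-PartI} combined with the observations above, since the full argument already appears in Part I.
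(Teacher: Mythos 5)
Your proposal is correct and follows essentially the same route as the paper: the paper simply cites \cite[Lem.\ 3.5(i)]{ABK-PartI}, and the argument there is exactly the one you outline — pass to the reduced problem $\min_{u\in\Uad}F(u)$ subject to $G(u)\in K$ with $K=C([0,T])_-^q$, invoke \cite[Proposition 3.18]{MR1756264} using the nonempty interior of $K$ to obtain a nonzero pair $(\beta,\dd\mu)\in\RR_+\times N_K(G(\ub))$ with $-D_uL[\beta,\dd\mu](\ub)\in N_{\Uad}(\ub)$, and rewrite the stationarity condition via the costate equation as the variational inequality \eqref{FirstControl}. Your remark that an $L^2$-local solution of \eqref{P} is a local solution of the reduced problem in the topology where the multiplier theorem is applied is the right (and only) point needing care, and it is handled as you describe.
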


\section{Second order necessary conditions}\label{sec:3}

We start this section by recalling some results obtained in \cite{ABK-PartI}, the main one being the second order necessary condition of Theorem \ref{SONC}.
We then introduce the {\em Goh transform} and apply it to the quadratic form and the critical cone, {and then obtain necessary conditions on the transformed objects (see Theorem~\ref{thm:nec-sec-Goh}). We show later in Section~\ref{suf-cond-sec} that these necessary conditions can be strengthened to get sufficient conditions for optimality (see Theorem~\ref{ThmSC}).}

Let us consider an admissible trajectory  $(\ub,\yb)$.

\subsection{Assumptions \blue{on the control structure} and additional \blue{state} regularity}
\label{sonc:setting}

Consider the {\em contact sets associated to the control bounds} defined, up to null measure sets, by
$\check{I}_i  : = \{t\in [0,T];\; \ub_i(t) = \umin_i\}$, $\hat{I}_i : = \{t\in [0,T];\; \ub_i(t) = \umax_i\}$,
$I_i :=\check{I}_i  \cup \hat{I}_i$.
For $j=1,\dots,q,$ the \emph{contact set associated with the 
$j$th state constraint} is
$I^{C}_{j}:=\{ t \in [0,T];\;g_j(\yb(\cdot,t))=0\}$.
Given $0 \le a<b\le T$,
 we say that $(a,b)$ is a \emph{maximal state constrained arc}
for the $j$th state constraints, if $I^C_j$ contains
$(a,b)$ but it contains no open interval strictly containing
$(a,b)$.
We define in the same way a 
\emph{maximal (lower or upper)
control bound constraints arc}
(having in mind that the latter are defined up to a
null measure set).

We will assume the following {\em finite arc property:}
\be
\label{arcs-junc-points-hyp}
\left\{
\begin{array}{c}
\text{the contact sets for the state and bound constraints are,} \\ 
\text{{up to a finite set}, the union of finitely many maximal arcs.}
\end{array}
\right.
\ee
In the sequel we identify $\ub$ 
(defined up to a null measure set)  with a function whose 
$i$th component is constant over each interval of time that is
included, up to a zero-measure set, in either  
$\check{I}_i$ or $\hat{I}_i$.
For almost all $t\in [0,T]$, the {\em set of active constraints at time $t$}
is denoted by $(\check{B}(t), \hat{B}(t),C(t) )$ where
\be
\label{BC}
\left\{ \ba{lll}
\check{B}(t) := \{ 1\leq i \leq m;\; \ub_i(t) = \umin_i\},
\vspace{1mm} \\
\hat{B}(t) := \{1\leq i \leq m; \;  \ub_i(t) = \umax_i\},
\vspace{1mm} \\
C(t) := \{ 1\leq j \leq q; \; g_j(\yb(\cdot,t)) = 0\}.
\ea \right.\ee
These sets are well-defined over open subsets of $(0,T)$ 
where the set of active constraints
is constant, and by
\eqref{arcs-junc-points-hyp},
there exist time points called {\em junction points}
$0=:\tau_0 < \cdots < \tau_{r}:=T$,
such that the intervals $(\tau_k,\tau_{k+1})$ are {\em maximal arcs
with constant active constraints}, for $k=0,\dots,r-1.$ 
We may sometimes call them shortly {\em maximal arcs.} For $m=1$ we call junction points where a \emph{BB junction} if we have active bound constraints on both neighbouring maximal arcs, a \emph{CB junction} (resp. \emph{BC junction}) if we have a state constrained arc and an active bound constrained arc.

\begin{definition}
\label{def-bk-ck}
For $k=0,\dots,r-1,$ let  $\check{B}_k, \hat{B}_k, C_k$
denote the set of indexes of active lower and upper bound constraints, and 
state constraints, on the
maximal arc $(\tau_k,\tau_{k+1})$,
and set $B_k:= \check{B}_k \cup \hat{B}_k$.
\end{definition}

\if{As a consequence of above definitions and hypothesis \eqref{HypUad} on the admissible set of controls, we get
 the following characterization of the first order condition.
 \begin{corollary}
 \label{CorFirst}
The first order optimality condition \eqref{FirstControl} is equivalent to 
\be
\label{coco-coco}
\{t\in [0,T];\; \Psi_i^p(t) >0\} \subseteq \check{I}_i,\qquad
\{t\in [0,T];\; \Psi_i^p(t) <0\} \subseteq \hat{I}_i,
\ee
for every $(\beta,p,\dd\mu)\in \Lambda.$
\end{corollary}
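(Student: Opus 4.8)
My plan is to read the first-order condition \eqref{FirstControl} as a pointwise variational inequality over the box $\Uad$ and to transcribe it component by component. Since \eqref{FirstControl} involves $\yb$ and $p$ only through the fixed function $\Psi^p\in L^\infty(0,T)^m$, it is a statement about a linear functional on the convex set $\Uad$, and $\Uad$ decouples over the components $i=1,\dots,m$ and, for each $i$, pointwise in $t\in(0,T)$. So I would fix $i$ and show that the pair of inclusions $\{t:\Psi_i^p(t)>0\}\subseteq\check I_i$ and $\{t:\Psi_i^p(t)<0\}\subseteq\hat I_i$ is equivalent to the nonnegativity, for a.e.\ $t$ and every $u\in\Uad$, of the quantity $\Psi_i^p(t)\,(u_i(t)-\ub_i(t))$; the corollary then follows by summing over $i$ and integrating, which is licit since $\Psi^p\in L^\infty$ and $u-\ub\in L^2(0,T)^m$.

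\emph{Sufficiency.} Assuming the two inclusions hold (up to null sets), I would argue by the sign of $\Psi_i^p(t)$: where $\Psi_i^p(t)>0$ one has $t\in\check I_i$, hence $\ub_i(t)=\umin_i\le u_i(t)$ and the product is $\ge0$; where $\Psi_i^p(t)<0$ one has $t\in\hat I_i$, hence $\ub_i(t)=\umax_i\ge u_i(t)$ and again the product is $\ge0$; where $\Psi_i^p(t)=0$ the product vanishes. Summation over $i$ and integration then give \eqref{FirstControl}.

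\emph{Necessity.} For the converse I would argue by contraposition. If, say, $A:=\{t:\Psi_i^p(t)>0\}\setminus\check I_i$ has positive measure (the upper-bound case being symmetric), then on $A$ one has $\umin_i<\ub_i(t)\le\umax_i$, and the competitor $u$ defined by $u_j:=\ub_j$ for $j\ne i$, $u_i:=\ub_i$ off $A$, and $u_i:=\umin_i$ on $A$ lies in $\Uad$ — it stays between the bounds, hence in $L^2(0,T)^m$. For this $u$ the sum in \eqref{FirstControl} reduces to $\int_A\Psi_i^p(t)\,(\umin_i-\ub_i(t))\,\dd t$, which is strictly negative because its integrand is the product of a positive and a strictly negative function on the positive-measure set $A$; this contradicts \eqref{FirstControl}.

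I do not expect a genuine obstacle here: no PDE input enters, and the content is simply the elementary characterization of stationarity over a box. The only points needing a little care are measure-theoretic — reading $\check I_i$, $\hat I_i$ modulo null sets consistently with the conventions fixed in Section~\ref{sonc:setting}, verifying admissibility of the competitor control in the necessity part, and using $\Psi^p\in L^\infty(0,T)^m$ so that all integrals make sense. Finally, since the reasoning is carried out for the switching function $\Psi^p$ of an arbitrary multiplier, the stated equivalence holds for every $(\beta,p,\dd\mu)\in\Lambda$.
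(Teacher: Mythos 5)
Your proof is correct and is exactly the standard localization argument over the box $\Uad$ that the paper treats as immediate: the corollary appears there without proof, introduced only as ``a consequence of above definitions and hypothesis \eqref{HypUad}''. Both directions are handled properly, including the one genuinely nontrivial point (that a strictly negative integrand on a positive-measure set yields a strictly negative integral, contradicting \eqref{FirstControl} via an admissible competitor), so nothing is missing.
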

}\fi

\if{
\subsubsection{On the jumps of the multiplier} Given a function $v:[0,T]\rar X$,
where $X$ is a Banach space, 
we denote (if they exist) its left and right limits 
at $\tau\in [0,T]$ by $v(\tau\pm)$,
with the convention 
$v(0-):=v(0)$, $v(T+):=v(T)$; 
then the jump of $v$ at time $\tau$ is defined as
$[v(\tau)]:=v(\tau+)-v(\tau-)$.

We denote the time derivative of the state constraints by 
\be
\label{dtgj}
g^{(1)}_j(\yb(\cdot,t)) := \frac{\dd}{\dd t} g_j(\yb(\cdot,t)) = \int_\Om c_j(x)
\dot \yb(x,t) \dd x,
\quad j=1,\ldots,q.
\ee
{Note that $g^{(1)}_j(\yb(\cdot,t))$ is an element of $L^1(0,T),$ for each $j=1,\ldots,q.$}
\begin{lemma}
\label{costate-eq-reg.l-cor}
Let $\ub$ have left and right limits at $\tau \in (0,T)$.
Then 
\be
\label{costat-eq4-c}
[\Psi^p_i(\tau)] [\ub_i(\tau)] =
[ g^{(1)}_j(\bar y(\cdot,\tau)) ][\mu_j(\tau)] = 0, 
\;\; i=1,\ldots,m, \; \;\; j=1,\ldots,q.
\ee

\end{lemma}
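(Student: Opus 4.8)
The plan is to prove the two families of identities at once, by coupling two \emph{one-sided sign inequalities} coming from the optimality system with a single \emph{jump identity} that links the two products with opposite signs. Throughout I would use that, under the finite arc property \eqref{arcs-junc-points-hyp} together with the additional regularity of Section~\ref{sonc:setting} and the assumed existence of left/right limits of $\ub$, all of $\ub_i$, $\Psi_i^p$, $g^{(1)}_j$ and $\mu_j$ possess one-sided limits at $\tau$, so the jumps $[\,\cdot\,]$ are well defined. I would also exploit that $\yb$ is continuous in time and that the regular part of the costate is continuous, since $p=p^1-\sum_{j}c_j\mu_j$ with $p^1\in Y$; in particular $[p(\cdot,\tau)]=-\sum_{j}c_j[\mu_j(\tau)]$.

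First I would record the two sign inequalities. For the control, the variational inequality \eqref{FirstControl} is equivalent to the pointwise statement that, for a.e.\ $t$, $\ub_i(t)$ minimises $v\mapsto \Psi_i^p(t)\,v$ over $[\umin_i,\umax_i]$. Passing to the one-sided limits gives $\Psi_i^p(\tau\pm)\ub_i(\tau\pm)\le \Psi_i^p(\tau\pm)\,v$ for every admissible $v$; choosing $v=\ub_i(\tau\mp)$ in each and subtracting yields
\[
[\Psi_i^p(\tau)]\,[\ub_i(\tau)]\le 0,\qquad i=1,\dots,m.
\]
For the state constraint, if $[\mu_j(\tau)]\neq 0$ then, $\mu_j$ being nondecreasing, $\tau$ is an atom of $\dd\mu_j$; complementarity together with $g_j(\yb(\cdot,\cdot))\le 0$ forces $g_j(\yb(\cdot,\tau))=0$, so $\tau$ maximises the absolutely continuous map $t\mapsto g_j(\yb(\cdot,t))$, whose derivative is $g^{(1)}_j$. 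Hence $g^{(1)}_j(\tau-)\ge 0\ge g^{(1)}_j(\tau+)$, and therefore
\[
[g^{(1)}_j(\tau)]\,[\mu_j(\tau)]\le 0,\qquad j=1,\dots,q.
\]

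Next I would establish the coupling identity. Writing $M_{ij}(\tau):=\int_\Om b_i(x)c_j(x)\yb(x,\tau)\,\dd x$, the costate jump $[p(\cdot,\tau)]=-\sum_j c_j[\mu_j(\tau)]$ and \eqref{def-psi} give $[\Psi_i^p(\tau)]=-\sum_{j}M_{ij}(\tau)[\mu_j(\tau)]$, while differentiating the state constraint in time and inserting the dynamics \eqref{dynamics} gives $[g^{(1)}_j(\tau)]=\sum_{i}M_{ij}(\tau)[\ub_i(\tau)]$, the only surviving contribution being that of the bilinear control term. Multiplying the first relation by $[\ub_i]$ and summing over $i$, and the second by $[\mu_j]$ and summing over $j$, both reduce to $\mp\sum_{i,j}M_{ij}[\ub_i][\mu_j]$, so that
\[
\sum_{i=1}^m [\Psi_i^p(\tau)]\,[\ub_i(\tau)]=-\sum_{j=1}^q [g^{(1)}_j(\tau)]\,[\mu_j(\tau)].
\]
By the two sign inequalities the left-hand side is $\le 0$ and the right-hand side is $\ge 0$; hence both sums vanish, and being sums of nonpositive terms, every term vanishes, which is the assertion.

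The main obstacle is the jump formula for $g^{(1)}_j$: I must show that the only time-discontinuity of $g^{(1)}_j(\yb(\cdot,t))=\int_\Om c_j\dot\yb\,\dd x$ at $\tau$ is produced by the jump of the control, i.e.\ that the contributions of $\Delta\yb$, $\gamma\yb^3$, the $u_0$-term and, crucially, the source $f$ admit coinciding one-sided limits and so cancel across $\tau$. This is precisely where the additional state (and data) regularity of Section~\ref{sonc:setting} is needed. The conceptual point, once this formula is available, is the observation that the sign $-\sum_j c_j\mu_j$ in the costate decomposition makes the two families of products exact negatives of one another, so that two \emph{independent} sets of one-sided inequalities of the same sign can be reconciled only by simultaneous equality; neither sign inequality alone would suffice.
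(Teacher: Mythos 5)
Your proof is correct and follows essentially the same route as the paper's: both derive the jump relations $[\Psi_i^p(\tau)]=-\sum_j M_{ij}(\tau)[\mu_j(\tau)]$ (from the costate decomposition $p=p^1-\sum_j c_j\mu_j$ with $p^1$ continuous) and $[g^{(1)}_j(\yb(\cdot,\tau))]=\sum_i M_{ij}(\tau)[\ub_i(\tau)]$ (from the state equation), combine them into the single identity $\sum_i[\Psi_i^p(\tau)][\ub_i(\tau)]+\sum_j[g^{(1)}_j(\yb(\cdot,\tau))][\mu_j(\tau)]=0$, and conclude by the same sign arguments (the first-order condition for the control products, and maximality of $g_j$ at atoms of $\dd\mu_j$ for the state-constraint products) that every term vanishes. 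The only cosmetic differences are that you obtain $[\Psi_i^p(\tau)][\ub_i(\tau)]\le 0$ from the pointwise minimum principle rather than from its bang-bang reformulation, and you phrase the coupling as an equality of two sums of opposite sign rather than as one sum equal to zero.
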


\begin{proof}
Since $p=p^1-\sum_{j=1}^q c_j \mu_j$, 
$p^1\in Y\subset C([0,T];H^1_0(\Om))$, $\mu \in BV(0,T)^q_{0,+},$
and any function with bounded variation has 
left and right limits, 
we have that $p(\cdot,\tau)$ has left and right limits in
$H^1_0(\Om)$ and
satisfies 
\be
[p(\cdot,\tau)] = - \sum_{j=1}^q c_j [\mu_j(\tau)],
\quad 
\text{for all $\tau\in [0,T]$.}
\ee
Consequently $\Psi^p$ has left and right limits 
over $[0,T]$, and 
\be\label{Psi-left-right}
[\Psi^p_i(\tau)] = - \sum_j [\mu_j(\tau)] \int_\Om b_i(x) c_j(x) \yb(x,t) \dd x,
\quad 
\text{for all $\tau\in [0,T]$.}
\ee
On the other hand, eliminating $\dot \yb (x,t)$ using the state equation we get that, 
for $z\in H^2(\Om) \cap H^1_0(\Om)$,
\be
\label{DG-un}
D_{u_i} g^{(1)}_j(\yb(\cdot,t)) = \int_\Om b_i(x) c_j(x) \yb(x,t) \dd x,
\ee
Combining \eqref{Psi-left-right} and \eqref{DG-un} we obtain 
\be
\label{DG-Psi}
[\Psi^p_i(\tau)] = - \sum_j [\mu_j(\tau)] D_{u_i} g^{(1)}_j(\yb(\cdot,\tau)).
\ee
Next, if $\ub$ has left and right limits at some
$\tau \in (0,T)$, then, using the state equation and \eqref{dtgj}, we get
\be\label{jump-g_j}
[ g^{(1)}_j(\yb(\cdot,\tau)) ] = 
\sum_{i=1}^m [\ub_i(\tau)] \int_\Om b_i(x) c_j(x) \yb(x,t) \dd x
= 
\sum_{i=1}^m [\ub_i(\tau)] D_{u_i} g^{(1)}_j(\yb(\cdot,\tau)).
\ee
Thus, by \eqref{DG-Psi} and \eqref{jump-g_j}, we have 
\be
\label{costat-eq4-c-f}
\sum_{i=1}^m [\Psi^p_i(\tau)] [\ub_i(\tau)] + \sum_{j=1}^q [ g^{(1)}_j(\bar y(\cdot,\tau)) ] [\mu_j(\tau)] = 0.
\ee
By the first order conditions \eqref{coco-coco} we have 
$[\Psi^p_i(\tau)] [\ub_i(\tau)] \leq 0$, for $i=1$ to $m$.
Also $[\mu_j(\tau)] \geq 0$, and if $[\mu_j(\tau)] \neq 0$, 
the corresponding state constraint
has a maximum at time $\tau$.
Then $[ g^{(1)}_j (\yb(\cdot,\tau))] \leq 0$.
So, all terms in the sums in \eqref{costat-eq4-c-f}
are nonpositive and therefore are equal to zero.
The conclusion follows.
\end{proof}

\subsection{Smoothness over an arc}
\label{arcs-smooth}

}\fi
In the discussion that follows we fix 
$k$ in $\{0,\dots,r-1\}$, and consider a maximal arc 
$(\tau_k,\tau_{k+1}),$ where the junction points.
Set $\Bb_k := \{1,\ldots,m\} \setminus B_k$
and  
\be
\label{Mij}
M_{ij}(t) := \int_\Om b_i(x) c_j(x) \yb(x,t) \dd x,
\quad 
1\leq i \leq m, \;\; 1\leq j \leq q.
\ee
Let  $\Mb_k(t)$ (of size $|\Bb_k|  \times |C_k|$)
denote the submatrix of $M(t)$ having rows with index in $\Bb_k$
and columns with index in $C_k$.

\if{
\begin{remark}
This hypothesis was already used in a different setting (i.e. higher-order state constraints in the finite dimensional case) in e.g. \cite{MR2504044,Mau79a}. Note that
condition \eqref{controllability} implies, in particular, that 
the matrix $\Mb_k(t)$ has rank $|C_k|$ over 
$(\tau_k,\tau_{k+1})$.
\end{remark}

The expression of the derivative of the $j$th state constraint, for $1\leq j \leq q$, is
\be
\label{state-cons-g}
g_j^{(1)}(\yb(\cdot,t)) = \int_\Om c_j(x)\big(f(x,t) +\Delta \yb(x,t) - \gamma \yb(x,t)^3 \big) \dd x
+ \sum_{i=1}^m M_{ij}(t) \ub_i(t),
\ee
or, in vector form, for the active state constraints (denoting by 
$g^{(1)}_{C_k}(t)$ the vector of components $g_j^{(1)}(\yb(\cdot,t))$ for $j \in C_k$), we get
\be
\label{gct-ct}
g^{(1)}_{C_k}(\yb(\cdot,t)) = G_k(t) + \Mb_k(t)^\top \ub_{\bar B_k} (t) = 0,
\ee
where $\ub_{\bar B_k}$ is the restriction of $\ub$ to the components in 
$\Bb_k$, and $G_k(t)$ takes into account the 
contributions of the integral in \eqref{state-cons-g}
and of the components of $\ub$ in $B_k$, that is, for $j\in C_k$:
\be
G_{k,j}(t) := \int_\Om c_j\big(f(x,t) +\Delta \yb(x,t) - \gamma \yb(x,t)^3 \big) \dd x
+
\sum_{i\in B_k} M_{ij}(t) \ub_i(t).
\ee
By the controllability condition \eqref{controllability}, 
$\Mb_k(t)^\top$ is onto from $ \RR^{|\bar B_k|}$ to $\RR^{|C_k|}$.
In view of the state equation, 
by an integration by parts argument,
$M(t)$ has a bounded derivative
and is therefore Lipschitz continuous.
So there exists a linear change of control variables of the form $u(t) = N_k(t) \uh(t),$  for some invertible Lipschitz continuous matrix $N_k(t)$ of size $m\times m$, 
such that, calling $\bar N_k(t)$ the upper $|\bar B_k|\times |\bar B_k|-$diagonal block of $N_k(t),$ it holds that
$\Mb_k(t)^\top \Nb_k(t)$
has its first $|C_k|$ columns being equal to the identity
matrix, the other columns having null components.
That is,  for all $\uh\in \RR^{|\Bb_k| }$:
\be
\label{mbar-n-def}
( \Mb_k(t)^\top \Nb_k(t)\uh )_j = 
\uh_j, \quad \text{for } j=1,\ldots, |C_k|.
\ee 
Over a maximal arc $(\tau_k,\tau_{k+1})$, we have that
$g^{(1)}_j(\yb(\cdot,t))=0$ for $j\in C_k$ is equivalent to 
\be
\hu_j = - G_{k,j}(t), \quad \text{for } j = 1, \ldots,|C_k|.
\ee

The following result on the regularity of the state constraint multiplier holds. Recall the definition of the switching function $\Psi^p$ given in \eqref{def-psi}.
\begin{proposition}
 There exists $a\in L^1(0,T)^m$ such that
 \begin{itemize}
  \item [(i)] 
  \be\label{dPsi}
  \dd \Psi^p (t) = a(t) \dd t - M(t) \dd \mu(t),\quad \text{on } [0,T].
  \ee
  \item[(ii)] 
We have that
$\dot \mu_{C_k}$ is locally integrable
over $(\tau_k,\tau_{k+1})$, hence $\mu_{C_k}$ is locally absolutely continuous, and the following expression holds
  \be
 \label{psi-abmp}
  0=\dot \Psi^p_{\bar B_k} (t) = a_{\bar B_k} (t) \dd t -\bar{M}_k(t) \dot \mu_{C_k}(t),\quad \text{on } (\tau_k,\tau_{k+1}).
  \ee
 \end{itemize}

\end{proposition}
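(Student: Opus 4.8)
The plan is to obtain the two claims by combining the costate equation with the structure of the switching function and the maximal-arc hypothesis. First I would recall, as in the commented-out Lemma ``costate-eq-reg.l-cor'' and in the computations leading to \eqref{Mij}, that differentiating the expression \eqref{def-psi} for $\Psi^p$ in the sense of distributions produces two contributions: one coming from the time-derivative of the smooth part $\beta\alpha_i+\int_\Om b_i\yb p^1\,\dd x$, which after using the state equation and the costate equation and integrating by parts in space is an $L^1(0,T)$ function of $t$ (this is the vector $a$), and one coming from the singular part $p=p^1-\sum_j c_j\mu_j$, which contributes $-\sum_j\big(\int_\Om b_i c_j\yb\,\dd x\big)\dd\mu_j = -\big(M(t)\dd\mu(t)\big)_i$ by the definition \eqref{Mij} of $M_{ij}$. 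This gives (i), the identity $\dd\Psi^p(t)=a(t)\dd t-M(t)\dd\mu(t)$ on $[0,T]$; the only care needed is to check that the spatial integrations by parts are licit, which uses $b\in W^{1,\infty}$, $c_j\in H^2\cap H^1_0$, $\yb\in Y$ and $p^1\in Y$, all available from \eqref{HypSpaces1}, \eqref{stateconstraint} and the regularity recalled after \eqref{def-psi}.

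For (ii), I would restrict to a fixed maximal arc $(\tau_k,\tau_{k+1})$ with constant active set $(\check B_k,\hat B_k,C_k)$. On such an arc the indices in $\Bb_k=\{1,\dots,m\}\setminus B_k$ are inactive for the control bounds, so $\ub_i$ lies strictly between $\umin_i$ and $\umax_i$ there, and the first-order condition \eqref{FirstControl} forces the corresponding switching functions to vanish: $\Psi^p_{\Bb_k}(t)=0$ on $(\tau_k,\tau_{k+1})$, hence $\dd\Psi^p_{\Bb_k}=0$ there. Plugging this into (i) restricted to the rows $\Bb_k$ gives $0=a_{\Bb_k}(t)\,\dd t-M_{\Bb_k}(t)\,\dd\mu(t)$. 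Now I use the complementarity of $\mu$: on $(\tau_k,\tau_{k+1})$ the constraints $j\notin C_k$ are strictly inactive, so $\dd\mu_j=0$ there for $j\notin C_k$, and $M(t)\dd\mu(t)$ reduces on these rows to $\Mb_k(t)\dd\mu_{C_k}(t)$, where $\Mb_k$ is the submatrix from the text (rows $\Bb_k$, columns $C_k$). Thus $\Mb_k(t)\dd\mu_{C_k}(t)=a_{\Bb_k}(t)\,\dd t$ on $(\tau_k,\tau_{k+1})$ as measures.

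It remains to extract the absolute continuity of $\mu_{C_k}$. Here I would invoke the controllability/rank hypothesis on $\Mb_k$ (the condition labelled \eqref{controllability} in the suppressed material, which guarantees $\Mb_k(t)^\top$ is onto, equivalently $\Mb_k(t)$ has full column rank $|C_k|$ on the arc, and admits a Lipschitz left inverse $\Mb_k(t)^+$ built from the change of variables $N_k$). Multiplying $\Mb_k(t)\dd\mu_{C_k}(t)=a_{\Bb_k}(t)\,\dd t$ on the left by $\Mb_k(t)^+$ yields $\dd\mu_{C_k}(t)=\Mb_k(t)^+a_{\Bb_k}(t)\,\dd t$ on $(\tau_k,\tau_{k+1})$; since the right-hand side is a density in $L^1_{loc}(\tau_k,\tau_{k+1})$ (product of a locally bounded matrix and an $L^1$ vector), $\mu_{C_k}$ is locally absolutely continuous on the arc with $\dot\mu_{C_k}=\Mb_k^+a_{\Bb_k}\in L^1_{loc}$, and substituting back gives \eqref{psi-abmp} in the stated form $0=a_{\Bb_k}(t)-\Mb_k(t)\dot\mu_{C_k}(t)$ a.e. on $(\tau_k,\tau_{k+1})$. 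The main obstacle, and the step I would be most careful about, is the legitimacy of this last division: one must know a priori that $M(t)\dd\mu(t)$ restricted to the rows $\Bb_k$ has no singular part — but this is exactly what the identity $a_{\Bb_k}\dd t=\Mb_k\dd\mu_{C_k}$ together with $a_{\Bb_k}\in L^1$ delivers once $\Mb_k$ has a bounded left inverse, so the rank condition is doing the essential work; without it the argument collapses.
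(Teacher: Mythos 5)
Your argument follows essentially the same route as the paper's proof: decompose $\Psi^p_i$ via $p=p^1-\sum_j c_j\mu_j$ into $\beta\alpha_i+\int_\Om b_i\yb p^1\,\dd x-\sum_j M_{ij}(t)\mu_j(t)$, identify the absolutely continuous part $a$ by differentiating $\yb p^1$ with the state/costate equations and integrating by parts in space, then on each maximal arc use $\Psi^p_{\Bb_k}\equiv 0$ (from the pointwise first-order condition on inactive bound indices), the vanishing of $\dd\mu_j$ for $j\notin C_k$, and the uniform injectivity of $\Mb_k$ to invert and obtain $\dot\mu_{C_k}\in L^1_{loc}$; this is exactly the paper's proof, including your closing observation that the rank condition is what rules out a singular part of $\dd\mu_{C_k}$. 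One small correction: $a$ is not just the time derivative of the ``smooth part'' $\beta\alpha_i+\int_\Om b_i\yb p^1\,\dd x$; differentiating $-\sum_j M_{ij}(t)\mu_j(t)$ by the product rule also contributes the absolutely continuous term $-\sum_j\dot M_{ij}(t)\mu_j(t)$, which must be absorbed into $a_i$ (and is integrable since $\dot M_{ij}$ is, by the same integration-by-parts argument), otherwise the identity $\dd\Psi^p=a\,\dd t-M\,\dd\mu$ does not balance.
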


\begin{proof}
By \eqref{p1} and \eqref{def-psi},
one has, for $i\in\{1,\ldots,m\}$:
\be
\label{def-psi-cons}
\Psi^p_i(t) = \alpha_i + \int_\Om b_i(x) \yb(x,t) p^1(x,t) \dd x
- \sum_{j=1}^q M_{ij}(t) \mu_j(t),
\quad i=1,\dots,m. 
\ee 
Let $a\colon (0,T) \to \cR^m$ be given by 
\be
\label{defai}
a_i(t) := \ddt \int_\Om b_i(x) \yb(x,t) p^1(x,t) \dd x
- \sum_{j=1}^q \dot M_{ij}(t) \mu_j(t),
\quad
\text{for }i=1,\ldots,m.
\ee
Note that 
$\dot M_{ij}(t) = \int_\Om b_i(x) c_j(x) \dot \yb(x,t) \dd x$
is integrable
\big(this follows integrating by parts the contribution of 
$\Delta \yb$ and since
$Y \subset C([0,T]; H^1_0(\Om))$\big), and that 
\be
\label{expr_a-mu-o}
\ddt \left( \yb p^1 \right) =  p^1 \, \Delta \yb - \yb\,\Delta p^1 + fp^1 +2\gamma \yb^3 p^1 -\beta \yb (\yb-y_d) - \sum_{j=1}^q \mu_j \yb Ac_j.
\ee
Integrating by parts the terms in \eqref{expr_a-mu-o} containing Laplacians,  we get, for the 
integral term in \eqref{defai},
\begin{equation}
\label{expr_a-mu-o-b}
\begin{aligned}
\int_\Om b_i(x) 
\ddt \left( \yb p^1 \right) \dd x &=  
 \int_\Om  b_i \left(
fp^1 +2\gamma \yb^3 p^1 -\beta
\yb (\yb-y_d) - \sum_{j=1}^q \mu_j \yb Ac_j
\right) \dd x \\
& \quad - \int_\Om  \nabla b_i (p^1\nabla \yb - \yb \nabla p^1)\dd x.
\end{aligned}
\end{equation}  
It follows that $a \in L^1(0,T)^m$ and
\eqref{dPsi} holds. Consequently $\Psi^p$ has bounded variation.

Over $(\tau_k,\tau_{k+1})$, we have 
$\dd \mu_j(t) = 0$ whenever $j\not\in C_k$, and so
\be
\label{psi-abm}
0 = \dd \Psi^p_{\Bb_k} (t) = a_{\Bb_k} (t) \dd t - \Mb_k(t) \dd \mu_{C_k}(t). 
\ee
Since $\Mb_k(t)$ is continuous and injective, and $a$
is integrable, this implies the existence of 
$\dot \mu_j(t) \in L^1(0,T)$, for $j\in C_k$. 
This yields \eqref{psi-abmp}.

\if {As expected from the finite dimensional theory 
there is no explicit contribution of the control in the 
derivative of the switching function, and we get that for
$ i\in \Bb$:
\be
 \int_\Om b_i(x) \left(
2 p \yb^2 
- y (y-y_d) -
\sum_{j=1}^q c_j \yb  \dot \mu_j(t)
\right) \dd x 
=0.
\ee
Since $\mu_j(t)$ is constant when $j\not\in C$, 
this is of the form
\be
\label{xi-mbt}
\Xi(t) + \Mb(t) \dot \mu_C(t) = 0,
\ee
with in fact $\Xi(t) = -a_{\Bb}(t)$. 
} \fi
And so, $\mu_{C_k}(t)$ is locally 
absolutely continuous.
\end{proof}

\begin{corollary}
\label{ref-reg-a}
Let the finite maximal arc property
\eqref{arcs-junc-points-hyp} 
and the uniform
controllability condition \eqref{controllability} hold.
\begin{itemize}
\item[(i)] If $f,y_d \in L^\infty(0,T;L^2(\Omega)),$ 
then $a\in L^\infty(0,T)^m.$
\item[(ii)] If additionally
$
f, y_d \in C([0,T]; L^2(\Om)),
$
then 
$\mu$ is $C^1$ over each 
maximal arc $(\tau_k,\tau_{k+1}).$
\end{itemize}
\end{corollary}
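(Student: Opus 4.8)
The plan is to revisit the two formulas established in the preceding Proposition, namely the expression \eqref{defai} for $a_i$ together with \eqref{expr_a-mu-o-b}, and the identity \eqref{psi-abmp} for $\dot\mu_{C_k}$, and to track the regularity of each term under the stronger hypotheses on $f$ and $y_d$. For part (i), I would start from \eqref{defai} rewritten via \eqref{expr_a-mu-o-b}, so that $a_i(t)$ is a sum of: (a) integrals over $\Om$ of $b_i$ times $fp^1$, $\gamma\yb^3 p^1$, $\beta\yb(\yb-y_d)$ and $\mu_j\,\yb\, Ac_j$; and (b) the integral of $\nabla b_i\cdot(p^1\nabla\yb-\yb\nabla p^1)$. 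Each of these is estimated in $L^\infty(0,T)$ by Cauchy--Schwarz in $x$ and the known regularity of the data: $b\in W^{1,\infty}(\Om)^{m+1}$, $\yb\in Y\subset C([0,T];H^1_0(\Om))$ with $H^{2,1}(Q)$-regularity, $p^1\in Y$, $\mu\in BV(0,T)^q_{0,+}$ (hence bounded), $Ac_j=\Delta c_j\in L^2(\Om)$ since $c_j\in H^2(\Om)\cap H^1_0(\Om)$, and finally the new assumption $f,y_d\in L^\infty(0,T;L^2(\Om))$. Using the embedding $H^{2,1}(Q)\hookrightarrow C([0,T];H^1_0(\Om))$ and, for the cubic term, $H^1_0(\Om)\hookrightarrow L^6(\Om)$ (valid for $n\le 3$) to bound $\|\yb(\cdot,t)^3 p^1(\cdot,t)\|_{L^1(\Om)}\le \|\yb(\cdot,t)\|_{L^6}^3\|p^1(\cdot,t)\|_{L^2}$ uniformly in $t$, one concludes $a\in L^\infty(0,T)^m$.

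For part (ii), the additional continuity $f,y_d\in C([0,T];L^2(\Om))$ upgrades each term above from $L^\infty$ to continuous in $t$: the maps $t\mapsto \yb(\cdot,t)$, $t\mapsto \nabla\yb(\cdot,t)$, $t\mapsto p^1(\cdot,t)$, $t\mapsto\nabla p^1(\cdot,t)$ are continuous into $L^2(\Om)$, the map $t\mapsto\yb(\cdot,t)^3$ is continuous into $L^2(\Om)$ by the $L^6$-embedding and continuity of $r\mapsto r^3$, $t\mapsto\mu_j(t)$ is continuous on each maximal arc (it is absolutely continuous there by the Proposition), and $t\mapsto f(\cdot,t),y_d(\cdot,t)$ are continuous into $L^2(\Om)$ by hypothesis; products and integrals of these against the fixed $L^2$- or $L^\infty$-functions $b_i$, $\nabla b_i$, $Ac_j$ are then continuous. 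Hence $a$ is continuous on each maximal arc $(\tau_k,\tau_{k+1})$. Now invoke \eqref{psi-abmp}: on $(\tau_k,\tau_{k+1})$ we have $\Mb_k(t)\dot\mu_{C_k}(t)=a_{\bar B_k}(t)$, where $\Mb_k$ is Lipschitz continuous (shown in the Proposition, via integration by parts in the state equation) and, by the uniform controllability condition \eqref{controllability}, $\Mb_k(t)$ has full column rank $|C_k|$ with a uniformly bounded left inverse. Writing $\dot\mu_{C_k}(t)=(\Mb_k(t)^\top\Mb_k(t))^{-1}\Mb_k(t)^\top a_{\bar B_k}(t)$ exhibits $\dot\mu_{C_k}$ as a product of continuous functions, so $\dot\mu_{C_k}$ is continuous on $(\tau_k,\tau_{k+1})$, i.e.\ $\mu$ is $C^1$ there.

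The main obstacle is the bookkeeping of the nonlinear (cubic) term and the gradient term in \eqref{expr_a-mu-o-b}: one must make sure the dimensional restriction $n\le 3$ is actually what is needed for the Sobolev embedding $H^1_0(\Om)\hookrightarrow L^6(\Om)$ to control $\int_\Om b_i\,\gamma\yb^3 p^1\,\dd x$ uniformly (and continuously) in time, and that the integration-by-parts identity \eqref{expr_a-mu-o-b} — which trades two Laplacians for one gradient pairing — is exactly what keeps every surviving term at the level of $L^2\times L^2$ pairings rather than requiring $H^2$-in-space control of $\yb$ or $p^1$ pointwise in $t$. Once that structural point is in hand, both parts follow by routine continuity/boundedness estimates and, for (ii), the quantitative form of \eqref{controllability} to invert $\Mb_k$.
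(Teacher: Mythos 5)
Your proposal is correct and follows essentially the same route as the paper, which simply inspects the terms of $a$ from \eqref{defai} and \eqref{expr_a-mu-o-b}, observes they are essentially bounded (resp.\ continuous under the stronger hypotheses), and then recovers $\dot\mu_{C_k}$ from \eqref{psi-abmp} using the uniform injectivity of $\Mb_k(t)$ guaranteed by \eqref{controllability}. Your write-up just supplies the explicit Sobolev/H\"older bookkeeping (and the left-inverse formula for $\Mb_k$) that the paper leaves implicit; only remember that $a_i$ also contains the term $-\sum_j \dot M_{ij}\mu_j$, which is handled by the same integration-by-parts estimate.
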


\begin{proof}
Indeed, a careful inspection of the previous
proof shows that $a$ is a sum of essentially
bounded terms, so (i) follows. 
If the additional regularity hypotheses of item~(ii) hold, then $a$ is continuous.
The regularity of $\mu$ follows from
\eqref{psi-abm}
and the local controllability assumption \eqref{controllability}. This concludes the proof.
\end{proof}

}\fi

{For the remainder of the article we make the following set of assumptions}.

\begin{hypothesis}\label{hyp-setting}
{The following conditions hold:}
\begin{itemize}

\item[1.] the  finite maximal arc property \eqref{arcs-junc-points-hyp},

\item[2.] the problem is qualified (cf. also \cite[Sec. 3.2.1]{ABK-PartI}),
i.e., for $j=1,\dots,q$:
\be
\label{qualif-P}
\left\{ \ba{lll}
\text{there exists $\eps>0$ and $u\in \Uad$ such that $v:=u-\ub$ satisfies:}
\\
g_j(\yb(\cdot,t)) + g_j'(\yb(\cdot,t))z[v](\cdot, t) <-\eps,
\text{ for all $t\in [0,T]$.} 
\ea\right.
\ee

\item[3.] 
 We assume that $|C_k| \leq |\Bb_k|,$ for $k=0,\dots,r-1,$ and that the following
{\em (uniform) local  controllability condition} holds:
\begin{equation}
\label{controllability}
\left\{
\begin{aligned}
&\text{there exists } \alpha>0, \text{ such that } | \Mb_k(t) \lambda | \geq \alpha | \lambda |,\, \text{for all } \lambda \in \RR^{|C_k|},\\
&  \text{ a.e. over each maximal arc } (\tau_k,\tau_{k+1}),  \text{ for } k=0,\dots,r-1.
\end{aligned}
\right.
\end{equation}

\item[4.] the discontinuity of the derivative of the state constraints at 
corresponding junction points, i.e., 
\be
\label{hyp-g-sharp}
\text{\it for some $c>0$: $g_j(\yb(\cdot,t)) \leq -c \dist(t, I^C_j)$, for all $t \in [0,T]$,
$j=1,\ldots,q$,}
\ee

\item[5.] the uniform distance to control bounds whenever they are not active, i.e. there exists $\delta>0$ such that, 
\be\label{hyp-geom}
\dist\big(\ub_i(t),\{\umin_i,\umax_i\}\big)\geq \delta,\quad \text{for a.a. } t\notin
{I}_i,\,\text{for all } i=1,\dots,m,
\ee
\item[6.] 
the following regularity for the data (we do not try to take the weakest hypotheses) for some $r>n+1$:
\be
\label{sonc:setting-t1-1}
\displaystyle y_0,y_{dT} \in W^{1,r}_0(\Omega)\cap W^{2,r}(\Omega),\quad 
y_d,f\in L^\infty(Q),\quad b \in W^{2,\infty}(\Om)^{m+1},
\ee
\item[7.]  
the control $\ub$ has left and right limits at the junction points $\tau_k \in (0,T)$,
 (this will allow to apply \cite[Lem. 3.8]{ABK-PartI}).
\end{itemize}
\end{hypothesis}

\begin{remark}
Hypotheses \ref{hyp-setting} 4 and 5 
are instrumental for constructing feasible perturbations of the nominal trajectory, 
used in the proof of Theorem \ref{SONC} made in \cite{ABK-PartI}.
\end{remark}

In view of point 2 above, we consider from now on $\beta=1$ and thus we omit the component $\beta$ of the multipliers.

\begin{theorem}
\label{sonc:setting-t1}
The following assertions hold.
\begin{itemize}
\item[(i)]
For any $u\in L^\infty(0,T)^m,$ the associated state $y[u]$ belongs to $C(\bar Q).$
If $u$ remains in a bounded subset of $L^\infty(0,T)^m$ then the 
corresponding states form a bounded set in
$C(\bar Q)$.
In addition, if the sequence $(u_{\ell})$
of admissible controls converges to $\ub$ a.e. on $(0,T)$,
then the associated sequence of states $(y_{\ell}:=y[u_{\ell}])$ converges
uniformly to $\yb$ in $\bar{Q}$.
\item[(ii)]  The set $\Lambda_1$ is nonempty and for every \blue{$(p,\mu)\in \Lambda_1,$} one has that $\mu\in W^{1,\infty}(0,T)^q$ and $p$  is essentially bounded in $Q$.  
\end{itemize}
 \end{theorem}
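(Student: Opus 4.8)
\textbf{Proof plan for Theorem~\ref{sonc:setting-t1}.}

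For item (i), the plan is to combine the regularity hypothesis \eqref{sonc:setting-t1-1} with maximal parabolic regularity. Since $r>n+1$ and $y_0\in W^{1,r}_0(\Omega)\cap W^{2,r}(\Omega)$, $f\in L^\infty(Q)$, and $b\in W^{2,\infty}(\Omega)^{m+1}$, one first bootstraps the state equation \eqref{dynamics}: writing the bilinear term $y\sum_i u_ib_i$ and the cubic term $\gamma y^3$ as right-hand side data, a fixed-point or successive-approximation argument using the embedding $Y=H^{2,1}(Q)\hookrightarrow L^s(Q)$ (valid for suitable $s$ when $n\le 3$) shows $y[u]\in W^{2,1,r}(Q)$. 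The key embedding $W^{2,1,r}(Q)\hookrightarrow C(\bar Q)$ holds precisely because $r>n+1$ (this is the parabolic analogue of Morrey's embedding, see e.g.\ Ladyzhenskaya–Solonnikov–Ural'tseva). The boundedness of the family of states for $u$ in a bounded subset of $L^\infty(0,T)^m$ follows from the a priori estimates in the maximal-regularity inequality, which are uniform in $\|u\|_\infty$; here one also uses $\gamma\ge 0$ to control the cubic term's sign when testing. For the convergence claim, if $u_\ell\to\ub$ a.e.\ with $u_\ell$ admissible (hence uniformly bounded in $L^\infty$), dominated convergence gives $u_\ell\to\ub$ in $L^s(0,T)$ for all finite $s$; one then passes to the limit in the equation for $y_\ell-\yb$, uses the uniform $W^{2,1,r}$ bound to extract a strongly convergent subsequence in $C(\bar Q)$ (by compactness of the embedding into, say, $C(\bar Q)$ via Aubin–Lions), identifies the limit as $\yb$ by uniqueness, and concludes the whole sequence converges.

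For item (ii), nonemptiness of $\Lambda_1$ (with $\beta=1$) is exactly the content of the qualification hypothesis \ref{hyp-setting}.2 together with the first-order theory recalled in the excerpt — under \eqref{qualif-P} there is no singular multiplier and $\Lambda_1\ne\emptyset$ (cf.\ the bracketed Lemma~\ref{multipliers} material and \cite[Sec.~3.2.1]{ABK-PartI}). The substantive part is the extra regularity $\mu\in W^{1,\infty}(0,T)^q$ and $p\in L^\infty(Q)$. Here the plan is to invoke the computations bracketed out in the excerpt: from the decomposition $p=p^1-\sum_j c_j\mu_j$ with $p^1\in Y$, and from \eqref{dPsi}, one has $\dd\Psi^p = a(t)\,\dd t - M(t)\,\dd\mu(t)$ on $[0,T]$. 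On each maximal arc $(\tau_k,\tau_{k+1})$ the complementarity condition forces $\dd\mu_j=0$ for $j\notin C_k$, and on the active indices $C_k$ the first-order condition on $\Bb_k$ gives $0 = a_{\Bb_k}(t)\,\dd t - \Mb_k(t)\,\dd\mu_{C_k}(t)$; the uniform controllability \eqref{controllability} makes $\Mb_k(t)^\top$ injective with a uniform lower bound, so $\dot\mu_{C_k} = (\Mb_k^\top\Mb_k)^{-1}\Mb_k^\top a_{\Bb_k}$ is controlled pointwise by $|a|$. Then Corollary~\ref{ref-reg-a}(i) — whose hypotheses $f,y_d\in L^\infty(0,T;L^2(\Omega))$ are implied by \eqref{sonc:setting-t1-1} — yields $a\in L^\infty(0,T)^m$, hence $\dot\mu\in L^\infty$ on each arc, and since there are finitely many arcs and $\mu$ is continuous across junctions (by \eqref{hyp-g-sharp} and \cite[Lem.~3.8]{ABK-PartI}, using hypothesis \ref{hyp-setting}.7 on the left/right limits of $\ub$), we get $\mu\in W^{1,\infty}(0,T)^q$. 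Finally $p\in L^\infty(Q)$ follows: $p^1\in Y\hookrightarrow C(\bar Q)$ by the same parabolic embedding as in (i) (the costate equation has right-hand side in $L^\infty(Q)$ once we know $\yb\in C(\bar Q)$, $\mu\in L^\infty$, and $c_j\in H^2\cap H^1_0$ with $\gamma\ge0$), and $\sum_j c_j\mu_j\in L^\infty(Q)$ because $c_j\in H^2(\Omega)\hookrightarrow L^\infty(\Omega)$ for $n\le 3$ and $\mu_j\in L^\infty(0,T)$.

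The main obstacle I anticipate is the bootstrap to $W^{2,1,r}(Q)$-regularity with $r>n+1$ for the state (and then the costate): one must carefully handle the cubic nonlinearity $\gamma y^3$ and the bilinear term together, checking that the product $y^3$ lands in $L^r(Q)$ once $y\in C(\bar Q)$ — which is circular unless organized as a continuation/bootstrap argument starting from the baseline $y\in Y$ and the Sobolev embeddings available for $n\le 3$. A clean way is: first show $y\in L^\infty(Q)$ by a Stampacchia truncation or by the $L^p$-bootstrap in $W^{2,1,p}(Q)$ increasing $p$, using $\gamma\ge0$ to discard the bad-sign cubic term; once $y\in L^\infty(Q)$, the right-hand side of \eqref{dynamics} is in $L^\infty(Q)\subset L^r(Q)$, so $y\in W^{2,1,r}(Q)\hookrightarrow C(\bar Q)$, and the uniform bounds propagate. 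The costate is then linear in $p$ with bounded coefficients, so its $L^\infty(Q)$ bound is comparatively routine once $\mu\in W^{1,\infty}$ is in hand.
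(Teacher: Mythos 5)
Your proposal follows essentially the same route as the paper's (which simply defers to Part I for this theorem): item (i) via $W^{2,1,r}(Q)$-regularity with $r>n+1$ and the anisotropic Sobolev embedding into $C(\bar Q)$, treating the bilinear term as a bounded zero-order coefficient; item (ii) via the decomposition $p=p^1-\sum_j c_j\mu_j$ from \eqref{p1}, the identity $\dd\Psi^p=a\,\dd t-M\,\dd\mu$ restricted on each maximal arc to the components where the control is not at a bound, and the uniform injectivity \eqref{controllability} to solve for $\dot\mu_{C_k}$ in terms of the essentially bounded density $a$, with continuity of $\mu$ across junctions coming from the jump relations and Hypothesis \ref{hyp-setting}.4 and 7. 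The only organizational difference in (i) is that you run a bootstrap/truncation to reach $y\in L^\infty(Q)$ first, whereas the cited result gives $W^{2,1,q}(Q)$ for the semilinear equation with monotone cubic term and bounded zero-order coefficient directly; both are standard and give the same conclusion.

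One step in your item (ii) does not work as written, namely the claim that $p^1\in Y\hookrightarrow C(\bar Q)$ because ``the costate equation has right-hand side in $L^\infty(Q)$.'' First, $Y=H^{2,1}(Q)$ does not embed into $C(\bar Q)$ for $n=2,3$ (the embedding $W^{2,1,q}(Q)\hookrightarrow C(\bar Q)$ requires $q>(n+2)/2$), so you do need the upgrade to $W^{2,1,q}$ for large $q$. Second, and more substantively, the source term of the $p^1$-equation contains $\sum_j\mu_j Ac_j$, and $\Delta c_j$ is only in $L^2(\Omega)$ under the standing assumption $c_j\in H^2(\Omega)\cap H^1_0(\Omega)$; hence the right-hand side lies in $L^\infty(0,T;L^2(\Omega))$ but not in $L^\infty(Q)$, nor in $L^q(Q)$ for the large $q$ the embedding requires. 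To conclude $p^1\in L^\infty(Q)$ (which is all that is needed, since $\sum_j c_j\mu_j\in L^\infty(Q)$ by $H^2(\Omega)\hookrightarrow L^\infty(\Omega)$ for $n\le 3$ once $\mu\in L^\infty$), you need an additional ingredient: for instance the $L^2\to L^\infty$ ultracontractive smoothing of the heat semigroup, $\|e^{t\Delta}\|_{L^2\to L^\infty}\le C\,t^{-n/4}$ with $n/4<1$, applied in the Duhamel representation to the part of $p^1$ driven by $\mu_j\Delta c_j$, or else extra regularity of the $c_j$. This is a repairable but genuine gap in your endgame for the essential boundedness of $p$; the rest of the argument is sound and matches the paper's.
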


\begin{proof}We refer to \cite[Thm. 4.2]{ABK-PartI}. Note that the non-emptiness of $\Lambda_1$ follows from \eqref{qualif-P}.  \if{
(i) 
Since $b$ and the control $u$ are essentially bounded, $y=y[u]$ verifies $\dot y - \Delta y +e y +\gamma y^3=f,$
with $e(x,t) := - u(t)\cdot b(x,t)$ being essentially bounded.
In view of the regularity imposed to $y_0$ in \eqref{sonc:setting-t1-1}, we can apply 
\cite[Prop.~2.1]{bonnans:hal-00740698} and deduce that for all $q\in (2,\infty)$, the state 
$y$ belongs to $W^{2,1,q}(Q) \subset W^{1,q}(Q)$.
Taking $q>n$, it follows from the 
Sobolev Embedding Theorem (see e.g. \cite[Theorem 5, p.269]{evans}) that  $y$
is continuous (and even H\"older-continuous)
over the closure of $Q$,
with uniform bound over the set of admissible controls.
If the sequence $(u_{\ell})$
of admissible controls converges a.e. to $\ub$,
by the Dominated Convergence Theorem, 
$u_{\ell}\rar\ub$ in $L^q(0,T)$
for all $q\in [1,\infty)$. 
So, by similar arguments it can be proved that
associated sequence of states converges
uniformly to $\yb$.
\\ (ii)
By \cite[Lemma 3.4, page 82]{MR0241822} (see also \cite[p. 20]{MR712486}), given $y \in W^{2,1,q}(Q)$, its trace $y(\cdot,T)$  belongs to the {\em Besov space} $B^{2-2/q,q}(\Om)$
(defined in \cite[page 70]{MR0241822}).
Moreover, since $W^{1,q}(\Om)\subset B^{2-2/q,q}(\Om)$
with continuous inclusion, from \eqref{sonc:setting-t1-1} and latter sentence, we deduce that $p(\cdot,T)$ belongs to $B^{2-2/q,q}(\Om)$ and then, by 
\cite[Thm IV.9.1, p. 341]{MR0241822}, $p$ is in $ W^{2,1,q}(Q)$.
Hence, by an analogous reasoning of part (i), $p^1$ is also
continuous over the closure of $Q$.
The continuity of $\mu$ at junction points follows from \eqref{hyp-g-sharp} in Hypothesis \ref{hyp-setting}  and Lemma \ref{costate-eq-reg.l-cor}.
The boundedness on each arc 
of the derivative of $\mu$
follows from 
\eqref{psi-abmp} for $\dot\mu$, since by Corollary~\ref{ref-reg-a}, $a\in L^{\infty}(0,T)^m$
and by
\eqref{controllability},
$\Mb(t)$ is `uniformly injective'  over each arc.
The conclusion follows.}\fi
\end{proof}

\subsection{Second variation}

{For $(p,\mu) \in \Lambda_1,$ set}
$\kappa(x,t) := 1-6 \gamma \yb(x,t) p(x,t)$,
and consider the quadratic form
\be
\label{def-qb}
\calq[p](z,v) := \int_Q  \left(\kappa z^2 
+ 2 p \sum_{i=1}^m v_i b_i  z
\right)\dd x \dd t 
+  \int_\Om z(x,T)^2 \dd x.
\ee
Let $(u,y)$ be a trajectory, and set
\be \label{v-delta-y}
(\delta y,v) :=  (y-\yb,u-\ub).
\ee 
Recall the definition of the operator $A$ given in \eqref{lin-state-equ}.
Subtracting the state equation at 
$(\ub,\yb)$ from the one at $(u,y)$, 
we get that
\be
\label{delta-y}
\left\{
\begin{split}
&\ddt \delta y + A \delta y 
= \sum_{i=1}^m v_i b_i y -3\gamma
\yb(\delta y)^2 -
\gamma (\delta y)^3\quad \text{in } Q,\\
&\delta y=0\quad \text{on } \Sigma,\quad \delta y(\cdot,0) = 0\quad \text{in } \Omega.
\end{split}
\right.
\ee

\if{
\begin{lemma}
\label{delta-y-minus-z.l}
We have that
\be
\label{delta-y-minus-z.l-1}
\| \eta\|_Y = O \left( \| v\|_2 \| \delta y\|_Y + 
 \| \delta y\|^2_{L^4(Q)}\right).
\ee
\be
\textcolor{cyan}{\| \eta\|_Y = O \left( \| v\|_2 \| \delta y\|_{ L^{\infty}(0,T;L^2(\Om))} + 
 \| \delta y\|^2_{L^4(Q)}\right)}.
\ee

\end{lemma}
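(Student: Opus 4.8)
Recall that $\eta := \delta y - z[v]$, where $z = z[v]\in Y$ solves the linearized state equation \eqref{lin-state-equ}. The plan is to identify the PDE satisfied by $\eta$ and then to invoke $L^2$ maximal parabolic regularity for $\ddt + A$. Subtracting the linearized equation \eqref{lin-state-equ} satisfied by $z=z[v]$ from \eqref{delta-y} — the operator $A$ being common to both, so that the linear parts cancel — one sees that $\eta$ solves, with homogeneous initial and boundary data,
\[
\ddt\eta + A\eta \;=\; \sum_{i=1}^m v_i\,b_i\,\delta y \;-\; 3\gamma\,\yb\,(\delta y)^2 \;-\; \gamma\,(\delta y)^3 \qquad \text{in } Q .
\]
By the linear parabolic estimate underlying the well-posedness of \eqref{lin-state-equ} (cf. Appendix~\ref{sec:well-posed}), one has $\|\eta\|_Y \le C\,\|h\|_{L^2(Q)}$, where $h$ denotes the right-hand side above and $C$ depends only on $\Om$, $T$ and the fixed coefficients. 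It thus remains to bound $\|h\|_{L^2(Q)}$ by the asserted quantity, which I would do term by term.

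For the bilinear term I would use that $v_i$ depends on $t$ only and $b\in W^{1,\infty}(\Om)^{m+1}$: Fubini gives $\|v_i b_i\,\delta y\|_{L^2(Q)}^2 \le \|b_i\|_\infty^2 \int_0^T |v_i(t)|^2\,\|\delta y(\cdot,t)\|_{L^2(\Om)}^2\,\dd t \le \|b_i\|_\infty^2\,\|v_i\|_2^2\,\|\delta y\|_{L^\infty(0,T;L^2(\Om))}^2$; summing over $i$ and using the continuous embedding $Y \hookrightarrow C([0,T];H^1_0(\Om)) \hookrightarrow C([0,T];L^2(\Om))$ makes this term $O(\|v\|_2\,\|\delta y\|_Y)$ (keeping the norm $\|\delta y\|_{L^\infty(0,T;L^2(\Om))}$ here yields the sharper variant displayed in the statement). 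For the quadratic and cubic terms I would invoke Theorem~\ref{sonc:setting-t1}(i): since $\ub\in\Uad$ and $u$ is taken in a bounded subset of $L^\infty(0,T)^m$, the states $\yb$ and $y[u]$, hence $\delta y = y - \yb$, remain bounded in $C(\bar Q)$, say $\|\yb\|_{L^\infty(Q)},\|\delta y\|_{L^\infty(Q)} \le M$. Then $\|\yb\,(\delta y)^2\|_{L^2(Q)} \le M\,\|(\delta y)^2\|_{L^2(Q)} = M\,\|\delta y\|_{L^4(Q)}^2$, and since $\int_Q|\delta y|^6 \le \|\delta y\|_{L^\infty(Q)}^2\int_Q|\delta y|^4$ one also gets $\|(\delta y)^3\|_{L^2(Q)} \le M\,\|\delta y\|_{L^4(Q)}^2$. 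Summing the three estimates and combining with $\|\eta\|_Y \le C\|h\|_{L^2(Q)}$ yields $\|\eta\|_Y = O(\|v\|_2\,\|\delta y\|_Y + \|\delta y\|_{L^4(Q)}^2)$.

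The only genuinely delicate point is the cubic term $\gamma(\delta y)^3$, which is a priori controlled only by $\|\delta y\|_{L^6(Q)}^3$, a cubic quantity; its reduction to the quadratic bound $M\,\|\delta y\|_{L^4(Q)}^2$ rests on the uniform sup-norm bound for $\delta y$, itself a consequence of the $L^\infty$ bounds built into $\Uad$ and of the continuity/boundedness statement of Theorem~\ref{sonc:setting-t1}(i). The quadratic term is treated identically and is even simpler, and the bilinear term exploits only the separation of variables in $v_i b_i$; the rest is routine $L^2$ parabolic regularity and Sobolev embeddings valid for $n\le 3$.
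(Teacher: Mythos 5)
Your proof is correct and follows essentially the same route as the paper: you derive the same equation for $\eta$ (it is exactly \eqref{d_1-equ} rewritten with the operator $A$), apply the $L^2$ parabolic estimate to get $\|\eta\|_Y=O(\|v\,\delta y\|_2+\|\yb(\delta y)^2\|_2+\|(\delta y)^3\|_2)$, use the uniform bound on $\delta y$ from Theorem~\ref{sonc:setting-t1}(i) to reduce the quadratic and cubic terms to $O(\|\delta y\|^2_{L^4(Q)})$, and use the Fubini/H\"older separation of variables together with $Y\hookrightarrow C([0,T];L^2(\Om))$ for the bilinear term, exactly as the paper does.
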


\begin{proof}
Combining theorem \ref{sonc:setting-t1}(i)
(which implies that $|\delta y|$
is uniformly bounded)
and relation \eqref{d_1-equ}, we find that 
\be
\| \eta\|_Y 
= O \left(
\| v \delta y \|_2+\| \yb (\delta y)^2 \|_2 + 
\| (\delta y)^3 \|_2 \right)
= O \left(
\| v \delta y \|_2 +\| \delta y \|^2_{L^4(Q)} \right).
\ee
By Lemma \ref{lem-uby},
\be
\| v \delta y \|_2 
\leq
\| v\|_2 \| \delta y\|_{L^\infty(0,T;L^2(\Om))}  
\leq
\| v\|_2 \| \delta y\|_Y.
\ee
The conclusion follows. 
\end{proof}
}\fi
\if{
\begin{lemma}
 The weak solution of \eqref{d_1-equ} coincides with a mild solution in $C(0,T;L^2(\Om))$.
\end{lemma}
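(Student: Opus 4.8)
The plan is to recognize that, in its unknown, equation~\eqref{d_1-equ} is a \emph{linear} parabolic equation with right-hand side in $L^2(Q)$ and zero initial datum, and then to invoke the classical identification of weak (variational) and mild (Duhamel) solutions for such equations, which in particular delivers membership in $C([0,T];L^2(\Om))$. First I would check the data: writing $h$ for the right-hand side of~\eqref{d_1-equ}, Theorem~\ref{sonc:setting-t1}(i) gives $\yb,y\in C(\bar Q)$, hence $\delta y\in L^\infty(Q)$, and together with $\gamma\geq 0$, $b_i\in W^{2,\infty}(\Om)$ and $v\in L^2(0,T)^m$ this yields $h\in L^2(0,T;L^2(\Om))$ (it is a combination of the product $v_ib_i$ with an essentially bounded function and of terms bounded by powers of $\delta y$). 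The elliptic part of~\eqref{d_1-equ} is the operator $A(t)$ of~\eqref{delta-y}: its principal part is the autonomous, coercive $-\Delta$ on $H^1_0(\Om)$, while its zeroth-order coefficient lies in $L^\infty(Q)$ and is measurable in $t$ (it is built from $\yb(\cdot,t)$ and $\ub(t)$, both essentially bounded by Theorem~\ref{sonc:setting-t1}(i) and the admissibility of $\ub$); so the associated bilinear form on $H^1_0(\Om)$ is bounded and, up to a shift, coercive, uniformly in $t$.

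With this in hand I would produce the two solution concepts. On the variational side, the standard theory of non-autonomous linear parabolic equations (Lions; see e.g.\ Dautray--Lions or Lions--Magenes) yields a unique weak solution in $W(0,T):=L^2(0,T;H^1_0(\Om))\cap H^1(0,T;H^{-1}(\Om))$ with zero initial value, and the embedding $W(0,T)\hookrightarrow C([0,T];L^2(\Om))$ already furnishes continuity in time; this is the weak solution of the statement. On the semigroup side, writing $A(t)=-\Delta+B(t)$ with $B(t)\in L^\infty(\Om)$ and $\sup_t\|B(t)\|_\infty<\infty$ exhibits $A(\cdot)$ as a bounded, measurable perturbation of the analytic semigroup $\mathrm e^{t\Delta}$, which generates a strongly continuous parabolic evolution system $\{U(t,s)\}_{0\le s\le t\le T}$ on $L^2(\Om)$; one then sets $\eta_{\mathrm m}(t):=\int_0^t U(t,s)h(s)\,\dd s$. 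Since $h\in L^1(0,T;L^2(\Om))$ and $U(\cdot,\cdot)$ is uniformly bounded and strongly continuous on the compact triangle, $\eta_{\mathrm m}\in C([0,T];L^2(\Om))$ --- this is the variation-of-constants (mild) formula, as in the $C_0$-semigroup statements of Pazy or Ball, which carry over to evolution systems.

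It remains to identify the two objects. Testing the variational identity for the weak solution $\eta$ against an arbitrary $\varphi\in\dom(A^*(t))\subset H^1_0(\Om)$ and using $\langle A(t)\eta(t),\varphi\rangle=(\eta(t),A^*(t)\varphi)$ (the $L^2(\Om)$ pairing) shows that $t\mapsto(\eta(t),\varphi)$ is absolutely continuous with derivative $-(\eta(t),A^*(t)\varphi)+(h(t),\varphi)$; that is, $\eta$ is a weak solution in the semigroup sense. By the (non-autonomous version of the) classical equivalence between weak and mild solutions --- Ball's theorem in the autonomous case, transported to evolution systems --- this forces $\eta=\eta_{\mathrm m}$. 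Hence the weak solution of~\eqref{d_1-equ} coincides with the mild solution $\eta_{\mathrm m}$ and, by the previous paragraph, belongs to $C([0,T];L^2(\Om))$, which is the claim.

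The step I expect to demand the most care is this last identification: reconciling the PDE-theoretic weak solution (variational, $H^1_0$-valued, with time-derivative in $H^{-1}$) with the semigroup-theoretic weak/mild solution (tested only against $\dom(A^*)$ and a priori only $L^2$-valued). It is handled by density of $\dom(A^*(t))$ in $H^1_0(\Om)$ together with the elementary integration-by-parts identity above; alternatively, since only the autonomous smoothing part $-\Delta$ enters the principal symbol, one may reduce to the autonomous Duhamel formula $\eta(t)=\int_0^t\mathrm e^{(t-s)\Delta}(h(s)-B(s)\eta(s))\,\dd s$ and conclude by uniqueness in $C([0,T];L^2(\Om))$ via a Gronwall argument, bypassing non-autonomous machinery altogether. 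The only genuinely time-dependent ingredient is the coefficient $B(\cdot)$, whose essential boundedness and measurability --- guaranteed by \eqref{sonc:setting-t1-1}, by $\yb\in C(\bar Q)$, and by the admissibility of $\ub$ --- are exactly what the argument uses; beyond this it is the standard linear parabolic toolbox.
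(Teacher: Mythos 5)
Your proposal is correct, and its closing ``alternative'' is in fact the paper's own argument: the authors do not build a non-autonomous evolution system at all. They keep only $-\Delta$ in the generator, absorb the zeroth-order term into the source by setting $g:=\tilde r+r\eta$ (noting $r\in L^\infty(Q)$ and $\tilde r\in L^1(0,T;L^2(\Om))$, so $g\in L^1(0,T;L^2(\Om))$ since $\eta\in Y\subset C([0,T];L^2(\Om))$), test against $\phi\in\dom(-\Delta)$ to see that $\eta$ is a very weak solution of the autonomous heat equation with datum $g$, and then invoke Ball's theorem: the unique very weak solution is the Duhamel integral $\int_0^t e^{(t-s)\Delta}g(s)\,\dd s$, whence $\eta$ equals it and lies in $C([0,T];L^2(\Om))$. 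Your primary route --- realizing $-\Delta+B(t)$ with $B$ bounded and merely measurable in $t$ as the generator of a parabolic evolution family $U(t,s)$ and identifying weak and mild solutions in the non-autonomous setting --- reaches the same conclusion but costs more: the classical evolution-system constructions (Pazy, Tanabe) assume some time-regularity of the coefficients, so for measurable-in-$t$ perturbations you would have to build $U(t,s)$ yourself via the integral equation, and you would need a non-autonomous analogue of Ball's equivalence, which is exactly the machinery the authors' trick is designed to avoid. What your route buys is a cleaner conceptual statement (the mild solution is attached to the full operator rather than to a frozen source depending on the unknown); what the paper's route buys is that a single application of the autonomous Ball theorem plus uniqueness (your Gronwall remark) suffices. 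Since you state that reduction explicitly, the proposal is complete.
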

\begin{proof}
 The statement can be easily derived from \cite{Ball:1977} taken into account that here we have a zero order term with coefficient depending on space and time.
\end{proof}
}\fi
\if{\begin{proof}
 Let $\underline{r}(t):=r(t,\cdot)$ and $\underline{\tilde r}(t):=\tilde r(t,\cdot)$. Both functions belong to $L^1(0,T;L^2(\Om))$. We consider the very weak formulation of \eqref{d_1-equ}:
 $\eta(0)= 0$ and for any 
$\phi \in \dom(-\Delta)$,  the function $t\mapsto \la \phi , \eta(t)\ra$ 
is absolutely continuous over $[0,T]$ and satisfies
\begin{align}
\ddt\la \phi , \eta(t)\ra+\la -\Delta \phi , \eta(t)\ra 
= \la \phi , \underline{\tilde r}(t) + \underline{r}(t)\eta(t)\ra,
\;\; \text{for a.a. $t\in [0,T]$.}
\end{align}
 Define $g(t):=\underline{\tilde r}(t) + \underline{r}(t)\eta(t)$ and $\psi(t):=\int_0^t e^{(t-s)\Delta}g(s) \dd s$. Then, by Ball \cite{Ball:1977} $\psi$ is the unique very weak solution of
 \begin{align}
\dot \psi(t)  -\Delta \psi(t)= g(t),
\;\; \text{for a.a. $t\in [0,T],\quad \psi(0)=0$}
\end{align}
which is given by 
\begin{align}
\ddt\la \psi , \eta(t)\ra+\la -\Delta \psi , \eta(t)\ra 
= \la \psi , g(t)\ra,
\;\; \text{for a.a. $t\in [0,T]$.}
\end{align}
By the uniqueness of the very weak solution we obtain $\psi=\eta$ which concludes the proof.
\end{proof}}\fi

See the definition of the Lagrangian function $\call$ given in equation \eqref{Lagrangian} of the Appendix.
\begin{proposition}
\label{prop:expansion}
Let $(p,\mu)\in \Lambda_1$, and let
 $(u,y)$ be a trajectory. 
Then 
\begin{multline}
\label{DeltaLformula}
\call [p,\mu](u,y,p) - \call
[p,\mu](\ub,\yb,p)\\
 = \int_0^T  \Psi^p(t) \cdot v(t) \dd t + \half \calq[p](\delta y,v) - 
\gamma
\int_Q  p (\delta y)^3 \dd x\dd t.
\end{multline}
Here, we omit the dependence of the Lagrangian on $(\beta,p_0)$ being equal to $(1,p(\cdot,0))$.
\end{proposition}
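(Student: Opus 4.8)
The plan is to treat $\call[p,\mu](\cdot,\cdot,p)$, with $\beta=1$, $p_0=p(\cdot,0)$ and the pair $(p,\mu)$ held fixed, as a function of $(y,u)\in Y\times L^2(0,T)^m$, and to Taylor-expand it about $(\yb,\ub)$ in the direction $(\delta y,v)=(y-\yb,u-\ub)$. The structural point that makes this work is that $\call$ is a polynomial of degree at most three in $(y,u)$: the contributions of $J$, of the terminal term, of the affine state-constraint terms $\sum_j\int_0^T g_j(y(\cdot,t))\,\dd\mu_j(t)$ and of the bilinear coupling $\int_Q p\,y\sum_{i}u_ib_i\,\dd x\,\dd t$ are of degree at most two, and the unique cubic term is $-\gamma\int_Q p\,y^3\,\dd x\,\dd t$, coming from the cubic nonlinearity of the state equation. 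Hence the third-order Taylor expansion is exact, giving
\[
\call[p,\mu](u,y,p)-\call[p,\mu](\ub,\yb,p)=D\call(\ub,\yb)[\delta y,v]+\tfrac12 D^2\call(\ub,\yb)[(\delta y,v)^2]+\tfrac16 D^3\call(\ub,\yb)[(\delta y,v)^3],
\]
all derivatives being with respect to $(y,u)$ only. It then remains to identify the three terms.

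For the first-order term, write $D\call(\ub,\yb)[\delta y,v]=D_y\call(\ub,\yb)[\delta y]+D_u\call(\ub,\yb)[v]$. By construction, the costate equation \eqref{costate-eq} for $p$ (together with $p_0=p(\cdot,0)$) is precisely the Euler--Lagrange equation $D_y\call(\ub,\yb)=0$: testing it against $\delta y\in Y$ and integrating by parts in time and in space, the interior contributions reproduce the adjoint PDE, the boundary term at $t=T$ is cancelled by the terminal condition, and the boundary term at $t=0$ vanishes because $\delta y(\cdot,0)=0$ for trajectories; the regularity $p=p^1-\sum_j c_j\mu_j$ with $p^1\in Y$ and $\mu\in W^{1,\infty}(0,T)^q$ recalled in Theorem~\ref{sonc:setting-t1}(ii) is what makes these integrations by parts legitimate. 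Thus $D_y\call(\ub,\yb)[\delta y]=0$. Differentiating $\call$ in $u$ and invoking the definition \eqref{def-psi} of the switching function (with $\beta=1$) gives $D_u\call(\ub,\yb)[v]=\sum_{i=1}^m\int_0^T\big(\alpha_i+\int_\Om b_i(x)\yb(x,t)p(x,t)\,\dd x\big)v_i(t)\,\dd t=\int_0^T\Psi^p(t)\cdot v(t)\,\dd t$.

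For the third-order term only $-\gamma\int_Q p\,y^3\,\dd x\,\dd t$ contributes, and $\tfrac16 D^3\call(\ub,\yb)[(\delta y,v)^3]=-\gamma\int_Q p(\delta y)^3\,\dd x\,\dd t$. For the second-order term, a direct computation of $D^2\call$ collects the tracking contribution $\int_Q(\delta y)^2\,\dd x\,\dd t$, the terminal contribution $\int_\Om\delta y(\cdot,T)^2\,\dd x$, the bilinear contribution $2\int_Q p\,\delta y\sum_i v_ib_i\,\dd x\,\dd t$, and the quadratic-in-$\delta y$ part of the expansion of $-\gamma\int_Q p(\yb+\delta y)^3\,\dd x\,\dd t$, namely $-6\gamma\int_Q p\,\yb(\delta y)^2\,\dd x\,\dd t$; halving and using $\kappa=1-6\gamma\yb p$ this is exactly $\tfrac12\calq[p](\delta y,v)$ as defined in \eqref{def-qb}. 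Combining the three identifications yields \eqref{DeltaLformula}. The only genuinely delicate point is the very first claim of the previous paragraph---recognising the costate equation as the vanishing of $D_y\call$ and justifying the integrations by parts despite $p$ being only a $BV$-in-time, $H^1_0$-in-space object---which is precisely what the regularity results stated just before the proposition are meant to cover; everything else is careful bookkeeping of the quadratic terms and of signs.
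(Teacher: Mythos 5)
Your proof is correct and is essentially the paper's own argument (given in Part I): both amount to expanding the cubic-in-$(y,u)$ Lagrangian exactly around $(\ub,\yb)$, using the costate equation \eqref{costate-eq} — which is already stated in weak form, i.e.\ precisely as $D_y\call(\ub,\yb)[z]=0$ for $z\in Y$, so no further integration by parts on $p$ is needed — to kill the first-order term in $y$, identifying the $u$-derivative with $\int_0^T\Psi^p\cdot v$, and collecting the second- and third-order terms into $\tfrac12\calq[p](\delta y,v)$ and $-\gamma\int_Q p(\delta y)^3$.
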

\begin{proof}
 We refer to \cite[Prop. 4.3]{ABK-PartI}.
\end{proof}

\if{
\begin{proof}
Use $\Delta \call$ to denote the l.h.s. of \eqref{DeltaLformula}. We have
\be
\label{DeltaL}
\begin{aligned}
 \Delta & \call =\, J(u,y)-  J(\ub,\yb)  +\int_Q p \left( -\frac{\dd}{\dd t}\delta y +\Delta \delta y- \gamma (y^3-\yb^3) \right)\dd x \dd t\\
 & +\int_Q p\left(\sum_{i=1}^m v_ib_iy+\sum_{i=0}^m \ub_i b_i\delta y  \right)\dd x \dd t  
+\sum_{j=1}^q \int_0^T \int_\Om c_j \delta y \, \dd x \dd \mu_j(t)
\\ =&    \int_Q \delta y\left( \half\delta y+\yb-y_d \right) \dd x\dd t+
   \int_\Omega \delta y (x,T)\Big( \half\delta y(x,T)+\yb(x,T)-y_{dT}(x) \Big) \dd x\\
&+\sum_{i=1}^m\alpha_i\int_0^T v_i \dd t+ \int_Q p\left( -\frac{\dd}{\dd t}\delta y+\Delta \delta y-\gamma(\delta y^3 +3\yb \delta y^2+3\yb^2 \delta y)\right)\dd x\dd t
 \\  & + \int_Q p \left( \sum_{i=1}^m v_ib_iy+\sum_{i=0}^m \ub_i b_i\delta y  \right)+\sum_{j=1}^q \int_0^T \int_\Om c_j \delta y \, \dd x \dd \mu_j(t)
.\end{aligned}
\ee
By \eqref{costate-eq} we obtain
\be
\label{pdotdeltay}
\begin{split}
\int_Q p \frac{\dd}{\dd t}\delta y\, \dd x\dd t = &  -\int_Q pA\,\delta y\,\dd x\dd t + \sum_{j=1}^q \int_0^T \int_\Om c_j \delta y \, \dd x \dd \mu_j(t)\\
&+    \int_Q \delta y\left(\yb-y_d \right) \dd x\dd t+    \int_\Omega \delta y (x,T)\left( \yb(x,T)-y_{dT}(x) \right) \dd x.
\end{split}
\ee
Thus, from \eqref{DeltaL} and \eqref{pdotdeltay} we get
\begin{multline}
\Delta \call = \half    \int_Q \delta y^2 \dd x\dd t+
\half    \int_\Omega \delta y (\cdot,T)^2  \dd x + \sum_{i=1}^m\alpha_i\int_0^T v_i \dd t\\
+ \int_Q p\left( -\gamma [\delta y^3 +3\yb \delta y^2]+  \sum_{i=1}^m v_ib_iy \right)\dd x\dd t,
\end{multline}
which leads to \eqref{DeltaLformula} in view of the definition of $\Psi^p_i$ given in \eqref{def-psi}. This concludes the proof.
\end{proof}}\fi

\subsection{Critical directions}

Recall the definitions of $\check{I}_i,\hat{I}_i$ and $I^C_j$ given in Section   \ref{sonc:setting}, and remember that we use $z[v]$ to denote the solution of the linearized state equation \eqref{lineq2}  associated to $v.$

We define the {\em cone of critical directions}
at $\ub$ in $L^2$, or in short {\em critical cone,} by
\be
C:=
\left\{
\begin{split}
& (z[v],v)\in Y\times L^2(0,T)^m;\\
& v_i(t)\Psi_i^p(t)=0\, \text{ a.e. on } [0,T],\, \text{for all } (p,\mu)\in \Lambda_1 \\ 
&v_i(t)\geq 0
\text{ a.e. on }  \check{I}_i,\,
 v_i(t)\leq 0\, \text{ a.e. on } \hat{I}_i,\, \text{ for } i=1,\dots,m, \\
& \int_\Om c_j(x) z[v](x,t)\dd x\leq 0 \text{ on } I^C_j, \, \text{ for } j=1,\dots,q
\end{split}
\right\}.
\ee
The {\em strict critical cone} is defined below, and it is obtained by imposing 
that the linearization of active constraints is zero,
\be
\label{Cs}
C_{\rm s}:=
\left\{
\begin{split}
& (z[v],v)\in Y\times L^2(0,T)^m;\; v_i(t)=0\, 
\text{ a.e. on }  I_i, \text{ for } i=1,\dots,m, 
\\
&
\int_\Om c_j(x) z[v](x,t)\dd x  = 0\, \text{ on }I^C_j, \text{ for }  j=1,\dots,q
\end{split}
\right\}.
\ee
Hence, clearly $C_{\rm s} \subseteq C,$ and $C_{\rm s}$ is a closed subspace of $Y\times\Uspace.$
\if{Now, note that in the interior of each $I^C_j$ one has, for every $(z[v],v)\in C_{\rm s},$
\be
\begin{aligned}
0&=\ddt \big( g'_j(\yb(\cdot,t)) z[v](\cdot,t) \big) =  \ddt  \int_\Om c_j(x) z[v](x,t) \dd x  \\
&= \int_\Om c_j(x)  \dot z[v](x,t)  \dd x 
=\int_\Om c_j(x)  \Big({-{(Az[v])}(x,t)} + (v(t)\cdot b(x)) \yb(x,t) \Big) \dd x,
\end{aligned}
\ee
which can be rewritten as
\be
\label{dotgz}
\sum_{i=1}^m v_i(t) M_{ij}(t) 
= \int_\Om c_j(x) {(Az[v])}(x,t) \dd x,
\ee
in view of the definition of $M_{ij}$ given in \eqref{Mij}.
Therefore,  over any arc $(a,b)$ we have $g'_j(\yb(\cdot,t)) z[v](\cdot,t) =0$
for $t\in (a,b)$ if and only if $g'_j(\yb(\cdot,a)) z[v](\cdot,a) =0$
and \eqref{dotgz} holds over~$(a,b)$. 
We define the {\em entry (resp. exit) point} of a time interval
$(t',t'')$ as $t'$ (resp. $t''$). 
{This induces the consideration of the following sets}
\begin{equation*}
C_{\rm e} := \left\{ 
\begin{split}
&(z[v],v)\in Y\times L^2(0,T)^m;\;\\
&g'_j(\yb(\cdot,\tau_k)) z[v](\tau_k)  =0,\, \text{if } j\in C_k,\, \text{for } k=0,\dots,r-1
\end{split}
\right\},
\end{equation*}
\begin{equation*}
C_{\rm n}:=
\left\{
\begin{split}
& (z[v],v)\in Y\times L^2(0,T)^m;\; v_i(t)=0 \,
\text{ a.e. on }  I_i,\;\text{for } i=1,\dots,m,\\
&
\sum_{i=1}^m v_i(t) M_{ij}(t) 
= \int_\Om c_j(x) {(Az[v])}(x,t) \dd x\text{ a.e. on }I^C_j, \text{for } j=1,\dots,q
\end{split}
\right\}.
\end{equation*}
With these definitions, we can write the strict critical cone as
$
C_{\rm s} = C_{\rm e} \cap C_{\rm n},
$
and prove the following result.
}\fi

\if{
\begin{lemma}
\label{cs-cap-inf.l}
{$C_{\rm s} \cap \Big(Y\times L^\infty(0,T)^m\Big)$
is dense in  $C_{\rm s},$ with respect to the $Y\times L^2(0,T)^m$-topology}.
\end{lemma}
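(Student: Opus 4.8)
The plan is to prove this density statement by a direct truncation/regularization argument on the control variable $v$, exploiting the structure of $C_{\rm s}$ as a closed subspace defined by the conditions $v_i = 0$ a.e.\ on $I_i$ together with the linear equality constraints $\int_\Om c_j(x) z[v](x,t)\,\dd x = 0$ on $I^C_j$. Given $(z[v],v)\in C_{\rm s}$ with $v\in L^2(0,T)^m$, I would first produce a sequence $v_\ell \in L^\infty(0,T)^m$ with $v_\ell \to v$ in $L^2(0,T)^m$; the natural choice is the pointwise truncation $v_{\ell,i}(t) := \max(-\ell,\min(\ell,v_i(t)))$, which is bounded, converges to $v_i$ in $L^2(0,T)$ by dominated convergence, and automatically satisfies $v_{\ell,i}=0$ a.e.\ on $I_i$ because $v_i$ does (truncation preserves the zero set). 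By continuity of the linear map $v\mapsto z[v]$ from $L^2(0,T)^m$ to $Y$ (coming from well-posedness of \eqref{lineq2}), we get $z[v_\ell]\to z[v]$ in $Y$, hence $(z[v_\ell],v_\ell)\to (z[v],v)$ in $Y\times L^2(0,T)^m$.

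The difficulty is that the truncated controls $v_\ell$ need not satisfy the equality constraints $\int_\Om c_j(x) z[v_\ell](x,t)\,\dd x = 0$ on $I^C_j$, so $(z[v_\ell],v_\ell)$ lies close to $C_{\rm s}$ but not inside it. To fix this, I would use the uniform local controllability condition \eqref{controllability}: on each maximal arc $(\tau_k,\tau_{k+1})$, the submatrix $\Mb_k(t)$ is uniformly injective, so the map sending the $\Bb_k$-components of the control to $\Mb_k(t)^\top(\cdot)$ is uniformly surjective onto $\RR^{|C_k|}$ (here using $|C_k|\le|\Bb_k|$). Concretely, I would exploit the change of variables $u=N_k(t)\uh(t)$ normalizing $\Mb_k(t)^\top \Nb_k(t)$ so that, over each state-constrained arc, the equality constraint on $z[\cdot]$ pins down $|C_k|$ of the transformed control components as a bounded linear (in fact affine through $(Az[v])$) function of the remaining ones and of $z[v]$ itself — compare the computation leading to \eqref{dotgz}. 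Thus, after truncating the ``free'' components of $\hat v_\ell$ in the transformed coordinates, I can recover the constrained components by solving the (uniformly solvable) linear system; since $\Mb_k$, $N_k$ are Lipschitz and bounded and the correction depends continuously on the data, the resulting $\tilde v_\ell$ is still in $L^\infty$, still vanishes on the $I_i$, now satisfies the equality constraints exactly, and still converges to $v$ in $L^2$. Transforming back gives the desired sequence in $C_{\rm s}\cap(Y\times L^\infty(0,T)^m)$.

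I expect the main obstacle to be the bookkeeping at the junction points: the decomposition of $[0,T]$ into maximal arcs with constant active sets means the active index sets $\Bb_k$, $C_k$, and the normalizing matrices $N_k$ change from arc to arc, so the corrected control must be assembled arc-by-arc and one must check that the pieces match up into a single $L^\infty$ (not merely piecewise-$L^\infty$) function and that the endpoint conditions $g'_j(\yb(\cdot,\tau_k))z[v](\tau_k)=0$ needed to propagate the equality constraint across an arc are preserved — this is exactly where the finite maximal arc property \eqref{arcs-junc-points-hyp} and the left/right limit hypothesis (Hypothesis \ref{hyp-setting}.7) enter. A clean alternative, if the arc-by-arc surgery becomes cumbersome, is to note that $C_{\rm s}$ is a closed subspace of a Hilbert space and argue abstractly: $L^\infty\cap C_{\rm s}$ is dense in $C_{\rm s}$ iff its orthogonal complement within $C_{\rm s}$ is trivial, which one checks using that $L^\infty(0,T)^m$ is dense in $L^2(0,T)^m$ together with the surjectivity-type estimate \eqref{controllability} guaranteeing that the constraint manifold has no $L^2$-singular directions; but I would attempt the constructive route first since it also yields quantitative control of $\|\tilde v_\ell - v\|_2$.
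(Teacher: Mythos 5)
Your overall strategy is the right one and is in fact the same as the paper's in spirit: use the uniform controllability condition \eqref{controllability} to split the control on each maximal arc into a ``free'' part and a part determined by the active state constraints, truncate only the free part, and reconstruct. But as written the plan has two genuine gaps. The first is that the recovery of the constrained components is not a pointwise algebraic solve. The differentiated form of the equality constraint on a constrained arc reads $\bar M_k(t)^\top v_{\bar B_k}(t) = E_{C_k}(t)$, where $E_{C_k}(t)$ is built from $\int_\Om c_j(x)(Az)(x,t)\,\dd x$ for the linearized state $z$ associated with the \emph{corrected} control, not the original one. So when you modify the free components, the right-hand side of the linear system you propose to solve changes, and it changes through the solution of \eqref{lineq2}; you acknowledge the affine dependence on $z[v]$ but then treat the system as if it could be solved with frozen data. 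The paper closes this loop by expressing the constrained components as $L_1(z)$ for a linear continuous map $L_1$ whose values are Lipschitz on each arc (hence already in $L^\infty$, so they never need truncating), and then defining the approximating state $z_\eps$ as the solution of the linearized equation with the feedback term $L_1(z_\eps)$ inserted on the right-hand side; the well-posedness of this fixed-point equation is what makes the corrected control and its linearized state mutually consistent. Some such device is indispensable.

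The second gap is the one you flag as ``bookkeeping'' at the junctions, but it is where a new ingredient is actually required. Enforcing the differentiated constraint a.e.\ on a constrained arc only forces $t\mapsto \int_\Om c_j(x) z(x,t)\,\dd x$ to be \emph{constant} there; to land in $C_{\rm s}$ that constant must be zero, i.e.\ the finitely many entry-point conditions at the junction times $\tau_k$ must hold. Your truncation perturbs these by $O(\eps)$ and the arc-wise correction does not restore them. The paper's route is to write $C_{\rm s}$ as the intersection of the subspace $C_{\rm n}$ (the pointwise conditions: $v_i=0$ on $I_i$ and the differentiated constraint a.e.) with the finite-codimension subspace $C_{\rm e}$ of entry-point conditions, prove the $L^\infty$-density only in $C_{\rm n}$, and then invoke Dmitruk's density lemma to pass to the intersection with $C_{\rm e}$. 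Your abstract fallback via orthogonal complements is not a substitute: establishing that the relevant complement is trivial would require precisely the quantitative surjectivity estimate that the Dmitruk (or Hoffman-type) argument packages for you.
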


\begin{proof}
In view of Dmitruk's density Lemma (see \cite[Lemma 1]{Dmi08}),
it is enough to prove that $C_{\rm n} \cap \Big( Y\times L^\infty(0,T)^m\Big)$
is a dense subset of $C_{\rm n}$. 

Let us then take $(z,v)\in C_{\rm n}.$
Recall the definition of the junction times $\tau_k$ given after equation \eqref{Mij}.  Fix $k\in\{0,\dots,r-1\}.$
Note that we can take a partition of $[0,T]$, say
$0=t_0\leq \dots \leq t_{\ell} \leq \dots \leq  t_N=T$, 
such that $(t_{\ell},t_{\ell+1})$ is contained in some 
$(\tau_k,\tau_{k+1})$, and on $(t_{\ell},t_{\ell+1})$ a fixed set of the rows of
$M(t)$ is linearly independent with rank equal to the one
of $M(t)$. Now consider the matrix $\bar M_k$ given after \eqref{Mij}.
Using the same notation as in
\eqref{gct-ct}, let us write 
$v_{\bar B_k}$ to refer to the restriction
of $v$ to the components in $\bar B_k.$ 
For each $t\in (t_{\ell},t_{\ell+1})$, we can write
\be
v_{\bar B_k}(t) = v_{\bar B_k,0}(t) +v_{\bar B_k,1}(t),
\ee
where $v_{\bar B_k,0}(t)\in \Ker \bar{M}_k(t)^\top$ and $v_{\bar
  B_k,1}(t)\in \Im \bar{M}_k(t)$ for almost all $t,$  hence 
$v_{\bar  B_k,1}(t)= \bar{M}_k(t) \lambda_{k}(t)$ for some $\lambda_{k}(t) \in \cR^{|C_k|}.$
Let
$E_{C_k}(t)$ be the $|C_k|$-dimensional vector with components
\be
E_{C_k,j}(t):=\int_\Om c_j(x) {(Az)}(x,t) \dd x, \quad j\in C_k. 
\ee
 Then \eqref{dotgz}
can be rewritten as 
\be
E_{C_k}(t) = \bar{M}_k(t)^\top v_{\bar B_k}(t) = 
\bar{M}_k(t)^\top v_{\bar  B_k,1}(t)
=
\bar{M}_k(t)^\top \bar{M}_k(t) \lambda_k(t), 
\ee
and, therefore, $\lambda_k(t) = 
\big(\bar{M}_k(t) ^\top\bar{M}_k(t) \big)^{-1} E_{C_k}(t),$
so that
\begin{align} 
\label{v-prime-1}
v_{\bar B_k,1}(t) = 
\bar{M}_k(t) \lambda_k(t)
=
\bar{M}_k(t)
\big(\bar{M}_k (t)^\top\bar{M}_k(t) \big)^{-1}
E_{C_k}(t).
\end{align}
By an integration by parts (in space) argument, it follows
that $E_{C_k}(t)$ is a continuous function, and so is
$\bar{M}_k(t)$. Therefore,
$v_{\bar B_k,1}$ is continuous on each maximal arc.
We may also view the application 
$z\mapsto v_{\bar B_k,1}$ 
as a linear and continuous mapping say
\be
L_1: \;\; Y \rar \prod_{k=0}^{r-1} \Lip(\tau_k,\tau_{k+1})^{|C_k|}
\ee
where $C_k$ is the set of active state constraints on 
$(\tau_k,\tau_{k+1})$ and, for $t'<t''$,
$\Lip(t',t'')$ is the Banach space of
continuous real functions with domain $(t',t'')$,
endowed with the norm
\be
\|f\|_{\Lip(t',t'')} :=
\sup_{t\in (t',t'')} |f(t)| +
\sup_{t,\tau\in (t',t'')} 
\frac{|f(t)-f(\tau)| }{|t-\tau|},
\ee
with the convention ``$0/0=0$''.

For any $\eps>0,$  there exists 
$v_{\bar B_k,0}^\eps$ in $L^\infty(0,T)^{|B_k|}$ such that
$\|v_{\bar B_k,0}^\eps-v_{\bar B_k,0}\|_2<\eps$, 
it has zero components for indexes corresponding to active
control bound constraints, and 
$v_{\bar B_k,0}^\eps (t)\in\Ker \bar{M}_k(t)^\top$ for a.a. $t.$
In fact, to construct this $v_{\bar B_k,0}^\eps$ it suffices to project 
an approximation of $v_{\bar B_k,0}$
obtained by a truncation argument on the kernel $\Ker \bar{M}_k(t)^\top$.
In what follows we shall abuse notation and use the same symbol to denote a vector and its canonical immersion in $\cR^m.$
Let $z_\eps$ be the unique solution in $Y$ of the linearized equation 
$$
\dot z_\eps + A z_\eps  = \sum_{i=1}^m (L_1({z_\eps}) + v_{\bar B,0}^\eps +v_B)_i  b_i\, \yb,
$$ 
with the usual initial and boundary conditions, and where $v_B$ is the restriction of $v$ to the set $B.$ 
Set  $v_{\bar B,1}^{\eps}:=L_1(z_\eps),$  $v_{\bar B_k}^{\eps}:=v_{\bar B_k,1}^{\eps}+v_{\bar B_k,0}^\eps,$ and define $v_\eps$ to have the restriction to $\bar B_k$ equal to $v_{\bar B_k}^{\eps}$ and the restriction to $B_k$ equal to $v.$
Then $v_\eps$ is in $C_{\rm n}\cap (Y\times L^{\infty}(0,T)^m)$ and $\|v_\eps-v\|_2=O(\eps).$ Hence, $C_{\rm n} \cap \Big( Y\times L^\infty(0,T)^m\Big)$
is a dense subset of $C_{\rm n}$.
The conclusion follows.
\end{proof}

\subsubsection{Radiality of critical directions}

According to Aronna {\em et al.} \cite[Definition 6]{MR3555384}, a critical direction $(z,v)$ is {\em quasi radial} if there exists $\tau_0>0$ such that, for $\tau\in [0,\tau_0],$ the following conditions are satisfied:
\begin{gather}
\label{max-g-o2}
\max_{t\in [0,T]} \left\{ g_j(\yb(\cdot,t)) + \tau g_j'(\yb(\cdot,t)) z(t)\right\}  = o(\tau^2),\quad \text{for } j=1,\dots,q,\\
\label{qr2}
\umin_i \leq \ub_i(t) +\tau v_i(t) \leq \umax_i,\quad \text{a.e. on } [0,T],\quad \text{for } i=1,\dots,m.
\end{gather}

\begin{lemma}
\label{hyp-g-sharp.l}
Every direction in $C_{\rm s}\cap \Big( Y\times L^\infty(0,T)^m \Big)$ is quasi radial.
\end{lemma}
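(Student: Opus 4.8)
The plan is to check the two defining properties of quasi-radiality, \eqref{max-g-o2} and \eqref{qr2}, directly, exploiting the extra regularity that $v\in L^\infty(0,T)^m$ provides. Fix $(z,v)\in C_{\rm s}\cap\big(Y\times L^\infty(0,T)^m\big)$ and write $z=z[v]$. The control-bound condition \eqref{qr2} is the easy half: on the contact set $I_i$ one has $v_i\equiv 0$ by definition of $C_{\rm s}$, so $\ub_i+\tau v_i=\ub_i\in[\umin_i,\umax_i]$ for every $\tau$; off $I_i$, Hypothesis \ref{hyp-setting}.5 (inequality \eqref{hyp-geom}) gives $\dist(\ub_i(t),\{\umin_i,\umax_i\})\geq\delta$ a.e., so as soon as $\tau\|v\|_\infty\leq\delta$ the perturbed control $\ub_i+\tau v_i$ still lies in $[\umin_i,\umax_i]$. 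This fixes a first threshold $\tau_1:=\delta/(1+\|v\|_\infty)$.

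For \eqref{max-g-o2} I would introduce $h_j(t):=g_j'(\yb(\cdot,t))z(t)=\int_\Om c_j(x)z(x,t)\,\dd x$ and prove that each $h_j$ is Lipschitz continuous. Since $v\in L^\infty(0,T)^m$ and $\yb\in C(\bar Q)$ (Theorem \ref{sonc:setting-t1}(i)), the right-hand side of the linearized state equation \eqref{lineq2} is bounded, so by the same parabolic bootstrap argument as in Theorem \ref{sonc:setting-t1}(i) one gets $z=z[v]\in C(\bar Q)$, in particular $z\in L^\infty(Q)$. Differentiating $h_j$ and substituting $\dot z=-Az+\sum_{i=1}^m v_i b_i\yb$, an integration by parts — legitimate because $c_j\in H^2(\Om)\cap H^1_0(\Om)$ — yields
\[
\dot h_j(t)=-\int_\Om (Ac_j)(x)\,z(x,t)\,\dd x+\sum_{i=1}^m v_i(t)M_{ij}(t),
\]
and the right-hand side lies in $L^\infty(0,T)$ because $Ac_j\in L^2(\Om)$, $z\in L^\infty(Q)$, $v\in L^\infty$, and $M$ is bounded (cf. \eqref{Mij}). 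Hence $h_j$ is Lipschitz with constant $L_j:=\|\dot h_j\|_\infty$; moreover $h_j\equiv 0$ on $I^C_j$ because $(z,v)\in C_{\rm s}$, and $I^C_j$ is closed by continuity of $t\mapsto g_j(\yb(\cdot,t))$, so $|h_j(t)|\leq L_j\,\dist(t,I^C_j)$ for all $t\in[0,T]$.

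It then remains only to combine this with Hypothesis \ref{hyp-setting}.4 (inequality \eqref{hyp-g-sharp}): for $t\notin I^C_j$,
\[
g_j(\yb(\cdot,t))+\tau h_j(t)\leq -c\,\dist(t,I^C_j)+\tau L_j\,\dist(t,I^C_j)=(\tau L_j-c)\,\dist(t,I^C_j)\leq 0
\]
whenever $\tau\leq c/\max_j L_j=:\tau_2$, while for $t\in I^C_j$ both $g_j(\yb(\cdot,t))$ and $h_j(t)$ vanish, so the bracketed expression equals $0$ there. Consequently $\max_{t\in[0,T]}\{g_j(\yb(\cdot,t))+\tau h_j(t)\}\leq 0$ for $\tau\in[0,\tau_2]$, which is in particular $o(\tau^2)$. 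Setting $\tau_0:=\min(\tau_1,\tau_2)$ then yields both \eqref{max-g-o2} and \eqref{qr2}, i.e. $(z,v)$ is quasi radial.

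The only genuinely technical point — hence the expected main obstacle — is the Lipschitz regularity of $h_j$: one must check that the integration by parts is justified (it uses $c_j\in H^2\cap H^1_0$, which is assumed) and that $z[v]$ is bounded on $Q$ (which uses the $L^\infty$-regularity argument for the linearized equation, parallel to Theorem \ref{sonc:setting-t1}(i)). Everything else is elementary once $|h_j|\leq L_j\,\dist(\cdot,I^C_j)$ is available; note in particular that the argument delivers the bound $\max_t\{\cdots\}\leq 0$, which is considerably stronger than the required $o(\tau^2)$ estimate.
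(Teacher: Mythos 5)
Your proof is correct, and it reaches the conclusion by a genuinely different route from the paper's. Both arguments dispose of the control-bound condition the same way (criticality forces $v_i=0$ on $I_i$, and \eqref{hyp-geom} plus $\|v\|_\infty<\infty$ handles the rest), and both reduce the state-constraint condition to a modulus-of-continuity estimate for $h_j(t)=\int_\Om c_j z\,\dd x$ near the contact set, combined with \eqref{hyp-g-sharp}. The difference is in how that modulus is obtained. The paper only establishes $\dot h_j\in L^2(0,T)$, via the crude bound $|\dot h_j(t)|\le\|c_j\|_{L^2(\Om)}\|\dot z(\cdot,t)\|_{L^2(\Om)}$ coming from $z\in Y=H^{2,1}(Q)$; this yields merely $|h_j(t)|\le\eps\sqrt{\dist(t,I^C_j)}$ near each constrained arc (with $\eps$ small by dominated convergence), and the $o(\tau^2)$ bound is then extracted by maximizing $-c\,s+\tau\eps\sqrt{s}$ in $s$, which gives the value $\tau^2\eps^2/(4c)$. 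You instead integrate by parts in space — legitimate since $c_j\in H^2(\Om)\cap H^1_0(\Om)$ and $z$ vanishes on $\Sigma$ — to move the operator onto $c_j$, and use $v\in L^\infty$ together with the boundedness of $\ub$, $\yb$ and $M$ to conclude $\dot h_j\in L^\infty(0,T)$. The resulting Lipschitz bound $|h_j|\le L_j\,\dist(\cdot,I^C_j)$ makes the final step trivial and delivers the stronger conclusion that the maximum is $\le 0$ for all small $\tau$, with no balancing argument. What each approach buys: yours is sharper and uses the full strength of $v\in L^\infty$ in the state-constraint part as well; the paper's $L^2$ estimate on $\dot h_j$ would survive for merely $L^2$ directions (the $L^\infty$ assumption being needed there only for the control bounds), so it is the more robust half-step even though it requires the extra optimization at the end. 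One small remark: your detour through $z\in L^\infty(Q)$ is unnecessary — $z\in C([0,T];L^2(\Om))$, which is automatic from $Y\hookrightarrow C([0,T];H^1_0(\Om))$, already suffices to bound $\int_\Om(Ac_j)z\,\dd x$ uniformly in $t$ — but this is harmless.
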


\begin{proof}
Let $(z,v) \in C_{\rm s}\cap \Big( Y\times L^\infty(0,T)^m \Big).$ 
Then \eqref{qr2}  follows from \eqref{hyp-geom}.
Let us next prove \eqref{max-g-o2}.
The function $h(t) := g'_j(\yb(t))z(t)$ has
the derivative $\dot h(t) = \int_\Om c_j(x) \dot z(x,t) \dd x$,
so that 
$|\dot h(t) | \leq \|c_j\|_{L^2(\Om)} \|\dot z(\cdot,t)\|_{L^2(\Om)}$ and hence, $\dot h 
        \in L^2(0,T)$. 
Let $0\leq t' < t'' \leq T$. 
By the Cauchy-Schwarz inequality,
for any $\eps>0$, if $t''-t'$ is small enough: 
\be
| h(t'')-h(t') | \leq \int_{t'}^{t''} |\dot h(t)| \dd t 
\leq \sqrt{t''-t'} \| \dot h\|_{L^2(t',t'')}. 
\ee
Let $(a,b)$ be a maximal constrained arc with say $a>0$.
Take $t'<a$,  and $t''=a$.
When $t\uparrow a$, by the Dominated Convergence Theorem, 
$\|\dot{ h}\|_{L^2(t',t'')} \rar 0$.
Given $\eps>0$, we deduce with \eqref{hyp-g-sharp}
that for $\tau>0$ and $t<a$ close enough to $a$:
\be
\label{hyp-g-sharp.l-1}
g_j(\yb(\cdot,t)) + \tau g_j'(\yb(\cdot,t)) z(t) \leq -c(a-t) + \tau \eps \sqrt{a-t},    
\quad \text{for all $t\in (t',a)$.}
\ee
The maximum of the r.h.s. of \eqref{hyp-g-sharp.l-1} over
$t\in [a-\eps,a]$
is attained when
\be
c \sqrt{a-t} = \half \tau\eps,
\qquad
a-t = \frac{\tau^2\eps^2}{4c^2}.
\ee
So the r.h.s. of \eqref{hyp-g-sharp.l-1}
is less or equal than $\tau^2\eps^2/(4c)$. 
Since we can take $\eps$ arbitrarily small, $\tau^2\eps^2/(4c)$ is not greater than $o(\tau^2)$. 
For $t>b$ we have a similar result.
The conclusion follows.
\end{proof}

Combining the previous result with Lemma
\ref{cs-cap-inf.l}, we deduce that:

\begin{corollary}
\label{qrd-dense-coro}
The set of quasi radial critical directions of $C_{\rm s}$ is dense in $C_{\rm s}.$
\end{corollary}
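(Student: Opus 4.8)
The plan is to obtain the corollary by chaining the two results that immediately precede it, so that no new analysis is required. Recall that a critical direction $(z,v)\in C_{\rm s}$ is quasi radial if, for all small $\tau>0$, the control $\ub+\tau v$ stays admissible and the perturbed state $\yb+\tau z$ violates each state constraint only by $o(\tau^2)$; the assertion is that the set of such directions is dense in $C_{\rm s}$ for the $Y\times L^2(0,T)^m$ topology.

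First I would invoke Lemma~\ref{hyp-g-sharp.l}, which says that every direction in $C_{\rm s}\cap\bigl(Y\times L^\infty(0,T)^m\bigr)$ is quasi radial; this is the step that consumes the structural hypotheses, \eqref{hyp-geom} giving admissibility of $\ub+\tau v$ for bounded $v$ and \eqref{hyp-g-sharp} — via the Cauchy--Schwarz bound $|h(t'')-h(t')|\le \sqrt{t''-t'}\,\|\dot h\|_{L^2(t',t'')}$ for $h(t):=g_j'(\yb(\cdot,t))z(\cdot,t)$ together with the elementary optimization $c\sqrt{a-t}=\tfrac12\tau\eps$ — giving the one-sided $o(\tau^2)$ estimate near each junction. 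Hence the set of quasi radial critical directions of $C_{\rm s}$ contains $C_{\rm s}\cap\bigl(Y\times L^\infty(0,T)^m\bigr)$.

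Then I would invoke Lemma~\ref{cs-cap-inf.l}, that $C_{\rm s}\cap\bigl(Y\times L^\infty(0,T)^m\bigr)$ is dense in $C_{\rm s}$ for the $Y\times L^2(0,T)^m$ topology. Since a set containing a dense subset is itself dense, the set of quasi radial critical directions of $C_{\rm s}$ — which by the previous paragraph contains that dense subset — is dense in $C_{\rm s}$, which is exactly the claim.

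The main obstacle has already been cleared inside the two lemmas: in Lemma~\ref{cs-cap-inf.l} it is the use of Dmitruk's density lemma on the larger cone $C_{\rm n}$ together with the splitting of $v_{\bar B_k}$ into a kernel part (truncated, then re-projected onto $\Ker\Mb_k(t)^\top$) and a Lipschitz range part recovered from the linearized dynamics via \eqref{controllability}; in Lemma~\ref{hyp-g-sharp.l} it is the $\sqrt{a-t}$ estimate. For the corollary itself the only points to watch are keeping the right topology and noting that no quantitative uniformity of the approximating sequence is needed — set-theoretic inclusion of a dense set into a larger set suffices — so the proof is just the bookkeeping step that packages Lemmas~\ref{cs-cap-inf.l} and~\ref{hyp-g-sharp.l} together.
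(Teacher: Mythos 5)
Your proof is correct and follows exactly the paper's route: the paper derives this corollary by the one-line observation ``Combining the previous result [Lemma~\ref{hyp-g-sharp.l}] with Lemma~\ref{cs-cap-inf.l}, we deduce that\dots'', i.e.\ the quasi radial directions contain the dense subset $C_{\rm s}\cap\bigl(Y\times L^\infty(0,T)^m\bigr)$ and hence are themselves dense. Your additional remarks correctly locate where the real work lies (inside the two lemmas), so nothing is missing.
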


\if{ 
\subsubsection{Application of Lemma \ref{hyp-g-sharp.l}}
Let the state constraint have the more general form
\be
g_j(y(\cdot,t)) = \int_\Om \gamma_j(y(x,t)) \dd x
\ee
We look for conditions on $\gamma_j$
for which $h(t) := g'(\yb(t))z(t)$ has a derivative in $L^2(0,T)$,
so that the Lemma \ref{hyp-g-sharp.l} applies. 
Formally
\be
h_j(t) = g'_j(y)z = \int_\Om \gamma'_j(y(x,t)) z(x,t) \dd x
\ee
and
\be
\dot h_j = \int_\Om \left( 
\gamma''_j(y) \dot y z 
+
\gamma'_j(y) \dot z\right)\dd x
\ee
Since $\dot y$ and $\dot z$ belong to $L^2(Q)$, 
we need that $\gamma''_j(y) = 0$ and that
$\gamma'_j(y) $ is essentially bounded.
Therefore it is natural to assume 
\eqref{stateconstraint}, with  $c_j$ in $L^\infty(\Om)$ for each $j.$

\begin{proposition}
\label{derg'}
 If $c_j\in L^\infty(\Omega)$ for all $j=1,\dots,\ell,$ then $t\mapsto g'_j(\yb(t))z(t)$ has derivative in $L^2(0,T).$
\end{proposition}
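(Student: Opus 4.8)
The plan is to exploit the fact that the state constraint \eqref{stateconstraint} is affine in $y$, so that its linearization is explicit and carries no second-order term. Writing $g_j(y(\cdot,t))=\int_\Om c_j(x)y(x,t)\,\dd x+d_j$, the directional derivative at $\yb$ is simply
$h_j(t):=g_j'(\yb(\cdot,t))z(\cdot,t)=\int_\Om c_j(x)z(x,t)\,\dd x$,
with none of the contribution $\int_\Om\gamma_j''(\yb)\dot\yb\,z\,\dd x$ that a genuinely nonlinear $\gamma_j$ would produce. First I would show that $h_j$ is absolutely continuous on $[0,T]$ with $\dot h_j(t)=\int_\Om c_j(x)\dot z(x,t)\,\dd x$, and then bound $\dot h_j$ in $L^2(0,T)$ by the Cauchy--Schwarz inequality, using the parabolic regularity of $z=z[v]$.

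The regularity input is that $z=z[v]$ belongs to $Y$. This holds because $z[v]$ is the value of the derivative of the control-to-state map $u\mapsto y[u]$, which is of class $C^\infty$ from $L^2(0,T)^m$ into $Y$ (as recalled after \eqref{P}); equivalently, $z$ solves $\dot z+Az=\yb\sum_{i=1}^m v_ib_i$ with homogeneous initial and boundary data, whose right-hand side lies in $L^2(Q)$ since $\yb\in C(\bar Q)$ by Theorem~\ref{sonc:setting-t1}(i) and $b_i\in W^{1,\infty}(\Om)$. In particular $\dot z\in L^2(Q)$ and $t\mapsto z(\cdot,t)$ lies in $C([0,T];H^1_0(\Om))\cap H^1(0,T;L^2(\Om))$, so that $z(\cdot,t_2)-z(\cdot,t_1)=\int_{t_1}^{t_2}\dot z(\cdot,s)\,\dd s$ as an identity in $L^2(\Om)$.

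To deduce the derivative formula I would pair this identity with $c_j$. Since $\Om$ is bounded, $c_j\in L^\infty(\Om)\subset L^2(\Om)$, so pairing and applying Fubini give $h_j(t_2)-h_j(t_1)=\int_{t_1}^{t_2}\big(\int_\Om c_j(x)\dot z(x,s)\,\dd x\big)\,\dd s$; hence $h_j$ is absolutely continuous and $\dot h_j(t)=\int_\Om c_j(x)\dot z(x,t)\,\dd x$ for a.e.\ $t$. By Cauchy--Schwarz, $|\dot h_j(t)|\le\|c_j\|_{L^2(\Om)}\,\|\dot z(\cdot,t)\|_{L^2(\Om)}$, and integrating in time yields $\|\dot h_j\|_{L^2(0,T)}\le\|c_j\|_{L^2(\Om)}\,\|\dot z\|_{L^2(Q)}<\infty$, which is the assertion.

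The computation is short, and I do not expect a serious obstacle. The one point requiring care is the justification of the identity $\dot h_j(t)=\int_\Om c_j\,\dot z\,\dd x$, i.e.\ differentiation under the integral sign; this is precisely where the $H^{2,1}(Q)$-regularity of $z$ (so that $\dot z\in L^2(Q)$ and $t\mapsto z(\cdot,t)$ is absolutely continuous into $L^2(\Om)$) enters. It is also where the affine structure of the constraint is essential: for a nonlinear $\gamma_j$ the extra term $\int_\Om\gamma_j''(\yb)\dot\yb\,z\,\dd x$ involves the product of $\dot\yb\in L^2(Q)$ with $z$, which is only integrable rather than obviously square-integrable in time, and this is exactly the heuristic that motivates restricting to $\gamma_j''=0$ with $c_j=\gamma_j'\in L^\infty(\Om)$.
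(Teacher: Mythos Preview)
Your proposal is correct and follows essentially the same argument as the paper: write $h_j(t)=\int_\Om c_j(x)z(x,t)\,\dd x$, identify $\dot h_j(t)=\int_\Om c_j(x)\dot z(x,t)\,\dd x$, and bound this in $L^2(0,T)$ by Cauchy--Schwarz using $c_j\in L^2(\Om)$ and $\dot z\in L^2(Q)$. You add a careful justification of the differentiation-under-the-integral step via the $H^1(0,T;L^2(\Om))$-regularity of $z\in Y$, which the paper leaves implicit; your closing remark on why nonlinear $\gamma_j$ would fail mirrors exactly the heuristic discussion that precedes the proposition in the paper.
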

} \fi 

}\fi

\subsection{Second order necessary condition}
We recall from \cite[Thm. 4.7]{ABK-PartI}.
\begin{theorem}[Second order necessary condition]
\label{SONC}
Let the admissible trajectory $(\ub,\yb)$ be an $L^\infty$-local solution of $(P)$. Then
\be
 \max_{ (p,\mu) \in \Lambda_1 } \calq[p] (z,v)\geq 0, 
\qquad \text{for all } (z,v) \in C_{\rm s}.
\ee
\end{theorem}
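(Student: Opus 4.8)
The plan is to derive this necessary condition from the abstract second order necessary condition for the reduced problem \eqref{RP}, which is classical for optimization problems with constraints in a Banach space having nonempty interior. First I would recall that by Hypothesis \ref{hyp-setting}(2) the problem is qualified, so by the first order analysis (Lemma \ref{multipliers} / \cite[Lem.~3.5]{ABK-PartI}) the set $\Lambda_1$ is nonempty and weakly-$*$ compact, and we may normalize $\beta=1$. The strategy is: (i) take a critical direction $(z[v],v)\in C_{\rm s}$; (ii) show that it can be approximated (in the $Y\times L^2$ topology) by quasi-radial critical directions lying in $C_{\rm s}\cap(Y\times L^\infty(0,T)^m)$, so that along each such direction there is an admissible curve of trajectories $\tau\mapsto (u^\tau,y^\tau)$ with $u^\tau=\ub+\tau v+o(\tau)$ feasible up to $o(\tau^2)$ in the state constraint (this is exactly the content of the quasi-radiality Lemma~\ref{hyp-g-sharp.l} together with the density Corollary~\ref{qrd-dense-coro}, and this is where Hypotheses \ref{hyp-setting}(4) and (5) are used); (iii) expand the cost along this curve using the Lagrangian expansion of Proposition~\ref{prop:expansion}.

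More concretely, for a quasi-radial critical direction $(z,v)\in C_{\rm s}\cap(Y\times L^\infty)$ and any multiplier $(p,\mu)\in\Lambda_1$, since $\mu$ is complementary to the state constraint, $\mu\ge 0$, and the perturbed trajectory satisfies the state constraints up to $o(\tau^2)$, one has
\[
J(u^\tau,y^\tau)-J(\ub,\yb)\ \ge\ \call[p,\mu](u^\tau,y^\tau,p)-\call[p,\mu](\ub,\yb,p)-o(\tau^2).
\]
Plugging $(u^\tau,y^\tau)$ into \eqref{DeltaLformula}, the first-order term $\int_0^T\Psi^p\cdot(u^\tau-\ub)\,\dd t$ is nonnegative by the sign conditions defining $C_{\rm s}$ (on $I_i$ we have $v_i=0$, and the remaining contribution vanishes because $v_i\Psi^p_i=0$); the cubic remainder $-\gamma\int_Q p(\delta y)^3$ is $O(\tau^3)$ using $\delta y=\tau z+o(\tau)$ in $Y$ and boundedness of $p$ (Theorem~\ref{sonc:setting-t1}(ii)); and $\calq[p](\delta y, u^\tau-\ub)=\tau^2\calq[p](z,v)+o(\tau^2)$ by bilinearity and continuity of $\calq$. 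Since $(\ub,\yb)$ is an $L^\infty$-local minimizer and $u^\tau\to\ub$ in $L^\infty$, the left-hand side is $\ge 0$ for $\tau$ small; dividing by $\tfrac12\tau^2$ and letting $\tau\downarrow 0$ gives $\calq[p](z,v)\ge 0$ for \emph{this particular} $(p,\mu)$. To get the $\max$ over $\Lambda_1$ rather than a condition valid for all multipliers, one must instead argue abstractly: the standard second order necessary condition for \eqref{RP} with a nonpolyhedric cone yields existence of \emph{some} multiplier (depending on the direction) for which the Lagrangian's second variation is nonnegative; combined with the expansion this is exactly $\max_{(p,\mu)\in\Lambda_1}\calq[p](z,v)\ge 0$. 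Finally, extend from the dense set of quasi-radial directions to all of $C_{\rm s}$ by continuity of $v\mapsto\calq[p](z[v],v)$ uniformly in $p$ on the weakly-$*$ compact set $\Lambda_1$.

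I expect the main obstacle to be step (ii)—the construction of feasible second-order perturbations respecting the state constraints up to $o(\tau^2)$—because the state constraints are running (pointwise in $t$) and of integral-in-space type, the cone is not polyhedric, and one needs the junction regularity \eqref{hyp-g-sharp} and the distance-to-bounds estimate \eqref{hyp-geom} to control the perturbation near the boundary of the contact sets; this is precisely why the quasi-radiality machinery (Lemmas~\ref{cs-cap-inf.l}, \ref{hyp-g-sharp.l} and Corollary~\ref{qrd-dense-coro}) is set up beforehand. A secondary subtlety is the passage from a ``for all multipliers'' statement to the ``$\max$ over multipliers'' formulation, which requires invoking the abstract necessary condition of, e.g., \cite[Prop.~3.45 or Thm.~3.50]{MR1756264} in a form that allows the multiplier to depend on the critical direction, rather than elementarily perturbing along a fixed curve. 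Since the statement merely cites \cite[Thm.~4.7]{ABK-PartI}, the proof here reduces to pointing to that reference, but the above is the route one would reconstruct.
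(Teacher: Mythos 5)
Your reconstruction follows essentially the same route as the paper's proof (which is delegated to \cite[Thm.~4.7]{ABK-PartI}): approximate a strict critical direction by quasi-radial critical directions in $C_{\rm s}\cap\big(Y\times L^\infty(0,T)^m\big)$ --- this is where the junction condition \eqref{hyp-g-sharp} and the distance-to-bounds condition \eqref{hyp-geom} enter --- then invoke the abstract second-order necessary condition in the style of \cite[Thm.~8]{MR3555384} to obtain, for each such direction, \emph{some} multiplier in $\Lambda_1$ with $\calq\geq 0$, and finally pass to the limit using weak-$*$ compactness of $\Lambda_1$ and continuity of $\calq$ with respect to both the direction and the multiplier. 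One caveat: the fixed-multiplier expansion in your middle paragraph does not close as written (from $\Delta J\geq 0$ and $\Delta J\geq \Delta\call-o(\tau^2)$ one gets only an upper bound on $\Delta\call$, hence no sign information on $\calq[p]$; and a version that did close would prove the stronger ``for all multipliers'' statement, which is false in general for non-polyhedric constraints because of the second-order correction/curvature term), but since you explicitly discard that computation in favour of the abstract direction-dependent-multiplier argument, the proposal is sound.
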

\if{\begin{proof}We refer to .
\if{
 Let $(z,v)\in C_{\rm s}.$ 
By Corollary \ref{qrd-dense-coro},
there exists a sequence $(z_{\ell},v_{\ell})$ of quasi radial directions converging
to
$v$ in $Y\times L^2(0,T)^m$.
 Doing as in 
\cite[Theorem 8]{MR3555384}, 
we get the existence of a multiplier 
$(p_{\ell},\dd\mu_{\ell}) \in \Lambda_1$ (with $\Lambda_1$ defined in Section~\ref{sec:red-prob}), such that
 \be
 \calq[p_{\ell},\dd\mu_{\ell}](z_\ell,v_{\ell}) \geq 0.
 \ee
By Lemma \ref{multipliers}, $\Lambda_1$ 
is bounded and, extracting if necessary a subsequence, we may
assume that
$(p_{\ell},\dd\mu_{\ell})$  weakly-$*$ converges to some 
\blue{$(p,\mu)\in \Lambda_1$}. Since $\calq$ is 
weakly-$*$ continuous with respect to the multipliers,
and $(z_\ell,v_{\ell})\rar (z[v],v)$ in $Y\times L^2(0,T)^m$, 
we easily deduce that 
$\calq[p_{\ell},\dd\mu_{\ell}](z_\ell,v_{\ell}) \rar
\calq[p,\dd\mu](z[v],v).$
The conclusion follows.
}\fi
\end{proof}
}\fi

\subsection{Goh transform} 
Given a critical direction $(z[v],v)$, set 
\be
\label{Goh}
w(t) := \int_0^t v(s) \dd s; \quad
B(x,t) := \yb(x,t)  b(x);
\quad
\zeta(x,t) = z(x,t)  - B(x,t) \cdot w(t).
\ee
Then $\zeta$ satisfies the initial and
boundary conditions
\be
\begin{aligned}
\label{boundaryzeta}
\zeta(x,0)=0  \text{ for } x\in \Omega,\quad
\zeta(x,t) =0\text{ for } (x,t)\in \Sigma.
\end{aligned}
\ee
Remembering the definition \eqref{lin-state-equ}
of the operator $A$, we obtain that 
\be
\dot \zeta + A \zeta = 
\left(\dot z+ A z - \sum_{i=1}^m v_i B_i \right) 
- \sum_{i=1}^m w_i ( A B_i +\dot B_i),\quad \zeta(\cdot ,0)=0,\quad
\zeta(x,t) =0\text{ on } \Sigma.
\ee
In view of the linearized state equation
\eqref{lineq2}, the term between the
large parentheses  in the latter equation vanishes. Since 
$\dot B_i = b_i \dot \yb$ 
it follows that 
\be
\label{zeta}
\dot \zeta(x,t)  
+ {(A\zeta)} (x,t) = B^1(x,t) \cdot w(t),\quad \zeta(\cdot ,0)=0,\quad
\zeta(x,t) =0\text{ on } \Sigma, 
\ee 
where 
\be
\label{B1}
B^1_i := -f b_i +2\nabla\yb \cdot \nabla b_i + \yb \Delta b_i - 2\gamma\yb^3b_i,\quad \text{for }i=1,\dots ,m.
\ee
{Equation \eqref{zeta} is well-posed 
since $b\in W^{2,\infty}(\Omega)$, and the  solution $\zeta$ belongs to $Y$.} We use $\zeta[w]$ to denote the solution of \eqref{zeta} corresponding to $w.$

\subsection{Goh transform of the quadratic form} 
Recall that $(\ub,\yb)$ is a feasible trajectory.
Let $\pb=p[\ub]$ be the costate associated to $\ub,$ and set
\be
W:=Y\times L^2(0,T)^m \times \RR^m.
\ee
Let $S(t)$ be the time dependent
symmetric $m\times m-$matrix with generic term
\be\label{def:S}
S_{ij}(t) := \int_\Om b_i(x) b_j(x) p(x,t) \yb(x,t) \dd x,
\quad \text{for }1 \leq i,j\leq m.
\ee 
Set
\be\label{chi}
\chi := \ddt (p\yb)
= pf+
p \Delta \yb - \yb \Delta  p
+ 2 p \yb^3
- \yb(\yb-y_d)
- \yb \sum_{j=1}^q c_j \dot \mu_j.
\ee
Observe that
\be
\dot S_{ij}(t) =
\int_\Om b_i b_j 
\ddt (p\yb) \dd x
= 
\int_\Om b_i b_j 
\chi \dd x.
\ee
Since $\yb$, $p$ 
belong to $L^\infty(0,T,H^1_0(\Om))$,
and 
$y_d$, $\yb^3$, $\dot \mu$
are essentially bounded, 
integrating by parts the terms in \eqref{chi} involving the Laplacian
operator  and using \eqref{sonc:setting-t1-1}, we obtain that 
$\dot S_{ij}$ is essentially bounded.
So we can define 
the continuous quadratic form on $W:$ 
\be
\label{def-hatq}
\whq[p,\mu] (\zeta,w,h) :=  \int_0^T \hat q_I(t) \dd t + \hat q_T,
\ee
where
\begin{multline}
\label{def-hatq1}
\hat q_I :=  \disp 
\int_\Om \kappa \left(\zeta+ \yb\sum_{i=1}^m b_i w_i \right)^2  \dd x
-w^\top \dot S w\\
 -2 \sum_{i=1}^m  w_i \int_\Om  
\Big[ \zeta \big( -\Delta b_i p - 2\nabla b_i\cdot \nabla p+b_i(\yb-y_d)
+ b_i\sum_{j=1}^q c_j \dot\mu_j \big) - p B^1\cdot w\Big]\dd x,
\end{multline}
and
\begin{multline}
\label{def-hatq2}
\hat q_T:=  \\
\int_\Om \left[ \Big(\zeta(x,T)+\yb(x,T) \sum_{i=1}^m h_i b_i(x)\Big)^2  + 2 \sum_{i=1}^m h_i  b_i (x) p(x,T) \zeta (x,T) \right]\dd x
 +
h^\top  S(T) h.
\end{multline}

\begin{lemma}[Transformed second variation]\label{lem:transf-sec-var}
For $v\in L^2(0,T)^m,$ and $w\in AC([0,T])^m$ given by the Goh transform \eqref{Goh}, {and for all $(p,\mu)\in \Lambda_1$,} we have
\be
\calq[p](z[v],v) = \whq[p,\mu] (\zeta[w], w , w(T)).
\ee
\end{lemma}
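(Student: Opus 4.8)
The plan is to substitute the Goh decomposition $z[v]=\zeta[w]+B\cdot w$ directly into the quadratic form $\calq[p]$ and identify the resulting terms with $\hat q_I$ and $\hat q_T$, with the only nontrivial manipulation being an integration by parts in time to convert the cross-term involving $v_i=\dot w_i$ into a term with $\dot S$ plus a boundary contribution at $t=T$. First I would recall that
\be
\calq[p](z,v)=\int_Q\big(\kappa z^2+2p\textstyle\sum_i v_ib_iz\big)\dd x\dd t+\int_\Om z(x,T)^2\dd x,
\ee
and plug in $z=\zeta+\sum_i b_iw_i\yb=\zeta+B\cdot w$. The term $\int_Q\kappa z^2$ becomes immediately $\int_0^T\int_\Om\kappa(\zeta+\yb\sum_ib_iw_i)^2\dd x\dd t$, matching the first term of $\hat q_I$, and likewise the endpoint term $\int_\Om z(x,T)^2\dd x$ produces the first term of $\hat q_T$.

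The main work is the cross term $2\int_Q p(\sum_iv_ib_i)z\,\dd x\dd t$. Writing $v_i=\dot w_i$ and $z=\zeta+\sum_jb_jw_j\yb$, this splits into $2\int_Q p\dot w_ib_i\zeta$ and $2\int_Q p\,\yb\,b_ib_j\dot w_iw_j=\int_Q\dot w_iw_j\cdot 2S_{ij}'$-style pieces. For the latter, note $\sum_{i,j}S_{ij}\dot w_iw_j$ is not symmetric; I would symmetrize using $\ddt(w^\top S w)=w^\top\dot Sw+2w^\top S\dot w$ (using $S=S^\top$), so that $\int_0^T 2w^\top S\dot w\,\dd t=[w^\top Sw]_0^T-\int_0^T w^\top\dot Sw\,\dd t=h^\top S(T)h-\int_0^Tw^\top\dot Sw\,\dd t$ with $h=w(T)$ and $w(0)=0$. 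This yields the $-w^\top\dot Sw$ term in $\hat q_I$ and the $h^\top S(T)h$ term in $\hat q_T$. For the term $2\int_Qp\dot w_ib_i\zeta$, I would integrate by parts in time: $2\int_0^T\dot w_i\big(\int_\Om pb_i\zeta\,\dd x\big)\dd t=2\big[w_i\int_\Om pb_i\zeta\,\dd x\big]_0^T-2\int_0^Tw_i\ddt\big(\int_\Om pb_i\zeta\,\dd x\big)\dd t$. The boundary term gives $2\sum_ih_ib_i p(\cdot,T)\zeta(\cdot,T)$, the remaining piece of $\hat q_T$; the interior term requires computing $\ddt(p b_i\zeta)=b_i\dot p\,\zeta+b_ip\dot\zeta$ and then using the costate equation for $\dot p$ and the Goh-transformed equation \eqref{zeta} for $\dot\zeta$ (i.e. $\dot\zeta=-A\zeta+B^1\cdot w$), together with an integration by parts in space to move the Laplacians off $p$ and $\zeta$ onto $b_i$.

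The expected main obstacle is precisely this last bookkeeping step: after substituting $\dot p$ (which involves $-Ap$, the source terms $\yb-y_d$, the cubic term $2\gamma\yb^3\cdot(\text{coeff})$ absorbed into $\kappa=1-6\gamma\yb p$, and the measure term $\sum_jc_j\dot\mu_j$) and $\dot\zeta=-A\zeta+B^1\cdot w$, one must integrate $\int_\Om b_i(\dot p\,\zeta+p\dot\zeta)\dd x$ by parts so that the second-order operators appear as $-\Delta b_i\,p-2\nabla b_i\cdot\nabla p$ acting through $\zeta$ (the $A\zeta$ and $Ap$ contributions partially cancel and partially redistribute onto $\nabla b_i$), and collect the zeroth-order pieces $b_i(\yb-y_d)$ and $b_i\sum_jc_j\dot\mu_j$ and $-pB^1\cdot w$ to exactly reconstruct the bracketed expression in the last line of \eqref{def-hatq1}. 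This is a careful but routine computation; the key inputs that make it work are the definition $\chi=\ddt(p\yb)$ and the identity $\dot S_{ij}=\int_\Om b_ib_j\chi\,\dd x$ together with $B^1$ as given in \eqref{B1}, and the fact, from Theorem~\ref{sonc:setting-t1}(ii), that $\mu\in W^{1,\infty}$ and $p$ is essentially bounded so that all integrations by parts in time and space are justified. I would present the computation by grouping terms by their order in $w$ (quadratic in $w$, linear in $w$ times $\zeta$, independent of $w$) and matching each group against $\hat q_I,\hat q_T$.
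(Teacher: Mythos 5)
Your proposal follows essentially the same route as the paper's proof: substitute $z=\zeta+B\cdot w$ into $\calq$, integrate the $2\int_0^T v^\top S w$ piece by parts using the symmetry of $S$ to produce $-w^\top\dot S w$ plus $h^\top S(T)h$, and integrate the $\int_Q p\,v_i b_i\zeta$ piece by parts in time, then use the costate equation and the transformed equation \eqref{zeta} together with Green's formula to rewrite $\int_\Om b_i\,\ddt(p\zeta)\,\dd x$ in the form appearing in \eqref{def-hatq1}. The bookkeeping you describe (cancellation of the $3\gamma\yb^2$ and $\ub\cdot b$ terms in $\ddt(p\zeta)$, redistribution of the Laplacians onto $b_i$) is exactly what the paper carries out in \eqref{dtpzeta}--\eqref{wtqb2i}, so the argument is correct.
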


\begin{proof}
We first replace 
$z$ by $\zeta + B\cdot w = \zeta + \yb \sum_{i=1}^m w_i b_i$ {in $\calq$}, and define
\begin{multline}
\label{wtq}
{\wtq}:= \int_Q \Big[ \kappa( \zeta + \yb \sum_{i=1}^m w_i b_i )^2
+2p\sum_{i=1}^m v_ib_i(\zeta + \yb \sum_{j=1}^m w_j b_j)\Big]\dd x\dd t\\
+\int_\Omega\big(\zeta(T) + \yb(T) \sum_{i=1}^m w_i(T) b_i\big)^2\dd x.
\end{multline}
We aim at proving that $\wtq$ coincides with $\whq.$ This will be done by removing the dependence on $v$ from the above expression.  
For this, we have to deal with the bilinear term {in $\wtq$}, namely with
\be
\label{hatcalqb}
\wtq_b :=\wtq_{b,1} + {2\sum_{i=1}^m {\wtq_{b,2i}},}
\ee
where, omitting the dependence on the multipliers for the sake of simplicity of the presentation, 
\be
\label{hatcalqb12}
{\wtq_{b,1}}:= 2
\int_0^T v^\top S  w  \dd t\quad \text{and }\quad
{{\wtq_{b,2i}} := \disp
\int_0^T v_i \int_\Om b_i  p \zeta  \dd x  \dd t,\,\,\text{for } i=1,\dots,m.}
\ee
Concerning $\wtq_{b,1}$,
since $S$ is symmetric, we get, integrating by parts,
\be
\label{wtqb1}
{\wtq_{b,1}} =\left[w^\top S w\right]_0^T - \int_0^T w^\top \dot S w \dd t.
\ee
Hence $\wtq_{b,1}$ is a function of $w$ and $w(T)$.
Concerning $\wtq_{b,2i}$ defined in \eqref{hatcalqb12}, integrating by parts, we get
\be
\label{Qi''2}
\wtq_{2,bi}
=
w_i(T) \int_\Om b_i p(x,T) \zeta (x,T) \dd x 
-
\int_0^T w_i \int_\Om  b_i  \ddt \big( p\zeta  \big)\dd x \dd t .
\ee
For the derivative inside the latter integral, one has
\be\label{dtpzeta}
\ddt \big( p(x,t)\zeta (x,t) \big) = 
-\Delta p \zeta + p\Delta \zeta-\zeta\left(
(\yb-y_d)+\sum_{j=1}^q c_j \dot\mu_j\right)+p B^1\cdot w.
\ee
By Green's Formula:
\be\label{intDeltapzeta}
\int_Q w_ib_i\big(-\Delta p \zeta + p\Delta \zeta\big)\dd x\dd t =
\int_Q w_i \big( \Delta b_i p +2\nabla b_i\cdot \nabla p) \zeta \dd x\dd t.
\ee
Using \eqref{dtpzeta} and \eqref{intDeltapzeta} in {the expression} \eqref{Qi''2} yields
\begin{multline}
\label{wtqb2i}
\wtq_{b,2i}=  
w_i(T) \int_\Om b_i p(x,T) \zeta (x,T) \dd x 
+\int_Q w_i \Big[ \zeta \big( -\Delta b_i p - 2\nabla b_i\cdot \nabla p\\
 + b_i(\yb-y_d) + b_i\sum_{j=1}^q c_j \dot\mu_j \big) - p B^1\cdot w\Big]\dd x\dd t.
\end{multline}
Hence, $\wtq_{b,2}$ is a function of $(\zeta,w,w(T))$. Finally, 
{putting together \eqref{wtq}, \eqref{hatcalqb}, \eqref{wtqb1} and \eqref{wtqb2i} yields an expression for $\wtq$ that does not depend on $v$ and coincides with $\whq$ (in view of its definition given in \eqref{def-hatq}-\eqref{def-hatq2}). This concludes the proof.}
\end{proof}

\begin{remark}
{The matrix appearing as coefficient of the quadratic term $w$ in $\whq$ (see \eqref{def-hatq1}) is the symmetric $m\times m$ time dependent matrix  $R(t)$ with entries}
\be
\label{matrixR}
R_{ij} := \int_\Om \left(\kappa b_i b_j \yb^2 - \dot S_{ij}
+ p (b_i B^1_j + b_j B^1_i)  \right) \dd x,\quad \text{for } i,j=1,\dots,m.
\ee
\end{remark}

\subsection{Goh transform of the critical cone}
Here, {we apply the Goh transform to the critical cone and obtain the cone $PC$ in the new variables $(\zeta,w,w(T))$. We then define its closure $PC_2,$ that will be used in the next section to prove second order sufficient conditions. In Proposition \ref{cc-m1-p}, we characterize $PC_2$ in the case of scalar control.}
\subsubsection{Primitives of strict critical directions}
Define the set of primitives of strict critical directions as
\be\label{PC_2}
PC := \left\{
\begin{split}
 &(\zeta,w,w(T))\in  Y\times H^1(0,T)^m\times \RR^m;\\ 
 & (\zeta,w) \text{ is given by \eqref{Goh}  for some } (z,v)\in C_{\rm s}
\end{split}
\right\},
\ee
{which is obtained by applying the Goh transform \eqref{Goh} to $C_{\rm s}$,} and let
\be
PC_2 := 
\text{closure of $PC$ in $Y\times L^2(0,T)^m\times\RR^m$}.
\ee
Remember Definition \ref{def-bk-ck}
of the active constraints sets $\check{B}_k,$ $\hat{B}_k$, $B_k=\check{B}_k \cup \hat{B}_k,$ $C_k$.
\begin{lemma}
For any $(\zeta,w,h)\in PC$, it holds
\be
\label{w-B}
w_{B_k} (t) =
\frac{1}{\tau_{k+1}  - \tau_{k}}
 \int_{\tau_{k}}^{\tau_{k+1}} w_{B_k} (s) \dd s,\quad \text{for } k=0,\dots,r-1.
 \ee
\end{lemma}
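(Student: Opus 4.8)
The statement says: for $(\zeta,w,h)\in PC$, the components of $w$ indexed by $B_k$ are constant on each maximal arc $(\tau_k,\tau_{k+1})$ and equal to their average there. The plan is to unwind the definition of $PC$ and use the structure of the strict critical cone $C_{\rm s}$. By definition of $PC$, such $(\zeta,w,h)$ arises from some $(z,v)\in C_{\rm s}$ via the Goh transform \eqref{Goh}, so $w(t)=\int_0^t v(s)\,\dd s$. The key fact from the definition \eqref{Cs} of $C_{\rm s}$ is that $v_i(t)=0$ a.e.\ on $I_i$ for each $i=1,\dots,m$. Now fix $k$ and $i\in B_k$; then $i\in\check B_k$ or $i\in\hat B_k$, which means the $i$th bound constraint is active on the whole arc $(\tau_k,\tau_{k+1})$, i.e.\ $(\tau_k,\tau_{k+1})\subseteq I_i$ up to a null set. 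Hence $v_i\equiv 0$ a.e.\ on $(\tau_k,\tau_{k+1})$, and therefore $w_i$ is constant on $[\tau_k,\tau_{k+1}]$ (it is absolutely continuous with a.e.-zero derivative there).

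Once $w_{B_k}$ is constant on $(\tau_k,\tau_{k+1})$, the averaging identity \eqref{w-B} is immediate: the average of a constant function over an interval equals that constant, so $\frac{1}{\tau_{k+1}-\tau_k}\int_{\tau_k}^{\tau_{k+1}} w_{B_k}(s)\,\dd s = w_{B_k}(t)$ for every $t\in(\tau_k,\tau_{k+1})$. I would phrase the conclusion for all $t$ in the arc (the endpoints being handled by continuity of $w$, since $w\in H^1(0,T)^m\subset C([0,T])^m$).

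The only genuinely substantive point — and the one I would state carefully — is the identification ``$i\in B_k$ implies $(\tau_k,\tau_{k+1})\subseteq I_i$ up to null measure.'' This is exactly Definition \ref{def-bk-ck} together with the convention, adopted right before that definition, that $\ub$ is identified with a representative whose $i$th component is constant over any time interval included (up to a zero-measure set) in $\check I_i$ or $\hat I_i$, and with the finite maximal arc property \eqref{arcs-junc-points-hyp} guaranteeing that each $(\tau_k,\tau_{k+1})$ is a maximal arc of constant active constraints. So if $i\in\check B_k$ then $(\tau_k,\tau_{k+1})\subseteq\check I_i$ up to null measure, hence $i\in I_i$'s contact set there, hence $v_i=0$ a.e.\ on the arc by membership in $C_{\rm s}$. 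There is no real obstacle; the proof is essentially a bookkeeping argument chaining the definitions, and I expect it to be three or four lines long.
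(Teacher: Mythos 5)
Your proof is correct and follows the same route as the paper, which simply observes that the identity is immediate from the constancy of $w_{B_k}$ on each maximal arc; you have merely filled in the (straightforward) justification of that constancy from the definitions of $C_{\rm s}$, $B_k$ and the Goh transform.
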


\begin{proof}
Immediate from the constancy of $w_{B_k}$ a.e. on each $(\tau_k,\tau_{k+1}),$ for any $(\zeta,w,h)\in PC.$
\end{proof}

Take $(z,v)\in C_{\rm s},$ and let $w$ and $\zeta[w]$ be given by the Goh transform \eqref{Goh}. Let $k\in \{0,\dots,r-1\}$ and take an
index $j \in C_k.$ Then
$\ds 0=\int_\Om c_j(x) z(x,t) \dd x$ on $(\tau_{k} , \tau_{k+1})$.
Therefore,
letting $M_j(t)$ denote the $j$th column of the matrix 
$M(t)$ (defined in \eqref{Mij}), one has
\be
\label{w-C}
M_j(t) \cdot w(t) = - \int_\Om c_j(x)  \zeta[w](x,t) \dd x,\quad \text{on } (\tau_k,\tau_{k+1}),\, \text{for } j\in C_k.
\ee
We can rewrite \eqref{w-B}-\eqref{w-C} 
in the form
\be
\label{Aw=Bw}
\cala^k(t) w(t) = \left( \calb^k  w \right) (t),
\quad
\text{on } (\tau_{k} , \tau_{k+1}),
\ee
where $\cala^k(t)$ is an $m_k\times m$ matrix with $m_k := |B_k| + |C_k|,$ and $\calb^k \colon L^2(0,T)^m \to H^1(\tau_k,\tau_{k+1})^{m_k}.$
We can actually consider 
$\calb:=(\calb^1,\ldots,\calb^{r})$
as a linear continuous mapping from
$L^2(0,T)^m$ to $\ds\Pi_{k=0}^{r-1}  H^1(\tau_k,\tau_{k+1})^{m_k},$
and $\cala:=(\cala^1,\ldots,\cala^{r})$
as a linear continuous mapping from 
$L^2(0,T)^m$ into 
$\ds\Pi_{k=0}^{r-1} L^2(\tau_k,\tau_{k+1})^{m_k}$.
For each $t \in (\tau_k,\tau_{k+1})$, let us use $\cala(t)$ to denote the matrix $\cala^k(t).$ 
We have that, for a.e. $t\in (0,T),$  $\cala(t)$ is  of maximal rank, 
so that there exists a unique measurable
$\lambda(t)$ 
(whose dimension is the rank of $\cala(t)$ and depends on $t$)
such that 
\be
\label{wdecomposition}
w(t) = w_0(t) + \cala(t)^\top\lambda(t),
\quad
\text{with } w_0(t) \in \Ker \cala(t).
\ee
Observe that
 $\cala(t) \cala(t)^\top$ has a continuous time
derivative and is uniformly invertible
on $[0,T].$ So, 
$(\cala(t) \cala(t)^\top)^{-1}$
is linear continuous from $H^1$
into $H^1$ (with appropriate dimensions)
over each arc,
and 
$\cala(t) \cala(t)^\top \lambda (t) = (\calb w)(t)$
a.e.
We deduce that
$t\mapsto (\lambda (t), w_0(t))$
belongs to $H^1$
over each arc $(\tau_k,\tau_{k+1}).$
So, in the subspace $\Ker (\cala-\calb)$,
$w\mapsto\lambda(w)$
is linear continuous, {considering the $L^2(0,T)^m$-topology in the departure set, and the $\ds\Pi_{k=0}^{r-1} H^1(\tau_k,\tau_{k+1})^{m_k}$-topology in the arrival set}.
Since $(\cala-\calb)$ is linear
continuous over $L^2(0,T)^m$
we have that
\be
\label{pc2-ker-a-b}
w\in  \Ker (\cala-\calb),\qquad \text{for all } (\zeta,w,h)\in PC_2.
\ee
While the inclusion {induced by} \eqref{pc2-ker-a-b}
could be strict, we see that for any $(\zeta,w,h)\in PC_2$,
$\lambda(w)$ and $\cala w$ are well-defined in 
the $H^1$ spaces, and the following initial-final conditions hold:
\be
\label{lem-pc2-1}
\left\{ \ba{lll}
{\rm (i)} & \disp
\text{$w_i=0$\, a.e. on $(0,\tau_1),$ for each $i\in B_0,$}
\vspace{1mm} \\ {\rm (ii)} & \disp
  \text{$w_i=h_i$\, a.e. on $(\tau_{r-1},T),$ for each $i\in B_{r-1},$}
\vspace{1mm} \\ {\rm (iii)} & \disp
\text{$g_j'(\yb(\cdot,T))[\zeta(\cdot,T)+B(\cdot,T)\cdot h ]=0$ if $j\in C_{r-1}$.}
\ea\right. 
\ee
From the definitions of $C_s$ (see \eqref{Cs}) and of $PC_2$, we can obtain  additional continuity conditions at the {\em bang-bang} junction points:
\be
\text{if $i\in B_{k-1} \cup B_k$, then $w_i$ is continuous at  $\tau_k$,\quad for all $(\zeta,w,h)\in PC_2.$}
\ee

\begin{remark}
Another example is when $m=1,$ the state constraint is active for $t<\tau$ and the control constraint is active for $t>\tau$, then $w$ is continuous at time $\tau$.
This is similar to the ODE case studied in \cite[Remark 5]{MR3555384}.
\end{remark}

We have seen that {over each arc $(\tau_k,\tau_{k+1})$,} 
$\lambda^k:=\lambda(w)$ is pointwise well-defined, and it possesses right limit at the entry point and left limit at the exit point,
denoted by 
$\lambda(\tau_k^+)$ and $\lambda(\tau_{k+1}^-)$, respectively.
Let {$c_{k+1}\in\RR^m$}
be such that, for some
$\nu^{k+i}$,
\be
\label{LemmaJ-1}
{c_{k+1}}  = \cala^{k+i}(\tau_{k+1})^\top\nu^{k+i}, \;\;\text{for } i=0,1,
\ee 
meaning that $c_{k+1}$ is a linear combination of the 
rows of $\cala^{k+i}(\tau_{k+1})$ for both $i=0,1.$

\begin{lemma}
\label{LemmaJ}
{Let $k=0,\dots,r-1,$ and let $c_{k+1}$} satisfy \eqref{LemmaJ-1}.
Then, the junction condition
\be
\label{LemmaJ-2}
{c_{k+1}\cdot \big(w(\tau_{k+1}^+)- w(\tau_{k+1}^-) \big)} = 0,
\ee
holds for all $(\zeta,w,h) \in PC_2$.
\end{lemma}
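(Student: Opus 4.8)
The plan is to prove \eqref{LemmaJ-2} by writing $c_{k+1}$, via its representation over the arc $(\tau_k,\tau_{k+1})$, as a linear combination of vectors each of which annihilates the jump $\delta:=w(\tau_{k+1}^+)-w(\tau_{k+1}^-)$. Fix $(\zeta,w,h)\in PC_2$; by the discussion preceding the statement, the one-sided limits $w(\tau_{k+1}^-)$ and $w(\tau_{k+1}^+)$ exist, so $\delta$ is well defined. Recall that the rows of $\cala^k(\tau_{k+1})$ are precisely the coordinate vectors $e_i^\top$ for $i\in B_k$ (coming from \eqref{w-B}) together with the vectors $M_j(\tau_{k+1})^\top$ for $j\in C_k$ (coming from \eqref{w-C}), where $e_i\in\RR^m$ is the $i$th coordinate vector and $M_j(t)$ denotes the $j$th column of the matrix $M(t)$ from \eqref{Mij}. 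Hence, splitting the coefficient vector $\nu^k$ of \eqref{LemmaJ-1} (taken with $i=0$) into the block indexed by $B_k$ and the block indexed by $C_k$, we may write
\[
c_{k+1}=\sum_{i\in B_k}\nu^k_i\, e_i+\sum_{j\in C_k}\nu^k_j\, M_j(\tau_{k+1}).
\]

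First I would treat the bang block: by the continuity property recalled just before the statement (if $i\in B_{k-1}\cup B_k$ then $w_i$ is continuous at $\tau_k$), applied at the junction $\tau_{k+1}$, $w_i$ is continuous at $\tau_{k+1}$ for every $i\in B_k\cup B_{k+1}$, so in particular $\delta_i=0$ for all $i\in B_k$. Next I would treat the state-constraint block. Fix $j\in C_k$. Since $z=z[v]=\zeta+B\cdot w$ and $M_{ij}(t)=\int_\Om b_ic_j\yb\,\dd x$, one has the pointwise identity $\int_\Om c_j z\,\dd x=\int_\Om c_j\zeta\,\dd x+M_j(t)\cdot w(t)$ for a.e.\ $t$, i.e.\ $M_j(t)\cdot w(t)=\int_\Om c_j z\,\dd x-\int_\Om c_j\zeta\,\dd x$. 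The maps $t\mapsto\int_\Om c_j z\,\dd x$ and $t\mapsto\int_\Om c_j\zeta\,\dd x$ are continuous on $[0,T]$ (as $z,\zeta\in Y$), and $M$ is continuous; moreover $j\in C_k$ means $(\tau_k,\tau_{k+1})\subseteq I^C_j$, so that, because $(z,v)\in C_{\rm s}$, the first map vanishes on $(\tau_k,\tau_{k+1})$ and hence at $\tau_{k+1}$ by continuity. Letting $t\to\tau_{k+1}^-$ through $(\tau_k,\tau_{k+1})$ (where $M_j(t)\cdot w(t)\to M_j(\tau_{k+1})\cdot w(\tau_{k+1}^-)$, since $w$ has a left limit there and $M$ is continuous) and, likewise, $t\to\tau_{k+1}^+$ through $(\tau_{k+1},\tau_{k+2})$, I obtain
\[
M_j(\tau_{k+1})\cdot w(\tau_{k+1}^-)=-\int_\Om c_j(x)\,\zeta(x,\tau_{k+1})\,\dd x=M_j(\tau_{k+1})\cdot w(\tau_{k+1}^+),
\]
that is, $M_j(\tau_{k+1})\cdot\delta=0$ for every $j\in C_k$.

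Putting the two blocks together yields $c_{k+1}\cdot\delta=\sum_{i\in B_k}\nu^k_i\,\delta_i+\sum_{j\in C_k}\nu^k_j\,M_j(\tau_{k+1})\cdot\delta=0$, which is \eqref{LemmaJ-2}; carrying out the symmetric computation with the representation of $c_{k+1}$ over $(\tau_{k+1},\tau_{k+2})$ (i.e.\ \eqref{LemmaJ-1} with $i=1$) and the analogues of the two steps at the entry point $\tau_{k+1}$ gives the same identity. The only delicate point is the limit passage in the second step — namely that the one-sided limits of $M_j(\cdot)\cdot w(\cdot)$ at $\tau_{k+1}$ exist and both equal $-\int_\Om c_j\zeta(\cdot,\tau_{k+1})\,\dd x$ — which rests on the existence of the one-sided limits of $w$ at the junctions together with the continuity of $t\mapsto\int_\Om c_j z(\cdot,t)\,\dd x$ and its vanishing on $I^C_j$; everything else is elementary linear algebra.
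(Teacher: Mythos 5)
There is a genuine gap. The heart of Lemma \ref{LemmaJ} is not the identity for elements of $PC$ --- for those, $w\in H^1(0,T)^m$ is continuous, so the jump $w(\tau_{k+1}^+)-w(\tau_{k+1}^-)$ is simply zero and \eqref{LemmaJ-2} is trivial. The whole content is that the conclusion survives passage to the $L^2$-closure $PC_2$, and your argument never performs that limiting step. Instead you treat a general element of $PC_2$ as if it came from an underlying critical direction: you write $z=\zeta+B\cdot w$ with $(z,v)\in C_{\rm s}$ and use $\int_\Om c_j z\,\dd x=0$ on $I^C_j$. For $(\zeta,w,h)\in PC_2\setminus PC$ there is no such $(z,v)$; all that survives the closure is $w\in\Ker(\cala-\calb)$, i.e.\ the a.e.\ relations \eqref{w-B}--\eqref{w-C} on each arc, cf.\ \eqref{pc2-ker-a-b}.

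The second, related, problem is your opening claim that the one-sided limits $w(\tau_{k+1}^{\pm})$ of the full vector exist for $PC_2$ elements. They need not: as the paper points out, only $\lambda(w)$ and $\cala w$ are well defined in $H^1$ over each arc, while the component $w_0(t)\in\Ker\cala(t)$ is merely $L^2$. Consequently only combinations $r\cdot w$ with $r$ in the row space of $\cala^k(\tau_{k+1})$ have a left limit at $\tau_{k+1}$, and only those with $r$ in the row space of $\cala^{k+1}(\tau_{k+1})$ have a right limit. Your blockwise decomposition of $c_{k+1}$ into the rows $e_i$ ($i\in B_k$) and $M_j(\tau_{k+1})$ ($j\in C_k$) of $\cala^k(\tau_{k+1})$ therefore fails at the right limit: for $i\in B_k\setminus B_{k+1}$ the quantity $w_i(\tau_{k+1}^+)$ is not defined (the continuity remark you invoke is itself only asserted in the paper, and does not produce a right limit when the bound is inactive on the next arc), and for $j\in C_k\setminus C_{k+1}$ nothing controls $M_j(\tau_{k+1})\cdot w(\tau_{k+1}^+)$ --- think of a CB junction where the state constraint exits at $\tau_{k+1}$. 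Hypothesis \eqref{LemmaJ-1} is precisely the statement that the \emph{combination} $c_{k+1}$ lies in both row spaces, and the paper's proof exploits exactly this: for $PC$ elements one has $c\cdot w(\tau)=(\nu^{k+i})^\top\cala^{k+i}(\tau)(\cala^{k+i}(\tau))^\top\lambda^{k+i}(\tau)$ for $i=0,1$, giving \eqref{nukeq}, and this identity passes to the limit because $w\mapsto\lambda(w)$ is continuous from $L^2(0,T)^m$ into $H^1$ on each arc, so that pointwise evaluation of $\lambda^{k+i}$ at $\tau_{k+1}$ is stable under $L^2$ convergence of $w_\ell$. To repair your proof you would need both this reinterpretation of the one-sided limits through the $\lambda$-decomposition and the approximation argument from $PC$ to $PC_2$; without them the argument only establishes the (trivial) case of $PC$.
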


\begin{proof}
Let $(\zeta,w,h)$ in $PC$, {and set $c:=c_{k+1}$ and $\tau:=\tau_{k+1}$ in order to simplify the notation}.
Then
\be
\label{LemmaJ-2-a}
c\cdot w(\tau) 
 = 
(\nu^k)^\top \cala^k(\tau) w(\tau)
= 
(\nu^k)^\top \cala^k(\tau) (\cala^k(\tau))^\top \lambda^k(\tau).
\ee
By the same relations for index $k+1$ we conclude that
\be
\label{nukeq}
(\nu^{k})^\top \cala^{k}(\tau) (\cala^{k}(\tau))^\top \lambda^{k}(\tau)
=
(\nu^{k+1})^\top \cala^{k+1}(\tau) (\cala^{k+1}(\tau))^\top \lambda^{k+1}(\tau).
\ee
Now let $(\zeta,w,h)\in PC_2$. Passing to the limit in the above relation \eqref{nukeq}
written for $(\zeta[w_\ell],w_{\ell},h_{\ell})\in PC$, $w_{\ell}\rar w$ in $L^2(0,T)^m$, {$h_{\ell}\rar h$}
(which is possible since
$\lambda(t)$ is uniformly Lipschitz over each arc),
we get that  \eqref{nukeq} holds for any 
$(\zeta,w,h)\in PC_2$, from which the conclusion follows.
\end{proof}

By {\em junction conditions} at the junction time $\tau = \tau_k\in (0,T)$,
we mean any relation of type \eqref{LemmaJ-2}. 
Set 
\be\label{PC-set}
PC'_2 := \{ (\zeta[w],w,h);\, w\in \Ker (\cala-\calb),
\text{\eqref{LemmaJ-2} holds, for all $c$
satisfying \eqref{LemmaJ-1}}\}.
\ee
We have proved that
\be
\label{pc2incl-pcp2}
PC_2 \subseteq PC'_2.
\ee
In the case of a scalar control ($m=1$) we can show that these
two sets coincide. 
\subsubsection{Scalar control case}
The following holds: 

\begin{proposition}
\label{cc-m1-p}
If the control is scalar, then
\be
\label{cc-m1-p1}
PC_2 = \left\{ 
\ba{lll}
(\zeta[w],w,h)\in Y\times L^2(0,T)\times \RR; \;\;
w \in \Ker (\cala-\calb); 
\vspace{1mm} \\
\text{$w$ is continuous at BB, BC, CB junctions} 
\vspace{1mm} \\
\text{$\lim_{t\downarrow 0}w(t)=0$ 
if the first arc is not singular}
\vspace{1mm} \\
\text{$\lim_{t\uparrow T}w(t)=h$ 
if the last arc is not singular}
\ea \right\}.
\ee
\end{proposition}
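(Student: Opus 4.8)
The plan is to prove the two inclusions separately. The inclusion ``$\subseteq$'' is essentially already available: by \eqref{pc2incl-pcp2} we have $PC_2 \subseteq PC'_2$, and in the scalar case the constraints defining $PC'_2$ unwind into exactly the continuity-at-junctions conditions and the initial/final conditions listed on the right-hand side of \eqref{cc-m1-p1}. Indeed, when $m=1$ each set $B_k$ is either empty (singular arc) or all of $\{1\}$ (bang arc), and each $C_k$ is either empty or $\{1\}$; so the matrix $\cala^k(t)$ is either the $1\times 1$ identity (on a bang arc, from \eqref{w-B}, or on a constrained arc, from \eqref{w-C} using the controllability condition \eqref{controllability} which forces $M_1(t)\neq 0$) or is void (on a singular interior arc with no active constraint). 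Feeding this into \eqref{LemmaJ-1}--\eqref{LemmaJ-2} and into \eqref{lem-pc2-1} produces precisely: continuity of $w$ at any junction where at least one side is non-singular (BB, BC, CB), the condition $\lim_{t\downarrow 0}w(t)=0$ when the first arc is a bang arc (from \eqref{lem-pc2-1}(i)), and $\lim_{t\uparrow T}w(t)=h$ when the last arc is a bang arc (from \eqref{lem-pc2-1}(ii); note \eqref{lem-pc2-1}(iii) gives no extra restriction once one recalls $w_{C_{r-1}}$ is determined by $\zeta$ through \eqref{w-C}). So the first inclusion is a matter of carefully bookkeeping the four junction types.

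The substantive direction is ``$\supseteq$'': given $(\zeta[w],w,h)$ in the set on the right, one must approximate it in the $Y\times L^2(0,T)\times\RR$ topology by elements of $PC$, i.e. by primitives of genuine strict critical directions $(z,v)\in C_{\rm s}$. The natural approach is a density argument in the spirit of Dmitruk's density lemma: first reduce to approximating $w$ by functions that are additionally essentially bounded (or even Lipschitz) on each arc, then on each arc recover $v := \dot w$ and $z := \zeta + B\cdot w$ and check that $(z,v)\in C_{\rm s}$. The point is that $w\in\Ker(\cala-\calb)$ already encodes that $\dot w$ vanishes on bang arcs (so $v$ satisfies $v_i=0$ a.e. on $I_i$, since on a scalar bang arc $w$ is constant) and that $g_j'(\yb)z=0$ on active constrained arcs (that is the content of \eqref{w-C} differentiated, together with the fact that $g_j'(\yb(\cdot,t))z(\cdot,t)$ is constant on the arc and vanishes at the entry point by the continuity/initial conditions). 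The continuity conditions at BB, BC, CB junctions and the endpoint conditions are exactly what is needed to patch the arc-wise constructions into a globally $H^1$ function $w$, equivalently an $L^2$ function $v$, with no spurious Dirac masses at junctions.

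I expect the main obstacle to be the approximation step at the junctions, specifically checking that when one perturbs $w$ on the interior of the arcs to make it regular, one can still arrange that the perturbed $w$ continues to lie in $\Ker(\cala-\calb)$ and continues to satisfy the continuity requirements, so that the perturbed pair is genuinely in $PC$ and not merely in $PC'_2$. On a constrained arc the relation \eqref{w-C} ties $w$ to $\zeta[w]$ through the (solution operator of the) equation \eqref{zeta}, so one cannot perturb $w$ freely there; the trick, as in the proof of Lemma \ref{cs-cap-inf.l}, is to perturb only the ``free'' component — here, on a scalar control, the free component lives only on singular arcs, where $\cala$ imposes nothing — truncate and mollify it there, solve \eqref{zeta} for the corresponding $\zeta$, and let the constrained-arc values of $w$ be \emph{defined} by \eqref{w-C} in terms of this new $\zeta$. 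One then verifies that $v=\dot w$ so obtained is in $L^2$, that $(z,v)\in C_{\rm s}$, and that the whole family converges to the target as the mollification parameter tends to zero. The scalar hypothesis $m=1$ is what makes this clean: the rank of $\cala(t)$ can only jump between $0$ and $1$, the kernel is at most one-dimensional, and the junction conditions \eqref{LemmaJ-2} reduce to the single scalar continuity statements listed, so there is no interaction between several control components to control.
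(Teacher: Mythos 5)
The paper does not actually prove this proposition in the text; it defers to \cite[Prop.~4 and Thm.~3]{MR3555384}, so your attempt is being measured against the argument of that reference. Your inclusion ``$\subseteq$'' is essentially right: in the scalar case each $\cala^k(t)$ is either void (singular arc) or a nonzero $1\times 1$ matrix (equal to $1$ on a bang arc, to $M_{1j}(t)\neq 0$ on a constrained arc, by \eqref{controllability}), so \eqref{LemmaJ-1}--\eqref{LemmaJ-2} produce continuity of $w$ exactly at BB, BC, CB junctions and nothing at junctions adjacent to a singular arc. One bookkeeping point you miss: when the first (or last) arc is a \emph{constrained} rather than a bang arc, the condition $\lim_{t\downarrow 0}w(t)=0$ (resp.\ $\lim_{t\uparrow T}w(t)=h$) does not come from \eqref{lem-pc2-1}(i)--(ii), which are vacuous since $B_0=\emptyset$ there, but from \eqref{w-C} together with $\zeta(\cdot,0)=0$ (resp.\ together with \eqref{lem-pc2-1}(iii)); your sketch only treats the bang case.

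The genuine gap is in ``$\supseteq$''. Your construction --- mollify $w$ on singular arcs, keep it on bang arcs, solve \eqref{zeta} for the perturbed $\zeta_\eps$, and \emph{redefine} $w$ on constrained arcs through \eqref{w-C} --- is the right idea, but the resulting $w'_\eps$ is in general \emph{not} continuous at the endpoints of constrained arcs: its one-sided value there is $-\bigl(\int_\Om c_j\zeta_\eps\,\dd x\bigr)/M_{1j}$, which differs from the value inherited from the adjacent arc by a nonzero $O(\eps)$ amount precisely because $\zeta_\eps\neq\zeta[w_0]$. So $w'_\eps$ is only piecewise $H^1$ with small jumps, its distributional derivative contains Dirac masses, and the pair is not in $PC$; the continuity conditions satisfied by the \emph{target} $w_0$ do not transfer to the perturbed object, contrary to your claim that the junction conditions ``are exactly what is needed to patch''. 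The missing step is a surjectivity/metric-regularity argument: consider the linear jump operator $w\mapsto\{w(\tau^+)-w(\tau^-)\}_{\tau\in\calt}$ (with $\calt$ the set of junction points) on the space of piecewise-$H^1$ functions satisfying the arcwise constraints; it is onto, so Hoffman's lemma (in its infinite-dimensional form) yields some $w''_\eps\in PC$ with $\|w''_\eps-w'_\eps\|=O(\eps)$. Without this final correction the approximating family never lands in $PC$, and the density claim --- which is the whole content of the nontrivial inclusion --- is not established.
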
 
For a proof we refer to \cite[Prop. 4 and Thm. 3]{MR3555384}.

\if{
\subsubsection{Extension: under construction}

Set for $t\in (0,T)$:
\be
X^k_t := (\Ker \cala^k(t) )^\perp = \range( \cala^k(t)^\top).
\ee
For $t$ close enough to 
$(\tau_k, \tau_{k+1})$, this subspace has dimension $m_k$. 
Set, for $t$ close to $\tau= \tau_{k}$:
\be
Y^k_t:=X^k_{t} \cap X^{k-1}_{t}; \;\;
W^k_t:= X^k_{t-1} \cap (Y^k_t)^\perp  \;\;
Z^k_t := X^{k}_{t} \cap (Y^k_t)^\perp;
F^k_t = ( X^{k-1}_{t} + X^{k}_{t} )^\perp.
\ee
We need to assume the nondegeneracy hypothesis
\be
\label{hyp-ND}
\text{$Y^k_t$ has constant dimension for $t$ close to $\tau_{k+1}$.} 
\ee
Assume in the sequel that $t$ close to $\tau_{k+1}$.
Then we have the direct sums 
\be
\label{sum-dir-ywzf}
\RR^m = Y^k_t   \oplus W^k_t  \oplus Z^k_t  \oplus F^k_t.
\ee
These four spaces have constant dimension, and 
the projection of $w_t\in PC_2$ over each of them
is continuous. 

\begin{proposition}
Let \eqref{hyp-ND} hold. 
Then $PC_2 = PC'_2(\ub)$.
\end{proposition}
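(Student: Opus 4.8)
The statement in question is Proposition~\ref{cc-m1-p}, which claims that for scalar control ($m=1$), the set $PC_2$ (closure of the Goh-transformed strict critical cone) coincides with an explicit set described by membership in $\Ker(\cala-\calb)$ plus continuity conditions at bang-bang, bang-constrained, and constrained-bang junctions, plus the initial/final boundary conditions whenever the first/last arcs are non-singular. The reference given is \cite[Prop. 4 and Thm. 3]{MR3555384}, so the expected proof is an adaptation of the ODE argument to this PDE setting.

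\textbf{Plan of proof.} The inclusion $PC_2 \subseteq PC_2'$ has already been established in \eqref{pc2incl-pcp2}, and in the scalar case one checks directly that the right-hand side of \eqref{cc-m1-p1} is exactly $PC_2'$ (the junction conditions \eqref{LemmaJ-2} degenerate, for $m=1$, either to a vacuous statement on singular arcs — where $\cala^k(t)$ is a $0\times 1$ matrix — or to the continuity of $w$ at a junction adjacent to a non-singular arc, and likewise the initial/final conditions \eqref{lem-pc2-1}(i)-(ii) reduce to $w(0^+)=0$, $w(T^-)=h$ precisely when the extreme arcs are non-singular). So the real content is the reverse inclusion: every $(\zeta[w],w,h)$ in the explicit set is an $L^2\times\RR$-limit of elements of $PC$. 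First I would fix such a $w$ and work arc by arc. On each arc $(\tau_k,\tau_{k+1})$ either the arc is non-singular, in which case the membership $w\in\Ker(\cala-\calb)$ forces $w$ to be constant there (the $B_k$-component condition \eqref{w-B}), or it is singular, in which case the constraint is the scalar relation \eqref{w-C}, $M_j(t)w(t) = -\int_\Om c_j\zeta[w](\cdot,t)\dd x$, with $|M_j(t)|$ bounded below by the controllability hypothesis \eqref{controllability}.

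\textbf{Construction of the approximating sequence.} The approximation must be done so that the result stays in $PC$, i.e. corresponds to an honest strict critical direction $(z,v)\in C_s$ with $v\in L^2$ and $v=0$ on the bound-constrained arcs. The strategy, following \cite{MR3555384}, is: start from $w$, which is already absolutely continuous (hence $v=\dot w\in L^2$) on the singular arcs and constant on the non-singular arcs; the only obstruction to $w$ being a genuine primitive of a strict critical direction is that $w$ might have jumps at junctions between two singular arcs (not excluded by the stated conditions), and one must mollify $w$ near such junctions. Concretely, replace $w$ on a small neighbourhood of each such junction by an affine interpolation on a shrinking interval of length $\delta\to0$; this produces $w_\delta\in H^1$ which is still constant on the (shrunk) non-singular arcs, still satisfies the continuity requirements at BB/BC/CB junctions, and satisfies \eqref{w-C} only up to an error supported on the interpolation interval. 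To correct for this error one uses the controllability/surjectivity: on the singular arc, solving \eqref{w-C} for the component of $v$ constrained by the state constraint gives a continuous inverse $z\mapsto$ (that component of $v$) exactly as in the proof of Lemma~\ref{cs-cap-inf.l} (via formula \eqref{v-prime-1}), so one adjusts $w_\delta$ by solving the linearized equation \eqref{zeta} with the corrected right-hand side. One then verifies $\|w_\delta - w\|_2\to 0$ and $\|\zeta[w_\delta]-\zeta[w]\|_Y\to0$ as $\delta\to0$, using that the measure of the modified set tends to zero and that $\zeta[\cdot]$ depends continuously on $w$ in the relevant topologies.

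\textbf{Main obstacle.} I expect the delicate point to be verifying that the mollified/corrected directions genuinely land in $C_s$ — in particular that the sign/zero structure on the contact sets is preserved (the strict cone requires $v=0$ on the bound arcs and the linearized state constraint $=0$ on $I^C_j$), and that the correction needed to restore \eqref{w-C} exactly does not reintroduce a jump at a non-singular junction or spoil $v\equiv0$ on a bound arc. This is where the scalar-control hypothesis is essential: with $m=1$ the kernel/range splitting \eqref{wdecomposition} is one-dimensional on each arc, so there is no freedom to "rotate" between components and the interpolation is forced, which is precisely what makes $PC_2 = PC_2'$ rather than a strict inclusion. Since the full technical argument is carried out in \cite[Prop. 4, Thm. 3]{MR3555384} for the ODE case, and the only PDE-specific ingredients — well-posedness of \eqref{zeta} in $Y$, boundedness of $M(t)$ and its derivative, the Lipschitz-in-time regularity of $\cala(t)\cala(t)^\top$, and the continuous right-inverse \eqref{v-prime-1} — have all been verified above, the proof here reduces to invoking that reference, which is exactly what the paper does.
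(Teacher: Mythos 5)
You have proved a different statement from the one in question. The proposition here is the general, multidimensional one: under the nondegeneracy hypothesis \eqref{hyp-ND} (constancy of $\dim Y^k_t$, where $Y^k_t = X^k_t\cap X^{k-1}_t$, for $t$ near each junction), the inclusion $PC_2\subseteq PC_2'$ of \eqref{pc2incl-pcp2} becomes an equality for arbitrary $m$. Your argument is explicitly confined to the scalar case $m=1$ of Proposition \ref{cc-m1-p} --- you even single out the scalar hypothesis as ``essential'' because it makes the kernel/range splitting \eqref{wdecomposition} one-dimensional on each arc --- and it never invokes \eqref{hyp-ND} at all. That hypothesis is precisely the multidimensional surrogate for the one-dimensionality you rely on: it guarantees that the direct sum $\RR^m = Y^k_t\oplus W^k_t\oplus Z^k_t\oplus F^k_t$ of \eqref{sum-dir-ywzf} has summands of constant dimension near $\tau_{k+1}$, so that the projections of $w$ onto them vary continuously and the junction conditions \eqref{LemmaJ-2} can be matched across a junction at which the active sets, and hence the subspaces $X^{k-1}_t$ and $X^k_t$, change. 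Without engaging with this structure, the converse inclusion $PC_2'\subseteq PC_2$ is not established for $m>1$, and that converse inclusion is the entire content of the proposition.

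For what it is worth, your construction is close in spirit to what the paper itself attempts for the general statement: regularize $w$ (the paper uses convolution, you use affine interpolation near the problematic junctions), then restore the pointwise constraints $\cala^k(t)w(t)=(\calb^k w)(t)$ arc by arc using the continuous right-inverse furnished by the controllability condition \eqref{controllability}, and finally repair the resulting $O(\eps)$ jumps at junction points via Hoffman's lemma (\cite[Thm.~2.200]{MR1756264}) applied to $PC$ in a piecewise $H^1$ norm. But the paper's own argument for this general proposition is left unfinished precisely at the step where the corrected direction must be controlled across a junction in the multidimensional setting --- the step your proposal, by restricting to $m=1$, does not address.
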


\begin{proof}
We need to prove the converse inclusion to 
\eqref{pc2incl-pcp2}. So, let $w \in PC'_2(\ub)$.
We need to construct some $w'\in PC$
such that $\|w'-w\|_2$ is arbitrarily small.

First regularization by convolution,
$\rho_\eps(t) = \eps^{-1}\rho(t/\eps)$ with $rho$
$C^\infty$ with support in unit ball, nonnegative with
unit integral, and 
$w_\eps := w * \rho_\eps$. Then
\be
\| \dot w_\eps \|_\infty 
\leq \|w\|_1 \| \rho_\eps \|_\infty
\leq O( \eps^{-1} ).
\ee

XXX

Given $\eps>0$, let $w^1\in \Uad$ be of class
$C^\infty$ and such that $\|w^1-w\|_2< \eps$.
We next define another control $w^2$ by induction over the arcs.
Let $k\in\{0,\ldots, r-1\}$ be such that $w^2$ and the associated $\zeta^2$
have been defined over $[0,\tau_k)$.
We start with $k=0$. 
Over the arc $(\tau_k, \tau_{k+1})$ we define $w^2(t)$ as the 
closest element to $w^1(t)$ while satisfying that satisfies the 
bound and state constraints in the definition of $PC$:
\be
w^2(t) \in \argmin_{v\in\RR^m}\{ \half |v-w^1(t)|^2; \; 
\cala^k(t) v = \left( \calb^k  w^2 \right) (t)
\}.
\ee
In view of the definition of these linear constraints, we can write
(where $w^b$ corresponds to bound constraints)
\be
w^2(t) = w^b(t) + A(t) (w^2(t),zeta(t))
\ee
This definition makes sense: for each $t\in (\tau_k, \tau_{k+1})$,
$\left( \calb^k  w^2 \right) (t)$ since the values corresponding
to bound values is fixed and the contribution of the state contraint
depends only on  $\zeta^2(t)$ (solution of the equation in $\zeta$
associated with $w^2$).
(which depends only of the past values of $w^2$). 
We easily obtain that for some Lipschitz function $\Xi$:
\be
w^2(t) = \Xi(w^1(t), \zeta^2(t))
\ee
I think that we can deduce that 
\be
\| w^2 - w \|_2 = O(\|w^1-w\|_2) = O(\eps).
\ee
It follows that 
\be
\| \zeta^2 - \zeta \|_2  = O(\eps).
\ee
In addition over each arc $w^2$ is of class $C^\infty$
with uniformly bounded derivatives (of any order).
However $w^2$ has jumps at junction points due to 
the way it is defined. But these jumps are also of order $\eps$. 
By the infinite dimensional 
\cite[Thm. 2.200]{MR1756264} 
version of Hoffman's Lemma \cite{MR0051275}, 
applied to the space $PC$ (endowed with the norm of
$H^1_0(0,T)^m$), there exists 
$w^3 \in PC$ such that 
\be
\| w^3 - w \|_2\leq = \| w^3 - w^2 \|_2  + \| w^2 - w \|_2 
= O(\eps).
\ee
we may compute 
{\bf XXX TO BE REVISED}
\end{proof}
}\fi

\subsection{Necessary conditions after Goh transform}
\if{We define in the usual way (cf. Ambrosio \cite{MR1079985}) the space
$
BV(0,T;L^2(\Om)).
$}\fi
{The following second order necessary condition in the new variables follows.}
\begin{theorem}[Second order necessary condition]\label{thm:nec-sec-Goh}
If $(\ub,\yb)$ is an $L^\infty$-local solution of problem \eqref{P}, then
\be
 \max_{ (p,\mu) \in \Lambda_1 } \whq[p,\mu] (\zeta,w,h) \geq 0,\quad \text{on } PC_2.
 \ee
\end{theorem}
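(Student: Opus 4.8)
The plan is to transport the second order necessary condition of Theorem~\ref{SONC} through the Goh transform with the help of Lemma~\ref{lem:transf-sec-var}, obtaining the desired inequality first on the set $PC$, and then to extend it to its closure $PC_2$ by a weak-$*$ compactness argument on the multiplier set $\Lambda_1$. Concretely, let $(\zeta,w,h)\in PC$; by definition there is $(z,v)\in C_{\rm s}$ whose Goh transform \eqref{Goh} is $(\zeta,w)$, so that $z=z[v]$, $\zeta=\zeta[w]$, $w\in AC([0,T])^m$ and $h=w(T)$. Theorem~\ref{SONC} provides a multiplier $(p,\mu)\in\Lambda_1$ (a maximizer, which exists since $\Lambda_1$ is weakly-$*$ compact and $\calq[p]$ is weakly-$*$ continuous in $p$) with $\calq[p](z[v],v)\ge 0$, and Lemma~\ref{lem:transf-sec-var} gives $\calq[p](z[v],v)=\whq[p,\mu](\zeta[w],w,w(T))=\whq[p,\mu](\zeta,w,h)$. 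Hence for every point of $PC$ there is a multiplier in $\Lambda_1$ making $\whq$ nonnegative, so that $\max_{(p,\mu)\in\Lambda_1}\whq[p,\mu]\ge 0$ on $PC$.

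To pass to $PC_2$ I would fix $(\zeta,w,h)\in PC_2$ and choose $(\zeta_\ell,w_\ell,h_\ell)\in PC$ with $\zeta_\ell\to\zeta$ in $Y$, $w_\ell\to w$ in $L^2(0,T)^m$ and $h_\ell\to h$ in $\RR^m$; by the previous step pick $(p_\ell,\mu_\ell)\in\Lambda_1$ with $\whq[p_\ell,\mu_\ell](\zeta_\ell,w_\ell,h_\ell)\ge 0$. Since $\Lambda_1$ is nonempty, weakly-$*$ compact and bounded in $L^\infty(0,T;H^1_0(\Om))\times W^{1,\infty}(0,T)^q$ (Theorem~\ref{sonc:setting-t1}(ii), using \eqref{controllability}; see also \cite{ABK-PartI}), after extracting a subsequence one may assume $p_\ell\rightharpoonup p$ weakly-$*$ in $L^\infty(0,T;H^1_0(\Om))$, $\mu_\ell\to\mu$ uniformly on $[0,T]$ (Arzel\`a--Ascoli, using the uniform Lipschitz bound), and $\dot\mu_\ell\rightharpoonup\dot\mu$ weakly-$*$ in $L^\infty(0,T)^q$, with $(p,\mu)\in\Lambda_1$ by the weak-$*$ closedness of $\Lambda_1$. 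It then suffices to prove that $\whq[p_\ell,\mu_\ell](\zeta_\ell,w_\ell,h_\ell)\to\whq[p,\mu](\zeta,w,h)$, because this yields $\whq[p,\mu](\zeta,w,h)\ge 0$, a fortiori $\max_{(p,\mu)\in\Lambda_1}\whq[p,\mu](\zeta,w,h)\ge 0$.

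The remaining point, and the core of the proof, is this joint convergence of the quadratic forms. The main ingredients I would use are: the embeddings $Y\hookrightarrow C([0,T];H^1_0(\Om))$ and $Y\hookrightarrow L^2(0,T;H^2(\Om))$ together with $H^1_0(\Om)\hookrightarrow L^6(\Om)$ (recall $n\le 3$), which give $\zeta_\ell\to\zeta$ in $L^4(Q)$ and $\zeta_\ell(\cdot,T)\to\zeta(\cdot,T)$ in $L^6(\Om)$; the fact that $w_\ell\to w$ in $L^2$ forces $w_{\ell,i}w_{\ell,j}\to w_iw_j$ in $L^1(0,T)$; and the observation that the coefficients $\kappa=1-6\gamma\yb p$, the matrix $S$ in \eqref{def:S} and the brackets multiplying $w_i$ in \eqref{def-hatq1} depend \emph{affinely} on $p$ and, through $\chi$ in \eqref{chi}, on $\dot\mu$, with coefficients bounded uniformly over $\Lambda_1$, while $B^1$ in \eqref{B1} does not depend on $(p,\mu)$ at all and $p(\cdot,T)=p^1(\cdot,T)$ since $\mu(T)=0$ (so that the terminal coefficients in \eqref{def-hatq2} converge as well). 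Consequently, up to purely $\zeta_\ell$-quadratic pieces (which converge thanks to the $L^4(Q)$ and $L^6(\Om)$ convergences), each term of $\whq$ in \eqref{def-hatq1}--\eqref{def-hatq2} is a finite sum of integrals of the schematic form $\int a_\ell b_\ell$ with $a_\ell$ converging weakly-$*$ (in $L^\infty(0,T)$, respectively in $L^\infty(0,T;H^{-1}(\Om))$) and $b_\ell$ converging strongly (in $L^1(0,T)$, respectively in $L^1(0,T;L^2(\Om))$); such products pass to the limit through the splitting $\int a_\ell b_\ell-\int ab=\int a_\ell(b_\ell-b)+\int(a_\ell-a)b$, the first term vanishing by boundedness of $a_\ell$ and strong convergence of $b_\ell$, the second by weak-$*$ convergence of $a_\ell$ against the fixed integrable $b$.

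I expect the genuinely delicate term to be $-\int_0^T w_\ell^\top\dot S_\ell\,w_\ell\,\dd t$, together with the $\dot\mu_\ell$-linear contribution to \eqref{def-hatq1}: here $\dot S_\ell$ --- and with it $\dot\mu_\ell$ --- converges only weakly-$*$, so one must pair it precisely with the factor $w_\ell\otimes w_\ell$, which converges strongly in $L^1(0,T)$, and then invoke the splitting above; one also has to verify that the weak-$*$ limit of $\dot S_\ell$ coincides with the $\dot S$ associated with the limit multiplier, which is the case because $\dot\mu_\ell\rightharpoonup\dot\mu$ and the other ingredients of $\chi$ in \eqref{chi} depend continuously on $p_\ell$. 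Collecting these term-by-term limits yields the convergence claimed above, and hence the theorem.
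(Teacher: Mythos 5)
Your proposal is correct and follows essentially the same route as the paper: establish the inequality on $PC$ via Theorem~\ref{SONC} and Lemma~\ref{lem:transf-sec-var}, then pass to $PC_2$ by extracting weak-$*$ convergent multipliers (using the uniform bound on $\dot\mu_\ell$) and proving convergence of $\whq[p_\ell,\mu_\ell](\zeta_\ell,w_\ell,h_\ell)$ to $\whq[p,\mu](\zeta,w,h)$ by pairing weakly-$*$ convergent coefficients with strongly convergent quadratic factors. You correctly single out the same delicate term the paper does, namely the $\dot S_\ell$ (hence $\Delta p_\ell$ and $\dot\mu_\ell$) contribution, and resolve it in the same way.
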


\begin{proof} 
    Let $(\zeta,w,h) \in PC_2$.
    Then there exists a sequence $(\zeta_\ell:=\zeta[w_\ell],w_{\ell},w_{\ell}(T))$ in $PC$ with
\be
\label{strong-cv}
(\zeta_\ell,w_{\ell},w_{\ell}(T)) \rar (\zeta,w,h),\quad \text{in } Y \times L^2(0,T) \times \RR.
\ee
Let $(z_\ell,v_\ell)$ denote, for each $\ell,$ the corresponding critical direction in $C_{\rm s}.$
By  Lemma \ref{lem:transf-sec-var} and Theorem \ref{SONC},
there exists
$(p_{\ell},\mu_{\ell}) \in \Lambda_1$
such that 
\be
0 \leq \calq [p_{\ell}](z_\ell,v_{\ell}) = \widehat \calq [p_{\ell},\mu_{\ell}](\zeta_\ell,w_{\ell},h_{\ell}).
\ee
We have that $(\dot\mu_\ell)$ is  bounded in $L^\infty(0,T)$
(this is an easy variant of \cite[Cor. 3.12]{ABK-PartI}).
Extracting if necessary a subsequence, we may assume that
$(\dot\mu_{\ell})$ weak* converges in $L^\infty(0,T)$ to some $\dd\mu$. Consequently, the corresponding solutions $p_\ell$ of \eqref{equationp2} weakly converge to $p$ in $Y$, $p$ being the  costate associated with $\mu$.
In view of the definition of $\widehat\calq$
in \eqref{def-hatq}, we will show that, by strong/weak convergence,
\be
\lim_{\ell\to \infty} \widehat\calq [p_{\ell},\mu_{\ell}](\zeta_\ell,w_{\ell},h_{\ell})
=
\lim_{\ell\to \infty} \widehat\calq [p_{\ell},\mu_{\ell}](\zeta,w,h)
=
\widehat\calq [p,\mu](\zeta,w,h).
\ee
\blue{The first equality is an easy consequence of \eqref{strong-cv}
combined with the boundedness of $p_{\ell},\mu_{\ell}$.
We next discuss the second equality.
For the terms having integral in time it is enough to detail the most
delicate term that is the
contribution of $\Delta p$ to~$\dot S$. 
 Denote by $S_\ell$ the matrix $S$ in \eqref{def:S}
for $p$ equal to $p_\ell$.
Since $w$ belongs to $L^2(0,T)^m$, it is enough to show that
$\dot S_\ell$ weakly* converges in $L^\infty(0,T)^{m \times m}$. 
Again we detail the contribution of the most delicate term in $\dot S_\ell$,
namely for all $1\leq i,j \leq m$, 
$\int_\Om b_i b_j \yb \Delta p_\ell$,
and it is enough to check that it weakly* converges in $L^\infty(0,T)$
to $\int_\Om b_i b_j \yb \Delta p$.
} 

\blue{
Integrating by parts in space, we see
that we only need to check that 
$\nu_\ell := \int_\Om b_i b_j \nabla \yb \cdot \nabla p_\ell$
 weakly* converges in $L^\infty(0,T)$ to
 $\nu := \int_\Om b_i b_j \nabla \yb \cdot \nabla p$. That is,
 $\int_0^T \nu_\ell(t) \varphi(t) \dd t \rar \int_0^T \nu(t) \varphi(t) \dd t$,
 for all $\varphi \in L^1(0,T)$.
But since $\nu_\ell$ is bounded in $L^\infty(0,T)$  (using $H^{2,1}(Q)\subset L^{\infty}(0,T;H^1_0(\Om))$)
say of norm less than $M>0$,
it is enough to take the test functions $\varphi$ in $L^\infty(0,T)$
instead of $L^1(0,T)$. Indeed assume that
\be
\label{nu-varphi}
\int_0^T \nu_\ell (t) \varphi(t) \dd t \rar \int_0^T \nu (t) \varphi(t) \dd t,\quad \text{for all } \varphi \in L^{\infty}(0,T).
\ee
Then, let $\varphi\in L^1(0,T)$.
Given $\eps>0$, there exists $\varphi_\eps$ in $L^\infty(0,T)$ such that
$\| \varphi_\eps - \varphi \|_1 < \eps$. Then
\be 
\limsup_{\ell \to\infty} \int_0^T \nu_\ell(t)\varphi(t) \dd t \leq 
\int_0^T \nu(t) \varphi_\eps(t)\dd  t + M \eps
\leq \int_0^T \nu(t) \varphi(t)\dd  t + 2 M \eps.
\ee
So it suffices to prove \eqref{nu-varphi}.
Let $\varphi$ in $L^{\infty}(0,T)$.
Since (extracting if necessary a subsequence)
$p_\ell$ weakly converges to $p$ in 
$H^{2,1}(Q)$ we have that $\nabla p_\ell$
weakly converges to $\nabla p$ in $L^2(Q)$, hence \eqref{nu-varphi}  easily follows.
}

\blue{
On the other hand, for the contribution of the final time it is enough to observe that
$p_\ell(x,T)$  does not depend on $\ell$. 
}
\end{proof}


\section{Second order sufficient conditions}\label{suf-cond-sec} 

In this section we derive second order sufficient optimality
conditions.

\begin{definition}
We say that an $L^2$-local solution $(\ub,\yb)$ satisfies
the {\em weak quadratic growth condition} if 
 there exist $\rho >0$ and $\varepsilon>0$ such that,
\begin{equation}
\label{growth-cond}
F(u)- F(\ub) \geq \rho ( \|w\|^2_2 + |w(T)|^2),
\end{equation}
where $(u,y[u])$ is an admissible trajectory, $\|u-\ub\|_2<\varepsilon,$  $v := u-\ub$, and
$w(t):=\int_0^t v(s)\dd s$.
\end{definition}

\begin{remark}
Note that \eqref{growth-cond} is a quadratic growth condition
in the $L^2$-norm of the perturbations $(w,w(T))$ obtained after Goh transform.
\end{remark}

The main result of this part is given in Theorem \ref{ThmSC} and establishes sufficient conditions for a trajectory to be a $L^2$-local solution with weak quadratic growth.

Throughout the section we assume Hypothesis \ref{hyp-setting}.
In particular, we have by Theorem \ref{sonc:setting-t1} that the state and costate are essentially
bounded.

Consider the condition
\be
\label{jumpcond}
g_j'( \yb(\cdot,T))( \zetab(\cdot,T) + B(\cdot,T)\hb) = 0,\, \text{ if } T \in I^C_j \text{ and } [\mu_j(T )] > 0,\,\, \text{for } j=1,\dots,q.
 \ee
We define
\be
\label{p-cal}
PC_2^*:=
\left\{
\begin{split}
& (\zeta[w],w, h) \in Y\times L^2(0,T)^m \times \RR^m; \;
w_{B_k} \text{ is constant on each arc;} \\
& \eqref{zeta}, \eqref{w-C},\eqref{lem-pc2-1}\text{(i)-(ii)},\eqref{jumpcond}\text{ hold.}
\end{split}
\right\}.
\ee
Note that $PC_2^*$ is a superset of $PC_2$.

\if{
We define 
\be
\begin{aligned}
\calp_2^{(0)} &:= \left\{
\begin{array}{l}
 (w, h, \zeta) \in L^2(0,T)^m \times \RR^m \times Y; \;
 \begin{array}{l}
 \text{$w_{B_k}$ is constant on each arc}
 \end{array}
 \end{array}
 \right\},\\
\calp_2^{(1)} &:= \left\{
 (w, h, \zeta) \in \calp_2^{(0)}
; \;
  \eqref{w-C}, \eqref{zeta},\eqref{lem-pc2-1}\text{(i)-(ii)}\text{ hold}
 \right\},\\
\calp_2^{(2)}&:= \left\{(y, h, \xi) \in  \calp_2^{(1)} ;\; \eqref{lem-pc2-1}\text{(iii)}\text{ holds}\right\},
\end{aligned}
\ee
and set
\be\label{p-cal}
\calp_2:=
\left\{
\begin{array}{ll}
\calp^{(2)}_2, & \text{ if }T \in C\text{ and }[\mu(T )] > 0,\text{ for some }(\beta, p, \dd \mu) \in \Lambda,\\
\calp^{(1)}_2,& \text{otherwise}.
\end{array}
\right.
\ee
}\fi

\begin{definition} Let $W$ be a Banach space. 
We say that a quadratic form $Q \colon W \rar \RR$ is a \emph{Legendre
form} if it is weakly lower semicontinuous, positively homogeneous of degree $2$,
i.e., $Q(tx) = t^2 Q(x)$ for all $x \in  W$ and $t > 0$, and such that if $x_{\ell} \rightharpoonup  x$ and $Q(x_{\ell}) \rar Q(x)$, then $x_{\ell} \rar x$.

\if{We say that a quadratic mapping $Q \colon W \rar \RR$
is a Legendre form if it is 
sequentially weakly lower semi continuous 
and, if
$w_{\ell} \rar w$ weakly in $W$ and $Q(w_{\ell} ) \rar Q(w)$, then $w_{\ell} \rar w$ strongly.}\fi
\end{definition}

We assume, in the remainder of the article, the following strict complementarity conditions for the control and the state constraints.
\begin{hypothesis}\label{hyp2}
The following conditions hold:
\begin{align}
\label{strict-compl}
\left\{
\begin{array}{ll} 
\text{(i)}& \text{for all } i=1,\dots,m:\\
&\hspace{-5mm} \ds\max_{(p,\mu)\in \Lambda_1} \Psi^p_i(t) > 0
 \text{ in the interior of }
                                                         \check{I}_i,\,
                                                          \text{at }
                                                          t=0 \text{
                                                          if }
                                                          0\in\check{I}_i,
                                                          \text{ at }
                                                          t=T \text{
                                                          if }
                                                          T\in\check{I}_i,
  \\
   & \hspace{-5mm} \ds
      \min_{(p,\mu)\in \Lambda_1} \Psi^p_i(t) < 0
       \text{ in the interior of } \hat{I}_i,\, \text{at } t=0 \text{
       if } 0\in \hat{I}_i, \text{ at } t=T \text{ if }
       T\in\hat{I}_i,
  \\
\text{(ii)} & {\text{there exists $(p,\mu)\in \Lambda_1$ such that
              $\supp \dd \mu_j = I^C_j$, for all $j=1,\dots,q$.}
              }
\end{array}
\right.
\end{align}
\end{hypothesis}

\begin{theorem}\label{ThmSC}
Let Hypotheses \ref{hyp-setting} and \ref{hyp2} be satisfied. Then the following assertions hold.
\begin{itemize}
\item[a)]
Assume that 
  \begin{itemize}
  \item[(i)] $(\ub,\yb)$ {is a feasible trajectory with nonempty associated set of multipliers $\Lambda_1$;}
  \item[(ii)] {for each $(p,\mu) \in \Lambda_1,$
      $\whq[p,\mu](\cdot)$} is a Legendre form on the space
    \\
    $\{(\zeta[w],w, h) \in Y\times L^2(0,T)^m \times \RR^m\}$; and 
  \item[(iii)]  the {\em uniform positivity} holds, i.e. there exists $\rho>0$ such that
  \be\label{Q-coerc_cond}
\max_{ (p,\mu) \in \Lambda_1 } \whq[p,\mu](\zeta[w],w,h) \ge \rho ( \|w\|^2_2 + |h|^2), 
\;\; \text{for all $(w,h) \in  PC_2^*$.}
  \ee
 \end{itemize}
 Then $(\ub,\yb)$ is a $L^2$-local solution satisfying 
the weak quadratic growth condition.
\item[b)]
Conversely, for an admissible trajectory $(\ub,y[\ub])$ satisfying the growth condition \eqref{growth-cond}, it holds
 \be\label{Q-coerc_cond2}
 \max_{ (p,\mu) \in \Lambda_1 } \whq[p,\mu](\zeta[w],w,h) \ge \rho ( \|w\|^2_2 + |h|^2), 
\quad \text{for all $(w,h) \in  PC_2$.} 
  \ee
\end{itemize}
 \end{theorem}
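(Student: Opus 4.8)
The plan is to prove part~a) by contradiction and part~b) by a one-parameter perturbation argument; both rest on the Lagrangian expansion of Proposition~\ref{prop:expansion} combined with the Goh identity of Lemma~\ref{lem:transf-sec-var}. For part~a), suppose $(\ub,\yb)$ is not an $L^2$-local solution with weak quadratic growth. Then there are admissible trajectories $(u_\ell,y_\ell)$ with $u_\ell\to\ub$ in $L^2$ and $F(u_\ell)-F(\ub)<\rho_\ell\sigma_\ell^2$, where $\rho_\ell\downarrow0$, $v_\ell:=u_\ell-\ub$, $w_\ell(t):=\int_0^tv_\ell$, $h_\ell:=w_\ell(T)$ and $\sigma_\ell^2:=\|w_\ell\|_2^2+|h_\ell|^2$; note $\sigma_\ell=0$ would force $v_\ell\equiv0$, so $0<\sigma_\ell\to0$ since $\|v_\ell\|_2\to0$. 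Normalising, $\bar w_\ell:=w_\ell/\sigma_\ell$, $\bar h_\ell:=h_\ell/\sigma_\ell$ and $\bar\zeta_\ell:=\zeta[\bar w_\ell]$ are bounded in $L^2(0,T)^m$, $\RR^m$ and $Y$, so along a subsequence $\bar w_\ell\rightharpoonup\bar w$ in $L^2$, $\bar h_\ell\to\bar h$, and $\bar\zeta_\ell\rightharpoonup\bar\zeta=\zeta[\bar w]$ in $Y$ (weak continuity of the bounded linear map $w\mapsto\zeta[w]$); we may also assume $u_\ell\to\ub$ a.e., so by Theorem~\ref{sonc:setting-t1}(i) $\|y_\ell-\yb\|_{C(\bar Q)}\to0$. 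Applying Proposition~\ref{prop:expansion} to $(u_\ell,y_\ell)$, using $\call[p,\mu](\ub,\yb,p)=F(\ub)$ by complementarity and $\call[p,\mu](u_\ell,y_\ell,p)\le F(u_\ell)$ by feasibility and $\dd\mu\ge0$, and rewriting the quadratic part by the Goh transform (Lemma~\ref{lem:transf-sec-var}) together with the second-order remainder estimates of \cite{ABK-PartI}, one obtains, for every $(p,\mu)\in\Lambda_1$,
\be
\label{plan-key-exp}
0\le\int_0^T\Psi^p\cdot v_\ell\,\dd t\le F(u_\ell)-F(\ub)-\tfrac12\whq[p,\mu](\zeta[w_\ell],w_\ell,h_\ell)+o(\sigma_\ell^2)=O(\sigma_\ell^2),
\ee
the lower bound being the first order condition \eqref{FirstControl} and the $O(\sigma_\ell^2)$ holding uniformly on $\Lambda_1$ (which is bounded).

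The decisive step is to show $(\bar\zeta,\bar w,\bar h)\in PC_2^*$. Feasibility of $u_\ell$ gives $v_{\ell,i}\ge0$ a.e.\ on $\check I_i$ and $v_{\ell,i}\le0$ a.e.\ on $\hat I_i$, while \eqref{FirstControl} pointwise yields $\Psi_i^p\ge0$ a.e.\ on $\check I_i$, $\Psi_i^p\le0$ a.e.\ on $\hat I_i$, and $\Psi_i^p=0$ a.e.\ off $I_i$, for all $(p,\mu)\in\Lambda_1$; hence $\int_0^T\Psi^p\cdot v_\ell=\sum_i\big(\int_{\check I_i}\Psi_i^pv_{\ell,i}+\int_{\hat I_i}\Psi_i^pv_{\ell,i}\big)$ with each summand nonnegative and, by \eqref{plan-key-exp}, $O(\sigma_\ell^2)$. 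Covering a compact subset of the interior of a bang arc for component $i$ by finitely many pieces on each of which some multiplier realises $\Psi_i^p\ge c>0$ (strict complementarity, Hypothesis~\ref{hyp2}(i)), we get $\|v_{\ell,i}\|_{L^1}=O(\sigma_\ell^2)=o(\sigma_\ell)$ there; passing to the limit, $\bar w_{B_k}$ is constant on each arc and the endpoint conditions \eqref{lem-pc2-1}(i)--(ii) hold. Expanding $g_j(y_\ell(\cdot,t))\le0$ on $I^C_j$ to first order and using complementarity with the multiplier of Hypothesis~\ref{hyp2}(ii) ($\supp\dd\mu_j=I^C_j$) yields in the limit $\int_\Om c_j\,\zeta[\bar w]\,\dd x+M_j\cdot\bar w=0$ on $I^C_j$, i.e.\ \eqref{w-C}, and the jump condition \eqref{jumpcond} at $t=T$ when $[\mu_j(T)]>0$. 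Thus $(\bar\zeta,\bar w,\bar h)\in PC_2^*$. If $(\bar w,\bar h)\ne0$, the uniform positivity \eqref{Q-coerc_cond} provides $(\hat p,\hat\mu)\in\Lambda_1$ with $\whq[\hat p,\hat\mu](\bar\zeta,\bar w,\bar h)\ge\rho(\|\bar w\|_2^2+|\bar h|^2)>0$, whereas \eqref{plan-key-exp} divided by $\sigma_\ell^2$ gives $\whq[\hat p,\hat\mu](\bar\zeta_\ell,\bar w_\ell,\bar h_\ell)\le2\rho_\ell+o(1)\to0$, contradicting weak lower semicontinuity of the Legendre form $\whq[\hat p,\hat\mu]$. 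If $(\bar w,\bar h)=0$ (so $\bar\zeta=\zeta[0]=0$), pick any $(p,\mu)\in\Lambda_1$; by weak lower semicontinuity and \eqref{plan-key-exp}, $\whq[p,\mu](\bar\zeta_\ell,\bar w_\ell,\bar h_\ell)\to0=\whq[p,\mu](0,0,0)$, so the Legendre property forces $(\bar\zeta_\ell,\bar w_\ell,\bar h_\ell)\to(0,0,0)$ strongly, contradicting $\|\bar w_\ell\|_2^2+|\bar h_\ell|^2=1$. This proves a).

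For part~b), by the density in $C_{\rm s}$ of the critical directions admitting admissible variations (as in \cite{MR3555384} and \cite{ABK-PartI}), their Goh transforms are dense in $PC_2$, and since $\whq$ is continuous it suffices to prove \eqref{Q-coerc_cond2} for $(\zeta[w],w,w(T))$ arising from such a direction $(z,v)\in C_{\rm s}$. For such $v$ one constructs, using the qualification \eqref{qualif-P}, Hypothesis~\ref{hyp-setting}, and the local controllability \eqref{controllability} (which lets one adjust the free components $\ub_{\Bb_k}$ on each state-constrained arc so as to keep $g_j(y[u_\tau])=0$ on $I^C_j$), an admissible family $u_\tau$, $\tau\downarrow0$, with $u_\tau-\ub=\tau v+\tau^2\hat r_\tau$, $\hat r_\tau$ bounded and supported where the switching function vanishes. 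For every $(p,\mu)\in\Lambda_1$ complementarity is preserved (as $\supp\dd\mu_j\subseteq I^C_j$), so Proposition~\ref{prop:expansion} holds with equality; since $v\in C_{\rm s}$ and $\hat r_\tau$ is supported where $\Psi^p=0$ one has $\int_0^T\Psi^p\cdot(u_\tau-\ub)=0$, whence by Lemma~\ref{lem:transf-sec-var} and the remainder estimates $F(u_\tau)-F(\ub)=\tfrac{\tau^2}{2}\whq[p,\mu](\zeta[w],w,w(T))+o(\tau^2)$. On the other hand $w_\tau:=\int_0^\cdot(u_\tau-\ub)=\tau w+O(\tau^2)$, so the growth condition \eqref{growth-cond} gives $F(u_\tau)-F(\ub)\ge\rho\tau^2(\|w\|_2^2+|w(T)|^2)+o(\tau^2)$. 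Dividing by $\tau^2/2$ and letting $\tau\to0$ gives $\whq[p,\mu](\zeta[w],w,w(T))\ge2\rho(\|w\|_2^2+|w(T)|^2)$, which yields \eqref{Q-coerc_cond2}.

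The hard part is the decisive step of part~a): proving that the weak limit $(\bar\zeta,\bar w,\bar h)$ lies in $PC_2^*$. This needs careful order bookkeeping — in particular distinguishing $o(\sigma_\ell)$ from $O(\sigma_\ell^2)$ — to transfer the sign and complementarity constraints to the limit via strict complementarity (Hypothesis~\ref{hyp2}) without fixing a single multiplier, and to recover the state-constraint linearisation \eqref{w-C}, the jump condition \eqref{jumpcond} and the endpoint conditions; it also relies on the a priori estimates comparing $\delta y_\ell$, the linearised state and the Goh variable $\zeta[w_\ell]$ established in \cite{ABK-PartI}. In part~b) the technical points are the construction of the admissible family $u_\tau$ preserving complementarity — where \eqref{controllability} is essential — and the $o(\tau^2)$ control of the remainders.
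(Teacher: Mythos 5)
Your part a) is essentially the paper's own proof: the same contradiction setup with a minimizing-type sequence, the same normalisation by $\Upsilon_\ell=\sqrt{\|w_\ell\|_2^2+|w_\ell(T)|^2}$ and extraction of a weak limit, the same use of Proposition \ref{prop:expansion} together with Lemma \ref{lem:transf-sec-var} and the remainder estimates to obtain the key two-sided bound on $\int_0^T\Psi^p\cdot v_\ell\,\dd t$, the same strict-complementarity arguments to place the limit direction in $PC_2^*$ (the paper's ``Fact 1'', steps (a)--(d): constancy on bang arcs, the endpoint conditions, \eqref{w-C} via the two inequalities, and \eqref{jumpcond}), and the same Legendre-form/uniform-positivity endgame (``Fact 2''); your two-case split at the end merely unfolds the paper's single chain of inequalities. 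Part b) is where you genuinely diverge. The paper observes that under \eqref{growth-cond} the trajectory is an $L^2$-local solution of the auxiliary problem \eqref{Paux}, whose cost is $J$ minus the quadratic growth term, and then simply applies the already-established second order necessary condition (Theorem \ref{SONC}) to that problem followed by the Goh transform, deferring details to \cite[Theorem 5.5]{ABDL12}. You instead re-derive the inequality directly by constructing, for each strict critical direction, an admissible family $u_\tau=\ub+\tau v+\tau^2\hat r_\tau$ preserving complementarity, expanding $F(u_\tau)-F(\ub)$ and dividing by $\tau^2$. Both routes are legitimate, but note where the cost lies: the paper's reduction reuses the feasible-perturbation machinery already packaged inside Theorem \ref{SONC}, whereas your route must rebuild it — in particular the correction $\hat r_\tau$ keeping $g_j(y[u_\tau])=0$ on $I^C_j$ up to $o(\tau^2)$ while remaining supported where the switching function vanishes is exactly the quasi-radiality construction underlying the necessary conditions, and is the one step you only sketch. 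Your version also yields the inequality for \emph{every} multiplier in $\Lambda_1$ rather than for the maximum, which is stronger than \eqref{Q-coerc_cond2} requires but correspondingly more demanding, since complementarity must be preserved to order $o(\tau^2)$ for all multipliers simultaneously; if that construction resists, the reduction to \eqref{Paux} is the safer path.
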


The remainder of this section is devoted to the proof of Theorem \ref{ThmSC}. We first state some technical results.

\subsection{A refined expansion of the Lagrangian}\label{suf-cond-sec-ref} 

Combining with the linearized state equation \eqref{lineq2},
we deduce that $\eta$ given by
$\eta:=\delta y -z$,
satisfies the equation
\begin{equation}
\label{d_1-equ}
\left\{
\begin{aligned}
&\dot \eta -\Delta \eta 
= r \eta + \tilde r \quad \text{in } Q,\\
&\eta=0 \quad \text{on } \Sigma,\quad \eta (\cdot,0) = 0\quad \text{in } \Omega
\end{aligned}
\right.
\end{equation}
where $r$ and $\tilde r$ are defined as
\begin{align}
 r:=-3\gamma \yb^2 + \sum_{i=0}^m \ub_i b_i,\qquad  \tilde r :=\sum_{i=1}^m v_i b_i \delta y -3\gamma \yb(\delta y)^2- \gamma (\delta y)^3.
\end{align}

{Let $(\ub,\yb)$ be an admissible trajectory.} We start with a refinement of the expansion of the Lagrangian of Proposition \ref{prop:expansion}.

\begin{lemma}
\label{suf-cond.p-7-lem} 
Let $(u,y)$ be a trajectory,
$(\delta y,v):=(u-\ub,y-\yb)$,
$z$ be the solution of the linearized state equation 
\eqref{lineq2}, and 
$(w,\zeta)$ be given by the Goh transform \eqref{Goh}.
Then
\be
\label{suf-cond.p-7-lem-eq} 
\begin{split}
{\rm (i)} \,\,& \disp \| z \|_{{L^2(Q)}} + \|z(\cdot,T)\|_{L^2(\Omega)} = O( \|w\|_2 + |w(T)|),\\
{\rm (ii.a)} \,\,& \disp \|\delta y \|_{L^2(Q)} + \|\delta y(\cdot,T) \|_{L^2(\Omega)}  = O( \|w\|_2 + |w(T)|),\\
{\rm (ii.b)} \,\, & \|\delta y\|_{L^\infty(0,T;H_0^1(\Omega))} = O(\|w\|_\infty),\\  
{\rm (iii)}\,\, & \disp \|\eta \|_{L^\infty(0,T;L^2(\Omega))} + \| \eta(\cdot,T)\|_{L^2(\Omega)} = o ( \|w\|_2 + |w(T)| ).
\end{split}
\ee
\end{lemma}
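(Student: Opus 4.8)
The plan is to establish the four bounds in \eqref{suf-cond.p-7-lem-eq} in the order (i) $\Rightarrow$ (ii.b) $\Rightarrow$ (iii) $\Rightarrow$ (ii.a), using throughout the splitting $\delta y = z + \eta$ and the Goh substitution $z = \zeta[w] + B\cdot w$ from \eqref{Goh} (so $\eta$ solves \eqref{d_1-equ}), together with two facts from Theorem~\ref{sonc:setting-t1}(i): the states stay bounded in $C(\bar Q)$, and $\delta_0 := \|\delta y\|_{L^\infty(Q)} \to 0$ as $\varepsilon := \|u-\ub\|_2 \to 0$. Writing $b\cdot w := \sum_{i=1}^m w_i b_i$, so that $B\cdot w = \yb\,(b\cdot w)$, I would also record the interpolation inequality $\|w\|_\infty^2 \le 2\|w\|_2\|v\|_2$, which follows from $w(0)=0$ and $\dot w = v$ via $|w(t)|^2 = \int_0^t 2\,w\cdot v\,\dd s$ and Cauchy--Schwarz. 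Here $C$ denotes a constant that may change from line to line, and $\omega(\cdot)$ a generic nonnegative function with $\omega(\varepsilon)\to 0$ as $\varepsilon\to0^+$; since $\|w\|_\infty \le \sqrt T\,\|v\|_2 \le \sqrt T\,\varepsilon$, the inequality above gives $\|w\|_\infty^2 \le 2\varepsilon\|w\|_2 = \omega(\varepsilon)\|w\|_2$, which I invoke repeatedly.

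For (i): $\zeta[w]$ solves the well-posed linear parabolic equation \eqref{zeta} with zero initial datum and source $B^1\cdot w$, and by \eqref{sonc:setting-t1-1} and \eqref{B1} one has $B^1_i \in L^\infty(0,T;L^2(\Omega))$, so parabolic regularity gives $\|\zeta[w]\|_Y \le C\|B^1\cdot w\|_{L^2(Q)} \le C\|w\|_2$, hence $\|\zeta[w]\|_{C([0,T];H^1_0(\Omega))}\le C\|w\|_2$. As $B\in L^\infty(Q)^m$, $\|B\cdot w\|_{L^2(Q)}\le C\|w\|_2$ and $\|B(\cdot,T)\cdot w(T)\|_{L^2(\Omega)}\le C|w(T)|$; adding these yields (i). For (ii.b): from $z=\zeta[w]+B\cdot w$, the previous bound, and $\|B\cdot w\|_{L^\infty(0,T;H^1_0(\Omega))}\le C\|w\|_\infty$ (using $\yb\in Y\hookrightarrow C([0,T];H^1_0(\Omega))$ and $b\in W^{1,\infty}$), one gets $\|z\|_{L^\infty(0,T;H^1_0(\Omega))}=O(\|w\|_\infty)$. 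Since $\eta$ solves \eqref{d_1-equ} with $r\in L^\infty(Q)$, the parabolic estimate gives $\|\eta\|_Y\le C\|\tilde r\|_{L^2(Q)}$, and bounding the cubic terms of $\tilde r$ by $\delta_0\|\delta y\|_{L^2(Q)}$ and the bilinear one by $\|v\|_2\|\delta y\|_{L^\infty(0,T;L^2(\Omega))}$ yields $\|\eta\|_Y\le C(\varepsilon+\delta_0)\|\delta y\|_{L^\infty(0,T;H^1_0(\Omega))}$. Inserting this into $\|\delta y\|_{L^\infty(0,T;H^1_0)}\le\|z\|_{L^\infty(0,T;H^1_0)}+C\|\eta\|_Y$ and absorbing the $\|\delta y\|$-term for $\varepsilon$ small proves (ii.b).

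The crux is (iii). In \eqref{d_1-equ}, every term of $\tilde r$ other than $\sum_i v_i b_i\,\delta y$ is quadratic in $\delta y$, hence by (ii.b) and the embedding $H^1_0(\Omega)\hookrightarrow L^6(\Omega)$ (valid as $n\le3$) of $L^2(Q)$-size $O(\|w\|_\infty^2)=\omega(\varepsilon)\|w\|_2$. For the bilinear term I would substitute $\delta y=\zeta[w]+\yb\,(b\cdot w)+\eta$ and use $v_iw_j+v_jw_i=\ddt(w_iw_j)$ to rewrite its middle part $\yb\big(\sum_i v_ib_i\big)(b\cdot w)$ as $\tfrac12\yb\,\ddt\big((b\cdot w)^2\big)=\ddt\big(\tfrac12\yb(b\cdot w)^2\big)-\tfrac12\dot\yb\,(b\cdot w)^2$. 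Setting $\theta:=\eta-\tfrac12\yb\,(b\cdot w)^2$ (so $\theta(\cdot,0)=0$ and $\theta=0$ on $\Sigma$), the unknown $\theta$ solves $\dot\theta+A\theta=R$, where $R$ collects $-A\big(\tfrac12\yb(b\cdot w)^2\big)$ and $-\tfrac12\dot\yb(b\cdot w)^2$ (both $O(\|w\|_\infty^2)$ in $L^2(Q)$, using $b\in W^{2,\infty}$ and $\yb\in H^{2,1}(Q)$), $\sum_i v_ib_i\,\zeta[w]$ (bounded by $C\|v\|_2\|\zeta[w]\|_{L^\infty(0,T;L^2(\Omega))}=O(\varepsilon\|w\|_2)$), $\sum_i v_ib_i\,\eta$ (bounded by $C\|v\|_2(\|\theta\|_Y+\|w\|_\infty^2)$, whose first part is absorbed and whose second is $O(\varepsilon\|w\|_\infty^2)$), and the cubic terms ($\omega(\varepsilon)(\|w\|_2+|w(T)|)$ by (ii.b)). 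Hence $\|R\|_{L^2(Q)}\le\omega(\varepsilon)(\|w\|_2+|w(T)|)+C\varepsilon\|\theta\|_Y$; the parabolic estimate $\|\theta\|_Y\le C\|R\|_{L^2(Q)}$ absorbs the last term for $\varepsilon$ small, and then $\|\eta\|_{L^\infty(0,T;L^2(\Omega))}+\|\eta(\cdot,T)\|_{L^2(\Omega)}\le C\|\theta\|_Y+C\|w\|_\infty^2+C|w(T)|^2=\omega(\varepsilon)(\|w\|_2+|w(T)|)$, which is (iii). Finally (ii.a) is immediate from $\delta y=z+\eta$, (i) and (iii).

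The hardest point is the bilinear term $\sum_i v_i b_i\,\delta y$ in \eqref{d_1-equ}: it is \emph{linear}, not quadratic, in $v$, so it can neither be absorbed nor directly controlled by $\|w\|$ --- a naive estimate produces $\|v\|_2$, which is not $o(\|w\|_2+|w(T)|)$ for highly oscillatory perturbations. The remedy is the second Goh transform above, whose effectiveness rests on the identity $v_iw_j+v_jw_i=\ddt(w_iw_j)$ together with $\|w\|_\infty^2\le2\|w\|_2\|v\|_2$, which turns the remaining genuinely $w$-quadratic contributions into a uniform little-$o$ of $\|w\|_2$. A minor issue is the apparent circularity due to the cubic remainders involving $\|\delta y\|$; it is avoided by the ordering (i) $\Rightarrow$ (ii.b) $\Rightarrow$ (iii) $\Rightarrow$ (ii.a), as the proof of (ii.b) does not invoke (iii).
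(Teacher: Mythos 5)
Your proof is correct, but it takes a genuinely different route from the paper's for the two nontrivial parts. For (ii), the paper applies the Goh substitution directly to the \emph{nonlinear} perturbation, setting $\zeta_{\delta y}:=\delta y-(w\cdot b)\yb$ and showing that it solves a semilinear equation \eqref{dotzetadeltay} with bounded potential and source $\hat f_{\delta y}$ of size $O(\|w\|_1)$ in $L^1(0,T;L^2(\Omega))$; Proposition \ref{PropMild} then gives (ii.a) and (ii.b) at once, with no absorption/fixed-point step. For (iii), the paper's argument is a three-line application of the same mild-solution estimate to \eqref{d_1-equ}: since Proposition \ref{PropMild} only requires the source in $L^1(0,T;L^2(\Omega))$, the bilinear term obeys $\|(v\cdot b)\,\delta y\|_{L^1(0,T;L^2(\Omega))}\le \|b\|_\infty\|v\|_2\|\delta y\|_{L^2(Q)}=\|v\|_2\,O(\|w\|_2+|w(T)|)=o(\|w\|_2+|w(T)|)$, using (ii.a) and $\|v\|_2\to 0$, and the quadratic/cubic terms are handled by $\|\delta y\|_\infty\to 0$. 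Your stated obstruction --- that ``a naive estimate produces $\|v\|_2$, which is not $o(\|w\|_2+|w(T)|)$'' --- is therefore a misdiagnosis: it arises only because you insist on measuring the source in $L^2(Q)$ (to invoke maximal $L^2$-regularity for $\theta$), whereas the $L^1$-in-time estimate turns the linear-in-$v$ term into a harmless product $\|v\|_2\cdot\|\delta y\|_{L^2(Q)}$ once (ii.a) is known. Your remedy --- the second Goh transform $\theta:=\eta-\tfrac12\yb(b\cdot w)^2$, the identity $v_iw_j+v_jw_i=\tfrac{\dd}{\dd t}(w_iw_j)$, and the interpolation $\|w\|_\infty^2\le 2\|w\|_2\|v\|_2$ --- is nevertheless sound (I checked the resulting source $R$ term by term, the absorption of $C\|v\|_2\|\theta\|_Y$, and the reordering (i) $\Rightarrow$ (ii.b) $\Rightarrow$ (iii) $\Rightarrow$ (ii.a), which is internally consistent). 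What your approach buys is self-containedness in the $L^2(Q)$ maximal-regularity framework; what it costs is considerable extra length and the need for $b\in W^{2,\infty}$ and $\yb\in H^{2,1}(Q)$ in computing $A\bigl(\tfrac12\yb(b\cdot w)^2\bigr)$, none of which the paper's shorter argument requires at this point.
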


Before doing the proof of Lemma \ref{suf-cond.p-7-lem}, let us recall the following property:
\begin{proposition}
\label{PropMild}
The equation
\be
\label{eqPsi}
\dot\Phi-\Delta\Phi+a\Phi = \hat f,\quad \Phi(x,0)=0,
\ee
with 
$a\in L^\infty(Q)$, $\hat f\in L^1(0,T;L^2(\Omega)),$
and homogeneous Dirichlet conditions on $\partial\Om\times (0,T)$,
has a unique solution $\Phi$ in $C([0,T];L^2(\Omega))$,
that satisfies
\be
\| \Phi \|_{C([0,T];L^2(\Omega))} \leq c\|\hat f\|_{L^1(0,T;L^2(\Omega))}.
\ee
\end{proposition}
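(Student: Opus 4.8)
The plan is to read \eqref{eqPsi} in the mild sense and to rely on the Dirichlet heat semigroup. Let $(e^{t\Delta})_{t\ge 0}$ be the semigroup on $L^2(\Om)$ generated by $-\Delta$ with domain $H^2(\Om)\cap H^1_0(\Om)$; since this operator is self-adjoint and nonnegative, $(e^{t\Delta})$ is analytic and a contraction, $\|e^{t\Delta}\|_{\mathcal L(L^2(\Om))}\le 1$ for all $t\ge 0$. I would then call $\Phi\in C([0,T];L^2(\Om))$ a solution of \eqref{eqPsi} if it satisfies the Duhamel identity
\be
\label{duhamel-PropMild}
\Phi(t) = \int_0^t e^{(t-s)\Delta}\big(\hat f(s)-a(\cdot,s)\,\Phi(s)\big)\,\dd s,\qquad t\in[0,T],
\ee
and observe, exactly as for the linearized state equation, that such a $\Phi$ coincides with the weak (very weak) solution, so that no ambiguity arises in the notion of solution.

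First I would check that the right-hand side of \eqref{duhamel-PropMild} defines a map $\calt$ from $C([0,T];L^2(\Om))$ into itself: for fixed $t$ the integrand $s\mapsto e^{(t-s)\Delta}(\hat f(s)-a(\cdot,s)\Phi(s))$ belongs to $L^1(0,t;L^2(\Om))$ because $\hat f\in L^1(0,T;L^2(\Om))$, $a\in L^\infty(Q)$ and $\|e^{(t-s)\Delta}\|\le 1$, while continuity of $t\mapsto\calt\Phi(t)$ follows from strong continuity of the semigroup and dominated convergence. For $\Phi_1,\Phi_2\in C([0,T];L^2(\Om))$ one has the Lipschitz estimate
\be
\|\calt\Phi_1(t)-\calt\Phi_2(t)\|_{L^2(\Om)} \le \|a\|_{L^\infty(Q)}\int_0^t\|\Phi_1(s)-\Phi_2(s)\|_{L^2(\Om)}\,\dd s ,
\ee
so that, equipping $C([0,T];L^2(\Om))$ with the equivalent norm $\sup_{t\in[0,T]}e^{-\lambda t}\|\Phi(t)\|_{L^2(\Om)}$ with $\lambda>\|a\|_{L^\infty(Q)}$, the map $\calt$ becomes a strict contraction; Banach's fixed point theorem then yields existence and uniqueness of $\Phi\in C([0,T];L^2(\Om))$ solving \eqref{duhamel-PropMild}.

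Finally, taking $L^2(\Om)$-norms in \eqref{duhamel-PropMild} and using once more $\|e^{(t-s)\Delta}\|\le 1$,
\be
\|\Phi(t)\|_{L^2(\Om)} \le \|\hat f\|_{L^1(0,T;L^2(\Om))} + \|a\|_{L^\infty(Q)}\int_0^t\|\Phi(s)\|_{L^2(\Om)}\,\dd s ,\qquad t\in[0,T],
\ee
and Gronwall's lemma gives $\|\Phi(t)\|_{L^2(\Om)}\le e^{t\|a\|_{L^\infty(Q)}}\|\hat f\|_{L^1(0,T;L^2(\Om))}$, i.e., the claimed bound with $c=e^{T\|a\|_{L^\infty(Q)}}$. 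I do not expect a genuine obstacle here, as the statement is essentially textbook parabolic theory; the only point deserving attention is the mere $L^1$-integrability in time of $\hat f$, which rules out the naive energy estimate obtained by testing \eqref{eqPsi} with $\Phi$ (one would have to divide by $\|\Phi(t)\|_{L^2(\Om)}$). The Duhamel route above sidesteps this difficulty, and as an alternative one may run the energy argument on $t\mapsto(\varepsilon+\|\Phi(t)\|_{L^2(\Om)}^2)^{1/2}$ and let $\varepsilon\downarrow 0$.
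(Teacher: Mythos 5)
Your proposal is correct and follows the same route as the paper, which simply invokes the estimate for mild solutions in semigroup theory (citing \cite[Theorem 2]{MR3767765PlusErratum}); you have merely written out the standard details — Duhamel formula with the Dirichlet heat semigroup, contraction in a weighted sup-norm, and Gronwall — that the cited reference encapsulates. The only cosmetic slip is calling $-\Delta$ the generator of $(e^{t\Delta})$; the generator is the Dirichlet Laplacian $\Delta$ itself, and this does not affect the argument.
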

\begin{proof}
{This follows from the estimate for mild solutions in the
  semigroup theory, see e.g. \cite[Theorem 2]{MR3767765PlusErratum}.}
\end{proof}


\begin{proof}[Proof of Lemma \ref{suf-cond.p-7-lem}]
(i) Since $\zeta$ is solution of \eqref{zeta}, it satisfies \eqref{eqPsi} with
\be
\label{eqahatf}
a:=-3\gamma\yb^2+\sum_{i=0}^m \ub_ib_i,\quad
\hat f:=\sum_{i=1}^m w_i B^1_i,
\ee
where $B^1_i$ is given in \eqref{B1}.
One can see, in view of Hypothesis \ref{hyp-setting}, that $\hat f \in L^1(0,T;L^2(\Omega))$ since the terms in brackets in \eqref{eqahatf} belong to $L^\infty(0,T;L^2(\Omega)).$
Thus, from Proposition~\ref{PropMild} we get that $\zeta \in C([0,T];L^2(\Omega))$ and
\be
\label{suf-cond.p-6-lem-1}
\|\zeta\|_{L^\infty(0,T;L^2(\Omega))} = O(\|\hat f\|_{L^1(0,T;L^2(\Omega))}) = O( \|w\|_{1} ).
\ee
Thus, due to Goh transform \eqref{Goh} and Lemma \ref{state-equ-estimA.l}, we get that $z$ belongs to $C([0,T];L^2(\Omega))$ and we obtain the estimate {\rm (i).}

We next prove the estimate (ii) for $\delta y.$ Set $\zeta_{\delta y} :=\delta y - (w\cdot b)\yb.$ Then
\be
\label{dotzetadeltay}
\dot{\zeta}_{\delta y} - \Delta \zeta_{\delta y} + a_{\delta y}\zeta_{\delta y} = \hat{f}_{\delta y},
\ee
with
\be
\begin{aligned}
a_{\delta y}&:= 3\gamma\yb^2+3\gamma\yb\zeta_{\delta y}+\gamma(\zeta_{\delta y})^2-(\ub\cdot b),\\
\hat f _{\delta y}&:=  \sum_{i=1}^m w_i \left[ \yb \Delta b_i + \nabla b_i \cdot \nabla \yb - b_i(2\gamma\yb^3+f)\right].
\end{aligned}
\ee
By Theorem \ref{sonc:setting-t1}, $\zeta_{\delta y}$ is in $L^\infty(Q)$, hence $a_{\delta y}$ is essentially bounded.
Furthermore, in view of the regularity  Hypothesis \ref{hyp-setting} and Lemma \ref{state-equ-estimA.l}, $\hat f_{\delta y} \in L^1(0,T;L^2(\Omega))$.
We then get, using Proposition \ref{PropMild}, 
\be
\label{zetadeltayest}
\|\zeta_{\delta y}\|_{L^\infty(0,T;H_0^1(\Omega))} \leq O(\|w\|_1).
\ee
From the latter equation and the definition of $\zeta_{\delta y}$ we deduce {\rm (ii.a)}.
{Since 
\be
\nabla(\delta y) = \nabla (\zeta_{\delta y}) + \sum_{i=1}^m w_i (\yb\nabla b_i + b_i\nabla\yb),
\ee
applying the $L^\infty(0,T;L^2(\Omega))$-norm to both sides, and using \eqref{zetadeltayest} and Lemma \ref{state-equ-estimA.l} we get {\rm (ii.b).}}

The estimate in {\rm (iii)} follows from the following consideration.
To apply Proposition \ref{PropMild} to equation \eqref{d_1-equ} we easily verify that $r$ is in $L^{\infty}(Q)$ 
and $\tilde r$ in $L^1(0,T;L^2(\Om))$. Consequently, we have
\begin{equation}
 \begin{aligned}
 &\norm{\eta}{C([0,T];L^2(\Om))}   \le c \norm{\sum_{i=1}^m v_i b_i \delta y -3\gamma 
 \yb(\delta y)^2- \gamma (\delta y)^3}{L^1(0,T;L^2(\Om))}  \\
 & \quad\le \norm{v}{2} \norm{b}{\infty} \norm{\delta y}{2}
 +3\gamma 
 \norm{\yb}{\infty}\norm{(\delta y)^2}{L^1(0,T;L^2(\Om))}  
 +\gamma \norm{(\delta y)^3}{L^1(0,T;L^2(\Om))}.
\end{aligned}
\end{equation}
Now, since  $\norm{v}{2} \rar 0$ and
$\|\delta y\|_{\infty}\to 0$ (by similar arguments to those of the proof of (i) in Theorem~\ref{sonc:setting-t1}), we get {\rm (iii).}

\end{proof}

\begin{proposition}
\label{suf-cond.p}
\blue{Let $(p,\mu) \in \Lambda_1.$}
Let $(u_\ell)\subset \Uad$ and let us write $y_\ell$ for the corresponding states. Set $v_\ell:=u_\ell-\ub$ and assume that $v_{\ell} \to 0$ a.e.  
Then, 
\begin{multline}
\label{suf-cond.p-1}
\call[p,\mu](\ub+v_{\ell},y_\ell) = \call[p,\mu](\ub,\yb) \\+ \int_0^T \Psi^p(t)\cdot v_{\ell}(t) \dd t
+ \half
\whq[p,\mu](\zeta_{\ell},w_{\ell},w_{\ell}(T))+ o( \|w_{\ell}\|_2^2 + |w_{\ell}(T)|^2),
\end{multline}
where $w_{\ell}$ and $\zeta_{\ell}$ are given by the Goh transform \eqref{Goh}.
\end{proposition}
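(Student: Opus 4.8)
The starting point is the exact expansion of the Lagrangian from Proposition \ref{prop:expansion}, which gives
\[
\call[p,\mu](\ub+v_{\ell},y_\ell) - \call[p,\mu](\ub,\yb)
= \int_0^T \Psi^p\cdot v_{\ell}\,\dd t + \tfrac12 \calq[p](\delta y_\ell,v_\ell) - \gamma \int_Q p (\delta y_\ell)^3 \dd x\dd t,
\]
where $\delta y_\ell := y_\ell - \yb$. The plan is to show that (a) $\calq[p](\delta y_\ell,v_\ell)$ can be replaced by $\calq[p](z_\ell,v_\ell)$ up to $o(\|w_\ell\|_2^2+|w_\ell(T)|^2)$, (b) $\calq[p](z_\ell,v_\ell)$ equals $\whq[p,\mu](\zeta_\ell,w_\ell,w_\ell(T))$ by Lemma \ref{lem:transf-sec-var}, and (c) the cubic remainder term is also $o(\|w_\ell\|_2^2+|w_\ell(T)|^2)$.

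For step (a), I write $\delta y_\ell = z_\ell + \eta_\ell$ and expand $\calq[p]$ by bilinearity. The purely quadratic-in-$z$ and bilinear $v$-$z$ parts give $\calq[p](z_\ell,v_\ell)$; the cross terms are $2\int_Q \kappa z_\ell \eta_\ell + 2\int_Q p (\sum_i v_{\ell,i} b_i)\eta_\ell + 2\int_\Om z_\ell(\cdot,T)\eta_\ell(\cdot,T)$, plus the pure $\eta$ term $\int_Q \kappa \eta_\ell^2 + \int_\Om \eta_\ell(\cdot,T)^2$. Using $\kappa, p, b \in L^\infty$, Cauchy–Schwarz, Lemma \ref{suf-cond.p-7-lem}(i) ($\|z_\ell\|_2+\|z_\ell(\cdot,T)\|_2 = O(\|w_\ell\|_2+|w_\ell(T)|)$), Lemma \ref{suf-cond.p-7-lem}(iii) ($\|\eta_\ell\|_{L^\infty(0,T;L^2)}+\|\eta_\ell(\cdot,T)\|_2 = o(\|w_\ell\|_2+|w_\ell(T)|)$), and for the $v_\ell$-$\eta_\ell$ term the bound $\|v_\ell\|_2 \to 0$ together with $\|\eta_\ell\|_2 = o(\|w_\ell\|_2+|w_\ell(T)|)$, every cross/quadratic $\eta$ term is seen to be $o(\|w_\ell\|_2^2+|w_\ell(T)|^2)$. (The $v_\ell$-$\eta_\ell$ term is handled as $O(\|v_\ell\|_2)\cdot o(\|w_\ell\|_2+|w_\ell(T)|) = o(\|w_\ell\|_2^2+|w_\ell(T)|^2)$ since $\|v_\ell\|_2 = O(\|w_\ell\|_2+|w_\ell(T)|)$ is not available but $\|v_\ell\|_2\to 0$ is, and $\|\eta_\ell\|_2$ already carries a full $o(\|w_\ell\|_2+|w_\ell(T)|)$; this is the one spot requiring care, see below.)

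For step (c), the cubic term is estimated by $\gamma \|p\|_\infty \int_Q |\delta y_\ell|^3 \le \gamma\|p\|_\infty \|\delta y_\ell\|_{L^\infty(0,T;L^2(\Om))}^? \cdots$; more precisely, bound $\int_Q |\delta y_\ell|^3 \le \|\delta y_\ell\|_{L^\infty(Q)} \|\delta y_\ell\|_{L^2(Q)}^2$, and use that $\|\delta y_\ell\|_{L^\infty(Q)} \to 0$ (Theorem \ref{sonc:setting-t1}(i), since $v_\ell\to0$ a.e.) while $\|\delta y_\ell\|_{L^2(Q)}^2 = O(\|w_\ell\|_2^2+|w_\ell(T)|^2)$ by Lemma \ref{suf-cond.p-7-lem}(ii.a); this gives $o(\|w_\ell\|_2^2+|w_\ell(T)|^2)$. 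Then step (b) is a direct citation of Lemma \ref{lem:transf-sec-var} (valid since $w_\ell$ is absolutely continuous, being a primitive of $v_\ell\in L^2$). Assembling (a)–(c) yields \eqref{suf-cond.p-1}.

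The main obstacle is bookkeeping the $v_\ell$-$\eta_\ell$ cross term, since we only know $\|v_\ell\|_2\to 0$ rather than a clean $O(\|w_\ell\|_2)$ bound: the resolution is that Lemma \ref{suf-cond.p-7-lem}(iii) gives $\|\eta_\ell\|_{L^\infty(0,T;L^2)} = o(\|w_\ell\|_2+|w_\ell(T)|)$, so $|\int_Q p(\sum_i v_{\ell,i}b_i)\eta_\ell| \le \|p\|_\infty\|b\|_\infty \|v_\ell\|_2 \|\eta_\ell\|_{L^2(Q)} = o(1)\cdot o(\|w_\ell\|_2+|w_\ell(T)|)$, and since one factor of $o(\|w_\ell\|_2+|w_\ell(T)|)$ combined with the other factor $o(1)\cdot O(\|w_\ell\|_2+|w_\ell(T)|)$ — wait, this needs the second factor to also be $O(\|w_\ell\|_2+|w_\ell(T)|)$, which indeed holds since $o(\cdot)\subset O(\cdot)$; hence the product is $o\big((\|w_\ell\|_2+|w_\ell(T)|)^2\big) = o(\|w_\ell\|_2^2+|w_\ell(T)|^2)$. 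All other terms are straightforward applications of Hölder and the estimates already established in Lemma \ref{suf-cond.p-7-lem}.
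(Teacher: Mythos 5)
Your overall architecture (expand via Proposition \ref{prop:expansion}, replace $\calq[p](\delta y_\ell,v_\ell)$ by $\calq[p](z_\ell,v_\ell)$ up to the $\eta_\ell$-terms, invoke Lemma \ref{lem:transf-sec-var}, and kill the cubic term by $\|\delta y_\ell\|_{L^\infty(Q)}\to 0$ times $\|\delta y_\ell\|_{L^2(Q)}^2=O(\|w_\ell\|_2^2+|w_\ell(T)|^2)$) matches the paper, and all of those pieces are fine. The gap is exactly the term you flagged as "the one spot requiring care", namely $2\int_Q (v_\ell\cdot b)\,p\,\eta_\ell\,\dd x\dd t$, and your resolution of it is not valid. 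Writing $X_\ell:=\|w_\ell\|_2+|w_\ell(T)|$, your Cauchy--Schwarz bound gives $\|v_\ell\|_2\,\|\eta_\ell\|_{L^2(Q)}=o(1)\cdot o(X_\ell)=o(X_\ell)$, and since $X_\ell\to 0$ one has $o(X_\ell^2)\subsetneq o(X_\ell)$: a quantity that is $o(X_\ell)$ need not be $o(X_\ell^2)$ (e.g.\ $X_\ell^{3/2}$). Your attempted repair --- "the second factor is also $O(X_\ell)$, hence the product is $o(X_\ell^2)$" --- would require the \emph{first} factor $\|v_\ell\|_2$ to be $O(X_\ell)$, which is precisely what is unavailable: $w_\ell$ is the primitive of $v_\ell$, and for oscillatory $v_\ell$ the ratio $\|v_\ell\|_2/\|w_\ell\|_2$ is unbounded. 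This is the whole reason the Goh transform is needed in this problem, so the term cannot be dispatched by a direct H\"older estimate in $v_\ell$.

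The paper's proof spends essentially all of its effort on this one term. It integrates by parts in time, $\int_Q (v_\ell\cdot b)p\eta_\ell = \int_\Om (w_\ell(T)\cdot b)p(\cdot,T)\eta_\ell(\cdot,T)\,\dd x - \int_Q (w_\ell\cdot b)\,\ddt(p\eta_\ell)\,\dd x\dd t$ (cf.\ \eqref{DeltaQeq}), so that the control increment appears only through $w_\ell$, and then decomposes $\ddt(p\eta_\ell)=\varphi_0+\varphi_1+\varphi_2$ as in \eqref{intdtpeta}. Each piece requires further work: $\varphi_0$ is handled by integration by parts in space using $b\in W^{2,\infty}$ and $p\in L^\infty(0,T;H^1(\Om))$; $\varphi_1=(v_\ell\cdot b)p\,\delta y_\ell$ still contains $v_\ell$ and needs a second integration by parts (writing $(w\cdot b)^2(v\cdot b)=\tfrac13\ddt(w\cdot b)^3$) together with the estimate $\|\delta y_\ell\|_{L^\infty(0,T;H^1_0(\Om))}=O(\|w_\ell\|_\infty)$ from Lemma \ref{suf-cond.p-7-lem}(ii.b); $\varphi_2$ uses the essential boundedness of $\dot\mu$. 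None of this machinery is replaceable by the one-line Cauchy--Schwarz you propose, so as written the proof has a genuine hole at its central step.
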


\begin{proof} 
Since $(v_{\ell})$ is bounded in $L^{\infty}(0,T)^m$ and 
converges a.e. to 0, 
it converges to zero in any $L^p(0,T)^m$.
For simplicity of notation we omit the index $\ell$ for the remainder of the proof.
Set $\delta y:= y[\ub+v]-\yb.$ By Proposition \ref{prop:expansion}
it is enough to prove that 
\begin{gather}
\label{suf-cond.p-2-1}
 \left|\calq[p](\delta y,v) - \whq[p,\mu](w,w(T),\zeta) 
\right| =   o( \|w\|^2_2 + |w(T)|^2),\\
\label{suf-cond.p-2-2}
\left| \int_Q p (\delta y)^3 \right| = o( \|w\|^2_2 + |w(T)|^2).
\end{gather}
We have, setting as before $\eta:= \delta y -z$ where $z:=z[v],$
\be
\label{suf-cond.p-3}
\begin{aligned}
& \calq[p](\delta y,v) - \whq[p,\mu](\zeta,w,w(T))   =  
\calq[p](\delta y,v) - \calq[p](z,v)
\vspace{1mm} \\ &
= 
2 \int_Q  (v\cdot b) p \eta \dd x\dd t +
\int_Q\kappa (\delta y + z) \eta\dd
x \dd t
\vspace{1mm} 
+\int_{\Om} (\delta y(x,T) 
+ z(x,T))\eta(x,T)   \dd x,
\end{aligned}
\ee
and therefore, since 
the state and costate are essentially bounded:
\be
\label{suf-cond.p-4}
\begin{aligned}
\big| \calq[p](\delta y,v) - &\whq[p,\mu](\zeta,w,w(T))\big | 
\leq 2
\left| \int_Q  (v\cdot b)p \eta \dd x\dd t \right| +
O( \|\delta y + z \|_2\|\eta\|_2)\\
& \quad +
O( \|(\delta y + z)(\cdot,T)\|_{L^2(\Om)} \|\eta(\cdot,T)\|_{L^2(\Om)}).
\end{aligned}
\ee
In view of lemma \ref{suf-cond.p-7-lem}, the
  `big O' terms in the r.h.s. are of the desired order and it remains
  to
  deal with the integral term.
We have, integrating by parts in time,
\be\label{DeltaQeq}
\int_Q (v\cdot b) p \eta \dd x\dd t
=  \int_\Om \big(w(T)\cdot b(x)\big) p(x,T) \eta(x,T) \dd x
-
\int_Q (w\cdot b) \ddt (p \eta) \dd x\dd t. 
\ee
For the first term in the r.h.s. of \eqref{DeltaQeq} we get, in view of 
\eqref{suf-cond.p-7-lem-eq}(ii), 
\begin{multline}
\left|
\int_\Om (w(T)\cdot b(x)) p(x,T) \eta(x,T) \dd x
\right| \\
=
O( | w(T)| \|\eta(\cdot,T)\|_{L^2(\Omega)}) = o( \|w\|^2_2 + | w(T)|^2).
\end{multline}
And, for the second term in the r.h.s. of
\eqref{DeltaQeq}, since $b$
is essentially bounded,
\if{
\be\label{DeltaQest}
\left|
\int_Q (w\cdot b) \ddt (p \eta) \dd x\dd t
\right| 
=
O\left( \|w\|_{2} \left\| \ddt (p \eta)  \right\|_{L^2(Q)}
\right).
\ee
} \fi 
and $p$ and $\eta$ satisfy 
\eqref{equationp2} and \eqref{d_1-equ},
respectively, we have that, 
\be
\label{intdtpeta}
\begin{aligned}
&\hspace{4cm}\ddt (p \eta)  =
\varphi_0 + \varphi_1 + \varphi_2,\quad \\
&\varphi_0 := p \Delta \eta- \eta \Delta p; \;
\varphi_1 :=  ( v\cdot b )  p \delta y; \;
\varphi_2 := p e (\delta y)^2 -\eta \left( 
y - y_d + \sum_{j=1}^q c_j \dot \mu_j(t)
\right).
\end{aligned}
\ee
{\em Contribution of $\varphi_2.$}
Since $y$, $p$ and $\dot \mu$ are essentially bounded (see Theorem \ref{sonc:setting-t1}), we get
 \be
\left|
\int_Q (w\cdot b) \varphi_2 \right| = O\left(\|w(\delta y)^2+w\eta\|_2\right) = o(\|w\|^2_2+|w(T)|^2),
\ee
where the last equality follows from the estimates for $\delta y$ and $\eta$ obtained in Lemma~\ref{suf-cond.p-7-lem}.\\
{\em Contribution of $\varphi_1.$}
Integrating by parts in time,
we can write the contribution of $\varphi_1$ as 
\be
\label{varphi1cont}
\half \int_Q \ddt (w\cdot b)^2 p \delta y
=
\half \int_\Om (w(T)\cdot b)^2 p(x,T) \delta y(x,T) -
\half \int_Q (w\cdot b)^2 \ddt  (p \delta y)
\ee
The contribution of the term at $t=T$
is of the desired order.
Let us proceed with the estimate for the last term in the r.h.s. of \eqref{varphi1cont}. We have
\begin{equation}
\begin{aligned}
\label{dotpdeltay}
 \ddt (p\delta y) &= (-\delta y\Delta p+p\Delta\delta y) \\
 &+\left(- (\yb-y_d)-\sum_{j=1}^q c_j\dot{\mu}_j\right) \delta y  
 +  \left( \sum_{i=1}^m v_i b_i y -3\gamma
\yb(\delta y)^2 -\gamma (\delta y)^3\right) p.
\end{aligned}
\end{equation}
For the contribution of first term in the r.h.s. of latter equation we get
\be
\int_Q (w\cdot b)^2(-\delta y\Delta p+p\Delta\delta y) = \sum_{i,j=1}^m \int_0^T w_iw_j \int_\Omega \nabla (b_ib_j) \cdot(\delta y \nabla p -p\nabla\delta y).
\ee
Using \cite[Lem. 2.2]{ABK-PartI}, 
since $\nabla (b_ib_j)$ is essentially bounded for every pair $i,j$,
it is enough to prove that 
\be
\int_\Omega \nabla (b_ib_j) \cdot(\delta y \nabla p -p\nabla\delta y) \to 0
\ee
uniformly in time.
For this, in view of the estimate for $\|\delta y\|_{L^\infty(0,T;H^1_0(\Om))}$ obtained in Lemma \ref{suf-cond.p-7-lem} item {\rm (ii.b)}, and since $p$ is essentially bounded, it suffices to prove that $p$ is in $L^\infty(0,T;H^1(\Omega))$ which follows from Corollary \ref{cor:reg-p}.

Let us continue with the expression in \eqref{dotpdeltay}. The terms containing $\delta y$ go to 0 in $L^\infty(0,T;L^2(\Omega))$ and that is sufficient for our purpose. The only term that has to be estimated is
\begin{multline}
\label{wb3}
\int_Q (w\cdot b)^2 (v\cdot b) yp = \frac13 \int_Q \frac{\dd}{\dd t} (w\cdot b)^3 yp \\= \frac13\int_\Omega (w(T)\cdot b)^3y(\cdot,T)p(\cdot,T)- \frac13\int_Q (w\cdot b)^3 \frac{\dd}{\dd t}(yp).
\end{multline}
We consider the pair of state and costate equations with $g:=y-y_d$ given as
\begin{equation}
 \begin{aligned}
\dot y - \Delta y +\gamma y^3  &=  (u\cdot b)y  + f;& y(0)&=y_0;\\
- \dot p -\Delta  p + \gamma y^2 p&=  (u\cdot b)p  + g+ c \dot \mu;& p(T)&=0.
 \end{aligned}
 \end{equation}
and so for sufficiently smooth $\varphi\colon \Om  \times (0,T)\rar \RR$ we have
 \begin{equation}
 \begin{aligned}
  \int_Q\varphi &  \frac{\dd }{\dd t} (yp)   = \int_Q \varphi (\dot y p + y \dot p)   \\
  &= \int_Q \varphi \left[ (\Delta y -\gamma y^3 +  (u\cdot b)y  + f)p + y  (-\Delta p + \gamma y^2 p  -  (u\cdot b)p  - g- c \dot \mu) \right]  \\
  & =\int_Q \varphi  \left[ f p -  y  g +c \dot \mu { y} \right]+
  {\nabla \varphi \cdot (-p\nabla y + y\nabla p ), }
 \end{aligned}
 \end{equation}
 and, consequently, 
 we have for $\varphi=(w \cdot b)^3,$ 
 \begin{equation}
 \begin{aligned}
  \int_Q (w \cdot b)^3 \frac{\dd }{\dd t} (yp) 
  & =\int_Q  (w \cdot b)^3 \left[ f p - y  g +c \dot \mu { y} \right] 
  + {\nabla  (w \cdot b)^3 \cdot (-p\nabla y + y\nabla p )}. 
 \end{aligned}
 \end{equation}
 By Hypothesis \ref{hyp-setting}, $f$ and $b$ are sufficiently smooth, $\dot \mu$ is essentially bounded,
 $y, p \in L^{\infty}(0,T;H^1_0(\Om))$. 
 We estimate
 \begin{equation*}
\begin{aligned}
&\left| \int_Q (w\cdot b)^3 \frac{\dd}{\dd t}(yp) \right|  \leq
\|b\|^3_{{\infty}} \|w\|_\infty \|w\|_2^2 
\left\|f p -  y  g +c \dot \mu {y}\right\|_{L^\infty(0,T;L^1(\Om))}\\
&+  O(\|b\|_\infty^2\|\nabla b\|_\infty)  \|w\|_\infty \|w\|_2^2 
\left(\left\|y\right\|_{L^\infty(0,T;H^1_0(\Om))} \left\|p\right\|_{L^\infty(0,T;H^1_0(\Om))}\right)=o(\norm{w}{}^2).
\end{aligned}
\end{equation*}
\if{So we need
\be
\left\|\dot{y}\right\|_{L^\infty(0,T;L^2(\Omega))} \leq M,\quad
\left\|\dot{p}\right\|_{L^\infty(0,T;L^2(\Omega))}\leq M.
\ee
}\fi
\\
{\em Contribution of $\varphi_0.$}
  Integrating by parts, we have that
\be
\label{varphi0}
\begin{split}
\int_0^T w_i \int_\Om b_i  \varphi_0 &=
\int_0^T w_i\int_\Om b_i  (p \Delta \eta - \eta \Delta p) 
=
\int_0^T w_i\int_\Om \nabla b_i \cdot (-p \nabla \eta + \eta \nabla p)\\
&{ =\int_0^T w_i\int_\Om \Big(p\eta\Delta b_i +2 \eta \nabla p \cdot \nabla b_i).}
\end{split}
\ee
Recalling that $b\in W^{2,\infty}(\Om)$ (see \eqref{sonc:setting-t1-1}) and that $p$ is essentially bounded (due to Theorem~\ref{sonc:setting-t1}), we get for the first term  in the r.h.s. of the latter display, 
\be
\left| \int_0^T w_i\int_\Om p\eta\Delta b_i \right| \leq \|\Delta b_i\|_\infty\|w_i\|_2 \|p\|_\infty \|\eta\|_{L^2(0,T;L^2(\Om))},
\ee
that is a small-$o$ of $\|w\|_2^2$ in view of item {\rm (iii.a)} of Lemma \ref{suf-cond.p-7-lem}.
For the second term in the r.h.s. of \eqref{varphi0} we get
\be
\left| \int_0^T w_i\int_\Om \eta \nabla p \cdot \nabla b_i \right| \leq
\|\nabla b_i\|_{\infty} \|w_i\|_2 \|\eta\|_{L^2(0,T;L^2(\Om))} \|\nabla p\|_{L^\infty(0,T;L^2(\Om)^n)}
\ee
Since $p\in L^\infty(0,T;H^1(\Omega))$ as showed some lines above and in view of item {\rm (iii.a)} of Lemma \ref{suf-cond.p-7-lem}, we get that the r.h.s. of latter equation is a small-$o$ of $\|w\|_2^2,$ as desired.

Collecting the previous estimates, we get 
\eqref{suf-cond.p-2-1}.
Similarly, since 
$\delta y \rar 0$ uniformly and
 the costate $p$ is
essentially bounded, with 
\eqref{suf-cond.p-7-lem-eq}(i) we get
\be
\label{suf-cond.p-8}
\left| \int_Q  pb (\delta y)^3 \dd x\dd t \right| 
= 
o \left( \|\delta y\|^2_2 \right) =
o \left(  \|w\|_2 ^2+ |w(T)|^2 \right).
\ee
The result follows.
\end{proof}

\begin{corollary}\label{expansion:J}
Let $u=\ub+v$ be an admissible control. 
Then, setting
$w(t):=\int_0^t v(s)\dd s$,
we have the reduced cost expansion
\be
F(u)=F(\ub) +  D F(\ub)v +
O(\|w\|^2_2 + |w(T)|^2).
 \ee
\end{corollary}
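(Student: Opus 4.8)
Since $F$ is the reduced cost alone — no state constraints, no multipliers enter it — the plan is to argue directly on $F$ rather than to route through the Lagrangian of Proposition~\ref{suf-cond.p}. Write $\delta y:=y[u]-\yb$, $v:=u-\ub$, let $z=z[v]$ solve the linearized equation \eqref{lineq2} (so $z[\cdot]=Dy[\ub](\cdot)$) and $\eta:=\delta y-z$, which solves \eqref{d_1-equ}. Since $J$ is quadratic in $y$ and affine in $u$, a direct expansion of $J$ about $(\ub,\yb)$ gives
\[
F(u)-F(\ub)-DF(\ub)v \;=\; \int_Q(\yb-y_d)\,\eta\,\dd x\dd t + \int_\Om(\yb(\cdot,T)-y_{dT})\,\eta(\cdot,T)\,\dd x + \tfrac12\|\delta y\|_{L^2(Q)}^2 + \tfrac12\|\delta y(\cdot,T)\|_{L^2(\Om)}^2 .
\]
By Lemma~\ref{suf-cond.p-7-lem}(ii.a) the two squared terms are $O(\|w\|_2^2+|w(T)|^2)$, so the task reduces to the two terms linear in $\eta$.

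To dispose of those, introduce the adjoint state $\tilde p$ of the unconstrained problem: the solution of $-\dot{\tilde p}+A\tilde p=\yb-y_d$ in $Q$, $\tilde p=0$ on $\Sigma$, $\tilde p(\cdot,T)=\yb(\cdot,T)-y_{dT}$. Under the data regularity of Hypothesis~\ref{hyp-setting}(6), and by the same arguments as in Theorem~\ref{sonc:setting-t1}(ii) and Corollary~\ref{cor:reg-p}, $\tilde p\in Y\cap L^\infty(Q)\cap L^\infty(0,T;H^1(\Om))$. Integrating by parts in time — the $t=0$ endpoint term vanishes because $\eta(\cdot,0)=0$, and the $t=T$ endpoint term is cancelled by $\int_\Om(\yb(\cdot,T)-y_{dT})\eta(\cdot,T)\dd x$ thanks to the terminal data of $\tilde p$ — and using that $A(t)$ is formally self-adjoint, the two $\eta$-integrals become $\int_Q\tilde p\,\tilde r\,\dd x\dd t$, where $\tilde r:=\sum_{i=1}^m v_ib_i\,\delta y-3\gamma\yb(\delta y)^2-\gamma(\delta y)^3$ is the right-hand side of \eqref{d_1-equ}.

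It remains to show $\int_Q\tilde p\,\tilde r=O(\|w\|_2^2+|w(T)|^2)$. The contributions of $\yb(\delta y)^2$ and $(\delta y)^3$ are bounded by $C\|\delta y\|_{L^2(Q)}^2$ (using $\tilde p,\yb\in L^\infty(Q)$ and $\|\delta y\|_{L^\infty(Q)}=O(1)$ over $\Uad$ by Theorem~\ref{sonc:setting-t1}(i)), hence are of the right size by Lemma~\ref{suf-cond.p-7-lem}(ii.a). The only delicate term is $\sum_i\int_Q\tilde p\,v_ib_i\,\delta y$, which I would treat exactly as $\varphi_1$ in the proof of Proposition~\ref{suf-cond.p}, by two integrations by parts in time: with $G_i(t):=\int_\Om\tilde pb_i\,\delta y\,\dd x$, first $\int_0^Tv_iG_i=w_i(T)G_i(T)-\int_0^Tw_i\dot G_i$ (the endpoint term being $O(|w(T)|\,\|\delta y(\cdot,T)\|_{L^2(\Om)})$, absorbed by Young's inequality); then $\dot G_i$ is computed from the equations for $\tilde p$ and $\delta y$, the zero-order $r$-terms cancel, the second-order terms become $\int_\Om\nabla b_i\cdot(\delta y\nabla\tilde p-\tilde p\nabla\delta y)\,\dd x$ after a spatial integration by parts, and a single further control term $\sum_kv_k\int_\Om b_ib_k\tilde p\,y\,\dd x$ remains; the latter, in $\sum_{i,k}\int_0^Tw_iv_k\int_\Om b_ib_k\tilde py$, is handled by a second integration by parts in time using the symmetry $w_iv_k+w_kv_i=\tfrac{\dd}{\dd t}(w_iw_k)$. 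After these steps every term is either an endpoint term of order $O(|w(T)|^2)$ or an integral $\int_0^T(\text{product of two }w\text{-components})\times(\text{an }L^\infty(0,T)\text{ coefficient})$; bounding the latter uses $\tilde p,y,\yb\in L^\infty(Q)\cap L^\infty(0,T;H^1(\Om))$, the pointwise bound $\|\delta y(\cdot,t)\|_{H^1_0(\Om)}=O(\|w\|_2+|w(t)|)$ — the $C([0,T];H^1)$-form of the estimate behind Lemma~\ref{suf-cond.p-7-lem}(ii.b), valid because $\hat f_{\delta y}$ is $O(\|w\|_2)$ in $L^2(Q)$ — and $\|w\|_\infty,\|w\|_2=O(1)$ over $\Uad$, so that $\|w\|_\infty\|w\|_2^2$, $\|w\|_2^3$ and $\int_0^T|w_i||w|$ are all $O(\|w\|_2^2)$. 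Collecting the estimates proves the expansion.

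The hard part is the term $\sum_i\int_Q\tilde p\,v_ib_i\,\delta y$: bounded naively it is only $O(\|v\|_2(\|w\|_2+|w(T)|))$, and $\|v\|_2$ is not controlled by the Goh variables $(\|w\|_2,|w(T)|)$ — a rapidly oscillating $v$ keeps $w$ small while $\|v\|_2$ blows up. The point, as in Proposition~\ref{suf-cond.p}, is that integrating by parts in time trades $v$ for $w$ without costing a derivative on the remaining factors; their time derivatives, read off from the state and costate equations, only reintroduce Laplacians that a spatial integration by parts absorbs, plus one lower-order control term that a second time integration by parts converts into a genuinely quadratic-in-$w$ quantity. A minor extra point: the bound $\|\delta y\|_{L^\infty(0,T;H^1_0)}=O(\|w\|_\infty)$ as stated in Lemma~\ref{suf-cond.p-7-lem}(ii.b) is too weak at one place, where the sharper pointwise-in-$t$ estimate above is needed instead.
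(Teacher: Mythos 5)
Your proposal is correct, but it does not follow the route the paper has in mind. The paper states this corollary without proof, immediately after Proposition~\ref{suf-cond.p}; the intended derivation is to specialize the Lagrangian expansion to a trajectory (so the state-equation terms drop out), use Proposition~\ref{prop-1} to rewrite $\int_0^T\Psi^p\cdot v$ as $DF(\ub)v$ plus the measure term, and absorb $\whq$ into the remainder. That route (or, equivalently, your own first display combined with Lemma~\ref{suf-cond.p-7-lem}(iii)) only controls the $\eta$-linear terms by $o(\|w\|_2+|w(T)|)$, which is exactly the form in which the corollary is actually invoked in the proof of Theorem~\ref{ThmSC}. Your argument is genuinely different and proves the stronger remainder as literally stated: by introducing the unconstrained adjoint $\tilde p$ and converting the $\eta$-linear terms into $\int_Q\tilde p\,\tilde r$, you isolate the single problematic contribution $\int_Q(v\cdot b)\tilde p\,\delta y$ and then transplant the $\varphi_0$/$\varphi_1$ integration-by-parts machinery of Proposition~\ref{suf-cond.p} from the costate $p$ to $\tilde p$, trading $v$ for $w$ and reading the time derivatives off the state and adjoint equations. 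I checked the steps: the boundedness of $u\in\Uad$ keeps the residual control term $\sum_k v_k\int_\Om b_ib_k\tilde p\,y$ with an essentially bounded coefficient after the second integration by parts, and your pointwise bound $\|\delta y(\cdot,t)\|_{H^1_0}=O(\|w\|_2+|w(t)|)$ (consistent with the paper's estimate \eqref{zetadeltayest}) is indeed the right replacement for the cruder $O(\|w\|_\infty)$ of Lemma~\ref{suf-cond.p-7-lem}(ii.b) at the one place where it matters. What your approach buys is the genuine $O(\|w\|_2^2+|w(T)|^2)$ bound, independent of multipliers and of the state constraints; what the paper's shortcut buys is brevity, at the cost of only delivering $o(\|w\|_2+|w(T)|)$ --- which suffices for its later use.
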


\begin{proposition}\label{prop-1}
 Let \blue{$(p,\mu) \in \Lambda_1,$} and let $(z,v) \in Y \times L^2(0,T)^m$ satisfy the linearized state equation \eqref{lineq2}. Then,
\be
\begin{split}
  \int_0^T \Psi^p(t) \cdot v(t) \dd t  =   DJ(\ub,\yb)(z,v)+
 \sum_{j=1}^q \int_0^T g_j'(\yb(\cdot,t)z(\cdot,t)  {\rm d} \mu_j(t),
\end{split} 
\ee
where 
$$
DJ(\ub,\yb)(z,v)=\sum_{i=1}^m\int_0^T  \alpha_i v_i \dd t+ \int_Q(\yb-y_d)z\dd x\dd t + \int_\Omega (\yb(T)-y_{dT}) z(T)\dd x,$$ 
and it coincides with $DF(\ub)v.$
\end{proposition}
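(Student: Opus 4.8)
The plan is to prove the identity by integrating by parts against the costate equation, in the spirit of the Lagrangian expansion in Proposition~\ref{prop:expansion} but carried out on the linearized state $z$ instead of on a nonlinear increment $\delta y$. Inserting the definition \eqref{def-psi} of the switching function (with $\beta=1$) into the left-hand side gives
\[
\int_0^T \Psi^p(t)\cdot v(t)\,\dd t = \sum_{i=1}^m \int_0^T \alpha_i v_i(t)\,\dd t + \sum_{i=1}^m\int_0^T v_i(t)\int_\Om b_i(x)\yb(x,t)p(x,t)\,\dd x\,\dd t ,
\]
where the first sum is precisely the control part of $DJ(\ub,\yb)(z,v)$. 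Since $(z,v)$ solves the linearized state equation \eqref{lineq2} and $A$ is the operator of \eqref{lin-state-equ}, one has $\sum_{i=1}^m v_i b_i\yb = \dot z + Az$ in $Q$, so the second sum equals $\int_Q p\,(\dot z + Az)\,\dd x\,\dd t$.

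Next I would integrate by parts: in time, using $z(\cdot,0)=0$ and that under Hypothesis~\ref{hyp-setting} (cf.\ Theorem~\ref{sonc:setting-t1}(ii)) $\mu\in W^{1,\infty}(0,T)^q$, hence $p=p^1-\sum_j c_j\mu_j$ satisfies $\dot p\in L^2(Q)$; in space, using the self-adjointness of $A$ together with $z,p\in H^1_0(\Om)$ a.e., so that $\int_Q pAz = \int_Q zAp$. This yields
\[
\int_Q p\,(\dot z + Az)\,\dd x\,\dd t = \int_\Om p(x,T)z(x,T)\,\dd x + \int_Q (-\dot p + Ap)\,z\,\dd x\,\dd t .
\]
Invoking the costate equation \eqref{costate-eq} associated with $(1,\mu)$ (see Appendix~\ref{costate_equation}), i.e.\ $-\dot p + Ap = \yb - y_d + \sum_{j=1}^q c_j\dot\mu_j$ with terminal value $p(\cdot,T)=\yb(\cdot,T)-y_{dT}$, the right-hand side becomes
\[
\int_\Om(\yb(x,T)-y_{dT}(x))\,z(x,T)\,\dd x + \int_Q(\yb-y_d)\,z\,\dd x\,\dd t + \sum_{j=1}^q\int_0^T\Big(\int_\Om c_j(x)z(x,t)\,\dd x\Big)\dd\mu_j(t) .
\]
Since $\int_\Om c_j(x) z(x,t)\,\dd x = g_j'(\yb(\cdot,t))z(\cdot,t)$, adding back $\sum_{i=1}^m\int_0^T\alpha_i v_i\,\dd t$ gives the claimed formula, the collected terms $\sum_{i=1}^m\int_0^T\alpha_i v_i\,\dd t + \int_Q(\yb-y_d)z\,\dd x\,\dd t + \int_\Om(\yb(\cdot,T)-y_{dT})z(\cdot,T)\,\dd x$ being exactly $DJ(\ub,\yb)(z,v)$. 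For the last assertion, $F(u)=J(u,y[u])$ with $u\mapsto y[u]$ of class $C^\infty$ from $L^2(0,T)^m$ into $Y$ and $Dy[\ub]v=z[v]$, so the chain rule yields $DF(\ub)v = D_uJ(\ub,\yb)v + D_yJ(\ub,\yb)z[v] = DJ(\ub,\yb)(z,v)$.

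The only delicate point is the bookkeeping of the terminal and Stieltjes contributions: the measure $\dd\mu$ may carry an atom at $T$ (cf.\ \eqref{jumpcond}), so one must make sure that the terminal value of $p$ and the term $\sum_{j=1}^q\int_0^T(\int_\Om c_j z)\,\dd\mu_j$ are handled consistently with the precise weak formulation of the costate equation given in the appendix. Once that is pinned down, the regularity granted by Hypothesis~\ref{hyp-setting} ($\mu\in W^{1,\infty}(0,T)^q$, $p$ essentially bounded, $z\in Y$) makes every integral above classical and the integrations by parts legitimate, so this step reduces to matching formulas rather than to any genuine analytical difficulty.
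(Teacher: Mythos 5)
Your proof is correct and rests on the same underlying idea as the paper's: test the costate equation against the linearized state $z$ and use the definition \eqref{def-psi} of $\Psi^p$. The one difference is in execution. The paper's proof is a one-liner because it invokes the costate equation in the weak form \eqref{costat-eq}: taking $\varphi=\sum_i v_ib_i\yb=\dot z+Az$ and $\psi=z(\cdot,0)=0$ there gives, verbatim, the identity $\int_Q p\,(v\cdot b)\,\yb = \sum_j\int_0^T g_j'(\yb)z\,\dd\mu_j+\int_Q(\yb-y_d)z+\int_\Om(\yb(T)-y_{dT})z(T)$, and \eqref{def-psi} finishes. You instead pass to the strong form \eqref{equationp2} and integrate by parts in time and space, which is legitimate under Hypothesis \ref{hyp-setting} but forces you to confront the ``delicate point'' you flag at the end: a possible atom of $\dd\mu_j$ at $t=T$ makes $p$ jump there, so the boundary term $\int_\Om p(\cdot,T)z(\cdot,T)$ and the Stieltjes integral $\int_0^T(\int_\Om c_jz)\,\dd\mu_j$ must be paired using $p^1=p+\sum_j c_j\mu_j$ and $\mu_j(T)=0$ rather than naively. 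That bookkeeping is exactly what the weak formulation \eqref{costat-eq} packages once and for all, so the paper's route avoids the issue entirely; your route works but should carry out that last matching explicitly rather than asserting it. The final identification $DJ(\ub,\yb)(z,v)=DF(\ub)v$ via the chain rule is the same in both.
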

\begin{proof}
It follows from \eqref{lineq2}, \eqref{costat-eq} and \eqref{def-psi}.
 \if{From   we obtain
 \begin{multline*}
\int_Q p (v\cdot b \yb) {\rm d}x {\rm d}t  \\= 
 \sum_{j=1}^q \int_0^T g_j'(\yb)z  {\rm d} \mu_j(t)  
  + \int_Q  (\yb-y_d) z {\rm d}x {\rm d}t
 +  \int_\Omega (\yb(x,T)-y_{dT}(x)) z(x,T)  {\rm d}x.
\end{multline*} 
We conclude with \eqref{def-psi}.}\fi
\end{proof}

\subsection{Proof of Theorem \ref{ThmSC}}
What remains to prove is similar to what has been done in Aronna, Bonnans and Goh \cite[Theorem 5]{MR3555384}, 
in a finite dimensional setting,
except that here the control variable may be multidimensional and in \cite{MR3555384} it is scalar.

We start by showing item a).
If the conclusion does not hold,
there must exist a sequence $(u_{\ell},y_\ell)$
of admissible trajectories, with $u_\ell$ distinct from $\ub,$
such that $v_{\ell}:=u_{\ell}-\ub$
converges to zero in $L^2(0,T)^m$, 
and  
\be
\label{inequ1}
J(u_{\ell},y_\ell) \le J(\ub,\yb) + o(\Upsilon_{\ell}^2),
\ee
where $(w_\ell,\zeta_\ell)$ is obtained by Goh transform \eqref{Goh}, $h_{\ell}:=w_{\ell}(T)$ and  
$$\Upsilon_{\ell} := \sqrt{ \|w_{\ell}\|_2^2 + |w_{\ell}(T)|^2}.
$$
Let \blue{$(p,\mu)\in \Lambda_1$}.
Adding 
$\int_0^T g(y_{\ell}) \dd \mu\le 0$ on both sides of \eqref{inequ1} leads to
\be
\label{equ-exp:L}
\call[p,\mu](u_\ell , y_\ell) \leq \call[p,\mu](\ub,\yb) + o(\Upsilon_{\ell}^2).
\ee
Set $(\vb_{\ell}, \wb_{\ell},\hb_{\ell}) := 
(v_{\ell},w_{\ell},h_{\ell})/\Upsilon_{\ell}$.
Then $(\wb_{\ell},\hb_{\ell})$ has unit norm in $L^2(0,T)^m \times \RR^m$. Extracting if necessary a subsequence,
we may assume that there exists $(\wb, \hb)$
in
$L^2(0,T)^m \times \RR^m$ such that
\be
\wb_{\ell}\rightharpoonup  \wb \quad \text{and} \quad  h_{\ell}\rightarrow  \hb,
\ee
where the first limit is given in the weak topology of $L^2(0,T)^m$.
Set $\bar\zeta:=\zeta[\wb].$ The remainder of the proof is split in two parts:

\textbf{Fact 1:} The triple $(\zetab,\wb,\hb)$ belongs to $PC_2^*$ (defined in \eqref{p-cal}).

\textbf{Fact 2:} The inequality \eqref{inequ1} contradicts the hypothesis of uniform positi\-vi\-ty~\eqref{Q-coerc_cond}.

\textbf{Proof of Fact 1.}  We divide this part in four steps: {\bf (a)} $\wb_i$ is constant on each maximal arc of $I_i$, for $i=1,\dots,m,$ {\bf (b)} \eqref{lem-pc2-1}(i),(ii) hold, {\bf (c)} \eqref{w-C} holds, and {\bf(d)} \eqref{jumpcond}  holds.

\textbf{(a)} From Proposition \ref{suf-cond.p}, inequality \eqref{equ-exp:L}, and \eqref{FirstControl} we have
\be
\label{equ2}
- \whq[p,\dd\mu](\zeta_{\ell},w_{\ell},h_{\ell})+ o(\Upsilon^2_{\ell}) \ge \sum_{i=1}^m \int_0^T \Psi^p_i(t)\cdot v_{\ell,i}(t) \dd t \ge 0.
\ee
By the continuity of the quadratic form $\whq[p,\dd\mu]$ 
over the space $L^2(0,T)^m \times \RR^m,$ 
we deduce that
\begin{align}
 0 \le \int_0^T \Psi^p_i(t) v_{\ell,i}(t) \dd t \le O(\Upsilon^2_{\ell}),\quad \text{for all } i=1,\dots, m.
\end{align}
Hence, since the integrand in previous inequality is nonnegative for all $\ell \in \NN$, we
have that
\begin{align}\label{equ1}
 \lim_{\ell \rightarrow \infty }\int_0^T 
\Psi^p_i(t)\varphi(t) \vb_{\ell,i}(t) \dd t = 0
\end{align}
for any nonnegative  $C^1$ function $\varphi \colon [0,T] \rightarrow \RR$. 
Let us consider, in particular, $\varphi$ having its support $[c,d]\subset I_i$. Integrating by parts in \eqref{equ1} and using that $\wb_{\ell}$ is a
primitive of $\vb_{\ell}$, we obtain
\begin{align}
 0= \lim_{\ell\rar \infty}  \int_0^T \frac{\dd}{\dd t}(\Psi^p_i \varphi )\wb_{\ell,i}\dd t=\int_c^d \frac{\dd}{\dd t}  (\Psi^p_i(t)\varphi )\wb_i \dd t.
\end{align}
Over $[c,d]$, $\vb_{\ell,i}$ has constant sign and, therefore, 
$\wb_i$ is either nondecreasing or 
nonincreasing. Thus, we can integrate by parts in the latter equation to get
\be
\int_c^d \Psi^p_i(t)\varphi(t)  \dd \wb_i(t)  = 0.
\ee
Take now any $t_0 \in (c,d)$. {Assume, w.l.g. that $t_0\in \check{I}_i.$} By the strict complementary condition for the 
control constraint given in \eqref{strict-compl}, there exists a multiplier such that the associated $\Psi^p$ verifies $\Psi^p_i(t_0)> 0$. Hence, in view of the continuity of $\Psi^p_i$ on $I_i$, there exists 
$\varepsilon >0$ such that $\Psi^p_i>0$ on $(t_0 - 2\varepsilon,t_0 + 2\varepsilon) \subset (c,d)$.
Choose $\varphi$ such that $\supp \varphi \subset (t_0 - 2\varepsilon,t_0 + 2\varepsilon)$, 
and $\Psi^p_i\varphi \equiv1$ on $(t_0 - \varepsilon, t_0 + \varepsilon)$, {then $\wb_i(t_0+\eps)-\xb_i(t_0-\eps) =0.$ Since $\dd \wb_i \ge 0$,
we obtain
$\dd \wb_i =0$ a.e. on $\check I_i.$
Since $t_0$ is an arbitrary point in the interior of $I_i,$ we get
\be
\label{dw-zero}
\dd \wb_i =0\quad \text{ a.e. on }\check I_i.
\ee
This concludes step {\bf (a)}.}

\noindent\textbf{(b)} 
We now have to prove \eqref{lem-pc2-1}(i),(ii). Assume now that $B_0 \neq \emptyset$ or, w.l.g., that $\check{B}_0\neq \emptyset,$ and let $i \in \check{B}_0$.
By the previous step, 
$\wb_i$ is equal to some
constant $\theta$ a.e. over $(0,\tau_1)$.
Let us show that 
$\theta = 0$.
By the strict complementarity 
condition for 
the control constraint \eqref{strict-compl} there exist $t, \delta > 0$ {and a  multiplier such that the associated $\Psi^p$} satisfies $\Psi_i^p > \delta$ 
on $[0, t]\subset [0,\tau_1)$. 
{
By considering in \eqref{equ1} a 
nonnegative Lipschitz continuous function $\varphi \colon [0,T] \rar \RR$ being equal to  $1/\delta$ on $[0,t]$, with support included in $[0,\tau_1),$ and since $\vb_{\ell,i} \ge 0$ a.e. on $[0,\tau_1],$ we obtain, for any $\tau\in [0,t],$
\be
\wb_{\ell,i}(\tau) =\int_0^\tau  \vb_{\ell,i}(s) \dd s \leq \int_0^t \Psi_i^p(s) \varphi(s) \vb_{\ell,i}(s) \dd s    \rar 0,\quad \text{when }\ell \to \infty.
\ee
Thus $\wb_i=0$ a.e. on $[0,t].$ Consequently, from \eqref{dw-zero} we get $\wb_i=0$ a.e. on $[0,\tau_1).$}
The case when 
$i \in  B_{r-1} $ follows by a similar argument. This yields item {\bf(b)}.

\noindent{\bf(c)} {Let us prove \eqref{w-C}. 
We have, since $y_\ell$ is admissible and  $g$ linear, 
\be
0 \ge g_j(y_{\ell}(\cdot,t)) - g_j(\yb(\cdot,t)) = \int_\Omega c_j(x)(y_\ell-\yb)(x,t)\dd x,\quad \text{on } [\tau_k,\tau_{k+1}],
\ee
whenever $k,j$ are such that $k\in \{0,\dots,r-1\}$ and $j\in C_k.$
Let $z_\ell$ denote the linearized state corresponding to $v_\ell$, and let $\eta_\ell := y_\ell-\yb-z_\ell.$ By Lemma \eqref{suf-cond.p-7-lem}(iii), we deduce that
\be
\label{equ3}
\int_\Omega c_j(x)  z_\ell(x,t)\dd x \leq - \int c_j(x)\eta_\ell(x,t)\dd x \leq o(\Upsilon_\ell),\quad \text{on } [\tau_k,\tau_{k+1}].
\ee
Thus, by the Goh transform \eqref{Goh},
\be
\label{equcj}
\int_\Omega c_j(x) (\bar{\zeta}_\ell(x,t)+B(x,t)\cdot \wb_\ell(t))\dd x \leq o(1),\quad \text{on } [\tau_k,\tau_{k+1}],
\ee
where $\bar\zeta_\ell$ is the solution of \eqref{zeta} corresponding to $\wb_\ell.$
Let $\varphi$ be some time-dependent nonnegative continuous function with 
support included in $I_j^C.$
From \eqref{equcj}, we get that
\be
\label{equcj2}
\int_{\tau_k}^{\tau_{k+1}} \varphi \int_\Omega c_j (\bar{\zeta}_\ell+B\cdot \wb_\ell)\dd x \dd t\leq o(1).
\ee
Taking the limit $\ell \to \infty$ yields
\be
\label{equcj3}
\int_{\tau_k}^{\tau_{k+1}} \varphi \int_\Omega c_j (\bar{\zeta}+B\cdot \wb)\dd x \dd t\leq 0,
\ee
where $\bar\zeta$ is the solution of \eqref{zeta} associated to $\wb.$ Since \eqref{equcj3} holds for any nonnegative $\varphi,$ we get that 
\be
\label{linstateconsleq0}
\int_\Omega c_j (\bar{\zeta}(x,t)+B(x,t)\cdot \wb(t))\dd x \leq 0,\quad \text{a.e. on } [\tau_k,\tau_{k+1}].
\ee
In particular, if $T\in I_j^C,$ we get from \eqref{equcj} that
\be
\label{equcj4}
\int_\Omega c_j (\bar{\zeta}(x,T)+B(x,T)\cdot \hb)\dd x \leq 0.
\ee
We now have to prove the converse inequalities in \eqref{equcj3} and \eqref{equcj4}. 
}

By Proposition \ref{prop-1} and since $u_\ell$ is admissible, we have
\be\label{equ4}
\begin{split}
 \sum_{j=1}^q \int_0^T g_j'(\yb(\cdot,t))z(\cdot,t)  {\rm d} \mu_j(t) +  DJ(\ub,\yb)(z,v) =  \int_0^T \Psi^p(t) \cdot v_\ell(t) \dd t\ge 0.
\end{split} 
\ee  
By Proposition \ref{expansion:J}, we have 
$
F(u_\ell)=F(\ub) + DF(\ub)v_\ell + o(\Upsilon_{\ell}).
$
This, together with \eqref{equ4}, yield
\be
0 \le F(u_\ell) - F(\ub) + o(\Upsilon_{\ell}) +  \sum_{j=1}^q \int_0^T g_j'(\yb(\cdot,t))z(\cdot,t) {\rm d} \mu_j(t).
\ee
Using \eqref{inequ1} in latter inequality implies that 
\be
 -o(\Upsilon_\ell) \leq  \sum_{j=1}^q \int_0^T g_j'(\yb(\cdot,t))z(\cdot,t)  {\rm d} \mu_j(t),
\ee
thus
\be
 o(1) \leq  \sum_{j=1}^q \int_0^T g_j'(\yb(\cdot,t))(\bar\zeta_\ell(\cdot,t) + B(\cdot,t)\cdot \wb_\ell(t))  {\rm d} \mu_j(t).
\ee
Since, for every $j=1,\dots,q,$ the measure $\dd \mu_j$ has an essentially bounded density over $[0, T )$ (in view of Theorem \ref{sonc:setting-t1}), we have that
\be\label{equ5}
\begin{aligned}
 &0  \le \liminf_{\ell \rar \infty}  \sum_{j=1}^q  \int_{[0,T]} g_j'(\yb(\cdot,t))(\zetab_\ell(\cdot,t) + B(\cdot,t)\cdot\wb_{\ell}(t))  {\rm d} \mu_j\\
& = \lim_{\ell\rar \infty} \sum_{j=1}^q  \int_{[0,T)} g_j'(\yb(\cdot,t))( \zetab_{\ell}(\cdot,t)+ B(\cdot,t) \cdot\wb_\ell(t)){\rm d} \mu_j.\\
\end{aligned}
\ee
Using \eqref{linstateconsleq0} and the strict complementarity for the state constraint \eqref{strict-compl}(ii), we get \eqref{w-C}. This concludes the proof of item {\bf (c)}.
\\
\noindent{\bf (d)} Let us now prove \eqref{jumpcond}. Assume that $j\in\{1,\dots,q\}$ is such that $T \in I^C_j.$  One inequality was already proved in \eqref{equcj4}. If we further have that $[\mu_j(T )] > 0,$  condition \eqref{jumpcond} follows from \eqref{equ5}.

We conclude that the limit direction $(\zetab,\wb,\hb)$ belongs to $PC_2^*.$

\textbf{Proof of Fact 2}. From Proposition \ref{suf-cond.p} we obtain 
\begin{equation}
\begin{aligned}
\label{equ6}
\whq[p,\dd\mu] &(\zeta_\ell,w_{\ell}, h_{\ell}) \\
&= \call[p,\mu](u_{\ell},y_\ell) - \call[p,\mu](\ub,\yb) -
\int_0^T \Psi^p \cdot v_{\ell}\dd t + o(\Upsilon^2_{\ell} )\le o(\Upsilon^2_{\ell} ),
\end{aligned}
\end{equation}
where the last inequality follows from \eqref{equ-exp:L} and since $\Psi^p \cdot v_{\ell}\ge 0$ a.e. on~$[0, T ]$ in view of  the first order condition \eqref{FirstControl}.
Hence,
\be
\liminf_{\ell\rar \infty} \whq[p,\mu] (\bar\zeta_\ell,\yb_{\ell}, \hb_{\ell}) \le \limsup_{\ell \rar \infty}  \whq[p,\mu] (\zetab_\ell,\wb_{\ell}, \hb_{\ell}) \le 0.
\ee
Let us recall that, in view of the hypothesis (iii) of the current theorem, the mapping $\whq[p,\dd\mu]$ is a Legendre form in the
Hilbert space $\{(\zeta[w],w, h) \in Y\times L^2(0,T)^m \times \RR^m\}$. Furthermore, for
the critical direction $(\zetab, \wb, \hb)$, due to the uniform positivity condition \eqref{Q-coerc_cond}, there is a multiplier $(\bar p,\bar{\mu}) \in \Lambda_1$ such that
\be\label{equ7}
\rho (\|\wb\|^2_2+|\hb|^2)
\le \whq[\bar p,\bar{\mu}](\zetab, \wb, \hb) = \liminf_{\ell \rar \infty} \whq[\bar p,\bar{\mu}] ( \zetab_\ell,\wb_{\ell}, \hb_{\ell}) \le 0,
\ee
where the equality holds since $\whq[\bar p,\bar{\mu}]$ is a Legendre form and the  inequality is due to \eqref{equ6}. From \eqref{equ7} we get  $( \wb, \hb) = 0$ and $\ds\lim_{k\rar \infty} \whq[\bar p,\bar{\mu}]( \zetab_\ell,\wb_{\ell}, \hb_{\ell}) =0$.
Consequently, $(\wb_{\ell}, \hb_{\ell})$ converges strongly to $( \wb, \hb) = 0$ which is a contradiction,
since $( \wb_{\ell}, \hb_{\ell})$ has unit norm in $L^2(0,T)^m \times \RR^m$. We conclude that $(\ub, \yb)$ is
an $L^2$-local solution 
satisfying the weak quadratic growth condition.

Conversely, assume that the weak quadratic growth condition \eqref{growth-cond} holds at $(\ub,\yb)$ for $\rho>0.$ Note that $(\ub,\yb,\wb),$ with $\wb(t)=\int_0^t \ub(s) \dd s,$ is a 
$L^2$-local solution of the problem
\be
\label{Paux}
\begin{split}
\min_{u\in \calu_{\rm ad}}\, &J(u,y[u]) - \rho \left( \int_0^T (w - \wb)^2 \dd t + |w(T)-\wb(T)|^2\right),\\
{\rm s.t.\,}&\,\,\dot w = u,\,\, w(0)=0,\,\text{\eqref{stateconstraint} holds},\\
\end{split}
\ee
Applying the second order necessary condition in Theorem \ref{SONC} 
to this problem \eqref{Paux}, followed by the  Goh transform, yields the uniform positivity \eqref{Q-coerc_cond2}. For further details we refer to the corresponding statement for ordinary differential equations in  \cite[Theorem 5.5]{ABDL12}.
$\square$

\if{
\appendix
\section{Proof of Proposition \ref{cc-m1-p}}
\begin{proof}
(i)
Over each arc we have at most one active state constraint,
in view of the uniform local controllability condition
\eqref{controllability}.
On singular arcs 
(i.e., when no constraint is active) the operators
$\cala$, $\calb$ reduce to zero so that no continuity relations of
type \eqref{LemmaJ-2} can be obtained over junctions with a
singular arc. At other junctions we obtain one relation which means that 
$w$ is continuous. 
The last two conditions are easily obtained.
So, the l.h.s. of 
\eqref{cc-m1-p1}
is included in the r.h.s. \\
(ii)  
Given $(w_0,h_0)$ in the r.h.s. of \eqref{cc-m1-p1},
we need to construct 
$(w,h) \in PC$, arbitrarily close to 
$(w_0,h_0)$ in $L^2(0,T)^m\times\RR$.
As a first step, given $\eps>0$, 
let $w_\eps$ be equal to  $w_0$ on B and C arcs, $C^1$ on S arcs, continuous  on $[0,T],$ and such that $\|w_\eps-w_0\|_2 \leq \eps$,
and $w_\eps(T)=h_0$ if the last arc,  i.e. $(\tau_{r-1},\tau_r),$ is singular.
This is possible since it requires modifying $w_0$ only on S arcs,
 maintaining the values at junction points. \\
Take 
$h_\eps:=w_\eps(T)$.
{Next, compute the pair 
$(\zeta_\eps,w'_\eps)\in Y\times L^2(0,T)^m$
satisfying \eqref{zeta},
$w'_\eps(t)= w_\eps(t)$ over B and S arcs, 
and over $C$ arcs, 
$w'_\eps(t)$ is such that the linearized 
state constraint is active, i.e.,
$\int_\om c(x) (\zeta_\eps(x,t) ++B(x,t)\cdot w'_\eps(t)=0$.
}
Then $w'_\eps$
belongs to the set $PC''(\ub)$, defined as $PC$, but relaxing the
continuity condition of $w$ at junctions, and endowed with the 
piecewise (on each arc) $H^1$-norm, denoted as $\|w\|_{H^1_P}$. 
By induction over the arcs and using the 
uniform local controllability condition
\eqref{controllability}, 
setting $h'_\eps := w'_\eps(T)$, 
we easily obtain that 
\be
\label{wp-eps-w-eps}
\| w'_\eps - w_\eps\|_{H^1_P} 
+ | h'_\eps-h_\eps| = O(\eps).
\ee
For $w\in PC''(\ub)$,
let $Aw := \{ w(\tau_+)-w(\tau_-); \; \tau\in\calt\}$
where $\calt$ is the set of junction points. This operator is
 well defined, and $PC = PC''(\ub) \cap \Ker A$. 
By \eqref{wp-eps-w-eps},  $| A w'_\eps | = O(\eps)$.
So, by the infinite dimensional 
\cite[Thm. 2.200]{MR1756264} 
version of Hoffman's Lemma \cite{MR0051275}, 
there exists $w''_\eps\in PC$ such that
\be
\|w''_\eps - w'_\eps \|_{H^1_P} = O( | A w'_\eps |) = O(\eps).
\ee
Consequently,
$
\|w''_\eps - w_\eps \|_{2}
\leq 
\|w''_\eps - w'_\eps \|_{2} + \|w'_\eps - w_\eps \|_{2} 
= O(\eps)$,
and setting
$h''_\eps := w''_\eps(T)$, 
we can estimate 
$|h''_\eps - h_\eps \|_{2}$
in the same way.
The conclusion follows.
\end{proof}
}\fi

\appendix

\section{Well-posedness of state equation and existence of optimal controls}\label{sec:well-posed}
In this section we recall some statements from \cite{ABK-PartI}, for proofs we refer to the latter reference.
\if{We start by the following  technical result which follows immediately by  H\"older's inequality.

\begin{lemma}
\label{lem-uby}
The state equation \eqref{dynamics}
has a unique solution 
$y=y[u,y_0,f]$ in $Y$. The mapping $(u,y_0,f) \mapsto  y[u,y_0,f]$ is 
$C^\infty$ from $L^2(0,T)^m  \times H^1_0(\Om) \times L^2(Q)$ to $Y$,
and nondecreasing w.r.t. $y_0$ and $f$.
In addition, there exist functions $C_i$, $i=1$ to 2,
not decreasing w.r.t. each component, such that 
\begin{gather}
\label{state-equ-estimA.l-0}
\| y \|_{ L^\infty (0,T;L^2(\Om) ) } +
\| \nabla y \|_2 \leq 
C_1( \|y_0\|_2, \| f \|_2, \|u\|_2 \| b \|_\infty),
\\ 
\label{state-equ-estimA.l-1}
\|y\|_Y \leq 
C_2( \|y_0\|_{ H^1_0(\Om) }, \| f \|_2, \|u\|_2\| b \|_\infty).
\end{gather} %
Moreover, the state $y$ also belongs to $C([0,T];H^1_0(\Omega))$, since $Y$ is continuously embedded in that space \cite[Theorem 3.1, p.23]{LioMag68a}.
\end{lemma}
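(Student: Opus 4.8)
The plan is to treat \eqref{dynamics} as a monotone semilinear parabolic equation and to exploit the sign $\gamma\ge0$ of the cubic term throughout. Writing $e(x,t):=-\sum_{i=0}^m u_i(t)b_i(x)$, the equation becomes $\dot y-\Delta y+\gamma y^3+ey=f$ with homogeneous Dirichlet data and $y(\cdot,0)=y_0$. Since $b\in W^{1,\infty}(\Om)^{m+1}$ and $u\in L^2(0,T)^m$ (with the constant $u_0\equiv1$ contributing a bounded term), one has $\|e(\cdot,t)\|_{L^\infty(\Om)}\le \|b\|_\infty(1+|u(t)|_1)$, so that $t\mapsto\|e(\cdot,t)\|_{L^\infty(\Om)}$ lies in $L^2(0,T)\subset L^1(0,T)$ with norm controlled by $\|b\|_\infty(T+\sqrt T\,\|u\|_2)$; this is precisely the quantity entering the Gr\"onwall constants, hence the functions $C_1,C_2$.

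First I would establish the a priori estimates, rigorously on Galerkin approximations. Testing with $y$ and dropping the nonnegative term $\gamma\int_\Om y^4$ gives
\be
\label{en1}
\tfrac12\tfrac{\dd}{\dd t}\|y(\cdot,t)\|_{L^2(\Om)}^2+\|\nabla y(\cdot,t)\|_{L^2(\Om)}^2\le \|f(\cdot,t)\|_{L^2(\Om)}\|y(\cdot,t)\|_{L^2(\Om)}+\|e(\cdot,t)\|_{L^\infty(\Om)}\|y(\cdot,t)\|_{L^2(\Om)}^2,
\ee
and Gr\"onwall's lemma (the coefficient lying in $L^1(0,T)$) yields \eqref{state-equ-estimA.l-0}. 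To reach the $Y=H^{2,1}(Q)$ estimate I would test with $y^3$: using $-\int_\Om\Delta y\,y^3=3\int_\Om y^2|\nabla y|^2\ge0$ and absorbing $\int_\Om fy^3\le\tfrac{\gamma}{2}\|y\|_{L^6(\Om)}^6+C\|f\|_{L^2(\Om)}^2$ into the good term $\gamma\|y\|_{L^6(\Om)}^6$, a second Gr\"onwall argument gives $y\in L^\infty(0,T;L^4(\Om))\cap L^6(Q)$ (the case $\gamma=0$ being linear and needing only \eqref{en1}). Hence $\gamma y^3\in L^2(Q)$ and $ey\in L^2(Q)$, so $f-\gamma y^3-ey\in L^2(Q)$, and maximal $L^2$-parabolic regularity for $\dot y-\Delta y$ with $y_0\in H^1_0(\Om)$ yields $y\in Y$ together with \eqref{state-equ-estimA.l-1}, as in \cite[Prop.~2.1]{bonnans:hal-00740698} and \cite{MR0241822}.

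Existence then follows by passing to the limit in the Galerkin scheme: the only nontrivial term is the cubic one, which I would handle either by Aubin--Lions compactness (a.e.\ convergence of the approximations) or by Minty's monotonicity trick, since $s\mapsto s^3$ is nondecreasing. Uniqueness is obtained by testing the equation for the difference $\delta:=y_1-y_2$ with $\delta$ and using the factorization $y_1^3-y_2^3=(y_1^2+y_1y_2+y_2^2)\delta$ with nonnegative factor to discard the cubic contribution, followed by Gr\"onwall. Monotonicity with respect to $y_0$ and $f$ is the associated comparison principle: for $f_1\le f_2$, $y_{0,1}\le y_{0,2}$ and fixed $u$, testing the equation for $\delta=y_2-y_1$ with $-\delta^-$, using $\gamma(y_2^3-y_1^3)\delta\ge0$ pointwise and $\delta^-(\cdot,0)=0$, gives $\tfrac12\tfrac{\dd}{\dd t}\|\delta^-\|_{L^2(\Om)}^2\le\|e(\cdot,t)\|_{L^\infty(\Om)}\|\delta^-\|_{L^2(\Om)}^2$, whence $\delta^-\equiv0$ and $y_1\le y_2$.

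For the $C^\infty$ dependence I would apply the implicit function theorem to
\be
\label{Gmap}
G(y;u,y_0,f):=\Big(\dot y-\Delta y+\gamma y^3-y\textstyle\sum_{i=0}^m u_ib_i-f,\ y(\cdot,0)-y_0\Big),
\ee
seen as a map $Y\times\big(L^2(0,T)^m\times H^1_0(\Om)\times L^2(Q)\big)\to L^2(Q)\times H^1_0(\Om)$; by the parabolic Sobolev embedding $Y\hookrightarrow L^{10}(Q)$ (valid for $n\le3$) the map $y\mapsto y^3$ is a bounded polynomial, so $G$ is real-analytic, and its partial differential $D_yG[h]=(\dot h-\Delta h+(3\gamma y^2-\sum_i u_ib_i)h,\ h(\cdot,0))$ is an isomorphism onto $L^2(Q)\times H^1_0(\Om)$ by the same energy and maximal-regularity argument, the coefficient $3\gamma y^2\ge0$ carrying the right sign. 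The theorem then yields that $(u,y_0,f)\mapsto y[u,y_0,f]$ is $C^\infty$ into $Y$, and the last assertion $y\in C([0,T];H^1_0(\Om))$ is immediate from the continuous embedding $H^{2,1}(Q)\hookrightarrow C([0,T];H^1_0(\Om))$ \cite[Theorem 3.1, p.23]{LioMag68a}. I expect the main obstacle to be the upgrade from the weak solution in $L^\infty(0,T;L^2(\Om))\cap L^2(0,T;H^1_0(\Om))$ to the $H^{2,1}(Q)$-regularity demanded by $Y$: this hinges on the $L^6(Q)$-bound making $y^3\in L^2(Q)$, which genuinely requires $\gamma\ge0$ (the $y^3$-test) and the merely $L^2$-in-time integrability of the bilinear potential, and must be carried out on the Galerkin approximations before the limit; a secondary point is checking that the linearized operator is an isomorphism even though $3\gamma y^2$ lies only in $L^5(Q)$, which is again resolved by its favorable sign.
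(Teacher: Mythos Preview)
The paper does not contain a proof of this lemma: the appendix opens with ``In this section we recall some statements from \cite{ABK-PartI}, for proofs we refer to the latter reference,'' and the lemma is simply restated without argument. There is therefore no proof in the present paper to compare against. Your proposal is a correct and standard treatment: the $L^\infty(0,T;L^2)\cap L^2(0,T;H^1_0)$ energy estimate via testing with $y$ and Gr\"onwall, the upgrade to $Y$ via maximal $L^2$-regularity once $\gamma y^3+ey\in L^2(Q)$, existence by Galerkin, uniqueness and the comparison principle from the monotonicity of the cubic, and $C^\infty$-dependence via the implicit function theorem applied to the map \eqref{Gmap} are all sound.

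Two minor remarks. First, your $y^3$-test to obtain $y\in L^6(Q)$ works but is slightly heavier than needed: since $Y\hookrightarrow C([0,T];H^1_0(\Om))\hookrightarrow C([0,T];L^6(\Om))$ for $n\le3$, once a solution is known to lie in $Y$ one has $y^2\in L^\infty(0,T;L^3(\Om))$ directly, which is the natural regularity to feed into the linearized problem; the $y^3$-test is, however, the right way to close the a~priori $Y$-bound at the Galerkin level, so your organization is fine. Second, your closing remark underestimates the regularity of the coefficient $3\gamma y^2$: it lies in $L^\infty(0,T;L^3(\Om))$ (not merely $L^5(Q)$), and combined with $h\in L^2(0,T;L^6(\Om))$ from the energy estimate this gives $y^2h\in L^2(Q)$, so the bootstrap to $h\in Y$ goes through without any sign consideration; the sign $3\gamma y^2\ge0$ is used only to obtain the first energy bound for $h$.
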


}\fi
\if{
\begin{proof}
Indeed, by H\"older's inequality
{XXX do you agree for deleting this sentence ? }
 (using 1/2 = 1/3 + 1/6), for each $i=1,\dots,m,$
\be
\int_Q u^2_i b_i^2y^2 \dd x \dd t
= \disp
\int_0^T u^2_i(t) 
\| b_iy(\cdot,t) \|^2_{L^2(\Om)} \dd t
\leq \disp
\left( \int_0^T u^2_i \dd t\right) 
\| b_i y\|^2_{L^\infty(0,T;L^2(\Om))}.
\ee
This clearly implies \eqref{lem-uby1}.
The conclusion easily follows.
\end{proof}
}\fi

\begin{lemma}
\label{state-equ-estimA.l}
The state equation \eqref{dynamics}
has a unique solution 
$y=y[u,y_0,f]$ in $Y$. The mapping \blue{$(u,y_0,f) \mapsto  y[u,y_0,f]$} is 
$C^\infty$ from $L^2(0,T)^m  \times H^1_0(\Om) \times L^2(Q)$ to $Y$,
and nondecreasing w.r.t. $y_0$ and $f$.
In addition, there exist functions $C_i$, $i=1$ to 2,
not decreasing w.r.t. each component, such that 
\begin{gather}
\label{state-equ-estimA.l-0}
\| y \|_{ L^\infty (0,T;L^2(\Om) ) } +
\| \nabla y \|_2 \leq 
C_1( \|y_0\|_2, \| f \|_2, \|u\|_2 \| b \|_\infty),
\\ 
\label{state-equ-estimA.l-1}
\|y\|_Y \leq 
C_2( \|y_0\|_{ H^1_0(\Om) }, \| f \|_2, \|u\|_2\| b \|_\infty).
\end{gather} %
Moreover, the state $y$ also belongs to $C([0,T];H^1_0(\Omega))$, since $Y$ is continuously embedded in that space \cite[Theorem 3.1, p.23]{LioMag68a}.
\end{lemma}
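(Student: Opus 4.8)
The plan is to prove the statement along classical lines, in four stages: existence by Faedo--Galerkin approximation, the two a priori estimates, uniqueness together with the smoothness of the solution map, and finally monotonicity and the continuous embedding. Write $e(x,t):=-\sum_{i=0}^m u_i(t)b_i(x)$, so that $e\in L^2(0,T;L^\infty(\Om))$ since $b\in W^{1,\infty}(\Om)^{m+1}\subset L^\infty(\Om)^{m+1}$ and $u\in L^2(0,T)^m$; the state equation then reads $\dot y-\Delta y+ey+\gamma y^3=f$ with homogeneous Dirichlet data and $y(\cdot,0)=y_0$. The structural point, used repeatedly, is that $\gamma\ge 0$, so the monotone cubic term always carries the favourable sign in the estimates. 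Approximating $y$ by finite combinations of eigenfunctions of $-\Delta$ on $H^1_0(\Om)$ gives local solutions of the Galerkin ODE system, and the bounds below make them global and allow passage to the limit.

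For \eqref{state-equ-estimA.l-0}, test the (Galerkin) equation with $y$ and integrate over $\Om$:
\be
\tfrac12\ddt\|y(\cdot,t)\|_{L^2(\Om)}^2+\|\nabla y(\cdot,t)\|_{L^2(\Om)}^2+\gamma\int_\Om y^4\,\dd x=\int_\Om (f-ey)y\,\dd x .
\ee
Dropping $\gamma\int y^4\ge 0$, estimating $\left|\int_\Om ey^2\,\dd x\right|\le\|b\|_\infty|u(t)|_1\,\|y(\cdot,t)\|_{L^2(\Om)}^2$, and applying Young's inequality to the $\int fy$ term followed by Gronwall's lemma (whose coefficient lies in $L^1(0,T)$ since $u\in L^2(0,T)^m$) yields \eqref{state-equ-estimA.l-0}. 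For \eqref{state-equ-estimA.l-1}, test with $-\Delta y$: the cubic contributes $\gamma\int_\Om y^3(-\Delta y)\,\dd x=3\gamma\int_\Om y^2|\nabla y|^2\,\dd x\ge 0$ and can again be discarded, while $\int_\Om ey(-\Delta y)\,\dd x$ is bounded by $\|e(\cdot,t)\|_{L^\infty(\Om)}\|y(\cdot,t)\|_{H^1_0(\Om)}\|\Delta y(\cdot,t)\|_{L^2(\Om)}$ and Young's inequality absorbs $\|\Delta y\|_{L^2}^2$; a Gronwall argument in $\|\nabla y(\cdot,t)\|_{L^2(\Om)}^2$ (using $y_0\in H^1_0(\Om)$) bounds $\Delta y$ in $L^2(Q)$, and then the equation gives $\dot y\in L^2(Q)$, hence $y\in H^{2,1}(Q)$. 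For $n\le 3$ the embedding $H^1_0(\Om)\hookrightarrow L^6(\Om)$ makes $y^3\in L^2(Q)$ automatically, which is what lets this bootstrap close.

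Uniqueness follows from subtracting two solutions $y_1,y_2$: the difference $\delta:=y_1-y_2$ solves $\dot\delta-\Delta\delta+e\delta+\gamma(y_1^3-y_2^3)=0$ with $\delta(\cdot,0)=0$, and since $(y_1^3-y_2^3)\delta=(y_1^2+y_1y_2+y_2^2)\delta^2\ge 0$, testing with $\delta$ and Gronwall give $\delta\equiv 0$. Smoothness of $(u,y_0,f)\mapsto y$ I would obtain from the implicit function theorem applied to
\be
\Phi(y,u,y_0,f):=\Big(\dot y-\Delta y+\gamma y^3-f-y\sum_{i=0}^m u_ib_i,\ y(\cdot,0)-y_0\Big),
\ee
viewed as a $C^\infty$ map from $Y\times L^2(0,T)^m\times H^1_0(\Om)\times L^2(Q)$ to $L^2(Q)\times H^1_0(\Om)$: the polynomial nonlinearities depend smoothly on $y$ because $Y\hookrightarrow C(\Qb)$ for $n\le 3$, so pointwise products land in $L^2(Q)$. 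At a solution $\yb$, the partial derivative $\partial_y\Phi$ is the linear parabolic operator $\delta y\mapsto\big(\dot{\delta y}-\Delta\delta y+(3\gamma\yb^2-\sum_{i=0}^m u_ib_i)\,\delta y,\ \delta y(\cdot,0)\big)$, which is an isomorphism by the linear parabolic theory, the cubic being replaced by a zero-order potential lying, like $e$, in $L^2(0,T;L^\infty(\Om))$. Monotonicity in $y_0$ and $f$ is the parabolic comparison principle: if $f_1\le f_2$ and $y_{0,1}\le y_{0,2}$, then $y_1-y_2$ solves a linear parabolic equation with zero-order coefficient $e+\gamma(y_1^2+y_1y_2+y_2^2)\ge 0$ and nonpositive data, hence $y_1-y_2\le 0$; and $Y\subset C([0,T];H^1_0(\Om))$ is the trace theorem for $H^{2,1}(Q)$, cited as \cite[Thm. 3.1, p.23]{LioMag68a}.

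The step I expect to be the main obstacle is the $H^1$-level a priori estimate in the presence of the cubic term: one must genuinely use $n\le 3$ (so that $H^1_0(\Om)\hookrightarrow L^6(\Om)$ and $Y\hookrightarrow C(\Qb)$), exploit the sign of $\gamma$ rather than try to control $y^3$ directly, and keep the time integrability of the bilinear potential $e$ explicit throughout — since $e$ lies in $L^2(0,T;L^\infty(\Om))$ but not in $L^\infty(Q)$, every Gronwall estimate carries an $L^1$-in-time, not $L^\infty$-in-time, coefficient.
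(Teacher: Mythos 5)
Your proposal follows essentially the same route as the paper's (the paper defers the proof to Part I, but the surviving fragments show exactly this scheme: test with $y$ and Gronwall for \eqref{state-equ-estimA.l-0}, an $H^1$-level estimate for \eqref{state-equ-estimA.l-1}, uniqueness by subtracting two solutions and exploiting the monotonicity of $s\mapsto s^3$, and smoothness via the implicit function theorem applied to the map $\calf(u,y)=(\dot y-\Delta y+\gamma y^3-f-y\sum_i u_ib_i,\,y(\cdot,0)-y_0)$). One justification you give is, however, false as stated: $Y=H^{2,1}(Q)$ does \emph{not} embed into $C(\Qb)$ for $n=2,3$ (that embedding needs $W^{2,1,p}$ with $p>(n+2)/2$, and the paper only obtains continuity of the state in Theorem \ref{sonc:setting-t1} under the extra regularity of Hypothesis \ref{hyp-setting} and $u\in L^\infty$). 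The smoothness of the Nemytskii map $y\mapsto y^3$ from $Y$ to $L^2(Q)$ should instead be derived from $Y\hookrightarrow C([0,T];H^1_0(\Om))\hookrightarrow C([0,T];L^6(\Om))$ for $n\le 3$, so that cubing is a bounded polynomial map into $L^\infty(0,T;L^2(\Om))$; this is precisely the argument the paper uses. Relatedly, the zero-order potential $3\gamma\yb^2-\sum_i\ub_ib_i$ in $\partial_y\Phi$ is not in $L^2(0,T;L^\infty(\Om))$ in general — the $\yb^2$ part only lies in $L^\infty(0,T;L^3(\Om))$ (or $L^1(0,T;L^\infty(\Om))$ via $L^2(0,T;H^2)$) — but this regularity still suffices for the linearized isomorphism. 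With these two corrections the argument closes; everything else (the sign of $\gamma$, the $L^1$-in-time Gronwall coefficient, the comparison principle for monotonicity) is sound and matches the intended proof.
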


\if{
\begin{lemma}
\label{state-equ-estimA.l}
Given $\fh \in L^2(Q),$ the equation 
\be
\label{dynamicsA}
\left\{
\begin{aligned}
&\dot y(x,t) - \Delta y(x,t) +\gamma y^3(x,t) = \fh(x,t)\quad \text{in } Q,\\
&y=0\,\, \text{on } \Sigma,\quad 
y(\cdot,0) = y_0\;\;\text{in } \Om,
\end{aligned}
\right.
\ee
has a unique solution in $Y$, and we have that 
\begin{gather}
\label{state-equ-estimA.l-0}
\| y \|_{L^\infty(0,T;L^2(\Om))} +
\| \nabla y \|_{2} \leq c
\left( \|y_0\|_{2} + \| \fh \|_{2} \right),
\\
\label{state-equ-estimA.l-0-2}
{\|y \|_{H^1(0,T;L^2(\Om))} +
\| \nabla y \|_{L^\infty(0,T;L^2(\Om))} \leq c
\left( \| y_0\|_{H^1_0(\Om)} + \| \fh \|_{2} \right),}
\\
\label{state-equ-estimA.l-1}
\|y\|^2_Y \leq c \left(
\|\fh \|^2_{2} +
\|y_0\|^2_{H^1_0(\Om)}
+ \gamma \|y_0\|^4_{H^1_0(\Om)} 
\right).
\end{gather}
\end{lemma}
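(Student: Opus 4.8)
The plan is to treat the bilinear coupling as a zero-order coefficient $e(x,t):=\sum_{i=0}^m u_i(t)b_i(x)$, so that the equation \eqref{dynamics} reads $\dot y-\Delta y+\gamma y^3=f+ey$. Since $\|e(\cdot,t)\|_{W^{1,\infty}(\Om)}\le\|b_0\|_{W^{1,\infty}}+|u(t)|\,\max_i\|b_i\|_{W^{1,\infty}}$, the map $t\mapsto\|e(\cdot,t)\|_\infty$ lies in $L^2(0,T)\subset L^1(0,T)$. First I would derive the basic energy estimate by testing with $y$: the cubic contribution $\gamma\|y\|_{L^4}^4\ge0$ sits on the favourable side and is discarded, while $\int_\Om e\,y^2\le\|e(\cdot,t)\|_\infty\|y\|_2^2$, so Gronwall's lemma (its exponent being $\int_0^T(1+2\|e\|_\infty)\,\dd t=O(1+\|u\|_2\|b\|_\infty)$) produces the bound \eqref{state-equ-estimA.l-0} in $L^\infty(0,T;L^2(\Om))\cap L^2(0,T;H^1_0(\Om))$, with a constant monotone in $(\|y_0\|_2,\|f\|_2,\|u\|_2\|b\|_\infty)$.

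The crux is that this estimate is too weak to control the cubic term: by parabolic interpolation $L^\infty(0,T;L^2)\cap L^2(0,T;H^1_0)\hookrightarrow L^{10/3}(Q)$ for $n\le3$, which gives only $y^3\in L^{10/9}(Q)\not\subset L^2(Q)$. I would therefore perform a second energy estimate by testing with $\dot y$, obtaining $\tfrac12\|\dot y\|_2^2+\tfrac12\ddt\|\nabla y\|_2^2+\tfrac{\gamma}{4}\ddt\int_\Om y^4\le\|f\|_2^2+\|e\|_\infty^2\|y\|_2^2$, whose right-hand side is integrable in time by the first estimate and whose datum $\int_\Om y_0^4\le C\|y_0\|_{H^1_0}^4$ is controlled by Sobolev embedding. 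Integrating yields $\dot y\in L^2(Q)$ and $y\in L^\infty(0,T;H^1_0(\Om))$; then $H^1_0(\Om)\hookrightarrow L^6(\Om)$ for $n\le3$ forces $y^3\in L^\infty(0,T;L^2(\Om))\subset L^2(Q)$. Consequently $\tilde f:=f+ey-\gamma y^3\in L^2(Q)$, and maximal parabolic regularity for $\dot y-\Delta y=\tilde f$ with $y_0\in H^1_0(\Om)$ places $y$ in $Y=H^{2,1}(Q)$ and gives \eqref{state-equ-estimA.l-1}. This regularity bootstrap is the main obstacle, since the naive estimate genuinely does not suffice and the $\dot y$-test must come first.

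Existence I would obtain by a Faedo--Galerkin scheme in the eigenbasis of $-\Delta$, on which both energy estimates hold uniformly; the Aubin--Lions lemma gives strong $L^2(Q)$ convergence of a subsequence, and combined with the uniform $L^\infty(0,T;L^6)$ bound one passes to the limit in the cubic term (a.e.\ convergence together with boundedness of $y_N^3$ in $L^2(Q)$ yields $y_N^3\rightharpoonup y^3$). Uniqueness and monotonicity both rest on the monotonicity of $s\mapsto\gamma s^3$. For two solutions set $w:=y_1-y_2$; testing the difference equation with $w$ and using $(y_1^3-y_2^3)w\ge0$ reduces matters to $\tfrac12\ddt\|w\|_2^2\le\|e\|_\infty\|w\|_2^2$, so Gronwall gives $w\equiv0$ when $w(0)=0$. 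For monotonicity (say $f_1\le f_2$ and $y_{0,1}\le y_{0,2}$) I would instead test with $w^+$: the gradient and cubic terms keep the right sign and the source term is $\le0$, so the same Gronwall argument on $\|w^+\|_2^2$ with $w^+(0)=0$ forces $w^+\equiv0$, i.e.\ $y_1\le y_2$.

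Finally, for the $C^\infty$ dependence I would apply the implicit function theorem to $\mathcal{F}(y;u,y_0,f):=\bigl(\dot y-\Delta y+\gamma y^3-f-y\sum_i u_ib_i,\;y(\cdot,0)-y_0\bigr)$ mapping $Y\times(L^2(0,T)^m\times H^1_0(\Om)\times L^2(Q))$ into $L^2(Q)\times H^1_0(\Om)$. Smoothness of $\mathcal{F}$ follows once one checks that $y\mapsto y^3$ is smooth $Y\to L^2(Q)$, using $Y\hookrightarrow C([0,T];H^1_0(\Om))\hookrightarrow L^6(Q)$ and boundedness of the trilinear multiplication $L^6\times L^6\times L^6\to L^2$, and that $(u,y)\mapsto y\sum_i u_ib_i$ is bounded bilinear from $L^2(0,T)^m\times Y$ to $L^2(Q)$. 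The partial derivative $D_y\mathcal{F}[z]=(\dot z-\Delta z+az,\,z(\cdot,0))$ with $a:=3\gamma\bar y^2-\sum_i\bar u_ib_i$ is the linearized parabolic operator, which is an isomorphism onto $L^2(Q)\times H^1_0(\Om)$: the coefficient $a$, with $\bar y^2\in L^\infty(0,T;L^3(\Om))$ and $\sum_i\bar u_ib_i\in L^2(0,T;L^\infty(\Om))$, is a lower-order perturbation of the heat operator, uniqueness comes from the Gronwall estimate above, and maximal regularity gives the bounded inverse. The implicit function theorem then yields the claimed $C^\infty$ map, and the membership $y\in C([0,T];H^1_0(\Om))$ is immediate from the continuous embedding $Y\hookrightarrow C([0,T];H^1_0(\Om))$ \cite[Theorem 3.1, p.23]{LioMag68a}.
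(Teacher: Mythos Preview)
Your proof targets a different statement than the one you were given. The stated lemma concerns the equation $\dot y-\Delta y+\gamma y^3=\hat f$ with a generic right-hand side $\hat f\in L^2(Q)$ and \emph{no} bilinear control term, and it asks only for unique solvability in $Y$ together with the three a priori bounds \eqref{state-equ-estimA.l-0}--\eqref{state-equ-estimA.l-1}. You instead treat the full state equation \eqref{dynamics} (with the coupling $y\sum_i u_ib_i$) and go on to prove $C^\infty$ dependence on $(u,y_0,f)$ and monotonicity in $(y_0,f)$; none of this is part of the stated lemma. (There is another lemma in the paper carrying the same label with exactly that content --- the active one, as opposed to the version reproduced here --- and it appears you have proved that one instead.)

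That said, the two-step energy strategy you outline --- test with $y$ to get \eqref{state-equ-estimA.l-0}, then with $\dot y$ to get \eqref{state-equ-estimA.l-0-2}, then bootstrap to $Y$ via maximal $L^2$ parabolic regularity once $y^3\in L^2(Q)$ is secured through $L^\infty(0,T;H^1_0)\hookrightarrow L^\infty(0,T;L^6)$ --- is the standard route and works cleanly for the stated lemma. Indeed it simplifies: with $e\equiv 0$ the Gronwall step in the first estimate becomes trivial (the only zero-order term $\gamma y^4$ has the good sign), and in the $\dot y$-test the right-hand side is just $\int_\Om\hat f\dot y$. The appearance of $\gamma\|y_0\|_{H^1_0}^4$ in \eqref{state-equ-estimA.l-1} is precisely the initial datum $\tfrac{\gamma}{4}\int_\Om y_0^4$ arising from that test, estimated by Sobolev embedding. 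Your Galerkin construction and the uniqueness argument via monotonicity of $s\mapsto\gamma s^3$ carry over unchanged. The paper itself does not prove the lemma here but defers to Part~I \cite{ABK-PartI}, so there is no in-paper proof to compare against; your outline, specialized to $e\equiv 0$ and stripped of the $C^\infty$/monotonicity add-ons, is presumably what is done there and is correct.
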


\if{
\begin{lemma}
\label{state-equ-estim.l}
Given $u\in \Uspace,$ the state equation \eqref{dynamics}
has a unique solution $y[u]$ that satisfies
\be
\label{state-equ-estim.l-1}
\| y[u] \|^2_{L^\infty(0,T;L^2(\Om))} 
\leq {M_0};
\ee
\be
\label{state-equ-estim.l-2}
\| y[u] \|^2_Y \leq c \left( \| f  \|^2_2  
+{M_0\sum_{i=0}^m \|u_i \|^2_2 \|b_i \|^2_{\infty}} 
+ \| y_0\|^2_{H^1_0(\Om)} + \gamma \| y_0\|^4_{H^1_0(\Om)}
 \right),
\ee
where ${M_0 := e^T e^{2 \sum_{i=0}^m \| u_i \|_{1} \|b_i\|_{\infty}}\left( \|y_0\|^2_{2} + \|f\|^2_2 \right)}$.
\if{\be
\label{state-equ-estim.l-1}
\| y[u] \|^2_{L^\infty(0,T;L^2(\Om))} 
\leq M_0;
\ee
\be
\label{state-equ-estim.l-2}
\| y[u] \|^2_Y \leq c \left( \| f  \|^2_2  
+M_1 
+ \| y_0\|^2_{H^1_0(\Om)} + \gamma \| y_0\|^4_{H^1_0(\Om)}
 \right),
\ee
where $
M_0 := 
e^T e^{2 \sum_{i=0}^m \| u_i \|_{1} \|b_i\|_{\infty}}\left( \|y_0\|^2_{2} + \|f\|^2_2 \right)
,$
$M_1 := 
\sum_{i=0}^m \|u_i \|^2_2 \|b_i \|^2_{\infty} M_0.
$}\fi
\end{lemma}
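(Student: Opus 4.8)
The plan is to recast \eqref{dynamics} as $\dot y-\Delta y+cy+\gamma y^3=f$, where $c(x,t):=-\sum_{i=0}^m u_i(t)b_i(x)$ satisfies $\|c(\cdot,t)\|_{L^\infty(\Om)}\le \sum_i|u_i(t)|\,\|b_i\|_\infty$, so that $c\in L^2(0,T;L^\infty(\Om))$ with $\int_0^T\|c(\cdot,t)\|_\infty\,\dd t=O(\|u\|_2\|b\|_\infty)$. The essential structural features are that $\gamma\ge 0$ makes $\gamma y^3$ a \emph{monotone} zeroth‑order term (good sign), while the coupling $cy$ is merely linear in $y$ with an $L^2$‑in‑time, $L^\infty$‑in‑space coefficient. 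I would establish the four assertions in the order: (a) existence, uniqueness and the energy estimates; (b) monotone dependence on $(y_0,f)$; (c) $C^\infty$ dependence via the implicit function theorem; (d) the $C([0,T];H^1_0(\Om))$ inclusion.

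For existence I would run a Faedo–Galerkin scheme in the eigenbasis of $-\Delta$, deriving the two a priori bounds uniformly and passing to the limit, the cubic term being handled by the monotonicity $\langle y_1^3-y_2^3,y_1-y_2\rangle\ge0$ together with Aubin–Lions compactness. Estimate \eqref{state-equ-estimA.l-0} comes from testing with $y$: one gets $\tfrac12\ddt\|y\|_2^2+\|\nabla y\|_2^2+\gamma\|y\|_{L^4(\Om)}^4=\int_\Om fy+\int_\Om c\,y^2$, where the $\gamma$‑term is dropped and $\int_\Om c\,y^2\le\|c(\cdot,t)\|_\infty\|y\|_2^2$, so Gronwall against the $L^1$‑in‑time coefficient $\|c(\cdot,t)\|_\infty$ yields the stated dependence $C_1(\|y_0\|_2,\|f\|_2,\|u\|_2\|b\|_\infty)$. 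Estimate \eqref{state-equ-estimA.l-1} comes from testing with $-\Delta y$: since $\int_\Om\gamma y^3(-\Delta y)=3\gamma\int_\Om y^2|\nabla y|^2\ge0$ is dropped, Young's inequality absorbs $\int_\Om(f+cy)(-\Delta y)$ into $\tfrac12\|\Delta y\|_2^2$, and $\|cy\|_{L^2(Q)}\le\|c\|_{L^2(0,T;L^\infty)}\|y\|_{L^\infty(0,T;L^2(\Om))}$ is already controlled by \eqref{state-equ-estimA.l-0}; recovering $\dot y$ from the equation and bounding $\|y^3\|_{L^2(Q)}=\|y\|_{L^6(Q)}^3$ via $Y\hookrightarrow L^\infty(0,T;H^1_0(\Om))\hookrightarrow L^6(Q)$ (here $n\le 3$ is used) closes the $Y$‑estimate. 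Uniqueness follows by testing the difference equation with $w=y_1-y_2$, dropping $\gamma\int_\Om(y_1^3-y_2^3)w\ge0$ and applying Gronwall.

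Monotonicity in $(y_0,f)$ is a comparison argument: for $f_1\le f_2$, $y_{0,1}\le y_{0,2}$ and fixed $u$, testing the difference equation with $w^+=\max(w,0)$ gives $\tfrac12\ddt\|w^+\|_2^2+\|\nabla w^+\|_2^2+\gamma\int_\Om(y_1^3-y_2^3)w^+ + \int_\Om c\,(w^+)^2=\int_\Om(f_1-f_2)w^+\le 0$; the cube term is nonnegative on $\{w>0\}$, so $\tfrac12\ddt\|w^+\|_2^2\le\|c(\cdot,t)\|_\infty\|w^+\|_2^2$ with $w^+(0)=0$, whence $w\le0$. For the $C^\infty$ regularity I would introduce $\mathcal F(y;u,y_0,f):=(\dot y-\Delta y+\gamma y^3-f-y\sum_iu_ib_i,\;y(\cdot,0)-y_0)$ as a map $Y\times(\Uspace\times H^1_0(\Om)\times L^2(Q))\to L^2(Q)\times H^1_0(\Om)$; the cubic Nemytskii map is smooth $Y\to L^2(Q)$ because $Y\hookrightarrow L^6(Q)$, and $(y,u)\mapsto y\sum_iu_ib_i$ is continuous bilinear since $\|yu_ib_i\|_{L^2(Q)}\le\|b_i\|_\infty\|y\|_{L^\infty(0,T;L^2(\Om))}\|u_i\|_2$. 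The partial derivative $D_y\mathcal F$ is the linearized operator $\xi\mapsto(\dot\xi-\Delta\xi+(3\gamma\yb^2+c)\xi,\xi(\cdot,0))$, and the implicit function theorem applies once this is shown to be an isomorphism onto $L^2(Q)\times H^1_0(\Om)$.

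The main obstacle is precisely this last point, i.e.\ the well‑posedness in $Y$ of the linearized equation with the unbounded but good‑sign potential $3\gamma\yb^2$: in the $Y$‑estimate one must control $\int_\Om 3\gamma\yb^2\xi(-\Delta\xi)$, which I would bound by $\|\yb^2\xi\|_2\|\Delta\xi\|_2\le\|\yb\|_{L^6(\Om)}^2\|\nabla\xi\|_2\|\Delta\xi\|_2$ and absorb via Young, closing by Gronwall since $t\mapsto\|\yb(\cdot,t)\|_{L^6(\Om)}^4\in L^\infty(0,T)$; existence and uniqueness for the linear problem then give bijectivity, and the open mapping theorem yields the isomorphism. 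Finally, $y\in C([0,T];H^1_0(\Om))$ is immediate from the continuous embedding $Y=H^{2,1}(Q)\cap\{y=0\text{ on }\Sigma\}\hookrightarrow C([0,T];H^1_0(\Om))$ recalled from \cite[Theorem 3.1, p.23]{LioMag68a}. The delicate steps are thus the maximal‑regularity closing of \eqref{state-equ-estimA.l-1} and the isomorphism property of the linearization; the energy estimates and comparison principle are routine given the sign of $\gamma$.
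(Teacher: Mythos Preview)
You appear to have targeted the wrong lemma. The statement under consideration (Lemma~\ref{state-equ-estim.l}) asks only for existence, uniqueness, and the two explicit estimates \eqref{state-equ-estim.l-1}--\eqref{state-equ-estim.l-2}. Your proposal instead addresses the broader Lemma~\ref{state-equ-estimA.l}: you refer to estimates \eqref{state-equ-estimA.l-0} and \eqref{state-equ-estimA.l-1}, and your items (b) monotone dependence on $(y_0,f)$, (c) $C^\infty$ regularity via the implicit function theorem, and (d) the $C([0,T];H^1_0(\Om))$ inclusion are all claims of Lemma~\ref{state-equ-estimA.l}, not of the present statement. Those three items are simply extraneous here.

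That said, for the content that \emph{is} required, your approach is essentially the paper's. For \eqref{state-equ-estim.l-1} the paper does exactly what you propose: multiply the state equation by $y$, integrate over $\Om$, drop the nonnegative $\gamma\|y\|_{L^4(\Om)}^4$ term, bound $\int_\Om (\sum_i u_ib_i)y^2\le(\sum_i|u_i(t)|\|b_i\|_\infty)\|y(\cdot,t)\|_{L^2(\Om)}^2$, and apply Gronwall; this is precisely what produces the explicit constant $M_0$. For \eqref{state-equ-estim.l-2} the paper takes a slightly different route: rather than testing directly with $-\Delta y$ as you do, it invokes the auxiliary $Y$-estimate for the equation $\dot y-\Delta y+\gamma y^3=\hat f$ with $\hat f:=f+\sum_i u_ib_iy$, and then bounds $\|\hat f\|_2^2\le 2\|f\|_2^2+2\sum_i\|u_i\|_2^2\|b_i\|_\infty^2\|y\|_{L^\infty(0,T;L^2(\Om))}^2$, inserting the already-obtained bound $M_0$ for the last factor. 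Your direct $-\Delta y$ test is equivalent (the auxiliary lemma is itself proved that way), but the paper's modular approach makes the appearance of $M_0$ and of the term $\gamma\|y_0\|_{H^1_0(\Om)}^4$ in \eqref{state-equ-estim.l-2} transparent, whereas your sketch does not track these explicit constants. Uniqueness is handled identically in both: test the difference with itself, drop the monotone cubic contribution, Gronwall.
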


\begin{proof}
Estimate \eqref{state-equ-estim.l-1} follows, as in Lemma \ref{state-equ-estimA.l}, by multiplying the state equation by $y=y[u]$, then integrating in space and finally applying Gronwall's Lemma.
\if{ Let us split the proof in two parts. First part: proof of \eqref{state-equ-estim.l-1}.
Multiplying the state equation by $y=y[u]$ and integrating in space we
obtain that for a.a. $t\in (0,T)$
\begin{multline}
\label{se-est}
\half \frac{\dd}{\dd t} \norm{y(\cdot,t)}{L^2(\Om)}^2 + \norm{\nabla y(\cdot,t)}{L^2(\Om)}^2 +
 \gamma \int_\Om y(x,t)^4  \\
 = \int_\Om f(x,t)y(x,t)  + 
\sum_{i=0}^m 
u_i(t) \int_\Om b_i(x) y^2(x,t).
\end{multline}
So, $\nu(t) := \norm{y(\cdot,t)}{L^2(\Om)}^2$ satisfies
\be
\half \dot \nu(t) \leq \half \|f(\cdot,t) \|^2_{L^2(\Om)} 
+
\left( \frac12  + \sum_{i=0}^m |u_i(t)|  \|b_i\|_\infty \right)
\nu(t).
\ee
Integrating over $(0,T)$, we get 
\be
\nu(t) \leq \|y_0\|^2_{2}+ \|f\|^2_2 
+
\int_0^t \left(1 + 2 \sum_{i=0}^m 
|u_i(s)| \|b_i\|_\infty\right) \nu(s) \dd s.
\ee
We conclude with the integral form of Gronwall's lemma.}\fi

Let us now prove \eqref{state-equ-estim.l-2}.
We apply Lemma \ref{state-equ-estimA.l}
with $\fh:= f+ {\sum_{i=0}^m u_i b_i } y$, 
so that, by Lemma \ref{lem-uby}:
\be
\|\fh\|^2_2 \leq 2 \|f\|^2_2
 + {2} 
\sum_{i=0}^m \|u_i \|^2_2 
\|b_i\|^2_\infty 
\|y\|^2_{L^\infty(0,T;L^2(\Om))}.
\ee
The result follows, using the definition of {$M_0$ given in the statement.} 

It remains to prove the uniqueness of the solution. So, let 
$y'$ and $y''$ in $Y$, be two states associated with
$u\in L^2(0,T)$. Set $z:=y''-y'$. Then 
$z$ belongs to~$Y$, and by the Mean Value Theorem, satisfies
\be
\dot z -\Delta z + 3 \gamma y^2 z = {\sum_{i=0}^m u_i b_i } z,
\ee
where $y(x,t)\in [y'(x,t),y''(x,t)]$, for a.a.
$(x,t)\in Q$, so $y^2 \in L^{\infty}(0,T;L^3(\Om))$.
Multiplying by $z$ and integrating in space we deduce that for a.a. $t\in (0,T)$
\be
\half \ddt \|z(\cdot,t)\|^2_2 \leq {\sum_{i=0}^m \|b_i\|_\infty 
|u_i(t)|} \|z(\cdot,t)\|^2_{L^2(\Omega)}.
\ee
By Gronwall's Lemma, $z=0$, 
which gives the desired uniqueness property.
\end{proof}

\begin{remark}
We can similarly obtain, for any $n\in \cN,$ the estimates
\eqref{state-equ-estimA.l-1} and \eqref{state-equ-estim.l-2}
but with $\|y_0\|^4_{4} $
instead of $\|y_0\|^4_{H^1_0(\Om)}$.
\end{remark}
}\fi
}\fi
\if{
\begin{corollary}
\label{yCH10}
For each $u\in \Uspace,$ the corresponding state $y[u]$ belongs to the space $C([0,T];H_0^1(\Omega))$.
\end{corollary}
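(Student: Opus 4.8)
The plan is to deduce this immediately from the regularity already recorded in Lemma~\ref{state-equ-estimA.l}, combined with a classical embedding theorem for anisotropic Sobolev spaces. Recall that $Y = \{ y \in H^{2,1}(Q);\ y = 0 \text{ a.e.\ on } \Sigma \}$, where $H^{2,1}(Q) = W^{2,1,2}(Q)$ consists of the $L^2(Q)$-functions whose second spatial derivatives and first time derivative belong to $L^2(Q)$.

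First I would recall that, by Lemma~\ref{state-equ-estimA.l}, for each $u \in \Uspace$ the state equation \eqref{dynamics} has a unique solution $y[u] \in Y$, with the estimate \eqref{state-equ-estimA.l-1} controlling its $Y$-norm. Then I would invoke the Lions--Magenes embedding \cite[Theorem 3.1, p.23]{LioMag68a}, which asserts that $H^{2,1}(Q)$ embeds continuously into $C([0,T];H^1(\Omega))$; in particular, for any $y \in Y$ the time slices $y(\cdot,t)$ are well-defined elements of $H^1(\Omega)$ depending continuously on $t\in[0,T]$. Applying this to $y = y[u]$, and using that the homogeneous lateral condition $y[u] = 0$ on $\Sigma = \partial\Omega\times(0,T)$ forces $y[u](\cdot,t)\in H^1_0(\Omega)$ for every $t$, one obtains $y[u]\in C([0,T];H^1_0(\Omega))$, which is the claim.

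There is essentially no obstacle here: the corollary is a restatement of the last sentence of Lemma~\ref{state-equ-estimA.l}, and the only point worth a line of justification is the passage from $H^1(\Omega)$ to $H^1_0(\Omega)$ for the time slices, which follows from continuity of the (lateral) trace map applied to $y[u]=0$ on $\Sigma$.
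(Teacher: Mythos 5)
Your proposal is correct and follows essentially the same route as the paper: the paper's proof of this corollary (and the closing sentence of Lemma~\ref{state-equ-estimA.l}) simply invokes the continuous embedding $Y\subset C([0,T];H^1_0(\Omega))$ from Lions--Magenes \cite[Theorem 3.1, p.23]{LioMag68a}. Your only addition is to spell out the passage from $H^1(\Omega)$ to $H^1_0(\Omega)$ via the vanishing lateral trace, which the paper absorbs into the definition of $Y$; this is a harmless elaboration, not a different argument.
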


\begin{proof}
The result follows from the continuous embedding $Y\subset C([0,T];H_0^1(\Omega))$ by Lions-Magenes \cite{ }XXX.
\end{proof}
}\fi

\if{
\begin{lemma}
\label{LemmaEstimatez}
For any 
$\fb \in L^1(0,T;L^2(\Omega))$ and $z_0 \in H^1_0(\Om)$,
 the equation
\be
\label{lineq}
\left\{
\begin{split}
& \dot z + A z  = \fb\quad \text{in } Q;\\
& z=0\,\, \text{on } \Sigma,\quad z(x,0) = 0\,\, \text{in } \Omega,
\end{split}
\right.
\ee 
has a unique solution that verifies
\be
\label{estimatez}
\|z\|_{L^\infty(0,T;L^2(\Omega))} \leq e^{{\frac{T}{2}+\sum_{i=0}^m\|\ub_i\|_1 \|b_i\|_\infty}} \|\bar f\|_{L^1(0,T;L^2(\Omega))}.
\ee
\end{lemma}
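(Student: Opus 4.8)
The statement bundles four claims — well-posedness in $Y$, the a priori bounds \eqref{state-equ-estimA.l-0}--\eqref{state-equ-estimA.l-1}, monotone dependence on $(y_0,f)$, and $C^\infty$ dependence on $(u,y_0,f)$ — so the plan is to prove them in that order, exploiting two structural features of \eqref{dynamics}: the sign condition $\gamma\ge 0$, which makes $y\mapsto\gamma y^3$ monotone and hence dissipative, and the restriction $n\le 3$, which lets the embedding $H^1_0(\Om)\hookrightarrow L^6(\Om)$ control the cubic term in $L^2$.

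I would start with the a priori estimates. Testing \eqref{dynamics} with $y$, integrating over $\Om$ and discarding $\gamma\int_\Om y^4\ge 0$, I would bound the bilinear term by $\sum_i|u_i(t)|\,\|b_i\|_\infty\,\|y(\cdot,t)\|^2_{L^2(\Om)}$ and $\int_\Om fy$ by Young's inequality; the integral form of Gronwall's lemma (using $u\in L^2(0,T)^m\subset L^1$) then yields the $L^\infty(0,T;L^2)$ bound and, after integrating in time, the $\nabla y\in L^2(Q)$ bound, i.e. \eqref{state-equ-estimA.l-0}. To reach $Y$ I would run the higher energy estimate: testing with $-\Delta y$ produces $\frac12\frac{d}{dt}\|\nabla y\|^2_{L^2}+\|\Delta y\|^2_{L^2}+3\gamma\int_\Om y^2|\nabla y|^2$ on the left (the cubic term again having the good sign after integration by parts), while the right-hand side is absorbed via $|\int_\Om b_iy\Delta y|\le\|b_i\|_\infty\|y\|_{L^2}\|\Delta y\|_{L^2}$ and Young. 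Since $y_0\in H^1_0(\Om)$, Gronwall gives $\nabla y\in L^\infty(0,T;L^2)$, whence $y\in L^\infty(0,T;L^6)$ and $\gamma y^3\in L^\infty(0,T;L^2)\subset L^2(Q)$. Rewriting \eqref{dynamics} as the linear heat equation $\dot y-\Delta y=\hat f$ with $\hat f:=f-\gamma y^3+y\sum_i u_ib_i\in L^2(Q)$ and invoking maximal $L^2$-regularity then delivers $\dot y,\Delta y\in L^2(Q)$, i.e. \eqref{state-equ-estimA.l-1}. Existence I would obtain from a Galerkin scheme in the eigenbasis of $-\Delta$: the projected system is a locally Lipschitz ODE, global in time by the uniform bounds, the approximations are bounded in $Y$, and Aubin--Lions gives strong $L^2(Q)$ convergence, enough to pass to the limit in $\gamma y^3$ (with the uniform $L^\infty(0,T;L^6)$ bound) and in the bilinear term, producing a solution in $Y$.

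Uniqueness and monotonicity are both comparison arguments on the difference $z:=y''-y'$ of two solutions, which solves $\dot z-\Delta z+\gamma((y'')^3-(y')^3)=z\sum_i u_ib_i$ with $z(\cdot,0)=y_0''-y_0'$. Testing with $z$ and discarding the nonnegative term $\gamma\int_\Om((y'')^3-(y')^3)z$ gives uniqueness by Gronwall; testing instead with the negative part $z^-$ and writing $\gamma((y'')^3-(y')^3)=\gamma((y'')^2+y''y'+(y')^2)z$ with nonnegative coefficient shows $z^-\equiv 0$ whenever $y_0'\le y_0''$ and $f'\le f''$, which is the asserted monotonicity.

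Finally, for $C^\infty$ dependence I would apply the implicit function theorem to $\Phi(y,u,y_0,f):=(\dot y-\Delta y+\gamma y^3-f-y\sum_i u_ib_i,\,y(\cdot,0)-y_0)$, viewed as a map $Y\times L^2(0,T)^m\times H^1_0(\Om)\times L^2(Q)\to L^2(Q)\times H^1_0(\Om)$. The step I expect to be the most delicate is showing that the superposition map $y\mapsto y^3$ is a bounded homogeneous polynomial, hence real-analytic, from $Y$ into $L^2(Q)$: this relies on the embedding $Y\hookrightarrow C([0,T];H^1_0(\Om))\hookrightarrow C([0,T];L^6(\Om))$ (the last assertion of the lemma, which is \cite[Theorem 3.1, p.23]{LioMag68a}) together with the H\"older bound $\|y_1y_2y_3(\cdot,t)\|_{L^2(\Om)}\le\prod_i\|y_i(\cdot,t)\|_{L^6(\Om)}$; the bilinear coupling is treated the same way. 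Smoothness of $\Phi$ then follows, its partial derivative $D_y\Phi$ at a solution is the linearized operator $z\mapsto(\dot z-\Delta z+(3\gamma y^2-\sum_i u_ib_i)z,\,z(\cdot,0))$, which is an isomorphism onto $L^2(Q)\times H^1_0(\Om)$ by the linear parabolic well-posedness and maximal regularity, and the implicit function theorem yields that $(u,y_0,f)\mapsto y$ is $C^\infty$.
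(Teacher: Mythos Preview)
You have proved the wrong lemma. The statement concerns the \emph{linearized} equation $\dot z + Az = \bar f$, where $A$ is the operator of \eqref{lin-state-equ}, and asks only for existence, uniqueness, and the single $L^\infty(0,T;L^2(\Om))$ estimate \eqref{estimatez}. Your proposal instead addresses Lemma~\ref{state-equ-estimA.l} about the nonlinear state equation \eqref{dynamics}: the four claims you list (well-posedness in $Y$, the bounds \eqref{state-equ-estimA.l-0}--\eqref{state-equ-estimA.l-1}, monotonicity in $(y_0,f)$, and $C^\infty$ dependence) are the content of that other lemma, not of this one. The cubic nonlinearity $\gamma y^3$, Galerkin approximations, the comparison argument with $z^-$, and the implicit function theorem play no role here.

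The paper's argument is a two-line energy estimate: multiply \eqref{lineq} by $z$ and integrate over $\Om$. The term $3\gamma\int_\Om \yb^2 z^2$ coming from $A$ is nonnegative and can be dropped, the bilinear contribution is bounded by $\sum_{i=0}^m |\ub_i(t)|\,\|b_i\|_\infty\,\|z(\cdot,t)\|_{L^2(\Om)}^2$, and $\int_\Om \bar f z \le \frac12\|\bar f(\cdot,t)\|_{L^2(\Om)}^2 + \frac12\|z(\cdot,t)\|_{L^2(\Om)}^2$. Gronwall's lemma then gives \eqref{estimatez} directly.
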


\begin{proof}
Indeed, multiplying \eqref{lineq} by $z(x,t)$
and integrating over space we obtain that for a.a. $t\in (0,T)$
\be
\label{lin-se-est}
\begin{split}
\half \frac{\dd}{\dd t} \norm{z(\cdot,t)}{L^2(\Om)}^2 &+  \norm{\nabla z(\cdot,t)}{L^2(\Om)}^2 +
3 \gamma \| \yb(\cdot,t) z(\cdot,t)\|_{L^2(\Om)}^2
\\
&
=\int_\Om  z(x,t) \left( \fb(x,t) +{\sum_{i=0}^m \ub_i(t)\cdot b_i(x)} z(x,t)\right)\dd x.
\end{split}
\ee
The r.h.s. of \eqref{lin-se-est} can be bounded above by 
\be
\half \|\fb(\cdot,t)\|^2_{L^2(\Om)} 
+
\left( \half + {\sum_{i=0}^m\|\ub_i\|_2 \| b_i\|_\infty}\right) \|z(\cdot,t)\|^2_{L^2(\Om)}.
\ee 
Then we deduce the estimate \eqref{estimatez} with Gronwall's Lemma.
\end{proof}
}\fi

\begin{theorem}
\label{staequ-wp-th}
The mapping
$u\mapsto y[u]$ is of class $C^\infty$, from $L^2(0,T)^m$ to $Y$.
\end{theorem}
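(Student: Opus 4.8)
The plan is to derive the statement from the implicit function theorem for $C^\infty$ maps between Banach spaces. I would introduce
\[
\Phi\colon L^2(0,T)^m\times Y\longrightarrow L^2(Q)\times H^1_0(\Om),\qquad
\Phi(u,y):=\Big(\dot y-\Delta y+\gamma y^3-f-y\textstyle\sum_{i=0}^m u_ib_i,\; y(\cdot,0)-y_0\Big),
\]
so that, by \eqref{dynamics}, the state $y[u]$ is exactly the unique $y\in Y$ with $\Phi(u,y)=0$ (existence and uniqueness of this $y$ being Lemma \ref{state-equ-estimA.l}). The proof then reduces to two verifications: (a) $\Phi$ is well defined and of class $C^\infty$; (b) for every $(u,y)$ with $\Phi(u,y)=0$, the partial differential $D_y\Phi(u,y)\colon Y\to L^2(Q)\times H^1_0(\Om)$ is a topological isomorphism. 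Granting these, the implicit function theorem yields, near each $\bar u$, a $C^\infty$ solution map $u\mapsto y(u)$ of $\Phi=0$, which by uniqueness of the state must coincide with $u\mapsto y[u]$; since $\bar u$ is arbitrary, $u\mapsto y[u]$ is $C^\infty$.

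\textbf{Step (a).} I would use $Y\subset H^{2,1}(Q)$ and $Y\hookrightarrow C([0,T];H^1_0(\Om))\hookrightarrow C([0,T];L^6(\Om))$ (the last inclusion since $n\le 3$). The maps $y\mapsto\dot y$ and $y\mapsto\Delta y$ are bounded linear $Y\to L^2(Q)$, and $y\mapsto y(\cdot,0)$ is bounded linear $Y\to H^1_0(\Om)$ by the trace theorem for $H^{2,1}(Q)$. The cubic term $y\mapsto y^3$ is a continuous homogeneous cubic polynomial from $C([0,T];L^6(\Om))$ into $C([0,T];L^2(\Om))\hookrightarrow L^2(Q)$ (Hölder), hence $C^\infty$. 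The coupling term $(u,y)\mapsto y\sum_{i=0}^m u_ib_i$ is bilinear and continuous from $L^2(0,T)^m\times Y$ into $L^2(Q)$, because $\|u_ib_iy\|_{L^2(Q)}\le\|b_i\|_\infty\|u_i\|_{L^2(0,T)}\|y\|_{C([0,T];L^2(\Om))}$ for $i=1,\dots,m$, while the $i=0$ term ($u_0\equiv1$) is a bounded multiplication operator; thus it is $C^\infty$. Being a sum of $C^\infty$ maps, $\Phi$ is $C^\infty$.

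\textbf{Step (b).} A direct computation gives
\[
D_y\Phi(u,y)\,\delta y=\Big(\dot{\delta y}-\Delta\delta y+c\,\delta y,\ \delta y(\cdot,0)\Big),\qquad c:=3\gamma y^2-\textstyle\sum_{i=0}^m u_ib_i\in L^2(0,T;L^3(\Om))+L^2(0,T;L^\infty(\Om)).
\]
I would show that for every $(g,z_0)\in L^2(Q)\times H^1_0(\Om)$ there is a unique $\delta y\in Y$ with $D_y\Phi(u,y)\delta y=(g,z_0)$. One first reduces to $z_0=0$ by subtracting the caloric extension $e^{t\Delta}z_0$, which lies in $Y$ precisely because $z_0\in H^1_0(\Om)$; then one solves $\dot{\delta y}-\Delta\delta y=g-c\,\delta y$ by a Banach fixed point argument on a short interval $[0,\varepsilon]$, the map sending $\delta y$ to the $H^{2,1}$-solution of the heat equation with right-hand side $g-c\,\delta y$ being a contraction there (maximal $L^2$-regularity of the heat operator together with $\|c\,\delta y\|_{L^2(\Om\times(0,\varepsilon))}\lesssim\|c\|_{L^2(0,\varepsilon;L^3(\Om))}\,\|\delta y\|_{C([0,\varepsilon];L^6(\Om))}$ and $\|c\|_{L^2(0,\varepsilon;L^3(\Om))}\to0$ as $\varepsilon\downarrow0$); one then iterates over a finite partition of $[0,T]$. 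Uniqueness follows in the usual way from an energy estimate and Gronwall's lemma. Hence $D_y\Phi(u,y)$ is a continuous linear bijection, and an isomorphism by the open mapping theorem. Alternatively, this step is exactly the well-posedness in $Y$ of the linearized state equation \eqref{lineq2}, already available from \cite{ABK-PartI}.

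\textbf{Main obstacle.} Everything except Step (b) is routine. The one delicate point is the well-posedness of the linearized equation in $Y$, i.e. maximal $L^2$-regularity for a parabolic equation whose zeroth-order coefficient $c$ is only square-integrable (not essentially bounded) in time; the short-time contraction above handles this by exploiting the absolute continuity of $t\mapsto\|c(\cdot,t)\|_{L^3(\Om)}^2$, but if one simply quotes the corresponding result from \cite{ABK-PartI}, the whole argument reduces to an application of the implicit function theorem combined with Lemma \ref{state-equ-estimA.l}.
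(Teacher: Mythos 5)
Your proposal is correct and follows essentially the same route as the paper: the paper defers the proof to Part I, where (as in the commented-out source here) the argument is precisely to write the state equation as $\calf(u,y)=0$ for $\calf(u,y)=\bigl(\dot y-\Delta y+\gamma y^3-f-y\sum_{i=0}^m u_ib_i,\,y(\cdot,0)-y_0\bigr)$, check smoothness of $\calf$ via the embedding $Y\hookrightarrow C([0,T];H^1_0(\Om))\hookrightarrow C([0,T];L^6(\Om))$ for $n\le 3$, verify well-posedness of the linearized equation, and apply the implicit function theorem. Your Step (b) supplies the linearized well-posedness by a short-time contraction rather than by citation, but this is the same lemma the paper relies on, so the two proofs coincide in substance.
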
 

\if{\begin{proof}We refer to \cite{ABK-PartI}.
\if{
We already proved in Lemma \ref{state-equ-estim.l}
that the state equation has
 a unique solution $y[u]$ in $Y$.
Let us define the mapping 
$\calf: \Uspace \times Y \rar L^2(Q)\times H^1_0(\Om)$, 
\be
\calf(u,y):=\left(
\dot y - \Delta y +\gamma y^3 -f  - y\sum_{i=0}^m
u_i b_i \,, \,y(\cdot,0)-y_0 \right).
\ee
The state equation can be rewriten as $\calf(u,y)=0$.
Since $y\in C([0,T]; H^1_0(\Om))$ in view of Lemma \ref{state-equ-estimA.l} and $H^1_0(\Om)$ is continuously embedded in $L^6(\Om)$ (given that $n\leq 3$), by Lemma \ref{lem-uby}, $\calf$ is of class $C^\infty$. 
By the Implicit Function Theorem, the conclusion holds,
provided that we check that the linearized
equation \eqref{lineq} with arbitrary r.h.s. $\bar f$ is well-posed. 
We know from Lemma \ref{LemmaEstimatez} above that, given any 
$\fb \in L^2(Q)$ and $z_0 \in H^1_0(\Om)$,
 \eqref{lineq} has a unique solution $z\in L^{\infty}(0,T;L^2(\Om))$. 
 \if{For $\yb^2 - \sum_{i=1}^m \ub_ib_i\in L^{\infty}(0,T;L^3(\Om))$ we can apply \cite[lemma 2.1]{MR2683898} XXX\footnote{However, here $\ub$ is only $L^2(0,T)^m$! AK}
So with \eqref{lineq} we deduce a bound of \footnote{{XXX: check this} Relation of $\fh$ not
really clear. Maybe we add: Integrating in time \eqref{lin-se-est} we obtain an a priori estimate in $W(0,T)$. To apply Lemma 2.1 the $q$ defined there has to be in $L^{\infty}(0,T;L^3(\Om)$}.
\be
\fh := \dot z - \Delta z + 3 \gamma \yb^2 z 
\ee
in $L^2(Q)$. 
The conclusion follows then from 
\cite[lemma 2.1] {MR2683898}. 
}\fi}\fi
\end{proof}
}\fi


\if{We need the following results (see~
\cite[page 14]{MR712486} and 
\cite[Theorem 8.20.5]{MR0221256}, respectively): 
\be
\label{comp-inj-10}
\left\{ \ba{lll}
\text{For any $p\in [1,10)$, the following injection is compact:}
\\
Y \hookrightarrow L^p(0,T; L^{10}(\Om)), \;\; \text{when $n\leq 3$.}
\ea\right.
\ee 
\be
\label{dual-lp}
\left\{\ba{lll}
\text{If $Y$ is a reflexive Banach space and $p\in (1,\infty)$,}
\\
\text{the dual of $L^p(0,T;Y)$ is $L^q(0,T;Y^*)$, where $1/p+1/q=1$}.
\ea\right.
\ee
}\fi
\if{
\begin{lemma}
\label{lem-weak-cv}
The mapping $u\mapsto y[u]$ is 
sequentially weakly  continuous from the space 
$\Uspace$ into $Y$.
\end{lemma}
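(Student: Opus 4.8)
The plan is to combine the a priori bounds with a compactness argument and the weak–strong convergence principle, and then to identify the limit via uniqueness of the state. Let $u_k \rightharpoonup u$ weakly in $L^2(0,T)^m$ and set $y_k := y[u_k]$. A weakly convergent sequence is bounded, so $(\|u_k\|_2)_k$ is bounded, and the a priori estimate \eqref{state-equ-estimA.l-1} of Lemma \ref{state-equ-estimA.l} then bounds $(y_k)$ in $Y$. Since $Y$ is a Hilbert space, hence reflexive, I would extract a subsequence (not relabelled) with $y_k \rightharpoonup \tilde y$ weakly in $Y$. Because $Y$ sits inside $\{ y \in L^2(0,T; H^2(\Om)\cap H^1_0(\Om));\ \dot y \in L^2(Q)\}$ and $H^2(\Om)\cap H^1_0(\Om) \hookrightarrow\hookrightarrow L^2(\Om)$ is compact, the Aubin–Lions lemma gives a compact embedding $Y \hookrightarrow\hookrightarrow L^2(Q)$; hence along a further subsequence $y_k \to \tilde y$ strongly in $L^2(Q)$ and a.e. in $Q$.

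Next I would pass to the limit in the state equation \eqref{dynamics}, tested against an arbitrary $\phi \in C_c^\infty(Q)$. The linear terms cause no trouble: weak convergence in $Y$ makes $\dot y_k \rightharpoonup \dot{\tilde y}$ and $\Delta y_k \rightharpoonup \Delta \tilde y$ in $L^2(Q)$, because $y \mapsto \dot y$ and $y \mapsto \Delta y$ are continuous linear from $Y$ to $L^2(Q)$. For the cubic term, the continuous embedding $Y \hookrightarrow C([0,T]; H^1_0(\Om))$ (Lemma \ref{state-equ-estimA.l}) together with $H^1_0(\Om) \hookrightarrow L^6(\Om)$ (valid as $n \le 3$) bounds $(y_k^3)$ in $L^\infty(0,T; L^2(\Om)) \subset L^2(Q)$; with the a.e. convergence this yields $y_k^3 \rightharpoonup \tilde y^3$ in $L^2(Q)$, so $\int_Q y_k^3 \phi \to \int_Q \tilde y^3 \phi$.

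The main obstacle is the bilinear term $\int_Q y_k \big(\sum_i u_{k,i} b_i\big)\phi$, which pairs the only weakly convergent controls with the states. I would treat it by the weak–strong pairing. Writing it as $\sum_i \int_0^T u_{k,i}(t)\, g_{k,i}(t)\, dt$ with $g_{k,i}(t) := \int_\Om b_i(x)\, y_k(x,t)\, \phi(x,t)\, dx$, the Cauchy–Schwarz inequality gives $\|g_{k,i} - g_i\|_{L^2(0,T)}^2 \le \|b_i\|_\infty^2 \big(\sup_{t}\|\phi(\cdot,t)\|_{L^2(\Om)}^2\big)\, \|y_k - \tilde y\|_{L^2(Q)}^2 \to 0$, where $g_i(t) := \int_\Om b_i\, \tilde y\, \phi\, dx$. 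Since $u_{k,i} \rightharpoonup u_i$ weakly and $g_{k,i} \to g_i$ strongly in $L^2(0,T)$, the product converges, $\int_0^T u_{k,i} g_{k,i} \to \int_0^T u_i g_i$, so the bilinear term tends to $\int_Q \tilde y \big(\sum_i u_i b_i\big)\phi$.

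Collecting these limits shows $\tilde y$ solves \eqref{dynamics} with control $u$ in the sense of distributions; moreover $\tilde y(\cdot,0)=y_0$ because the trace $y \mapsto y(\cdot,0)$ is continuous linear on $Y$ and $y_k(\cdot,0)=y_0$ for all $k$. As $\tilde y \in Y$ is thus a solution for control $u$, the uniqueness assertion of Lemma \ref{state-equ-estimA.l} forces $\tilde y = y[u]$. Finally I would close with a subsequence–subsequence argument: the reasoning above applies to any subsequence of $(y[u_k])$ and always produces the same limit $y[u]$, so the entire sequence satisfies $y[u_k] \rightharpoonup y[u]$ in $Y$, which is the claim.
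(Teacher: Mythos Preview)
Your proof is correct and follows essentially the same strategy as the paper's: a priori bounds in $Y$, extraction of a weakly convergent subsequence, a compact embedding to gain strong convergence and a.e.\ convergence, passage to the limit term by term (with the bilinear term handled via weak--strong pairing of $u_{k,i}$ against a strongly convergent $g_{k,i}$), identification of the limit via uniqueness, and the subsequence--subsequence closing argument. The only differences are in the details: the paper uses the compact embedding $Y \hookrightarrow L^p(0,T;L^{10}(\Om))$ for $p<10$ (yielding strong $L^6(Q)$ convergence) and tests the bilinear term against $\varphi \in L^{\nu'}(0,T;L^2(\Om))$ with a H\"older argument, whereas your use of Aubin--Lions into $L^2(Q)$ together with smooth test functions $\phi \in C_c^\infty(Q)$ makes the bilinear estimate slightly more direct.
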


\begin{proof}
Taking $u_{\ell} \rightharpoonup \ub$ in $\Uspace$, we shall prove that $y_{\ell}\rightharpoonup \yb$ in $Y$, where $y_{\ell}:=y[u_{\ell}]$, $\yb:=y[\ub].$ We know that it is enough to check that any subsequence of  $y_{\ell}$ weakly converges to $\yb$  in $Y$. To do this, we prove that we can pass to the limit in each term of the state equation.

(a) By Lemma \ref{state-equ-estim.l}, $(y_{\ell})$ is bounded in $Y$.
Extracting if necessary a subsequence, we may assume that it 
weakly converges to some $\yh$ in $Y$. 
By \eqref{comp-inj-10}, $y_{\ell} \rar \yh$ in $L^6(Q)$ and, therefore, maybe for a subsequence, it converges almost everywhere in $Q$.
By \cite[Lemma~1.3, Chapter 1]{MR0259693}, we have that
$y^3_{\ell} \rightharpoonup \yh^3$ in $L^2(Q)$. 

 (b) We will next see how to choose $\nu\in (1,2)$,
such that
\be
\label{uk-yk-nu}
\text{$(u_{\ell}\cdot b) y_{\ell}  \rightharpoonup (\ub\cdot b) \yh$ 
in $L^\nu(0,T; L^{2}(\Om))$.}
\ee
By \eqref{dual-lp}, the dual of 
$L^\nu(0,T; L^{2}(\Om))$
is
$L^{\nu'}(0,T; L^{2}(\Om))$,
where
$1/\nu+1/\nu'=1$. 
So, \eqref{uk-yk-nu}
is equivalent to 
\be
\int_Q (u_{\ell}\cdot b) y_{\ell} \varphi 
\rar 
\int_Q (\ub\cdot b) \yh \,\varphi,
\quad
\text{for every $\varphi \in L^{\nu'}(0,T; L^{2}(\Om))$.}
\ee
It suffices to check that, for $i=0,\dots,m$,
\be
\label{w-cv-ubyh}
\int_0^T  u_{\ell i} \int_\Om b_i y_{\ell} \varphi 
\rar 
\int_0^T \ub_i \int_\Om b_i  \yh \varphi.
\ee
Since $u_{\ell} \rightharpoonup \ub$ in $L^2(0,T)$,
it suffices that
$\int_\Om b_i y_{\ell} \varphi \rar 
\int_\Om b_i  \yh \varphi$
in $L^2(0,T)$, that is,
\be
\int_0^T \left( \int_\Om b_i (y_{\ell}-\yh)\varphi \right)^2 \rar 0.
\ee
We have that
\begin{multline*}
\int_0^T 
\left( \int_\Om b_i (y_{\ell}-\yh)\varphi \right)^2 
\\
\leq
\| b_i \|^2_\infty 
\int_0^T \| y_{\ell}(\cdot,t)-\yh(\cdot,t)\|^2_{L^2(\Omega)}\| \varphi(\cdot,t)\|^2_{L^2(\Omega)}
\leq
\| b_i \|^2_\infty A_y A_\varphi,
\end{multline*}
where, for $s=\nu'/2$, {and $s'$ satisfying} $1/s+1/s'=1$, by H\"older's inequality, 
\be
\ba{lll}
A_\varphi & \leq c \disp
\left( \int_0^T \| \varphi\|^{2s}_2 \right)^{1/s}
 = c 
\left( \int_0^T \| \varphi\|^{\nu'}_2 \right)^{2/\nu'}
 \leq c 
\| \varphi\|^2_{L^{\nu'}(0,T;L^2(\Om))},
\vspace{1mm} \\ 
A_y &  \leq c  \disp
\left( \int_0^T \| y_{\ell}-\yh\|^{2s'}_2 \right)^{1/s'}
 \leq c 
\| y_{\ell}-\yh\|^2_{L^{2s'}(0,T;L^2(\Om))},
\ea\ee
for some constant $c>0$.
By \eqref{comp-inj-10}, 
$A_y\rar 0$, provided that $2s' <10$.
We could take for instance $s'=4$, then $s=4/3$, 
$\nu'=8/3$, $\nu=8/5$. 
Then we have proved that \eqref{uk-yk-nu} holds.

\if{\\ (c) 
{XXX Why  do we need mild solution here and not just
pass to the limit inthe state equation ?}
Remembering that the operator $\cala :=-\Delta$ associated with the
homogeneous Dirichlet condition generates a strongly continuous
semigroup in $L^2(\Om)$ and  $\fh \in L^1(0,T;L^2(\Om))$. 
The {\em mild solution} of 
\be
\dot y + \cala y = \fh; \quad y(0)=y_0,
\ee
denoted by $y[y_0,\fh]$,
is such that the linear mapping 
$(y_0,\fh)\mapsto y=y[y_0,\fh]$ is
well-defined and continuous
$L^2(\Om) \times L^1(0,T;L^2(\Om)) \rar C([0,T];L^2(\Om))$,
with the representation 
\be
y[\fh] = e^{- t \cala} y_0 + \int_0^t e^{- (t-s) \cala} \fh(s) \dd s.
\ee}\fi

By steps (a)-(b), we can pass to the limit in the 
weak formulation, and obtain (due to the uniqueness of solution) that $\yh=\yb.$ 
The conclusion follows.
\end{proof}
}\fi

\begin{theorem}
{\rm (i)}
The function $u\mapsto J(u,y[u]),$ from $L^2(0,T)^m$ to $\RR$,
is weakly sequentially l.s.c.
{\rm (ii)}
The set of solutions of the optimal control problem 
\eqref{P} 
is weakly sequentially closed in $L^2(0,T)^m.$
{\rm (iii)}
If \eqref{P} has a bounded minimizing sequence, 
the set of solutions of \eqref{P} 
is non empty. This is the case in particular if 
\eqref{P} is admissible and $\Uad$ is a nonempty, closed bounded convex subset of $L^2(0,T)^m$.
\end{theorem}
\if{\begin{proof}We refer to \cite{ABK-PartI}.
\if{(i) 
Combine Lemma \ref{lem-weak-cv}
and the fact that the cost function $J$
is  continuous and convex on
$\Uspace \times Y$, hence
it is also weakly lower semicontinuous over this product space. 
\\ (ii)
Let
 $(u_{\ell}) \subset L^2(0,T)^m$
be a sequence of solutions weakly converging to $\ub \in L^2(0,T)^m,$
with associated states $y_{\ell}$.
By  Lemma \ref{lem-weak-cv}, 
$(y_{\ell})$ weakly converge in $Y$ to the state $\yb$
associated with $\ub$ and, by point (i),
$J(\ub,\yb) \leq \liminf_{\ell} J(u_{\ell},y_{\ell})$. 
This lower limit being nothing but the value of
problem \eqref{P},
the conclusion follows. 
\\  (iii)
By the previous arguments,
a weak limit of a minimizing sequence
is a solution of~\eqref{P}.
This weak limit exists iff the sequence is bounded.
This concludes the proof. }\fi
\end{proof}
}\fi




\section{First order analysis}\label{sec:2}
Here, we recall some properties from \cite{ABK-PartI}.

Throughout the section, $(\ub,\yb)$ is a trajectory of problem \eqref{P}.
 We recall the hypotheses \eqref{HypSpaces1} and \eqref{HypSpaces2} 
on the data. 

We fix a trajectory 
$(\ub,\yb=y[\ub]).$ 
Let $A$ be linear continuous 
from $L^2(0,T; H^2(\Om))$ to $L^2(Q)$ such that, for each $z\in L^2(0,T; H^2(\Om))$ and $(x,t) \in Q,$
\be
\label{lin-state-equ}
(Az)(x, t) :=   -\Delta z(x,t) +3\gamma \yb(x,t)^2 z(x,t)- \sum_{i=0}^m \ub_i(t) b_i(x) z(x,t).
\ee

\subsection{The linearized state equation}
The {\em linearized state equation} at $(\ub,\yb)$ is given by
\be
\label{lineq2}
 \dot z + A z  =  \sum_{i=1}^m v_i b_i \yb\quad \text{in } Q;\quad 
z=0\,\, \text{on } \Sigma,\quad 
z(\cdot,0) = 0\,\, \text{on } \Omega.
\ee 
For $v\in \Uspace$, equation \eqref{lineq2} above possesses a unique solution 
$z[v]\in Y$ 
and the mapping $v \mapsto z[v]$ is
linear from $\Uspace$ to $Y.$
Particularly, the following estimate holds:
\be
\label{estlineq2}
\|z\|_{L^\infty(0,T;L^2(\Omega))}
\leq
{M_1}   \sum_{i=1}^m \|b_i\|_\infty \|v_i\|_1,
\ee
where $M_1 := e^{\frac{T}{2} + \sum_{{i=0}}^m  \|\ub_i\|_1\|b_i\|_\infty} \|\yb\|_{L^\infty(0,T;L^2(\Omega))}.$
\if{
\begin{proof}
Immediate consequence of 
Lemma \ref{LemmaEstimatez}.
\end{proof}
}\fi

\subsection{The costate equation}
\label{costate_equation}
\if{
For $\mu\in BV(0,T)^q$ its distributional derivative $\dd\mu$ is in the {\em space $\mathcal{M}(0,T)$ of finite Radon measures.}
And, conversely, any element $\dd \mu\in \mathcal{M}(0,T)$ can be
identified with a function $\mu$ of bounded
variation that vanishes at time $T.$
Let us consider the set of positive finite Radon measures $\mathcal{M}_+(0,T)$ and identify it with the set
\be
BV(0,T)_{0,+}^q := \{ \mu \in BV(0,T)^q; \;\mu(T) =0, \dd \mu \geq 0 \}. 
\ee
}\fi
The {\em generalized Lagrangian} of problem $(P)$ is,
choosing the multiplier of the state equation to be 
$(p,p_0) \in L^2(Q) \times H^{-1}(\Om)$ and taking $\beta\in\RR_+$, 
$\dd\mu\in \mathcal{M}_+(0,T),$
\be\label{Lagrangian}
\begin{split}
&\call [\beta,p,p_0, \mu](u,y) :=  \beta J(u,y)  -
\la p_0, y(\cdot,0) - y_0 \ra_{H^1_0(\Om)} 
\\
&+\int_Q p \Big(\Delta y(x,t) -\gamma y^3(x,t) +
f (x,t) + \sum_{i=0}^m u_i(t) b_i(x) y(x,t) - \dot y(x,t)\Big) {\rm d} x
{\rm d} t
\\
&+ \sum_{j=1}^q \int_0^T g_j(y(\cdot, t)) {\rm d} \mu_j(t).
\end{split}
\ee
The  {\em costate equation} is the condition of stationarity  of the 
Lagrangian $\call$ {with respect to the state} that is, for any $z\in Y$:
\begin{multline}
\label{costate-eq}
 \int_Q p(\dot z + Az ){\rm d}x {\rm d}t 
+ \la p_0, z(\cdot,0) \ra_{H^1_0(\Om)} =
 \sum_{j=1}^q \int_0^T \int_\Omega c_j z {\rm d} x {\rm d} \mu_j(t)  
 \\ +
\beta \int_Q  (\yb-y_d) z {\rm d}x {\rm d}t
 + \beta \int_\Omega (\yb(x,T)-y_{dT}(x)) z(x,T)  {\rm d}x.
 \end{multline}
To each $(\varphi,\psi)\in L^2(Q)\times H^1_0(\Om)$,
let us associate $z=z[\varphi,\psi] \in Y$, the unique solution of 
\be
\dot z +Az = \varphi; \quad z(\cdot,0)= \psi. 
\ee
{Since this mapping is onto},
the costate equation \eqref{costate-eq} can be rewritten, for 
$z=  z[\varphi,\psi]$ and arbitrary
$(\varphi,\psi)\in L^2(Q)\times H^1_0(\Om)$, as (see \cite[Equation (3.7)]{ABK-PartI})
\begin{multline}
\label{costat-eq}
  \int_Q p \varphi {\rm d}x {\rm d}t 
+ \la p_0, \psi \ra_{H^1_0(\Om)} = 
 \sum_{j=1}^q \int_0^T \int_\Omega c_jz {\rm d} x {\rm d} \mu_j(t) , \\
   +
\beta \int_Q  (\yb-y_d) z {\rm d}x {\rm d}t
 + \beta \int_\Omega (\yb(x,T)-y_{dT}(x)) z(x,T)  {\rm d}x.
\end{multline}
\if{The r.h.s. of \eqref{costat-eq} {can be seen as a  linear continuous form on the pairs $(\varphi,\psi)$ of the space}  $L^2(Q) \times H^1_0(\Om)$.
By the Riesz Representation Theorem, {there exists a unique $(p,p_0) \in L^2(Q) \times H^{-1}(\Om)$ satisfying \eqref{costat-eq}, that means, there is a unique solution of the costate equation}.}\fi

\if{
\begin{remark}
\label{p-smooth}
If $p$ is smooth enough, we can integrate 
by parts in time and we have the 
initial-terminal conditions 
\be
\label{p-smooth1}
p(\cdot,T) = \beta (\yb(\cdot,T)-y_{dT}(\cdot)), 
\quad 
p(0) = p_0. 
\ee
\end{remark}
}\fi
Next consider the {\em alternative costates}
\be
\label{p1}
{p^1 := p+ \sum_{j=1}^q c_j \mu_j};
\quad
p^1_0 := p_0 + \sum_{j=1}^q c_j\mu_j(0),
\ee
where $\mu \in BV(0,T)^q_{0,+}$ is the 
function of bounded variation
associated with~$\dd \mu$.
By
\cite[Lem. 3.2]{ABK-PartI},
$p^1 \in Y$ and $p^1(\cdot,0) = p^1_0$.
Therefore $p(\cdot,0)$ makes sense as an element of 
$H^1_0(\Om)$, and it follows that
$p(\cdot,0) = p^1(\cdot,0) - \sum_{j=1}^q c_j \mu_j(0) = p_0$.

\if{
\begin{lemma}
\label{costate-eq-reg.l}
Let $(p,p_0,\mu)\in L^2(Q)\times H^{-1}(\Omega)
\times BV(0,T)^q_{0,+}$ satisfy
\eqref{costat-eq}, and let
$(p^1,p^1_0)$
be given by \eqref{p1}.
Then $p^1\in Y$, it satisfies $p^1(0)=p^1_0$,
    and it is the unique solution of
\be
\label{costat-eq4}
- \dot p^1 + A p^1 = \beta  (\yb-y_d) 
 + \sum_{j=1}^q \mu_j A c_j,
\quad
p^1(\cdot,T) = \beta (\yb(\cdot,T)-y_{dT}).
\ee
Moreover,
$p(x,0)$ 
  and $p(x,T)$ are well-defined as elements of $H^1_0(\Omega)$ in view of \eqref{p1}, 
  and we have 
\be\label{equationp1}
p(\cdot, 0)=p_0,\quad p(\cdot, T)=\beta ( \yb(\cdot, T) - y_{dT} ).
\ee
\end{lemma}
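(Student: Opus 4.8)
The plan is to prove the statement by solving the backward parabolic equation \eqref{costat-eq4} directly and then identifying its solution with $p^1$ through the uniqueness of the representation underlying \eqref{costat-eq}. First I would check that \eqref{costat-eq4} is well-posed with solution $p^1\in Y$. Its right-hand side $\beta(\yb-y_d)+\sum_{j=1}^q \mu_j A c_j$ lies in $L^2(Q)$: indeed $\yb,y_d\in L^2(Q)$, and since $c_j\in H^2(\Om)\cap H^1_0(\Om)$ and $\mu_j\in BV(0,T)\subset L^\infty(0,T)$ one has $\mu_j A c_j\in L^2(Q)$ (using $\yb\in L^\infty(0,T;L^6(\Om))$, $\ub\in L^2(0,T)^m$, $b\in W^{1,\infty}(\Om)^{m+1}$ and $n\le 3$). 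The terminal datum $\beta(\yb(\cdot,T)-y_{dT})$ belongs to $H^1_0(\Om)$. After the time reversal $t\mapsto T-t$, equation \eqref{costat-eq4} becomes a forward linear parabolic equation with $L^2(Q)$ source and $H^1_0(\Om)$ initial datum, to which the linear well-posedness theory behind the linearized equation \eqref{lineq2} and Lemma \ref{state-equ-estimA.l} applies; this yields a unique $p^1\in Y$. Since $Y\subset C([0,T];H^1_0(\Om))$, the traces $p^1(\cdot,0)$ and $p^1(\cdot,T)$ are well-defined in $H^1_0(\Om)$, with $p^1(\cdot,T)=\beta(\yb(\cdot,T)-y_{dT})$.

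Next I would verify that the pair $(\tilde p,\tilde p_0):=\big(p^1-\sum_j c_j\mu_j,\ p^1(\cdot,0)-\sum_j c_j\mu_j(0)\big)$ satisfies \eqref{costat-eq}. Fix $(\varphi,\psi)\in L^2(Q)\times H^1_0(\Om)$ and let $z=z[\varphi,\psi]\in Y$ solve $\dot z + A z=\varphi$, $z(\cdot,0)=\psi$. Pairing \eqref{costat-eq4} with $z$ and integrating by parts in time (legitimate since $p^1,z\in Y\subset C([0,T];H^1_0(\Om))\cap H^1(0,T;L^2(\Om))$) and in space, using that $A$ is formally self-adjoint under homogeneous Dirichlet conditions, so that $\int_Q p^1 Az=\int_Q (Ap^1)z$ and $\int_\Om (Ac_j)z=\int_\Om c_j(Az)$ for $z,c_j\in H^2\cap H^1_0$, produces an expression for $\int_Q p^1\varphi$ in terms of the terminal pairing $\int_\Om p^1(\cdot,T)z(\cdot,T)$, the value $\int_\Om p^1(\cdot,0)\psi$, and the data terms.

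The only delicate contribution is $\sum_j\int_Q \mu_j(Ac_j)z$. To treat it I would set $h_j(t):=\int_\Om c_j(x)z(x,t)\,\dd x$, which is absolutely continuous because $\dot h_j=\int_\Om c_j\dot z\in L^2(0,T)$, and apply the Stieltjes integration-by-parts formula $\int_0^T h_j\,\dd\mu_j = -\mu_j(0)h_j(0) - \int_0^T \mu_j\dot h_j\,\dd t$, valid since $h_j$ is continuous and $\mu_j(T)=0$. Substituting $\dot h_j=\int_\Om c_j\varphi - \int_\Om (Ac_j)z$ (from $\dot z=\varphi-Az$ and self-adjointness) reconstructs exactly the measure term $\sum_j\int_0^T\int_\Om c_j z\,\dd x\,\dd\mu_j(t)$ on the right-hand side of \eqref{costat-eq}. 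After inserting $p^1(\cdot,T)=\beta(\yb(\cdot,T)-y_{dT})$ and cancelling the two $\int_\Om p^1(\cdot,0)\psi$ contributions, this shows that $(\tilde p,\tilde p_0)$ solves \eqref{costat-eq}.

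Finally I would invoke uniqueness. The right-hand side of \eqref{costat-eq} is a bounded linear functional of $(\varphi,\psi)\in L^2(Q)\times H^1_0(\Om)$, so by the Riesz representation theorem there is a unique representing pair $(p,p_0)\in L^2(Q)\times H^{-1}(\Om)$; hence $(\tilde p,\tilde p_0)=(p,p_0)$. This gives $p=p^1-\sum_j c_j\mu_j$, i.e. $p^1=p+\sum_j c_j\mu_j$ agrees with \eqref{p1} and lies in $Y$, while $p_0=p^1(\cdot,0)-\sum_j c_j\mu_j(0)$, whence $p^1(\cdot,0)=p_0+\sum_j c_j\mu_j(0)=p^1_0$. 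The boundary values then follow from \eqref{p1} together with $p^1\in C([0,T];H^1_0(\Om))$: since $\mu_j(T)=0$ one gets $p(\cdot,T)=p^1(\cdot,T)=\beta(\yb(\cdot,T)-y_{dT})$, and $p(\cdot,0)=p^1(\cdot,0)-\sum_j c_j\mu_j(0)=p_0$. I expect the main obstacle to be precisely this measure term and its Stieltjes integration by parts: one must justify the absolute continuity of $h_j$ and handle $\mu_j\in BV$ carefully at $t=T$, where $\mu_j(T)=0$, and at any jump points, which is exactly why the continuity of $h_j$ (hence the regularity $z\in Y$) is essential.
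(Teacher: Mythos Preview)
Your proof is correct and uses the same core ingredients as the paper: the Stieltjes integration-by-parts formula for the product of $\mu_j\in BV(0,T)$ and the absolutely continuous function $h_j(t)=\int_\Om c_j z$, the symmetry of $A$ on $H^2\cap H^1_0$, well-posedness in $Y$ of the linear backward equation \eqref{costat-eq4}, and uniqueness of the pair $(p,p_0)$ representing the right-hand side of \eqref{costat-eq}.

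The only difference is the logical order. The paper starts from $(p,p_0)$ satisfying \eqref{costat-eq}, applies the Stieltjes integration by parts to the measure term to rewrite \eqref{costat-eq} as a weak equation for $p^1:=p+\sum_j c_j\mu_j$, and then recognizes this as the strong equation \eqref{costat-eq4}; the initial condition $p^1(\cdot,0)=p^1_0$ is recovered by testing \eqref{costat-eq4} against $z$ and comparing. You instead solve \eqref{costat-eq4} first, push the computation backward to show $(\tilde p,\tilde p_0)$ satisfies \eqref{costat-eq}, and conclude by Riesz uniqueness. The two arguments are mirror images of one another; your route has the mild advantage that existence of $p^1\in Y$ is established cleanly at the outset rather than inferred from the weak identity, while the paper's route makes the derivation of \eqref{costat-eq4} from \eqref{costat-eq} more transparent.
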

\if{
\begin{proof}
Let $z\in Y$. Note that, for $1\leq j \leq q$, the function
$t\mapsto\int_\Om c_j(x)z(x,t) \dd x$,
belongs to $W^{1,1}(0,T)$ and is, therefore,
of bounded variation. 
Using the integration by parts formula for the product of 
scalar functions with
bounded variation, one of them being continuous
(see e.g. \cite[Lemma 3.6]{MR2683898}),  
and taking into account the fact that $\mu_j(T)=0$,
we get that, for $\psi=z(\cdot,0)$,
\be
\label{costat-eq2}
\sum_{j=1}^q\int_Q  c_j \mu_j \dot z \dd x \dd t
+ \sum_{j=1}^q \mu_j(0) \la c_j,\psi\ra_{L^2(\Omega)}
= -
 \sum_{j=1}^q\int_0^T \int_\Om c_j z {\rm d} x {\rm d} \mu_j(t).
\ee 
By the definition \eqref{p1} of the alternative costate, the latter equation can be rewritten as
\be
\label{costat-eq2r}
\int_Q  (p^1-p) \dot z \dd x \dd t
+\la p^1_0 - p_0,\psi\ra_{H^1_0(\Omega)}
= -
 \sum_{j=1}^q\int_0^T \int_\Om c_j z {\rm d} x {\rm d} \mu_j(t).
\ee 
Now adding 
\eqref{costat-eq} and \eqref{costat-eq2r}, as well as the identity
\be
\int_Q (p^1-p)  A z 
=
\int_Q \sum_{j=1}^q c_j \mu_j  A z
\ee
we obtain, since $\varphi=\dot z + Az$, 
that
(implicitly identifying, as usual, $L^2(\Om)$ with its dual)
\begin{multline}
\label{costat-eq3}
 \int_Q p^1 \varphi {\rm d}x {\rm d}t 
+ \la p^1_0, \psi \ra_{H^1_0(\Om)}
\\= \beta \int_Q  (\yb-y_d) z {\rm d}x {\rm d}t  
 + \beta\int_\Omega (\yb(x,T)-y_{dT}(x)) z(x,T)  {\rm d}x   + \int_Q \sum_{j=1}^q c_j \mu_j  A z.
\end{multline} 
Since $A$ is symmetric, 
using \eqref{HypSpaces2},
we see that $p^1$ is solution in $Y$ of \eqref{costat-eq4};
 the solution of the latter being clearly unique.
Multiplying \eqref{costat-eq4} by $z\in Y$ and integrating over~$Q$,
with an integration by parts of the term with $\dot p^1z$,
we recover (using \eqref{p1}) equation \eqref{costat-eq3}
implying that
$p^1(x,0) = p^1_0(x)$ for a.a. $x$ in $\Om$.
It is easy to prove that,
conversely, any solution of \eqref{costat-eq3}
is solution of \eqref{costat-eq4}.

\if{Finally, $p^1$ is in $W(0,T)\subset C(0,T;H)$ and $\mu$ has bounded
variation, so we get by \eqref{p1} the condition for $p$ at initial and final point.}\fi
Since $p^1$ and 
$c_j \mu_j$ belong to $L^{\infty}(0,T;H^1_0(\Om))$, by \eqref{p1} also $p$ has this regularity.
Use \eqref{p1} again,
the final condition on $p^1$ and the fact that
$\mu(T)=0$ to get
the second relation of
\eqref{equationp1}. Furthermore, we have
\be
p_0=p^1(\cdot, 0)-  \sum_{j=1}^q c_j\mu_j(0)=p(\cdot,0).
\ee
\end{proof} 
}\fi
}\fi

\begin{corollary}\label{cor:reg-p}
\if{
The solution $p$ of \eqref{costate-eq} belongs to 
$BV(0,T;L^2(\Om))$
and 
\be
\label{equationp1}
p(T)=\beta\big( \yb(T) - y_{dT}\big).
\ee
}\fi
If $\mu \in H^1(0,T)^q$ then 
$p\in Y$ and
\be
\label{equationp2}
-\dot p +Ap = \beta(\yb-y_d) + \sum_{j=1}^q c_j \dot{\mu}_j.
\ee
\end{corollary}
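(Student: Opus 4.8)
The plan is to reduce everything to the regularity and the equation already known for the \emph{alternative costate} $p^1$ from \eqref{p1}. Recall that $p = p^1 - \sum_{j=1}^q c_j\mu_j$ with $p^1 \in Y$ by \cite[Lem.~3.2]{ABK-PartI}, and that $p^1$ is the unique solution of the strong-form costate equation
\be
-\dot p^1 + A p^1 = \beta(\yb - y_d) + \sum_{j=1}^q \mu_j\, Ac_j,\qquad p^1(\cdot,T) = \beta\big(\yb(\cdot,T) - y_{dT}\big),
\ee
which follows from \eqref{costat-eq} by the integration-by-parts argument in \cite{ABK-PartI}. So the whole statement amounts to transferring these two facts through the shift by $\sum_j c_j\mu_j$.

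First I would check that $p \in Y$. Since $\mu_j \in H^1(0,T)$ by hypothesis and $c_j \in H^2(\Om)\cap H^1_0(\Om)$, the function $(x,t)\mapsto c_j(x)\mu_j(t)$ vanishes on $\Sigma$, has $\Delta(c_j\mu_j) = \mu_j\,\Delta c_j \in L^2(Q)$ and $\partial_t(c_j\mu_j) = \dot\mu_j\, c_j \in L^2(Q)$; hence $c_j\mu_j \in H^{2,1}(Q)$ with zero boundary trace, i.e. $c_j\mu_j \in Y$. Combined with $p^1\in Y$ this gives $p = p^1 - \sum_j c_j\mu_j \in Y$, so in particular $\dot p \in L^2(Q)$ and $Ap$ is well-defined in $L^2(Q)$ (using $n\le 3$, so $H^2(\Om)\hookrightarrow C(\bar\Om)$, together with $\yb\in C([0,T];H^1_0(\Om))$ and $\ub\in L^\infty(0,T)^m$).

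Then I would obtain \eqref{equationp2} by direct substitution. Since $A$ acts only in the space variable, $A(c_j\mu_j) = \mu_j\, Ac_j$, while $\partial_t(c_j\mu_j) = c_j\dot\mu_j$, so
\be
-\dot p + Ap = -\dot p^1 + Ap^1 + \sum_{j=1}^q\big( c_j\dot\mu_j - \mu_j\, Ac_j \big) = \beta(\yb - y_d) + \sum_{j=1}^q c_j\dot\mu_j,
\ee
the terms $\pm\sum_j \mu_j\,Ac_j$ cancelling. The terminal condition $p(\cdot,T) = p^1(\cdot,T) = \beta(\yb(\cdot,T) - y_{dT})$ follows as well, using $\mu(T)=0$.

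There is essentially no serious obstacle; the only point needing a little care is the bookkeeping that guarantees $c_j\mu_j \in Y$ and hence $Ap \in L^2(Q)$ — it is precisely the hypothesis $c_j \in H^2(\Om)$ together with $\mu_j \in H^1(0,T)$ that makes the second spatial derivative and the time derivative of $c_j\mu_j$ square-integrable. Everything else is algebra on the linear costate equation.
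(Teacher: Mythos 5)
Your argument is correct and is precisely the route the paper takes: the paper's proof consists of the single remark that the corollary ``follows immediately from \eqref{p1}'' and the strong-form equation for the alternative costate $p^1$, which is exactly the substitution $p = p^1 - \sum_j c_j\mu_j$ you carry out. Your additional bookkeeping (that $c_j\mu_j \in Y$ when $\mu_j \in H^1(0,T)$ and $c_j \in H^2(\Om)\cap H^1_0(\Om)$, and the cancellation of the $\mu_j\,Ac_j$ terms) correctly fills in the details the paper leaves implicit.
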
 

\if{
\begin{proof}
This follows immediately from \eqref{p1} and \eqref{costat-eq4}. \end{proof}
}\fi

\section{An example} 
We recall an example from \cite[Appendix B]{ABK-PartI} , and show that it satisfies the
sufficient condition for quadratic growth
(condition (a) of Theorem~\ref{ThmSC}).

We consider the following setting: Let $\Om = (0,1),$  and denote by 
$c_1(x):=\sqrt 2 \sin \pi x$ the first (normalized) eigenvector
of the Laplace operator. We assume that $\gamma=0$, 
 the control is scalar ($m=1$), 
$b_0 \equiv 0$ and $b_1 \equiv 1$ in $\Omega,$
and that $f \equiv 0$ in $Q.$ 
Then the state equation with initial condition $c_1$ reads
\be
\label{eqex}
\dot y(x,t) - \Delta y(x,t) = u(t) y(x,t); \;\quad (x,t) \in (0,1)\times (0,T),
\quad y(x,0) = c_1(x),\quad x\in \Om.
\ee 
The state satisfies
$y(x,t) = y_1(t) c_1(x)$, where $y_1$ is solution of 
\be
\dot y_1(t) + \pi^2 y_1(t) = u(t) y_1(t); \;\quad t \in (0,T),
\quad y_1(0) = y_{10}=1. 
\ee
We set $T=3$ and consider the state constraint \eqref{stateconstraint} with $q=1$
and $d_1:=-2,$
and the cost function \eqref{cost} with $\alpha_1=0$. 
%
%
The state constraint reduces to
\be
y_1(t) \leq 2,\quad t\in [0,3].
\ee
As target functions we take
$y_{dT}:= c_1$  and
$y_d(x,t) := \yh_d(t) c_1(x)$ with
\be
\yh_d(t) :=
\left\{ \ba{lll}
1.5e^t &\quad \text{for }t\in (0, \log 2),
\vspace{1mm} \\ 
3 &\quad \text{for }t\in (\log 2, 1),
\vspace{1mm} \\ 
4- t &\quad \text{for }t\in (1, 3).
\ea\right.\ee
 We assume that the lower and upper bounds for the control are  $\umin := -1$ and $\umax :=\pi^2+1$.
The optimal control is given by
\be\label{ex-control}
\ub(t) :=
\left\{ \ba{ll}
\umax & \quad \text{for } t\in (0, \log 2), \vspace{1mm}  \\ 
\pi^2 & \quad \text{for } t\in (\log 2, 2),  
\vspace{1mm} \\ 
 \pi^2 -1/\hat{y}_d & \quad \text{for } t\in (2, 3).
\vspace{1mm} \\ 
\ea\right.\ee
and the optimal state by
\be
\yb_1(t) :=
\left\{ \ba{lll}
e^t & \quad\text{for } t\in (0, \log 2), &
\vspace{1mm} \\ 
 2 & \quad\text{for } t\in (\log 2,2), &  
\vspace{1mm} \\ 
 4-t & \quad \text{for } t\in (2, 3). &
\vspace{1mm} \\ 
\ea\right.\ee
The above control is feasible. The trajectory $(\ub,\yb)$ is optimal.
The costate equation is 
\be
-\dot p +Ap = c_1 (\yb_1-\hat y_d) + c_1\dot{\mu}_1,\quad p(\cdot,T)= \yb(T)-y_{dT} = 0.
\ee
Since $\yb$ and $y_d$ are colinear to $c_1$,
it follows that $p(x,t) = p_1(t) c_1(x)$, and 
\be
-\dot p_1 + \pi^2 p_1 = \ub p_1 + \yb_1 - \yh_{d}  + \dot{\mu}_1; \quad p_1(3)=0.
\ee
Over $(2,3)$, $\dot{\mu}_1=0$ (state constraint not active) and 
$\yb_1 = \yh_d$, therefore $p_1$ and $p$ identically vanish.
Over $(\log 2,2)$, $\ub$ is out of bounds and therefore
\be
0 = \int_\Om p(x,t) \yb(x,t) = p_1(t) \yb_1(t) \int_\Om c_1(x)^2 = 2p_1(t).
\ee
It follows that $p_1$ and $p$ also vanish on $(\log 2,2)$ and that
\be
\dot \mu_1 = -( \yb_1 - \yh_{d} ) >0, \quad \text{a.a. $t\in (\log 2,2)$. }
\ee
Over $(0,\log 2),$ the control attains its upper bound, then
\be
-\dot p_1 = p_1 - \half e^t
\ee
with final condition $p_1(\log 2)=0$, so that
\be
p_1(t) = \frac{e^t}{4} - e^{-t}.
\ee
As expected, $p_1$ is negative.

\begin{lemma}
The hypothesis (a) of Theorem \ref{ThmSC} holds.
\end{lemma}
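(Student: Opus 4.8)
The plan is to exploit the very special structure of this example. Since $\gamma=0$ we have $\kappa\equiv 1$; since $b_1\equiv 1$ is constant in $x$, $f\equiv 0$ and $\gamma=0$, the coefficient $B^1_1$ of \eqref{B1} vanishes identically. Hence equation \eqref{zeta} is the homogeneous heat--type equation with zero data, so $\zeta[w]\equiv 0$ for every $w\in L^2(0,3)$. Consequently the space on which $\whq[p,\mu]$ is asked to be a Legendre form, namely $\{(\zeta[w],w,h)\}$, is here (recall $m=q=1$, $T=3$) simply the Hilbert space $L^2(0,3)\times\RR$, and all the $\zeta$--dependent and $B^1$--dependent terms in \eqref{def-hatq1}--\eqref{def-hatq2} drop out.

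First I would compute $\whq$. Inserting $\zeta\equiv 0$, $\kappa\equiv 1$, $B^1_1\equiv 0$, $b_1\equiv 1$ into \eqref{def-hatq}--\eqref{def-hatq2} and using $\yb=\yb_1 c_1$, $p=p_1 c_1$, $\|c_1\|_{L^2(\Om)}=1$, $\yb(\cdot,T)=c_1$, $p(\cdot,T)=0$, one obtains
\[
\whq[p,\mu](0,w,h)=\int_0^3 R_{11}(t)\,w(t)^2\,\dd t+h^2,
\]
where $R_{11}(t):=\yb_1(t)^2-\dot S_{11}(t)$ with $S_{11}=p_1\yb_1$ (this $R_{11}$ being the scalar matrix $R$ of \eqref{matrixR}). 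Then I would evaluate $R_{11}$ arc by arc. On $(\log 2,2)\cup(2,3)$ we have $p_1\equiv 0$, so $S_{11}\equiv 0$ and $R_{11}=\yb_1^2$, equal to $4$ on $(\log 2,2)$ and to $(4-t)^2\in(1,4)$ on $(2,3)$. On $(0,\log 2)$, with $p_1(t)=\tfrac14 e^t-e^{-t}$ and $\yb_1(t)=e^t$, we get $S_{11}=\tfrac14 e^{2t}-1$, $\dot S_{11}=\tfrac12 e^{2t}$, hence $R_{11}=e^{2t}-\tfrac12 e^{2t}=\tfrac12 e^{2t}\in(\tfrac12,2)$. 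Therefore $\tfrac12\le R_{11}(t)\le 4$ for a.e.\ $t\in(0,3)$.

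All three conditions (i)--(iii) then follow. For (i): $(\ub,\yb)$ is feasible (and optimal) as recalled above, and the pair $(p,\mu)$ constructed above lies in $\Lambda_1$; in fact $\Lambda_1=\{(p,\mu)\}$, since any multiplier has costate colinear to $c_1$ with zero terminal value, the strict interiority of $\ub$ on $(\log 2,3)$ together with \eqref{FirstControl} forces $\Psi^p_1=\yb_1p_1\equiv 0$ there (so $p_1\equiv 0$ on $(\log 2,3)$), and \cite[Lem. 3.8]{ABK-PartI} at $t=\log 2,2$ (where $\dot{\yb}_1$ jumps) together with the absolute continuity of $\mu$ on state--constrained arcs pins down $\mu$, whence the costate ODE on $(0,\log 2)$ pins down $p$. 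For (ii): by the displayed formula, $\whq[p,\mu](0,\cdot,\cdot)$ is a continuous quadratic form on $L^2(0,3)\times\RR$ equivalent to the square of the norm, $\tfrac12(\|w\|_2^2+h^2)\le\whq[p,\mu](0,w,h)\le 4(\|w\|_2^2+h^2)$; such a form is a Legendre form, since if $x_\ell\rightharpoonup x$ and $\whq[p,\mu](x_\ell)\to\whq[p,\mu](x)$, writing $\whq[p,\mu](\cdot)=\langle T\cdot,\cdot\rangle$ with $\tfrac12 I\le T$ one gets $\langle T(x_\ell-x),x_\ell-x\rangle\to 0$ and hence $x_\ell\to x$. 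For (iii): the same formula gives $\whq[p,\mu](\zeta[w],w,h)\ge\tfrac12(\|w\|_2^2+|h|^2)$ for every $(w,h)$, a fortiori on $PC_2^*$, so \eqref{Q-coerc_cond} holds with $\rho=\tfrac12$ (using that $\max$ over $\Lambda_1$ dominates the value at $(p,\mu)$). I would add a line checking that Hypotheses \ref{hyp-setting} and \ref{hyp2} also hold here (three arcs; qualification; $\bar M_k\equiv\yb_1=2$ uniformly bounded below on the state arc; strict complementarity readable from the sign of $\Psi^p_1$ and from $\supp\dd\mu_1=[\log 2,2]$; the data regularity being evident), so that Theorem \ref{ThmSC}(a) indeed delivers the weak quadratic growth.

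The routine part is the arc--by--arc evaluation of $R_{11}$, which is short precisely because $\zeta\equiv 0$ collapses $\whq$; the only mildly delicate point is the verification that $\Lambda_1$ is a singleton (needed so that (ii) holds for every multiplier), and this rests on the jump and regularity results already established in Part I rather than on any new argument.
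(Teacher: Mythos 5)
Your proof is correct and, at bottom, performs the same computation as the paper's: reduce everything to scalar ODEs via the ansatz $\yb=\yb_1c_1$, $p=p_1c_1$, apply the Goh transform, and evaluate the coefficient of $w^2$ arc by arc; part (i) is delegated to Part I in both cases. Two differences are worth recording. First, you make explicit that $B^1_1\equiv 0$ in \eqref{B1} (equivalently $AB+\dot B=0$), hence $\zeta[w]\equiv 0$; the paper has this implicitly but keeps $\xi$ in its formulas. This observation buys you a genuinely simpler treatment of (iii): you get $\whq[p,\mu](\zeta[w],w,h)=\int_0^3R_{11}(t)w(t)^2\,\dd t+h^2\ge\tfrac12\left(\|w\|_2^2+|h|^2\right)$ on the \emph{whole} space $L^2(0,3)\times\RR$, so \eqref{Q-coerc_cond} holds on $PC_2^*$ a fortiori, whereas the paper must use the structure of $PC_2^*$ ($w$ vanishes on the first two arcs via \eqref{lem-pc2-1}(i) and \eqref{w-C}) and then a ``Legendre form positive except at the origin is coercive'' argument on the last arc; your two-sided bound also yields the Legendre property (ii) by the clean $\langle T(x_\ell-x),x_\ell-x\rangle\to 0$ argument rather than by inspecting the $w^2$-coefficient alone. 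Second, on the first arc your coefficient $R_{11}=\tfrac12e^{2t}$ (from $\yb_1^2-\dot S_{11}$ with $S_{11}=e^{2t}/4-1$) is the one consistent with \eqref{matrixR} and with a direct integration by parts of the bilinear term $2\int_Q p\,v\,b_1 z$; the paper states $2+e^{2t}/4$ (and its displayed $\tilde Q$ drops the factor $2$ in the bilinear term), which appears to be a slip — immaterial here, since both expressions are bounded below by a positive constant, but your value is the correct one. Your closing remarks on the uniqueness of the multiplier and on verifying Hypotheses \ref{hyp-setting} and \ref{hyp2} cover exactly what the paper leaves implicit.
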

\begin{proof}
(i)
This has been obtained in part I \cite{ABK-PartI}.
Note that the multiplier is unique.
\\ (ii) 
We check the Legendre form condition.
For this, we apply the Goh transformation to the example. For $(v,z)$ solution of the linearized state equation we define
\be
B:=\yb b = \yb_1(t) c_1(x); \quad \xi:=z-Bw = (z_1 -\yb_1 w) c_1
\ee
and we observe that $\xi=\xi_1 c_1$ is solution of
\be
\dot \xi + A \xi = - (AB+\dot B) w; \quad \xi(0)=0;
\ee
where
\be
AB+\dot B = (\pi^2 - \ub) B +\dot \yb_1 c_1
=
( (\pi^2 -\ub) \yb_1  +\dot \yb_1 ) c_1
\ee
so that
\be
\label{dyn_xi}
\dot \xi_1 + (\pi^2-\ub)\xi_1 = B^1 w, \; B^1 := (\pi^2 -\ub) \yb_1  +\dot \yb_1.
\ee
For checking the Legendre condition 
((ii) of Theorem \ref{ThmSC}), 
we have to check the uniform positivity
of the coefficient 
of $w^2$ in $\hat Q$.
This trivially holds on the second and third arcs, since then $p$
and therefore $\chi$ vanish, so that the coefficient of $w^2$ reduces to 
$\int_\Om \kappa \yb^2 = \yb_1(t)^2 \geq 1$.
We now detail the computation for the first arc.
Replacing $z$ by $\xi + Bw$ in the quadratic form $\calq[p](v,z)$ we have
\be
\begin{aligned}
\tilde{Q}&= \int_{Q} ( (\xi+Bw)^2 + p v (\xi+Bw))\dd x \dd t + \int_\Om (\xi(\cdot,T)+B(\cdot,T)w(T))^2 \dd x.
\end{aligned}
\ee
For the second term in the first integral we have
\be
\begin{aligned}
\int_{Q}  p v (\xi+Bw)\dd x \dd t
&=
\int_0^T \left(p_1 \xi \ddt w + \half p_1 \yb_1 \ddt ( w^2) \right) \dd t 
\\ & =
-\int_0^T \left(\ddt (p_1\xi) w + \half \ddt (p_1 \yb_1) w^2 \right) \dd t + [\text{boundary-terms}].
\\ & =
-\int_0^T \left( p_1B^1 + \half \ddt (p_1 \yb_1) \right) w^2  \dd t + [\text{boundary-terms}].
\end{aligned}
\ee
Finally we obtain that over the first arc, the coefficient of $w^2$
in the integral term of $\tilde{Q}$ is $2+e^{2t}/4$.
It follows that $\widehat{\calq}[p](w,\xi[w])$ is a Legendre form.
\\(iii)
We check the uniform positivity condition.
Any $(w,h) \in PC_2^*$ is such that $w$ vanishes on the two first arcs,
and since the costate vanishes on the third arc we have that, using
$\yb(x,t) = (4-t)c_1(x)$
on the third arc
\be
\begin{split}
\widehat{\calq}[p,\mu](\xi,w,h)
&=
\int_2^3 \int_\Om (\xi(x,t)+(4-t) c_1(x) w(t))^2 \dd x \dd t +  \int_\Om (\xi(x,T)+ hc_1(x))^2 \dd x\\
&=
\int_2^3  (\xi_1(t)+(4-t)  w(t))^2\dd t +   (\xi_1(T)+ h)^2.
\end{split}
\ee
This is a Legendre form over $L^2(2,3)$, and so it is coercive iff it has positive
values except at $0$. If the value is zero then
$w(t) = \xi_1(t)/(t-4)$ so that $\xi$ vanishes identically and therefore
$w$ also, and $h=0$.
The conclusion follows.
\end{proof}


\bibliographystyle{amsplain}
\bibliography{hjbx}

\end{document}